\newtheorem{thm}{Theorem}[section]
\newtheorem*{theorem*}{Theorem}
\newtheorem*{acknowledgement*}{Acknowledgement}
\newtheorem{cor}[thm]{Corollary}
\newtheorem{lem}[thm]{Lemma}
\newtheorem{prop}[thm]{Proposition}
\theoremstyle{definition}
\theoremstyle{remark}
\newtheorem{rem}[thm]{Remark}
\numberwithin{equation}{section}
\title[Asymptotically conical self-expanders of mean curvature flow]{The space of asymptotically conical self-expanders of mean curvature flow}
\author{Jacob Bernstein}
\address{Department of Mathematics, Johns Hopkins University, 3400 N. Charles Street, Baltimore, MD 21218}
\email{bernstein@math.jhu.edu}
\author{Lu Wang}
\address{Department of Mathematics, University of Wisconsin-Madison, 480 Lincoln Drive, Madison, WI 53706}
\email{luwang@math.wisc.edu}
\thanks{The first author was partially supported by the NSF Grant DMS-1609340. The second author was partially supported by the NSF Grants DMS-1406240 and  DMS-1834824, an Alfred P. Sloan Research Fellowship, the office of the Vice Chancellor for Research and Graduate Education at the University of Wisconsin-Madison with funding from the Wisconsin Alumni Research Foundation and a Vilas Early Investigator Award.}
\begin{document}
\begin{abstract}
We show that the space of asymptotically conical self-expanders of the mean curvature flow is a smooth Banach manifold. An immediate consequence is that non-degenerate self-expanders -- that is, those self-expanders that admit no non-trivial normal Jacobi fields that fix the asymptotic cone -- are generic in a certain sense.
\end{abstract}
\maketitle

\section{Introduction} \label{IntroSec}
A \emph{hypersurface}, i.e., a properly embedded codimension-one submanifold, $\Sigma\subset\mathbb{R}^{n+1}$, is a \emph{self-expander} if
\begin{equation} \label{ExpanderEqn}
\mathbf{H}_\Sigma-\frac{\mathbf{x}^\perp}{2}=\mathbf{0}.
\end{equation}
Here 
$$
\mathbf{H}_\Sigma=\Delta_\Sigma\mathbf{x}=-H_\Sigma\mathbf{n}_\Sigma=-\mathrm{div}_\Sigma(\mathbf{n}_\Sigma)\mathbf{n}_\Sigma
$$
is the mean curvature vector, $\mathbf{n}_\Sigma$ is the unit normal, and $\mathbf{x}^\perp$ is the normal component of the position vector. Self-expanders arise naturally in the study of mean curvature flow. Indeed, $\Sigma$ is a self-expander if and only if the family of homothetic hypersurfaces
$$
\left\{\Sigma_t\right\}_{t>0}=\left\{\sqrt{t}\, \Sigma\right\}_{t>0}
$$
is a \emph{mean curvature flow} (MCF), that is, a solution to the flow
$$
\left(\frac{\partial \mathbf{x}}{\partial t}\right)^\perp=\mathbf{H}_{\Sigma_t}.
$$
In particular, self-expanders may be thought of as the forward in time analog of self-shrinkers.  While self-shrinkers model the behavior of a MCF as it develops a singularity, self-expanders are expected to model the behavior of a MCF as it emerges from a conical singularity. They are also expected to model the long time behavior of the flow. The interested reader may refer to \cite{AngenentIlmanenChopp}, \cite{CMGen}, \cite{ClutterbuckSchnurer}, \cite{Ding}, \cite{Ecker}, \cite{Huisken}, \cite{IlmanenLec}, \cite{MazzeoSaez},\cite{SchnurerSchulze}, and references therein.
Finally, self-expanders arise variationally as stationary points, with respect to compactly supported variations, of the functional
$$
E[\Sigma]=\int_{\Sigma} e^{\frac{|\mathbf{x}|^2}{4}}\, d\mathcal{H}^n
$$
where $\mathcal{H}^n$ is the $n$-dimensional Hausdorff measure.

Throughout the paper, $n,k\geq 2$ are integers and $\alpha\in (0,1)$. Let $\Gamma$ be a $C^{k,\alpha}_{*}$ asymptotically conical $C^{k,\alpha}$-hypersurface in $\mathbb{R}^{n+1}$ and let $\mathcal{L}(\Gamma)$   the link of the asymptotic cone of $\Gamma$.  For instance, if $\lim_{\rho\to 0^+} \rho \Gamma=\mathcal{C}$ in $C^{k, \alpha}_{loc} (\mathbb{R}^{n+1}\setminus\{\mathbf{0}\})$ for $\mathcal{C}$ a cone, then $\Gamma$ is $C^{k,\alpha}_{*}$-asymptotically conical with asymptotic cone $\mathcal{C}$. For technical reasons, the actual definition is slightly weaker -- see Section \ref{ACHSec} for the details. We wish to view the space of all $C^{k,\alpha}_{*}$-asymptotically conical self-expanders as an infinite dimensional geometric object and then understand some of its global features. However, this space is cumbersome to work with and so, inspired by work of White \cite{WhiteEI}, we take a slightly different point of view.

Specifically, let $\mathcal{ACH}^{k,\alpha}_n(\Gamma)$ be the space of $C^{k,\alpha}_*$-asymptotically conical embeddings of $\Gamma$ into $\mathbb{R}^{n+1}$.  Roughly speaking,  $\mathbf{f}\in \mathcal{ACH}^{k,\alpha}_n(\Gamma)$ if $\mathbf{f}$ is an embedding, $\Sigma=\mathbf{f}(\Gamma)$ is a $C^{k,\alpha}_{*}$-asymptotically conical $C^{k,\alpha}$-hypersurface and the parametrization of $\Sigma$ by $\mathbf{f}$ is well behaved at infinity.  This last condition means that an appropriate blow down of $\mathbf{f}$ yields a nice parametrization of the asymptotic cone of $\Sigma$. Two elements $\mathbf{f}_1,\mathbf{f}_2\in\mathcal{ACH}^{k,\alpha}_n(\Gamma)$ are equivalent if $\mathbf{f}_1=\mathbf{f}_2\circ\phi$ for some diffeomorphism, $\phi$, of $\Gamma$ that fixes the infinity of $\Gamma$; see Section \ref{ACESubsec} for the exact definitions of $\mathcal{ACH}^{k,\alpha}_n(\Gamma)$ and of the equivalence. Denote by $[\mathbf{f}]$ the equivalence class of $\mathbf{f}$. Observe, that it is possible for two elements $\mathbf{f}_1, \mathbf{f}_2\in  \mathcal{ACH}^{k,\alpha}_n(\Gamma)$ to parametrize the same hypersurface, $\Sigma$, but to be inequivalent. This happens when the blow downs of $\mathbf{f}_1$ and $\mathbf{f}_2$ give different parameterizations of the asymptotic cone of $\Sigma$.

\begin{thm} \label{StructureThm}
For $\Gamma\in\mathcal{ACH}^{k,\alpha}_n$ let 
$$
\mathcal{ACE}^{k,\alpha}_n(\Gamma)=\left\{[\mathbf{f}]\colon \mbox{$\mathbf{f}\in\mathcal{ACH}^{k,\alpha}_n(\Gamma)$ and $\Sigma=\mathbf{f}(\Gamma)$ satisfies \eqref{ExpanderEqn}}\right\}.
$$
Then the following statements hold:
\begin{enumerate}
\item \label{ManifoldItem} $\mathcal{ACE}^{k,\alpha}_n(\Gamma)$ is a smooth Banach manifold modeled on $C^{k,\alpha}(\mathcal{L}(\Gamma);\mathbb{R}^{n+1})$, with a countable cover by coordinate domains $\mathcal{O}_i$. 
\item \label{ProjectionItem} The projection map $\Pi\colon\mathcal{ACE}^{k,\alpha}_n(\Gamma)\to C^{k,\alpha}(\mathcal{L}(\Gamma);\mathbb{R}^{n+1})$ which assigns to $[\mathbf{f}]$ the trace at infinity, $\mathrm{tr}_\infty^1[\mathbf{f}]$, of $\mathbf{f}$ is a smooth map of Fredholm index $0$. 
\item \label{LocProjectionItem} Each $\Pi|_{\mathcal{O}_i}$ has a coordinate representation given by the map $(z,\kappa)\mapsto (z,\psi_i(z,\kappa))$ from a neighborhood of $0 \in \mathcal{Z}_i\oplus\mathcal{K}_i$ to itself, where $\mathcal{K}_i$ is the kernel of $D\Pi([\mathbf{f}_i])$ for some $[\mathbf{f}_i]\in\mathcal{O}_i$ and $\mathcal{Z}_i$ is the complement of $\mathcal{K}_i$ in $C^{k,\alpha}(\mathcal{L}(\Gamma);\mathbb{R}^{n+1})$. 
\item  \label{KernelItem} The kernel of $D\Pi([\mathbf{f}])$ is isomorphic to the space of normal Jacobi fields of $\mathbf{f}(\Gamma)$ that fix the asymptotic cone.
\end{enumerate}
\end{thm}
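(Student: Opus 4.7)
The plan is to realize $\mathcal{ACE}^{k,\alpha}_n(\Gamma)$ locally as the zero set of a smooth nonlinear Fredholm map between Banach spaces, and to deduce all four items by the implicit function theorem together with a Lyapunov--Schmidt reduction. Everything rests on one analytic ingredient, which I take to be supplied by the preceding technical sections: the Jacobi operator of a self-expander, augmented by the trace-at-infinity map, is Fredholm of index zero on appropriate weighted $C^{k,\alpha}$ spaces.

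To set up coordinates, I would fix $[\mathbf{f}_0] \in \mathcal{ACE}^{k,\alpha}_n(\Gamma)$ with $\Sigma_0 = \mathbf{f}_0(\Gamma)$ and use the Banach manifold structure on $\mathcal{ACH}^{k,\alpha}_n(\Gamma)/\!\sim$ developed in Section \ref{ACESubsec} to identify a neighborhood of $[\mathbf{f}_0]$ with an open set in a Banach space $\mathcal{B}_0$ of normal sections $u\mathbf{n}_{\Sigma_0}$ over $\Sigma_0$ in a suitable weighted $C^{k,\alpha}$ class, writing $\Sigma_u$ for the perturbed hypersurface. The nonlinear expander operator
\[
\mathcal{E}(u) = \left\langle \mathbf{H}_{\Sigma_u} - \frac{\mathbf{x}^\perp}{2},\, \mathbf{n}_{\Sigma_u}\right\rangle
\]
is smooth into a companion weighted $C^{k-2,\alpha}$ space $\mathcal{B}_0'$, and combined with the trace at infinity one obtains a smooth map
\[
\Psi = (\mathcal{E},\, \mathrm{tr}_\infty^1)\colon \mathcal{B}_0 \longrightarrow \mathcal{B}_0' \times C^{k,\alpha}(\mathcal{L}(\Gamma);\mathbb{R}^{n+1}),
\]
whose differential at $u=0$ is the linear operator $(L_{\Sigma_0}, \mathrm{tr}_\infty^1)$, where $L_{\Sigma_0}$ is the Jacobi operator associated with the expander functional $E$ at $\Sigma_0$. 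Formal self-adjointness of $L_{\Sigma_0}$ with respect to the weighted measure $e^{|\mathbf{x}|^2/4}\, d\mathcal{H}^n$ forces the Fredholm index of this linearization to be zero.

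Given this Fredholm input, items \eqref{ManifoldItem} and \eqref{ProjectionItem} follow from the implicit function theorem applied to $\mathcal{E}$: the zero set of $\mathcal{E}$ in $\mathcal{B}_0$ is a smooth submanifold near $0$, on which $\mathrm{tr}_\infty^1$ restricts to the local coordinate expression of $\Pi$ and is therefore smooth Fredholm of the same index zero. Modeling by $C^{k,\alpha}(\mathcal{L}(\Gamma);\mathbb{R}^{n+1})$ and countability of the atlas follow from separability of the relevant Banach spaces. Item \eqref{LocProjectionItem} is a standard Lyapunov--Schmidt reduction at each $[\mathbf{f}_i] \in \mathcal{O}_i$: splitting $C^{k,\alpha}(\mathcal{L}(\Gamma);\mathbb{R}^{n+1}) = \mathcal{Z}_i \oplus \mathcal{K}_i$ with $\mathcal{K}_i = \ker D\Pi([\mathbf{f}_i])$ and solving $\mathcal{E}=0$ in the $\mathcal{Z}_i$-direction by the implicit function theorem yields the graph form $(z, \kappa) \mapsto (z, \psi_i(z, \kappa))$. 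Item \eqref{KernelItem} is then transparent from the construction: $\ker D\Pi([\mathbf{f}])$ corresponds, in normal-graph coordinates over $\Sigma = \mathbf{f}(\Gamma)$, to sections $u\mathbf{n}_\Sigma$ with $L_\Sigma u = 0$ and $\mathrm{tr}_\infty^1 u = 0$, i.e., exactly the normal Jacobi fields of $\Sigma$ that fix the asymptotic cone.

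The main obstacle is the Fredholm index-zero property of $(L_{\Sigma_0}, \mathrm{tr}_\infty^1)$ on the weighted $C^{k,\alpha}$ spaces. Because of the drift term $\tfrac{1}{2}\mathbf{x}\cdot\nabla$ inside $L_{\Sigma_0}$, standard uniformly elliptic theory does not apply at infinity, so the weighted spaces must be designed so that kernel elements decay rapidly while the trace-at-infinity component remains surjective onto any prescribed boundary data in $C^{k,\alpha}(\mathcal{L}(\Gamma);\mathbb{R}^{n+1})$. A second, more routine, technicality is to verify that the diffeomorphism equivalence has been cleanly quotiented out so that $\mathcal{E}$ and its linearization descend to well-defined smooth maps on equivalence classes. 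I take both points to be the substantive content of the preceding sections of the paper.
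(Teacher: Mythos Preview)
Your outline follows the paper's strategy---Theorem \ref{SmoothDependThm} is precisely a smooth dependence result obtained by applying the implicit function theorem with a Lyapunov--Schmidt reduction to the expander operator---but two points are underspecified in a way that matters.

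First, parametrizing nearby hypersurfaces by normal graphs $u\mathbf{n}_{\Sigma_0}$ costs a derivative: $\mathbf{n}_{\Sigma_0}$ is only $C^{k-1,\alpha}$ when $\Sigma_0$ is $C^{k,\alpha}$, so the map $u\mapsto\mathcal{E}(u)$ does not land in a $C^{k-2,\alpha}$ target with the right weights. The paper replaces $\mathbf{n}_{\Sigma_0}$ by a $C^{k,\alpha}$ transverse section $\mathbf{v}$ (Section \ref{TransverseSubsec}) and works with the associated operator $L_{\mathbf{v}}$ throughout; this is not cosmetic.

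Second, and more substantively: you assert that the implicit function theorem applied to $\mathcal{E}$ alone shows $\mathcal{E}^{-1}(0)$ is a smooth submanifold. This requires $D\mathcal{E}(0)$ to be \emph{surjective} onto $\mathcal{B}_0'$, and that is not a consequence of the Fredholm index-zero statement you invoke for the combined map $(L_{\Sigma_0},\mathrm{tr}_\infty^1)$. On the decaying space $\mathcal{D}^{k,\alpha}(\Sigma)$ the operator $L_{\mathbf{v}}$ has cokernel exactly $\mathcal{K}_{\mathbf{v}}$ (Corollary \ref{CokerCor}); to fill this cokernel one must show that varying the boundary data at infinity produces, after applying $L_\Sigma$, elements that span $\mathcal{K}_{\mathbf{v}}$. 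Concretely, for each nonzero $\kappa\in\mathcal{K}_{\mathbf{v}}$ there must exist a boundary variation $\zeta$ for which the asymptotic Dirichlet problem $L_\Sigma u=0$, $\mathrm{tr}_\infty^1[u]=\zeta\,\mathbf{w}\cdot\mathbf{n}_{\mathcal{L}(\Sigma)}$ has \emph{no} solution. This is Corollary \ref{NonExistAsympDirichletCor}, and its proof rests on the sharp asymptotic expansion of Jacobi functions (Proposition \ref{AsymptoticKerProp}), which serves as the substitute for the Calder\'on unique continuation theorem used by White in the compact case. Your appeal to formal self-adjointness in the weighted $L^2$ measure is only a heuristic in H\"older spaces; the integration by parts on the noncompact $\Sigma$ that would make it rigorous is exactly what Proposition \ref{AsymptoticKerProp} is needed to justify. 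So the ``main obstacle'' you flag is really two separate ingredients---Fredholm index zero on $\mathcal{D}^{k,\alpha}$ (Section \ref{FredholmSec}) and the transversality with respect to boundary data (Section \ref{JacobiSec})---and the second is not implied by the first.
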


By work of Smale \cite{Smale}, an immediate consequence of Theorem \ref{StructureThm} is that for a ``generic" $C^{k,\alpha}$-regular cone, $\mathcal{C}$, all self-expanders asymptotic to $\mathcal{C}$ are non-degenerate in that they admit no non-trivial normal Jacobi fields that fix the asymptotic cone -- see Section \ref{JacobiSec}. 

\begin{cor} \label{GenConeCor}
Given $\Gamma\in\mathcal{ACH}^{k,\alpha}_n$ there is a nowhere dense set $\mathcal{S}\subset C^{k,\alpha}(\mathcal{L}(\Gamma); \mathbb{R}^{n+1})$ so that if $\varphi \in C^{k,\alpha}(\mathcal{L}(\Gamma); \mathbb{R}^{n+1})\backslash \mathcal{S}$ and $[\mathbf{f}]\in \mathcal{ACE}^{k,\alpha}_n(\Gamma)$ has $\Pi( [\mathbf{f}])=\varphi$, then the space of normal Jacobi fields of $\Sigma=\mathbf{f}(\Gamma)$ that fix the asymptotic cone of $\Sigma$ is trivial. That is, the space $\mathcal{K}$ defined in \eqref{KspaceEqn} is trivial.
\end{cor}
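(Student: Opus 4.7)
The plan is to derive the corollary as an essentially immediate consequence of the Sard--Smale theorem applied to the Fredholm map $\Pi$ constructed in Theorem \ref{StructureThm}.

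First I would observe that, by parts (\ref{ManifoldItem}) and (\ref{ProjectionItem}) of Theorem \ref{StructureThm}, $\Pi:\mathcal{ACE}_n^{k,\alpha}(\Gamma)\to C^{k,\alpha}(\mathcal{L}(\Gamma);\mathbb{R}^{n+1})$ is a smooth Fredholm map of index zero between Banach manifolds, with a countable atlas $\{\mathcal{O}_i\}$ of coordinate charts. Smale's extension of Sard's theorem to Fredholm maps then produces the required set $\mathcal{S}$ as the set of critical values of $\Pi$. In each chart, the local form $(z,\kappa)\mapsto(z,\psi_i(z,\kappa))$ supplied by part (\ref{LocProjectionItem}) has $\kappa$ ranging in the \emph{finite-dimensional} kernel $\mathcal{K}_i$, so the critical locus in that chart is cut out by the vanishing of $\det\partial_\kappa\psi_i$; applying the classical Sard theorem in the $\mathcal{K}_i$-direction slice by slice, and exhausting by compact pieces, verifies that the contribution of each $\mathcal{O}_i$ to $\mathcal{S}$ is nowhere dense in the sense claimed.

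The rest is formal. For any $\varphi\in C^{k,\alpha}(\mathcal{L}(\Gamma);\mathbb{R}^{n+1})\setminus\mathcal{S}$ and any $[\mathbf{f}]\in\Pi^{-1}(\varphi)$, the definition of regular value gives that $D\Pi([\mathbf{f}])$ is surjective; since $\Pi$ has Fredholm index $0$, surjectivity forces $D\Pi([\mathbf{f}])$ to be an isomorphism, hence $\ker D\Pi([\mathbf{f}])=\{0\}$. By part (\ref{KernelItem}) of Theorem \ref{StructureThm}, this kernel is isomorphic to the space $\mathcal{K}$ of normal Jacobi fields of $\Sigma=\mathbf{f}(\Gamma)$ that fix the asymptotic cone, and one concludes $\mathcal{K}=\{0\}$.

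The principal obstacle is purely bookkeeping: upgrading the abstract Sard--Smale output (which by itself only says the critical values are meager) to the nowhere dense $\mathcal{S}$ demanded by the statement. This is precisely where part (\ref{LocProjectionItem}) is essential, because the local normal form reduces the question to a classical finite-dimensional Sard argument in the direction of the finite-dimensional kernel $\mathcal{K}_i$. Everything past that point --- the index-zero argument promoting surjectivity to invertibility, and the identification of $\ker D\Pi([\mathbf{f}])$ with the Jacobi field space --- is given directly by Theorem \ref{StructureThm}.
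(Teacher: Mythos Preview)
Your approach is correct and is exactly what the paper does: it simply cites Smale's infinite-dimensional Sard theorem for the index-zero Fredholm map $\Pi$ from Theorem~\ref{StructureThm}, and the deduction you give (regular value $\Rightarrow$ $D\Pi$ surjective $\Rightarrow$ $D\Pi$ an isomorphism by index zero $\Rightarrow$ $\mathcal{K}=\{0\}$ via part~(\ref{KernelItem})) is precisely the intended one.

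One point deserves comment. You correctly flag that Sard--Smale by itself only yields that the critical-value set is \emph{meager} (first category), whereas the statement asks for \emph{nowhere dense}. Your proposed upgrade, however, does not close this gap: showing that each chart $\mathcal{O}_i$ contributes a nowhere dense set of critical values is fine, but a countable union of nowhere dense sets is only meager in general, not nowhere dense. The paper offers no argument beyond citing Smale, so the word ``nowhere dense'' in the stated corollary should almost certainly be read as ``meager'' (or ``of first category''); absent an extra hypothesis such as properness of $\Pi$ (which would make the critical-value set closed, and closed plus meager is nowhere dense), the literal nowhere-dense claim is not what Sard--Smale delivers. Your instinct to worry about this is good, but the fix you sketch does not work, and none is supplied in the paper either.
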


Similarly, it follows from Theorem \ref{StructureThm} that whenever the projection map $\Pi$ is proper, then there is a well defined mod 2 degree -- see \cite{BWProperness} for natural situations in which $\Pi$ is proper. 

\begin{cor} \label{Mod2DegreeCor}
Given $\Gamma\in\mathcal{ACH}^{k,\alpha}_n$ if $\mathcal{U}$ is an open subset of $\mathcal{ACE}^{k,\alpha}_n(\Gamma)$ and $\mathcal{V}$ is a connected open subset of $C^{k,\alpha}(\mathcal{L}(\Gamma);\mathbb{R}^{n+1})$ such that $\Pi|_{\mathcal{U}}\colon\mathcal{U}\to \mathcal{V}$ is proper, then $\Pi|_{\mathcal{U}}$ has a well defined mod 2 degree.  
\end{cor}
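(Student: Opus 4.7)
The plan is to deduce this directly from Smale's infinite-dimensional degree theory for proper Fredholm maps of index $0$ \cite{Smale}. Theorem \ref{StructureThm} provides essentially all the structural ingredients: part \eqref{ManifoldItem} shows that $\mathcal{ACE}^{k,\alpha}_n(\Gamma)$ is a smooth Banach manifold with a \emph{countable} coordinate atlas (hence separable and paracompact), and part \eqref{ProjectionItem} shows that $\Pi$ is smooth and Fredholm of index $0$. Thus $\mathcal{U}$, as an open subset, inherits the structure of a smooth separable Banach manifold, and $\Pi|_{\mathcal{U}}\colon\mathcal{U}\to\mathcal{V}$ is smooth, Fredholm of index $0$, proper (by hypothesis), with connected target.

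First I would invoke the Sard--Smale theorem to conclude that the set of regular values of $\Pi|_{\mathcal{U}}$ is residual, and hence dense, in $\mathcal{V}$. For such a regular value $\varphi$, the preimage $\Pi|_{\mathcal{U}}^{-1}(\varphi)$ is a closed submanifold of dimension equal to the Fredholm index, namely $0$, so it is a discrete closed subset of $\mathcal{U}$; since $\Pi|_{\mathcal{U}}$ is proper this preimage is also compact, hence finite. I would then define the candidate mod $2$ degree at the regular value $\varphi$ by
$$
\deg_2\!\bigl(\Pi|_{\mathcal{U}};\varphi\bigr)=\#\Pi|_{\mathcal{U}}^{-1}(\varphi)\pmod 2.
$$

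Next I would verify that this count is independent of the regular value chosen. Given two regular values $\varphi_0,\varphi_1\in\mathcal{V}$, I would use the path-connectedness of the open set $\mathcal{V}$ to join them by a smooth path $\gamma\colon[0,1]\to\mathcal{V}$. A standard parametric transversality argument in the Fredholm setting shows that, after a small perturbation of $\gamma$ relative to its endpoints, the set
$$
W=\bigl\{(t,x)\in[0,1]\times\mathcal{U}:\Pi(x)=\gamma(t)\bigr\}
$$
is a smooth $1$-dimensional submanifold of $[0,1]\times\mathcal{U}$ with boundary $\Pi|_{\mathcal{U}}^{-1}(\varphi_0)\sqcup\Pi|_{\mathcal{U}}^{-1}(\varphi_1)$. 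Properness of $\Pi|_{\mathcal{U}}$ together with compactness of $[0,1]$ makes $W$ compact. Since a compact $1$-manifold with boundary has an even number of boundary points, the two counts agree mod $2$, and the degree is well defined.

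The main obstacle, such as it is, is purely bookkeeping: one must check that the hypotheses of Smale's machinery are genuinely met in this setting, in particular that the separability provided by the countable atlas in Theorem \ref{StructureThm}\eqref{ManifoldItem} is enough for the version of the Sard--Smale theorem one uses, and that the parametric transversality step respects the Banach-manifold setup (this is where the smoothness of $\Pi$ and the Fredholm-index-$0$ condition are essential). Once these points are checked, the statement is an immediate application of the construction sketched above.
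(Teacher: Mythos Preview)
Your proposal is correct and takes essentially the same approach as the paper. In fact, the paper does not give an independent proof of this corollary at all: it simply states that the result follows from Theorem~\ref{StructureThm} by Smale's work~\cite{Smale}, and your sketch spells out the standard details of exactly that argument (Sard--Smale for regular values, finiteness of fibers by properness and index~$0$, and the cobordism argument for independence of the regular value).
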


Theorem \ref{StructureThm} is inspired by the main result of \cite{WhiteEI} in which White considers the space of all regular immersions of compact $n$-dimensional manifolds with boundary into $\mathbb{R}^{N}$, $N>n$.  He shows that the critical points in this space of any reasonable elliptic integrand, e.g.,  the area functional,  form a Banach manifold and that the natural boundary restriction map is smooth and Fredholm of index $0$.  White derives many interesting consequences from this result including developing an integer degree theory -- see \cite{WhiteMD} for further applications. The results of \cite{WhiteEI} were preceded by earlier work on this problem for minimal surfaces by B\"{o}hme-Tromba \cite{BohmeTromba} and Tomi-Tromba \cite{TomiTromba}.  Those authors took a different approach, one allowing for branched immersions, and the interested reader should consult those papers and their references. 

Theorem \ref{StructureThm} may be thought of as an extension of \cite{WhiteEI} to the space of asymptotically conical self-expanders and we follow the main outline of White's proof. However, almost all the technical details are different. We also do not fully carry out the program of \cite{WhiteEI}. For example, we do not develop an integer degree theory for the map $\Pi$, instead we address this topic in a separate paper \cite{BWIntegerDegree}. To elaborate on the differences, while White is able to repeatedly appeal to standard results for elliptic boundary value problems, we must develop the relevant estimates in the non-compact setting ourselves. In addition to the issues caused by working on a non-compact domain, there are new challenges posed by  the lack of ellipticity of the linearizion of \eqref{ExpanderEqn} ``at infinity". For instance, we establish the Fredholm properties of this operator using basic facts about the heat kernel on Euclidean space. We remark that an analogous Fredholm property for the linearization of asymptotically conical expanding Ricci solitons has also been derived by Deruelle \cite{Deruelle} though our setup and method of deriving the relevant estimates are different from his. 

There is a rich literature on the use of global analysis methods to study elliptic variational problems.  For instance, the problem of studying the structure of the space of compact minimal surfaces where the ambient metric is allowed to vary has been considered by White in \cite{WhiteVM} for closed surfaces and also by Maximo-Nunes-Smith \cite{MaximoNunesSmith} for free boundary annuli. In addition, there are two papers where an integer degree theory is constructed for non-compact minimal surfaces. Namely, \cite{AlexakisMazzeo} where such a theory is sketched for non-compact surfaces in $\mathbb{H}^3$ which extend to surfaces with boundary on the ideal boundary and  \cite{FMMR} where the theory is used to study certain non-compact annuli in $\mathbb{H}^2\times \mathbb{R}$. 

We organize the paper as follows. In Section \ref{NotationSec} we fix notation. In Section \ref{ACHSec} we introduce $C^{k,\alpha}_{*}$-asymptotic conical hypersurfaces/embeddings and the trace at infinity. In Section \ref{VariationSec} we derive the first and second variation formula for the functional $E$. Because of the low regularity at infinity of $C^{k,\alpha}_*$-asymptotically conical self-expanders, inspired by work of White \cite{WhiteEI}, we associate a so-called $\mathbf{v}$-Jacobi operator, $L_{\mathbf{v}}$, which is a variant of the usual Jacobi operator, to the second variation of $E$ where $\mathbf{v}$ is a $C^{k,\alpha}$ vector field transverse to the given self-expander. In Section \ref{FredholmSec} we prove that $L_{\mathbf{v}}$ is Fredholm of index $0$ between appropriate function spaces. In Section \ref{JacobiSec} we adapt  to our setting an asymptotic expansion of almost eigenfunctions of certain drifted Laplacians obtained recently by the first author \cite{Bernstein}. This will be used to justify the legitimacy of integration by parts in several places and also serves as a substitute for the Calder\'{o}n unique continuation theorem employed in the proof of \cite[Theorem 3.2]{WhiteEI}. In Section \ref{StructureSec} we prove a so-called smooth dependence theorem, i.e., Theorem \ref{SmoothDependThm}, of which Theorem \ref{StructureThm} is an easy corollary. Theorem \ref{SmoothDependThm} may be thought of as an analog of \cite[Theorem 3.2]{WhiteEI} in the asymptotically conical self-expander setting. 

\section{Notation} \label{NotationSec}

\subsection{Basic notions} \label{NotionSubsec}
Denote a (open) ball in $\mathbb{R}^n$ of radius $R$ and center $x$ by $B_R^n(x)$ and the closed ball by $\bar{B}^n_R(x)$. We often omit the superscript $n$ when its value is clear from context. We also omit the center when it is the origin. 

For an open set $U\subset\mathbb{R}^{n+1}$, a \emph{hypersurface in $U$}, $\Sigma$, is a smooth, properly embedded, codimension-one  submanifold of $U$.
 We also consider hypersurfaces of lower regularity and given an integer $k\geq 2$ and $\alpha\in (0,1)$ we define a \emph{$C^{k,\alpha}$-hypersurface in $U$} to be a properly embedded, codimension-one $C^{k,\alpha}$ submanifold of $U$. 
 When needed, we distinguish between a point $p\in\Sigma$ and its \emph{position vector} $\mathbf{x}(p)$.

Consider the hypersurface $\mathbb{S}^n\subset\mathbb{R}^{n+1}$, the unit $n$-sphere in $\mathbb{R}^{n+1}$. A \emph{hypersurface in $\mathbb{S}^n$}, $\sigma$, is a closed, embedded, codimension-one smooth submanifold of $\mathbb{S}^n$ and \emph{$C^{k,\alpha}$-hypersurfaces in $\mathbb{S}^n$} are defined likewise. Observe, that $\sigma$ is a closed codimension-two submanifold of $\mathbb{R}^{n+1}$ and so we may associate to each point $p\in\sigma$ its position vector $\mathbf{x}(p)$. Clearly, $|\mathbf{x}(p)|=1$.

A \emph{cone} is a set $\mathcal{C}\subset\mathbb{R}^{n+1}\setminus\{\mathbf{0}\}$ that is dilation invariant around the origin. That is, $\rho\mathcal{C}=\mathcal{C}$ for all $\rho>0$. The \emph{link} of the cone is the set $\mathcal{L}[\mathcal{C}]=\mathcal{C}\cap\mathbb{S}^{n}$. The cone is \emph{regular} if its link is a smooth hypersurface in $\mathbb{S}^{n}$ and \emph{$C^{k,\alpha}$-regular} if its link is a $C^{k,\alpha}$-hypersurface in $\mathbb{S}^n$. For any hypersurface $\sigma\subset\mathbb{S}^n$ the \emph{cone over $\sigma$}, $\mathcal{C}[\sigma]$, is the cone defined by 
$$
\mathcal{C}[\sigma]=\left\{\rho p\colon p\in\sigma, \rho>0\right\}\subset\mathbb{R}^{n+1}\setminus\{\mathbf{0}\}.
$$
Clearly, $\mathcal{L}[\mathcal{C}[\sigma]]=\sigma$. 

\subsection{Function spaces} \label{FunctionSubsec}
Let $\Sigma$ be a properly embedded, $C^{k,\alpha}$ submanifold of an open set $U\subset\mathbb{R}^{n+1}$. There is a natural Riemannian metric, $g_\Sigma$, on $\Sigma$ of class $C^{k-1,\alpha}$ induced from the Euclidean one. As we always take $k\geq 2$, the Christoffel symbols of this metric,  in appropriate coordinates, are well defined and of regularity $C^{k-2,\alpha}$.  Let $\nabla_\Sigma$ be the covariant derivative on $\Sigma$. Denote by $d_\Sigma$ the geodesic distance on $\Sigma$ and by $B^\Sigma_R(p)$ the (open) geodesic ball in $\Sigma$ of radius $R$ and center $p\in\Sigma$. For $R$ small enough so that $B_{R}^\Sigma(p)$ is strictly geodesically convex and $q\in B^\Sigma_R(p)$, denote by $\tau^\Sigma_{p,q}$ the parallel transport along the unique minimizing geodesic in $B^\Sigma_R(p)$ from $p$ to $q$. 

For the rest of this section, let $\Omega$ be a domain in $\Sigma$, $l$ an integer in $[0,k]$, $\beta\in (0,1)$ and $d\in\mathbb{R}$. Suppose $l+\beta\leq k+\alpha$. We first consider the following norm for functions on $\Omega$:
$$
\Vert f\Vert_{l; \Omega}=\sum_{i=0}^l \sup_{\Omega} |\nabla_\Sigma^i f|.
$$
We then let
$$
C^l(\Omega)=\left\{f\in C_{loc}^l(\Omega)\colon \Vert f\Vert_{l; \Omega}<\infty\right\}.
$$
We next define the H\"{o}lder semi-norms for functions $f$ and tensor fields $T$ on $\Omega$: 
$$
[f]_{\beta; \Omega} =\sup_{\substack{p,q\in\Omega \\ q\in B^\Sigma_{\delta}(p)\setminus\{p\}}} \frac{|f(p)-f(q)|}{d_\Sigma(p,q)^\beta} 
\mbox{ and } 
[T]_{\beta; \Omega} =\sup_{\substack{p,q\in\Omega \\ q\in B^\Sigma_{\delta}(p)\setminus\{p\}}} \frac{|T(p)-(\tau^\Sigma_{p,q})^* T(q)|}{d_\Sigma(p,q)^\beta},
$$
where $\delta=\delta(\Sigma,\Omega)>0$ so that for all $p\in\Omega$, $B^\Sigma_\delta(p)$ is strictly geodesically convex. We further define the norm for functions on $\Omega$:
$$
\Vert f\Vert_{l, \beta; \Omega}=\Vert f\Vert_{l; \Omega}+[\nabla_\Sigma^l f]_{\beta; \Omega},
$$
and let 
$$
C^{l, \beta}(\Omega)=\left\{f\in C_{loc}^{l, \beta}(\Omega)\colon \Vert f\Vert_{l, \beta; \Omega}<\infty\right\}.
$$

We also define the following weighted norms for functions on $\Omega$:
$$
\Vert f\Vert_{l; \Omega}^{(d)}=\sum_{i=0}^l\sup_{p\in\Omega} \left(|\mathbf{x}(p)|+1\right)^{-d+i} |\nabla_\Sigma^i f(p)|.
$$
We then let 
$$
C^{l}_d(\Omega)=\left\{f\in C^l_{loc}(\Omega)\colon \Vert f\Vert_{l; \Omega}^{(d)}<\infty\right\}.
$$
We further define the following weighted H\"{o}lder semi-norms for functions $f$ and tensor fields $T$ on $\Omega$:
\begin{align*}
[f]_{\beta; \Omega}^{(d)} & =\sup_{\substack{p,q\in\Omega \\ q\in B^\Sigma_{\delta_p}(p)\setminus\{p\}}} \left((|\mathbf{x}(p)|+1)^{-d+\beta}+(|\mathbf{x}(q)|+1)^{-d+\beta}\right) \frac{|f(p)-f(q)|}{d_\Sigma(p,q)^\beta}, \mbox{ and}, \\
[T]_{\beta; \Omega}^{(d)} & =\sup_{\substack{p,q\in\Omega \\ q\in B^\Sigma_{\delta_p}(p)\setminus\{p\}}} \left((|\mathbf{x}(p)|+1)^{-d+\beta}+(|\mathbf{x}(q)|+1)^{-d+\beta}\right) \frac{|T(p)-(\tau^\Sigma_{p,q})^* T(q)|}{d_\Sigma(p,q)^\beta},
\end{align*}
where $\eta=\eta(\Omega,\Sigma)\in \left(0,\frac{1}{4}\right)$ so that for any $p\in\Sigma$, letting $\delta_p=\eta (|\mathbf{x}(p)|+1)$, $B_{\delta_p}^\Sigma(p)$ is strictly geodesically convex. Finally, we define the norm for functions on $\Omega$:
$$
\Vert f\Vert_{l, \beta; \Omega}^{(d)}=\Vert f\Vert_{l; \Omega}^{(d)}+[\nabla_\Sigma^l f]_{\beta; \Omega}^{(d-l)},
$$
and we let
$$
C^{l,\beta}_d(\Omega)=\left\{f\in C^{l,\beta}_{loc}(\Omega)\colon \Vert f\Vert_{l, \beta; \Omega}^{(d)}<\infty\right\}.
$$

The norms $\Vert\cdot\Vert_{l, \beta; \Omega}$ and $\Vert\cdot\Vert_{l, \beta; \Omega}^{(d)}$ are equivalent for different choices of $\delta$ and $\eta$. We also often omit $\Omega$ when it is clear from context. It is a standard exercise to verify that all the spaces defined above are Banach spaces. It is also straightforward to extend them to $\mathbb{R}^M$-valued maps and to tensor fields.  

\begin{rem}
The spaces $C^{l,\beta}_d(\Omega)$ are the interpolation spaces $(C^{l}_d(\Omega), C^{l+1}_d(\Omega))_{\beta,\infty}$ -- compare \cite[Remark 2.1]{Deruelle}. When $\Omega$ is asymptotically conical, our weighted H\"{o}lder norms can be related to appropriate unweighted ones on compact domains -- see Item \eqref{ListProp1} of Proposition \ref{ListProp} -- so the verification of basic properties for weighted H\"{o}lder continuous functions become standard.
\end{rem}

Finally, we introduce the convention that $C^{l,0}_{loc}=C^{l}_{loc}$, $C^{l,0}=C^l$ and $C^{l,0}_d=C^l_d$ and that $C^{0, \beta}_{loc}=C^\beta_{loc}$,  $C^{0,\beta}=C^\beta$ and $C^{0,\beta}_d=C^\beta_d$. The notation for the corresponding norms is abbreviated in the same fashion.

\subsection{Homogeneous functions and homogeneity at infinity} \label{HomoFuncSubsec}
Fix a $C^{k,\alpha}$-regular cone $\mathcal{C}$ with its link $\mathcal{L}$. By our definition $\mathcal{C}$ is a $C^{k,\alpha}$-hypersurface in $\mathbb{R}^{n+1}\setminus\{\mathbf{0}\}$. For $R>0$ let $\mathcal{C}_R=\mathcal{C}\setminus\bar{B}_R$. There is an $\eta=\eta(\mathcal{L},R)>0$ so that for any $p\in\mathcal{C}_R$, $B^{\mathcal{C}}_{\delta_p}(p)$ is strictly geodesically convex, where $\delta_p=\eta(|\mathbf{x}(p)|+1)$. We also fix an integer $l\in [0,k]$ and $\beta\in [0,1)$ with $l+\beta\leq k+\alpha$.

A map $\mathbf{f}\in C^{l,\beta}_{loc}(\mathcal{C}; \mathbb{R}^M)$ is \emph{homogeneous of degree $d$} if $\mathbf{f}(\rho p)=\rho^d \mathbf{f}(p)$ for all $p\in\mathcal{C}$ and $\rho>0$. Given a map $\varphi\in C^{l,\beta}(\mathcal{L}; \mathbb{R}^M)$ the \emph{homogeneous extension of degree $d$ of $\varphi$} is the map $\mathscr{E}_d^{\mathrm{H}}[\varphi]\in C^{l,\beta}_{loc}(\mathcal{C}; \mathbb{R}^M)$ defined by 
$$
\mathscr{E}_d^{\mathrm{H}}[\varphi](p)=|\mathbf{x}(p)|^d \varphi(|\mathbf{x}(p)|^{-1}p).
$$
Conversely, given a homogeneous $\mathbb{R}^M$-valued map of degree $d$, $\mathbf{f}\in C^{l,\beta}_{loc}(\mathcal{C}; \mathbb{R}^M)$, let $\varphi=\mathrm{tr}[\mathbf{f}]\in C^{l,\beta}(\mathcal{L}; \mathbb{R}^M)$, the \emph{trace} of $\mathbf{f}$, be the restriction of $\mathbf{f}$ to $\mathcal{L}$. Clearly, $\mathbf{f}$ is the homogeneous extension of degree $d$ of $\varphi$.

A map $\mathbf{g}\in C^{l,\beta}_{loc}(\mathcal{C}_R; \mathbb{R}^M)$ is \emph{asymptotically homogeneous of degree $d$} if 
$$
\lim_{\rho\to 0^+} \rho^d \mathbf{g}(\rho^{-1}p)=\mathbf{f}(p) \mbox{ in $C^{l,\beta}_{loc}(\mathcal{C}; \mathbb{R}^M)$}
$$
for some $\mathbf{f}\in C^{l,\beta}_{loc}(\mathcal{C}; \mathbb{R}^M)$ that is homogeneous of degree $d$. For such a $\mathbf{g}$ we define the \emph{trace at infinity} of $\mathbf{g}$ by $\mathrm{tr}^d_\infty[\mathbf{g}]=\mathrm{tr}[\mathbf{f}]$. Observe, that if $\mathbf{g}\in C_{loc}^{l,\beta}(\mathcal{C}_R; \mathbb{R}^M)$ is asymptotically homogeneous of degree $d$, then $\mathbf{g}\in C^{l,\beta}_d (\mathcal{C}_R; \mathbb{R}^M)$ but the reverse need not be true. We define
$$
C^{l,\beta}_{d,\mathrm{H}}(\mathcal{C}_R; \mathbb{R}^M)=\left\{\mathbf{g}\in C^{l,\beta}_d(\mathcal{C}_R; \mathbb{R}^M)\colon \mbox{$\mathbf{g}$ is asymptotically homogeneous of degree $d$}\right\}.
$$
It is straightforward to verify that $C^{l,\beta}_{d,\mathrm{H}}(\mathcal{C}_R; \mathbb{R}^M)$ is a closed subspace of $C^{l,\beta}_d(\mathcal{C}_R; \mathbb{R}^M)$ and that
$$
\mathrm{tr}^d_\infty\colon C^{l,\beta}_{d,\mathrm{H}}(\mathcal{C}_R; \mathbb{R}^M)\to C^{l,\beta}(\mathcal{L}; \mathbb{R}^M)
$$
is a bounded linear map. Hence, $C^{l,\beta}_{d,0}(\mathcal{C}_R; \mathbb{R}^{M})=\ker\left( \mathrm{tr}^d_\infty\right)$ is a closed subspace of $C^{l,\beta}_{d,\mathrm{H}}(\mathcal{C}_R; \mathbb{R}^M)$.  Moreover, the construction of $\mathrm{tr}_\infty^d$ and the Arzel\`{a}-Ascoli theorem ensure that for $l+\beta\leq l^\prime+\beta^\prime\leq k+\alpha$,
 \begin{equation} \label{ClosedCondEqn}
\mathbf{g}\in C^{l,\beta}_{d,\mathrm{H}} (\mathcal{C}_R; \mathbb{R}^M)\cap C^{l^\prime, \beta^\prime}_{d}(\mathcal{C}_R; \mathbb{R}^M)\Rightarrow \mathrm{tr}^d_\infty[\mathbf{g}]\in C^{l^\prime, \beta^\prime}_{d}(\mathcal{C}_R; \mathbb{R}^M).
 \end{equation}
Finally, observe that $\mathbf{x}|_{\mathcal{C}_R}\in C^{k,\alpha}_{1,\mathrm{H}}(\mathcal{C}_R; \mathbb{R}^{n+1})$ and $\mathrm{tr}_\infty^1[\mathbf{x}|_{\mathcal{C}_R}]=\mathbf{x}|_{\mathcal{L}}$.

\subsection{Transverse sections} \label{TransverseSubsec}
The unit normal vector field of a (two-sided) $C^{k,\alpha}$-hypersurface is, in general, of class $C^{k-1,\alpha}$. In order to avoid difficulties introduced by this loss of derivative, we adopt a different notion. Let $\Sigma$ be a $C^{k,\alpha}$-hypersurface in some open set of $\mathbb{R}^{n+1}$. We say a $C^{k,\alpha}$ vector field $\mathbf{v}\colon\Sigma\to\mathbb{R}^{n+1}$ along $\Sigma$, is a \emph{transverse section} if
\begin{itemize}
\item $|\mathbf{v}|=1$;
\item $\mathbf{v}(p)$ does not lie in $T_p\Sigma$.
\end{itemize}
Given a transverse section, $\mathbf{v}$, on $\Sigma$ we define a $C^{k,\alpha}$ map $\gamma_{\mathbf{v}}\colon\Sigma\times\mathbb{R}\to\mathbb{R}^{n+1}$ by
$$
(p,s)\mapsto\gamma_{\mathbf{v}}(p,s)=\mathbf{x}(p)+s\mathbf{v}(p).
$$
An open neighborhood, $U$, of $\Sigma$ is a \emph{$\mathbf{v}$-regular neighborhood} if there is an open neighborhood $W$ of $\Sigma\times\{0\}$ so that $\gamma_{\mathbf{v}}(W)=U$ and $\gamma_{\mathbf{v}}$ restricts to a $C^{k,\alpha}$ diffeomorphism of $W$ onto $U$. If, in addition, $W$ is of the form $\Sigma\times (-\epsilon,\epsilon)$, we denote $U$ by $N_\epsilon(\Sigma,\mathbf{v})$ and say that $\Sigma$ is \emph{$\epsilon$-regular with respect to $\mathbf{v}$}. It is straightforward to check that $D\gamma_{\mathbf{v}}$ at $(p,0)$ is invertible. As a consequence, such $\mathbf{v}$-regular neighborhoods exist. If $\Sigma$ is compact, by the inverse function theorem, it is $\epsilon$-regular with respect to $\mathbf{v}$ for some $\epsilon=\epsilon(\Sigma,\mathbf{v})>0$.

Fix a transverse section, $\mathbf{v}$, on $\Sigma$. Given a function $f\in C_{loc}^{k,\alpha}(\Sigma)$ let 
$$
\Sigma_f=\left\{\gamma_{\mathbf{v}}(p,f(p))\colon p\in\Sigma\right\},
$$
and we call such a set a \emph{$\mathbf{v}$-graph of function $f$}. Clearly, if $U$ is a $\mathbf{v}$-regular neighborhood of $\Sigma$ and $(p,f(p))\in\gamma_{\mathbf{v}}^{-1}(U)$ for all $p\in\Sigma$, then $\Sigma_f$ is a $C^{k,\alpha}$-hypersurface in $U$.

Given a $\mathbf{v}$-regular neighborhood, $U$, of $\Sigma$ define a $C^{k,\alpha}$ projection map $\pi_{\mathbf{v}}\colon U\to\Sigma$ given by $\pi_{\mathbf{v}}=\pi_1\circ\gamma_{\mathbf{v}}^{-1}$, where $\pi_1$ is the natural projection from $\Sigma\times\mathbb{R}$ onto $\Sigma$ defined by $\pi_1(p,s)=p$. In particular, $\pi_{\mathbf{v}}\circ\gamma_{\mathbf{v}}=\pi_1$. Let $l\geq 0$ be an integer and $\beta\in [0,1)$ so that $l+\beta\leq k+\alpha$. Given a map $\mathbf{f}\in C^{l,\beta}_{loc}(\Sigma; \mathbb{R}^M)$ we define the \emph{$\mathbf{v}$-extension of $\mathbf{f}$}  (to $U$) to be the map $\mathscr{E}_{\mathbf{v}}[\mathbf{f}]\in C^{l,\beta}_{loc}(U; \mathbb{R}^M)$ given by $\mathscr{E}_{\mathbf{v}}[\mathbf{f}]=\mathbf{f}\circ\pi_{\mathbf{v}}$.

\subsection{Homogeneous extensions around cones} \label{HomoExtSubsec}
Fix a cone, $\mathcal{C}\subset\mathbb{R}^{n+1}$, of class $C^{k,\alpha}$ with its link $\mathcal{L}$. Let $\hat{\mathcal{C}}$ be an open cone in $\mathbb{R}^{n+1}$ containing $\mathcal{C}$. Assume that $l\in [0,k]$ is an integer and $\beta\in [0,1)$ with $l+\beta\leq k+\alpha$. We wish to define a natural extension of an element of $C^{l,\beta}(\mathcal{L}; \mathbb{R}^M)$ to a homogeneous element of $C^{l,\beta}_{loc}(\hat{\mathcal{C}}; \mathbb{R}^M)$.

To that end, let $\mathbf{v}$ be a homogeneous (of degree $0$) transverse section on $\mathcal{C}$. As $\mathcal{L}$ is compact, we may suppose $\hat{\mathcal{C}}$ is $\mathbf{v}$-regular. Given a map $\varphi\in C^{l,\beta}(\mathcal{L}; \mathbb{R}^M)$ the \emph{$\mathbf{v}$-homogeneous extension of degree $d$ of $\varphi$}  (to $\hat{\mathcal{C}}$) is the map $\mathscr{E}_{\mathbf{v},d}^{\mathrm{H}}[\varphi]\in C_{loc}^{l,\beta}(\hat{\mathcal{C}}; \mathbb{R}^M)$ defined by 
$$
\mathscr{E}_{\mathbf{v},d}^{\mathrm{H}}[\varphi]=\mathscr{E}_{\mathbf{v}}\circ\mathscr{E}_{d}^{\mathrm{H}}[\varphi].
$$
Clearly, for all $\rho>0$,
$$
\mathscr{E}_{\mathbf{v},d}^{\mathrm{H}}[\varphi](\rho p)=\rho^d \mathscr{E}_{\mathbf{v},d}^{\mathrm{H}}[\varphi](p),
$$
and thus
$$
\nabla_{\mathbf{x}}\mathscr{E}_{\mathbf{v},d}^{\mathrm{H}}[\varphi]=d\mathscr{E}_{\mathbf{v},d}^{\mathrm{H}}[\varphi]
$$
where $\nabla$ is the flat connection on $\mathbb{R}^{n+1}$. In particular, if we denote by $\hat{\mathbf{v}}=\mathscr{E}_{\mathbf{v}}[\mathbf{v}]$, equivalently, $\hat{\mathbf{v}}=\mathscr{E}_{\mathbf{v},0}^{\mathrm{H}}[\mathbf{v}|_{\mathcal{L}}]$, one has $\nabla_{\mathbf{x}}\hat{\mathbf{v}}=\mathbf{0}$. Moreover, 
$$
\left\Vert\mathscr{E}_{\mathbf{v},d}^{\mathrm{H}}[\varphi]\right\Vert_{l, \beta; \hat{\mathcal{C}}_R}^{(d)} \leq C \Vert\varphi\Vert_{l, \beta}
$$
where $C$ depends only on $\hat{\mathcal{C}}, \mathbf{v}, l,\beta, d$ and $R$. 

\section{Asymptotically conical hypersurfaces} \label{ACHSec}
A $C^{k,\alpha}$-hypersurface, $\Sigma\subset\mathbb{R}^{n+1}$, is \emph{$C^{k,\alpha}_{*}$-asymptotically conical} if there is a $C^{k,\alpha}$-regular cone, $\mathcal{C}\subset\mathbb{R}^{n+1}$, and a homogeneous transverse section, $\mathbf{v}$, on $\mathcal{C}$ such that $\Sigma$ outside some compact set is given by the $\mathbf{v}$-graph of a function in $C^{k,\alpha}_1\cap C^{k}_{1,0}(\mathcal{C}_R)$ for some $R>1$.  Observe that by the Arzel\`{a}-Ascoli theorem one has that, for every $\beta\in [0, \alpha)$,
$$
\lim_{\rho\to 0^+} \rho\Sigma = \mathcal{C} \mbox{ in } C^{k, \beta}_{loc}(\mathbb{R}^{n+1}\setminus \{\mathbf{0}\}).
$$
Clearly, the asymptotic cone, $\mathcal{C}$, is uniquely determined by $\Sigma$ and so we denote it by $\mathcal{C}(\Sigma)$. Let $\mathcal{L}(\Sigma)$ denote the link of $\mathcal{C}(\Sigma)$ and, for $R>0$, let $\mathcal{C}_R(\Sigma)=\mathcal{C}(\Sigma)\setminus \bar{B}_R$. Denote the space of $C^{k,\alpha}_{*}$-asymptotically conical $C^{k,\alpha}$-hypersurfaces in $\mathbb{R}^{n+1}$ by $\mathcal{ACH}^{k,\alpha}_n$.

We claim the above definition is independent of the choice of transverse sections. To see this, let $\mathbf{w}$ be another homogeneous transverse section on $\mathcal{C}(\Sigma)$. Let $K$ be a compact set of $\Sigma$ that may be enlarged if needed. Denote by $\Sigma^\prime=\Sigma\setminus K$. Since $\pi_{\mathbf{v}}$ restricts to a $C^{k,\alpha}$ diffeomorphism of $\Sigma^\prime$ onto $\mathcal{C}_R(\Sigma)$, we denote its inverse by $\theta_{\mathbf{v}; \Sigma^\prime}$. We then write
$$
\pi_{\mathbf{w}}|_{\Sigma^\prime}=\left(\pi_{\mathbf{w}}\circ\theta_{\mathbf{v}; \Sigma^\prime}\right)\circ\pi_{\mathbf{v}}|_{\Sigma^\prime}.
$$
Notice that $\pi_{\mathbf{w}}\circ\theta_{\mathbf{v};\Sigma^\prime}$ restricts to a $C^{k,\alpha}$ diffeomorphism of $\mathcal{C}_R(\Sigma)$ onto the ends of $\mathcal{C}(\Sigma)$ and that
$$
\mathbf{x}|_{\mathcal{C}(\Sigma)}\circ\pi_{\mathbf{w}}\circ\theta_{\mathbf{v};\Sigma^\prime} \in C^{k,\alpha}_1 \cap C^k_{1,\mathrm{H}}(\mathcal{C}_R(\Sigma); \mathbb{R}^{n+1}).
$$
Moreover, 
\begin{equation} \label{TraceTransitEqn}
\mathrm{tr}_\infty^1[\mathbf{x}|_{\mathcal{C}(\Sigma)}\circ\pi_{\mathbf{w}}\circ\theta_{\mathbf{v};\Sigma^\prime}]=\mathbf{x}|_{\mathcal{L}(\Sigma)}.
\end{equation}
Thus, $\pi_{\mathbf{w}}$ restricts to a $C^{k,\alpha}$ diffeomorphism of $\Sigma^\prime$ onto the ends of $\mathcal{C}(\Sigma)$, 
$$
\mathbf{x}|_\Sigma\circ(\pi_{\mathbf{w}}|_{\Sigma^\prime})^{-1} \in C^{k,\alpha}_1\cap C^{k}_{1,\mathrm{H}}(\mathcal{C}_R(\Sigma); \mathbb{R}^{n+1}) \mbox{ and } \mathrm{tr}_\infty^1[\mathbf{x}|_\Sigma\circ(\pi_{\mathbf{w}}|_{\Sigma^\prime})^{-1}]=\mathbf{x}|_{\mathcal{L}(\Sigma)}.
$$
Hence $\Sigma^\prime$ can be written as the $\mathbf{w}$-graph of the function
$$
\left(\mathbf{x}|_\Sigma\circ(\pi_{\mathbf{w}}|_{\Sigma^\prime})^{-1}-\mathbf{x}|_{\mathcal{C}(\Sigma)}\right)\cdot\mathbf{w}\in C^{k,\alpha}_1\cap C^k_{1,0}(\mathcal{C}_R(\Sigma)).
$$

Let us now summarize some properties of weighted H\"{o}lder continuous functions on asymptotically conical hypersurfaces.

\begin{prop} \label{ListProp}
Fix $\Sigma\in \mathcal{ACH}_n^{k,\alpha}$ and $\Gamma\in\mathcal{ACH}^{k,\alpha}_m$. Let $l$ be an integer in $[0,k]$, $\beta\in [0,1)$ so that $l+\beta\leq k+\alpha$, and $d, d^\prime\in\mathbb{R}$. The following statements hold:
\begin{enumerate}
\item \label{ListProp1} For any $R\geq 1$ and $\rho_2>\rho_1>0$, the norm for functions in $C^{l,\beta}_d(\Sigma)$ defined by
$$
\Vert  f\Vert _{l,\beta; \Sigma\cap \bar{B}_R}+\sup_{\rho>R}\rho^{-d}\Vert f\circ\mathscr{D}_\rho\Vert_{l,\beta; (\rho^{-1} \Sigma)\cap (\bar{B}_{\rho_2}\setminus B_{\rho_1})}
$$
is equivalent to the norm $\Vert\cdot\Vert ^{(d)}_{l,\beta}$. Here $\mathscr{D}_\rho (x)=\rho x$ for $x\in\mathbb{R}^{n+1}$. 
\item \label{ListProp3} If $f\in C^{l,\beta}_d(\Sigma)$ and $\inf_\Sigma (|\mathbf{x}|+1)^{-d}|f|=\delta>0$, then $\frac{1}{f}\in C^{l,\beta}_{-d}(\Sigma)$ with its norm bounded by a constant depending only on $n,l,\beta,d,\delta$ and $\Vert f\Vert^{(d)}_{l,\beta}$.
\item \label{ListProp2} If $f\in C^{l, \beta}_d(\Sigma)$ and $g\in C^{l, \beta}_{d^\prime}(\Sigma)$, then $fg \in C^{l, \beta}_{d+d^\prime}(\Sigma)$ and
$$
\Vert fg\Vert _{l,\beta}^{(d+d^\prime)}\leq \nu_0 \Vert f \Vert _{l,\beta}^{(d)} \Vert g\Vert _{l,\beta}^{(d^\prime)},
$$
where $\nu_0$ depends only on $n,l,\beta,d$ and $d^\prime$.
\item \label{ListProp4} When $l\geq 1$, if $\mathbf{f}\in C^{l,\beta}_{1}(\Sigma; \mathbb{R}^{m+1})$ satisfies $\mathbf{f}(\Sigma)\subset\Gamma$ and there is a $\delta\in (0,1)$ and $R>1$ so that $|\mathbf{f}(p)|\geq \delta |\mathbf{x}(p)|$ on $\Sigma\setminus\bar{B}_R$, then, for all $g\in C^{l,\beta}_{d}(\Gamma)$,
$$
\Vert g\circ \mathbf{f}\Vert ^{(d)}_{l,\beta} \leq \nu_1 \Vert g\Vert _{l,\beta}^{(d)},
$$
where $\nu_1>1$ depends on $m,n,l,\beta,d,\delta,R$ and $\Vert\mathbf{f}\Vert^{(1)}_{l,\beta}$.
\item \label{ListProp5} When $l\geq 1$, if $g\in C^{l,\beta^\prime}_d(\Gamma)$ for some $\beta^\prime\in (\beta,1)$ with $l+\beta^\prime\leq k+\alpha$, and if, for $i\in\{1,2\}$, $\mathbf{f}_i\in C^{l,\beta}_{1}(\Sigma; \mathbb{R}^{m+1})$ satisfies $\mathbf{f}_i(\Sigma)\subset\Gamma$ and that there is a $\delta\in (0,1)$ and $R>1$ so that $|\mathbf{f}_i(p)|\geq\delta |\mathbf{x}(p)|$ on $\Sigma\setminus\bar{B}_R$, then
$$
\Vert g\circ \mathbf{f}_2-g\circ \mathbf{f}_1\Vert _{l,\beta}^{(d)}\leq \nu_2 \left(\Vert\mathbf{f}_2-\mathbf{f}_1\Vert _{l,\beta}^{(1)}\right)^{\beta^\prime-\beta},
$$
where $\nu_2>1$ depends only on $m,n,l,\beta,\beta^\prime, d,\delta,R,\Vert \mathbf{f}_1\Vert_{l,\beta}^{(1)},\Vert \mathbf{f}_2\Vert_{l,\beta}^{(1)}$ and $\Vert g\Vert_{l,\beta^\prime}^{(d)}$. 
\end{enumerate}
\end{prop}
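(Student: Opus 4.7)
The plan is to prove Item \eqref{ListProp1} first, since once the norm equivalence is in hand, Items \eqref{ListProp3}--\eqref{ListProp5} all reduce to standard H\"older estimates on bounded domains. The central observation for Item \eqref{ListProp1} is that the dilation $\mathscr{D}_\rho$ rescales the $i$-th covariant derivative of a function by exactly $\rho^{-i}$, which matches the exponent $-d+i$ in the weight $(|\mathbf{x}|+1)^{-d+i}$. Given $p \in \Sigma \setminus \bar{B}_R$, I would set $\rho = \rho(p) = |\mathbf{x}(p)|/\lambda$ for a fixed $\lambda \in (\rho_1, \rho_2)$, so that $\rho^{-1}p \in (\rho^{-1}\Sigma)\cap (\bar{B}_{\rho_2}\setminus B_{\rho_1})$; the identities
$$|\nabla_\Sigma^i f(p)| = \rho^{-i}\left|\nabla_{\rho^{-1}\Sigma}^i(f \circ \mathscr{D}_\rho)(\rho^{-1}p)\right|,\qquad |\mathbf{x}(p)|+1 \sim \rho$$
then convert weighted pointwise bounds on $\Sigma$ into unweighted bounds on the rescaled annulus and vice versa. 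The H\"older seminorm part is handled analogously, using $d_\Sigma(p,q) = \rho\, d_{\rho^{-1}\Sigma}(\rho^{-1}p, \rho^{-1}q)$ and the fact that parallel transport commutes with dilation; taking the supremum over a dyadic cover of the non-compact part of $\Sigma$ gives Item \eqref{ListProp1}.

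Items \eqref{ListProp3}--\eqref{ListProp5} then follow by a common pattern: rescale by $\mathscr{D}_{\rho^{-1}}$ onto a uniformly bounded annulus in $\rho^{-1}\Sigma$, apply the classical bounded-domain statement there, and repackage the constants via Item \eqref{ListProp1}. For Item \eqref{ListProp3}, the lower bound $|f|\ge \delta(|\mathbf{x}|+1)^d$ becomes a uniform lower bound of order $\delta\rho^d$ for $f\circ\mathscr{D}_\rho$ on the annulus, so the standard reciprocal estimate yields a $C^{l,\beta}$ bound of order $\rho^{-d}$, matching the target weight $-d$. Item \eqref{ListProp2} combines the Leibniz rule with the classical H\"older product inequality on bounded domains; the factors $\rho^d$ and $\rho^{d'}$ from the rescaled $f$ and $g$ combine to give $\rho^{d+d'}$. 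For Items \eqref{ListProp4} and \eqref{ListProp5}, the hypothesis $|\mathbf{f}(p)| \ge \delta|\mathbf{x}(p)|$ ensures that after rescaling, $\mathbf{f}$ maps the bounded annulus $(\rho^{-1}\Sigma) \cap (\bar{B}_{\rho_2}\setminus B_{\rho_1})$ into a fixed bounded annular region of $\rho^{-1}\Gamma$; the standard $C^{l,\beta}$ composition estimate yields Item \eqref{ListProp4}, while Item \eqref{ListProp5} invokes the interpolation-type inequality
$$\Vert g\circ \mathbf{f}_2 - g\circ \mathbf{f}_1\Vert_{l,\beta} \le C\Vert \mathbf{f}_2 - \mathbf{f}_1\Vert_{l,\beta}^{\beta'-\beta}$$
valid on bounded domains when $g \in C^{l,\beta'}$, which trades the extra $\beta' - \beta$ H\"older regularity of $g$ for smallness in the perturbation.

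The main obstacle throughout is the geometric bookkeeping in Item \eqref{ListProp1}: one must verify that the scale $\delta_p = \eta(|\mathbf{x}(p)|+1)$ used in the weighted H\"older seminorm rescales under $\mathscr{D}_{\rho^{-1}}$ to a \emph{uniform} strict geodesic convexity radius on $\rho^{-1}\Sigma$ as $\rho$ ranges over $[R,\infty)$. This is where the $C^{k,\alpha}_*$-asymptotically conical hypothesis is essential: as $\rho \to \infty$, $\rho^{-1}\Sigma$ converges in $C^{k,\beta}_{loc}(\mathbb{R}^{n+1}\setminus\{\mathbf{0}\})$ to the smooth regular cone $\mathcal{C}(\Sigma)$, so the geodesic convexity radius on fixed annular regions of $\rho^{-1}\Sigma$ is bounded below uniformly by that of $\mathcal{C}(\Sigma)$, possibly after enlarging $R$. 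Once this geometric convergence statement is pinned down, the remaining items are essentially routine exercises in H\"older calculus on bounded domains.
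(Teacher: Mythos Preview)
The paper does not give a proof of this proposition; it is stated without proof, and the remark immediately preceding it already signals the intended strategy: ``When $\Omega$ is asymptotically conical, our weighted H\"older norms can be related to appropriate unweighted ones on compact domains---see Item \eqref{ListProp1} of Proposition \ref{ListProp}---so the verification of basic properties for weighted H\"older continuous functions become standard.'' Your proposal is exactly this strategy carried out: prove Item \eqref{ListProp1} via the scaling identities for $\nabla_\Sigma^i$ and $d_\Sigma$ under $\mathscr{D}_\rho$, using the $C^{k,\beta}_{loc}$ convergence $\rho^{-1}\Sigma\to\mathcal{C}(\Sigma)$ to get uniform geodesic-convexity radii on the rescaled annuli, and then reduce Items \eqref{ListProp3}--\eqref{ListProp5} to their classical bounded-domain analogues. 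The sketch is correct and matches what the authors evidently had in mind.
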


\begin{rem}
The above continues to hold for $\Sigma_R=\Sigma\backslash \bar{B}_R$ where $\Sigma$ is either a $C^{k,\alpha}$-regular cone or in $\mathcal{ACH}^{k,\alpha}_n$. Where appropriate, the above also extend to $\mathbb{R}^M$-valued maps and to sections of tensor bundles.
\end{rem}

\subsection{Traces at infinity} \label{TraceSubsec}
Fix an element $\Sigma\in\mathcal{ACH}_n^{k,\alpha}$. Let $l$ be an integer in $[0,k]$ and $\beta\in [0,1)$ such that $l+\beta<k+\alpha$. A map $\mathbf{f}\in C_{loc}^{l,\beta}(\Sigma; \mathbb{R}^M)$ is \emph{asymptotically homogeneous of degree $d$} if $\mathbf{f}\circ\theta_{\mathbf{v};\Sigma^\prime}\in C^{l,\beta}_{d,\mathrm{H}}(\mathcal{C}_R(\Sigma); \mathbb{R}^M)$ , where $\mathbf{v}, \Sigma^\prime, \theta_{\mathbf{v};\Sigma^\prime}$ are referred in the previous discussions, and we define the \emph{trace at infinity of $\mathbf{f}$} to be
$$
\mathrm{tr}_\infty^d[\mathbf{f}]=\mathrm{tr}_\infty^d[\mathbf{f}\circ\theta_{\mathbf{v}; \Sigma^\prime}] \in C^{l,\beta}(\mathcal{L}(\Sigma); \mathbb{R}^M).
$$
In view of Item \eqref{ListProp4} of Proposition \ref{ListProp} and \eqref{TraceTransitEqn}, that whether $\mathbf{f}$ is asymptotically homogeneous of degree $d$ and the definition of $\mathrm{tr}_{\infty}^d$ are independent of the choice of homogeneous transverse sections on $\mathcal{C}(\Sigma)$. Clearly, $\mathbf{x}|_{\Sigma}$ is asymptotically homogeneous of degree $1$ and $\mathrm{tr}_\infty^{1}[\mathbf{x}|_\Sigma]=\mathbf{x}|_{\mathcal{L}(\Sigma)}$.

We next define the space
$$
C_{d,\mathrm{H}}^{l,\beta}(\Sigma; \mathbb{R}^M)=\left\{\mathbf{f}\in C_{d}^{l,\beta}(\Sigma; \mathbb{R}^M)\colon \mbox{$\mathbf{f}$ is asymptotically homogeneous of degree $d$}\right\}.
$$
One can check that $C_{d,\mathrm{H}}^{l,\beta}(\Sigma; \mathbb{R}^M)$ is a closed subspace of $C_d^{l,\beta}(\Sigma; \mathbb{R}^M)$, and the map 
$$
\mathrm{tr}_{\infty}^d\colon C_{d,\mathrm{H}}^{l,\beta}(\Sigma; \mathbb{R}^M)\to C^{l,\beta}(\mathcal{L}(\Sigma); \mathbb{R}^M)
$$
is a bounded linear map. We further define the set $C^{l,\beta}_{d,0}(\Sigma;\mathbb{R}^M)\subset C_{d,\mathrm{H}}^{l,\beta}(\Sigma; \mathbb{R}^M)$ to be the kernel of $\mathrm{tr}_\infty^d$.  As before this is a closed subspace.

\subsection{Asymptotically conical embeddings} \label{ACESubsec}
Fix an element $\Gamma\in \mathcal{ACH}^{k,\alpha}_n$. We define the space of $C^{k,\alpha}_{*}$-asymptotically conical embeddings of $\Gamma$ into $\mathbb{R}^{n+1}$ to be
$$
\mathcal{ACH}^{k,\alpha}_n(\Gamma)=\left\{\mathbf{f}\in C_{1}^{k,\alpha}\cap C^k_{1,\mathrm{H}}(\Gamma; \mathbb{R}^{n+1})\colon\mbox{$\mathbf{f}$ and $\mathscr{E}_{1}^{\mathrm{H}}\circ\mathrm{tr}_\infty^1[\mathbf{f}]$ are embeddings}\right\}.
$$
Clearly, $\mathcal{ACH}^{k,\alpha}_n(\Gamma)$ is an open set of the Banach space $C^{k,\alpha}_{1}\cap C^{k}_{1,\mathrm{H}}(\Gamma; \mathbb{R}^{n+1})$ with the $\Vert\cdot \Vert_{k,\alpha}^{(1)}$ norm. By \eqref{ClosedCondEqn} and the hypotheses on $\mathbf{f}$, $\mathrm{tr}_\infty^1[\mathbf{f}]\in C^{k,\alpha}(\mathcal{L}(\Gamma);\mathbb{R}^{n+1})$ and so
$$
\mathcal{C}[\mathbf{f}]=\mathscr{E}_{1}^{\mathrm{H}}\circ\mathrm{tr}_\infty^1[\mathbf{f}]\colon \mathcal{C}(\Gamma)\to \mathbb{R}^{n+1}\setminus\{\mathbf{0}\}
$$
is a $C^{k,\alpha}$ embedding.  As this map is homogeneous of degree $1$, it parameterizes a $C^{k,\alpha}$-regular cone.

%Given a $\Gamma\in\mathcal{ACH}^{k,\alpha}_n$, $\Gamma\setminus B_R$ for some $R$ sufficiently large is $C^{k,\alpha}$ diffeomorphic to $\mathcal{L}(\Gamma)\times [0,1)$. Thus, we may add $\mathcal{L}(\Gamma)$ to $\Gamma$ at infinity so to make a compact $C^{k,\alpha}$ manifold with boundary, denoted by $\bar{\Gamma}$ and called an \emph{end compactification of $\Gamma$}. Every element of $\mathcal{ACH}_n^{k,\alpha}(\Gamma)$ parameterizes an element of $\mathcal{ACH}_n^{k,\alpha}$. Conversely, any element of $\mathcal{ACH}_n^{k,\alpha}$ that has an end compactification diffeomorphic to $\bar{\Gamma}$ can be parameterized by some element of $\mathcal{ACH}_n^{k,\alpha}(\Gamma)$. 

\begin{prop} \label{AsympConicalProp}
Given $\Gamma\in\mathcal{ACH}^{k,\alpha}_n$ and $\mathbf{f}\in\mathcal{ACH}^{k,\alpha}_n(\Gamma)$, let $\Sigma=\mathbf{f}(\Gamma)$. The following statements hold:
\begin{enumerate}
\item \label{ACHItem} $\Sigma\in\mathcal{ACH}_n^{k,\alpha}$ and $\mathcal{C}(\Sigma)=\mathcal{C}[\mathbf{f}](\mathcal{C}(\Gamma))$. 
\item \label{ACInverseItem} The inverse, denoted by $\mathbf{f}^{-1}$, of $\mathbf{f}$ restricted the range to its image is an element of $\mathcal{ACH}^{k,\alpha}_n(\Sigma)$ and $\mathcal{C}[\mathbf{f}^{-1}]=\mathcal{C}[\mathbf{f}]^{-1}$.
\item \label{ACHCCOMPItem} For $\mathbf{g}\in\mathcal{ACH}_n^{k,\alpha}(\Sigma)$, $\mathbf{g}\circ \mathbf{f}\in\mathcal{ACH}_n^{k,\alpha}(\Gamma)$ and $\mathcal{C}[\mathbf{g}\circ\mathbf{f}]=\mathcal{C}[\mathbf{g}]\circ\mathcal{C}[\mathbf{f}]$. 
%\item \label{ACEItem} If $\bar{\Sigma}$ is $C^{k,\alpha}$ diffeomorphic to $\bar{\Gamma}$, there is a $\mathbf{g}\in\mathcal{ACH}^{k,\alpha}_n(\Gamma)$ such that $\Sigma=\mathbf{g}(\Gamma)$. 
\end{enumerate}
\end{prop}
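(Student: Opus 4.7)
The plan is to prove Item \eqref{ACHItem} first in full, after which Items \eqref{ACInverseItem} and \eqref{ACHCCOMPItem} follow by combining it with the inverse function theorem and the composition estimates recorded in Proposition \ref{ListProp}. Throughout I lean on Items \eqref{ListProp4} and \eqref{ListProp5} of that proposition to track weighted H\"older norms under compositions with $C^{k,\alpha}_1$ maps of linear growth.

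For Item \eqref{ACHItem}, fix a homogeneous transverse section $\mathbf{v}_\Gamma$ on $\mathcal{C}(\Gamma)$ witnessing the asymptotic conicality of $\Gamma$, so the diffeomorphism $\theta_{\mathbf{v}_\Gamma;\Gamma^\prime}\colon \mathcal{C}_R(\Gamma)\to\Gamma^\prime$ is defined on the ends. Setting $\tilde{\mathbf{f}}=\mathbf{f}\circ\theta_{\mathbf{v}_\Gamma;\Gamma^\prime}$, one has $\tilde{\mathbf{f}}\in C^{k,\alpha}_1\cap C^k_{1,\mathrm{H}}(\mathcal{C}_R(\Gamma);\mathbb{R}^{n+1})$ with $\mathrm{tr}^1_\infty[\tilde{\mathbf{f}}]=\mathrm{tr}^1_\infty[\mathbf{f}]$; the key point is that by construction the error $\mathbf{e}:=\tilde{\mathbf{f}}-\mathcal{C}[\mathbf{f}]|_{\mathcal{C}_R(\Gamma)}$ lies in $C^{k,\alpha}_1\cap C^k_{1,0}(\mathcal{C}_R(\Gamma);\mathbb{R}^{n+1})$. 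Let $\mathcal{C}_\Sigma:=\mathcal{C}[\mathbf{f}](\mathcal{C}(\Gamma))$, a $C^{k,\alpha}$-regular cone since $\mathcal{C}[\mathbf{f}]$ is a homogeneous $C^{k,\alpha}$ embedding, and pick a homogeneous transverse section $\mathbf{v}_\Sigma$ on $\mathcal{C}_\Sigma$ together with a dilation-invariant $\mathbf{v}_\Sigma$-regular neighborhood $N$ (determined by an open set of the form $\{(p,s)\in\mathcal{C}_\Sigma\times\mathbb{R}\colon|s|<\epsilon|\mathbf{x}(p)|\}$). Because $|\mathbf{e}(p)|=o(|\mathbf{x}(p)|)$, enlarging $R$ one has $\tilde{\mathbf{f}}(\mathcal{C}_R(\Gamma))\subset N$, and then $\Phi:=\pi_{\mathbf{v}_\Sigma}\circ\tilde{\mathbf{f}}\colon\mathcal{C}_R(\Gamma)\to\mathcal{C}_\Sigma$ differs from $\mathcal{C}[\mathbf{f}]|_{\mathcal{C}_R(\Gamma)}$ by a term in $C^k_{1,0}$. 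A scale-invariant inverse function theorem argument, using Item \eqref{ListProp1} of Proposition \ref{ListProp} to rescale to a compact annular picture, shows $\Phi$ is a $C^{k,\alpha}$ diffeomorphism onto $\mathcal{C}_{R^\prime}(\Sigma)=\mathcal{C}_\Sigma\setminus\bar{B}_{R^\prime}$ for some $R^\prime$, with $\Phi^{-1}$ asymptotic to $\mathcal{C}[\mathbf{f}]^{-1}$ in the same sense. Writing $\tilde{\mathbf{f}}\circ\Phi^{-1}(q)=q+u(q)\mathbf{v}_\Sigma(q)$ realizes $\Sigma$ outside a compact set as a $\mathbf{v}_\Sigma$-graph over $\mathcal{C}_\Sigma$ with $u\in C^{k,\alpha}_1\cap C^k_{1,0}(\mathcal{C}_{R^\prime}(\Sigma))$, which establishes $\Sigma\in\mathcal{ACH}^{k,\alpha}_n$ and $\mathcal{C}(\Sigma)=\mathcal{C}_\Sigma$.

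For Item \eqref{ACInverseItem}, the inverse function theorem on manifolds yields that $\mathbf{f}^{-1}\colon\Sigma\to\Gamma\subset\mathbb{R}^{n+1}$ is a $C^{k,\alpha}$ embedding. To read off its asymptotics, use the graph picture from Item \eqref{ACHItem}: for $q\in\Sigma$ at infinity, $\mathbf{f}^{-1}(q)=\theta_{\mathbf{v}_\Gamma;\Gamma^\prime}(\Phi^{-1}(\pi_{\mathbf{v}_\Sigma}(q)))$, and Items \eqref{ListProp4}--\eqref{ListProp5} of Proposition \ref{ListProp} applied to this composition yield $\mathbf{f}^{-1}\in C^{k,\alpha}_1\cap C^k_{1,\mathrm{H}}(\Sigma;\mathbb{R}^{n+1})$ with $\mathrm{tr}^1_\infty[\mathbf{f}^{-1}]=\mathcal{C}[\mathbf{f}]^{-1}|_{\mathcal{L}(\Sigma)}$, giving $\mathcal{C}[\mathbf{f}^{-1}]=\mathcal{C}[\mathbf{f}]^{-1}$ by homogeneous extension; the embedding conditions on $\mathbf{f}^{-1}$ and $\mathcal{C}[\mathbf{f}^{-1}]$ are then automatic. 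Item \eqref{ACHCCOMPItem} reduces to the chain rule: $\mathbf{g}\circ\mathbf{f}$ is visibly a $C^{k,\alpha}$ embedding of $\Gamma$, and the composition estimates of Items \eqref{ListProp4}--\eqref{ListProp5} applied once more identify $\mathrm{tr}^1_\infty[\mathbf{g}\circ\mathbf{f}]=(\mathcal{C}[\mathbf{g}]\circ\mathcal{C}[\mathbf{f}])|_{\mathcal{L}(\Gamma)}$, so that $\mathcal{C}[\mathbf{g}\circ\mathbf{f}]=\mathcal{C}[\mathbf{g}]\circ\mathcal{C}[\mathbf{f}]$.

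The main obstacle lies in Item \eqref{ACHItem}: getting uniform-in-radius control on the graph function $u$. A naive inverse function theorem only provides local invertibility, so one must work with the dilation-invariant $\mathbf{v}_\Sigma$-regular neighborhood and use the scale-reduction in Item \eqref{ListProp1} of Proposition \ref{ListProp} to translate the $C^k_{1,0}$ smallness of $\mathbf{e}$ into a uniform-in-scale perturbation of $\mathcal{C}[\mathbf{f}]$, from which the claimed $C^{k,\alpha}_1\cap C^k_{1,0}$ regularity of $u$ can be extracted.
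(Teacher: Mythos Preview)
Your approach is correct and essentially the same as the paper's. For Item \eqref{ACHItem} you spell out more carefully than the paper does the scale-invariant inverse function theorem step (the paper simply asserts that $\pi_{\mathbf{w}}$ restricts to a diffeomorphism of the ends of $\Sigma$ onto $\mathcal{C}_{R'}$); for Items \eqref{ACInverseItem} and \eqref{ACHCCOMPItem} the paper instead identifies $\mathcal{C}[\mathbf{f}^{-1}]$ and $\mathcal{C}[\mathbf{g}\circ\mathbf{f}]$ by writing out the defining $C^k_{loc}$ limits $\lim_{\rho\to 0^+}\rho\,(\,\cdot\,)(\rho^{-1}p)$ and manipulating them directly, rather than packaging everything through Proposition \ref{ListProp} as you do---but these are just two presentations of the same computation.
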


\begin{proof}
As $\Gamma\in\mathcal{ACH}^{k,\alpha}_n$, there is a compact set $K\subset\Gamma$ so that $\Gamma^\prime=\Gamma\setminus K$ is the $\mathbf{v}$-graph of a function in  $C^{k,\alpha}_1\cap C^{k}_{1,0}(\mathcal{C}_R(\Gamma))$ for some $R>1$, where $\mathbf{v}$ is a homogeneous transverse section on $\mathcal{C}(\Gamma)$. Thus, $\pi_{\mathbf{v}}$ restricts to a $C^{k,\alpha}$ diffeomorphism of $\Gamma^\prime$ onto $\mathcal{C}_R(\Gamma)$, and we denote its inverse by $\theta_{\mathbf{v}; \Gamma^\prime}$.  

As $\mathbf{f}\in\mathcal{ACH}^{k,\alpha}_n(\Gamma)$, it follows that
$$
\mathbf{f}\circ\theta_{\mathbf{v}; \Gamma^\prime}\in C^{k,\alpha}_1\cap C^{k}_{1,\mathrm{H}}(\mathcal{C}_R(\Gamma); \mathbb{R}^{n+1}),
$$
and so, as noted above, $\mathrm{tr}_\infty^1[\mathbf{f}]\in C^{k,\alpha}(\mathcal{L}(\Gamma); \mathbb{R}^{n+1})$ and $\mathcal{C}[\mathbf{f}]$ parametrizes a $C^{k,\alpha}$-regular cone, denoted by $\mathcal{C}$. Observe that 
$$
\lim_{\rho\to 0^+} \rho \mathbf{f}\circ\theta_{\mathbf{v}; \Gamma^\prime}(\rho^{-1} \cdot)=\mathcal{C}[\mathbf{f}] \mbox{ in $C^k_{loc}(\mathcal{C}(\Gamma); \mathbb{R}^{n+1})$}.
$$
Let $\mathcal{L}$ be the link of $\mathcal{C}$ and $\mathbf{w}$ a homogeneous transverse section on $\mathcal{C}$. Thus, $\pi_{\mathbf{w}}$ restricts to a $C^{k,\alpha}$ diffeomorphism of $\Sigma^\prime=\Sigma\setminus K^\prime$ onto $\mathcal{C}_{R^\prime}$ for some compact set $K^\prime\subset\Sigma$ and $R^\prime>1$. Let us denote its inverse by $\theta_{\mathbf{w};\Sigma^\prime}$. Moreover,
$$
\mathbf{x}|_{\Sigma}\circ \theta_{\mathbf{w};\Sigma^\prime}\in C^{k,\alpha}_1\cap C^{k}_{1,\mathrm{H}}(\mathcal{C}_{R^\prime}; \mathbb{R}^{n+1}),
$$
and its trace at infinity is $\mathbf{x}|_{\mathcal{L}}$. Hence, it follows that $\Sigma\in\mathcal{ACH}^{k,\alpha}_n$ and our construction ensures $\mathcal{C}(\Sigma)=\mathcal{C}[\mathbf{f}](\mathcal{C}(\Gamma))$, proving Item \eqref{ACHItem}.

By our hypotheses on $\Gamma$ and $\mathbf{f}$, both $\mathbf{f}$ and its differential at any point of $\Gamma$ are injective, and, for some $\delta>0$, $|\mathbf{f}(p)|>\delta |\mathbf{x}(p)|$ whenever $|\mathbf{x}(p)|$ is sufficiently large. Thus, by the inverse function theorem, $\mathbf{f}$ restricted the range to its image is invertible and its inverse $\mathbf{f}^{-1}\in C^{k,\alpha}_{1}(\Sigma;\mathbb{R}^{n+1})$. Moreover,
\begin{align*}
\mathcal{C}[\mathbf{f}^{-1}] & =\lim_{\rho\to 0^+} \rho \mathbf{f}^{-1}\circ\theta_{\mathbf{w};\Sigma^\prime}(\rho^{-1}\cdot)=\lim_{\rho\to 0^+} \rho\mathbf{f}^{-1}\circ\theta_{\mathbf{w};\Sigma^\prime}(\rho^{-1}\cdot)\circ\mathcal{C}[\mathbf{f}]\circ\mathcal{C}[\mathbf{f}]^{-1} \\
& =\lim_{\rho\to 0^+} \rho\mathbf{f}^{-1}\circ\theta_{\mathbf{w};\Sigma^\prime}(\rho^{-1}\pi_{\mathbf{w}}(\rho\mathbf{f}\circ\theta_{\mathbf{v};\Gamma^\prime}(\rho^{-1}\cdot)))\circ\mathcal{C}[\mathbf{f}]^{-1} \\
& =\lim_{\rho\to 0^+} \rho\theta_{\mathbf{v};\Gamma^\prime}(\rho^{-1}\cdot)\circ \mathcal{C}[\mathbf{f}]^{-1}=\mathcal{C}[\mathbf{f}]^{-1}.
\end{align*}
Here the convergence is in $C^k_{loc}(\mathcal{C}(\Sigma))$. Thus we have shown $\mathbf{f}^{-1}\in \mathcal{ACH}^{k,\alpha}_n(\Sigma)$, proving Item \eqref{ACInverseItem}.

As $k\geq 1$, it follows from Item \eqref{ListProp4} of Proposition \ref{ListProp} that $\mathbf{g}\circ \mathbf{f}\in C^{k,\alpha}_{1}(\Gamma)$. The hypothesis that each $\mathbf{f}$ and $\mathbf{g}$ is an embedding implies that $\mathbf{g}\circ \mathbf{f}$ is an embedding. Moreover,
\begin{align*}
\mathcal{C}[\mathbf{g}\circ \mathbf{f}]& =\lim_{\rho\to 0^+} \rho (\mathbf{g}\circ\mathbf{f})\circ\theta_{\mathbf{v}; \Gamma^\prime}(\rho^{-1}\cdot)= \lim_{\rho\to 0^+} \rho\mathbf{g}\circ\theta_{\mathbf{w}; \Sigma^\prime}(\rho^{-1} \pi_{\mathbf{w}}(\rho\mathbf{f}\circ\theta_{\mathbf{v};\Gamma^\prime}(\rho^{-1} \cdot)))\\
&=\left(\lim_{\rho\to 0^+}  \rho\mathbf{g}(\theta_{\mathbf{w}; \Sigma^\prime}(\rho^{-1}\cdot))\right)\circ \pi_{\mathbf{w}}\circ\left( \lim_{\rho\to 0^+} \rho \mathbf{f}(\theta_{\mathbf{v};\Gamma^\prime}(\rho^{-1} \cdot))\right)\\
&=\mathcal{C}[\mathbf{g}]\circ\pi_{\mathbf{w}}\circ \mathcal{C}[\mathbf{f}]=\mathcal{C}[\mathbf{g}]\circ \mathcal{C}[\mathbf{f}]. 
\end{align*}
Here the convergence is in $C^{k}_{loc}(\mathcal{C}(\Gamma))$. Hence, $\mathcal{C}[\mathbf{g}\circ \mathbf{f}]$ is the composition of embeddings and so is also an embedding. That is, $\mathbf{g}\circ \mathbf{f}\in \mathcal{ACH}^{k,\alpha}_n(\Gamma)$ and we have proved Item \eqref{ACHCCOMPItem}.
\end{proof}

Finally, we introduce a natural equivalence relation on $\mathcal{ACH}_{n}^{k,\alpha}(\Gamma)$. First, say a $C^{k,\alpha}$ diffeomorphism $\phi:\Gamma\to \Gamma$ \emph{fixes infinity} if $\mathbf{x}|_{\Gamma}\circ \phi\in \mathcal{ACH}^{k,\alpha}_n(\Gamma)$ and
$$
\mathrm{tr}^1_{\infty}[\mathbf{x}|_{\Gamma}\circ \phi]=\mathbf{x}|_{\mathcal{L}(\Gamma)}.
$$
%Observe, that such a $\phi$ naturally extends to a $C^{k,\alpha}$ diffeomorphism from $\bar{\Gamma}$ to $\bar{\Gamma}$ that restricts to the identity map on $\partial \Gamma$. 
Two elements $\mathbf{f}, \mathbf{g}\in\mathcal{ACH}_{n}^{k,\alpha}(\Gamma)$ are equivalent, written $\mathbf{f}\sim \mathbf{g}$, provided there is a $C^{k,\alpha}$ diffeomorphism $\phi:\Gamma\to\Gamma$ that fixes infinity so that $\mathbf{f}\circ\phi=\mathbf{g}$. We observe that, by Items \eqref{ACInverseItem} and \eqref{ACHCCOMPItem} of Proposition \ref{AsympConicalProp},  $\mathbf{f}\sim\mathbf{g}$ if and only if
$$
\mathbf{f}(\Gamma)=\mathbf{g}(\Gamma) \mbox{ and }\mathcal{C}[\mathbf{f}]=\mathcal{C}[\mathbf{g}].
$$

\section{First and second variations of the $E$-functional} \label{VariationSec}
Given a $C^2$-hypersurface $\Sigma\subset\mathbb{R}^{n+1}$ an $M$-parameter differentiable family of $C^2$ embeddings, $\Psi_{\mathbf{s}}\colon\Sigma\to\mathbb{R}^{n+1}$ for $\mathbf{s}\in (-1,1)^M$, is a \emph{compactly supported variation of $\mathbf{x}|_\Sigma$} if $\Psi_{\mathbf{0}}=\mathbf{x}|_\Sigma$ and for some compact set $K\subset\Sigma$ each $\Psi_{\mathbf{s}}|_{\Sigma\setminus K}=\mathbf{x}|_{\Sigma\setminus K}$.

By a direct computation (see, for instance, \cite[Section 9]{Simon}) and integration by parts, we obtain the first variation formula of $E$.

\begin{prop} \label{1stVarProp}
Let $\Sigma$ be a $C^2$-hypersurface in $\mathbb{R}^{n+1}$, and let $\{\Psi_s\}_{s\in (-1,1)}$ be a compactly supported variation of $\mathbf{x}|_\Sigma$. If ${\frac{\partial\Psi_s}{\partial s}\vline}_{s=0}=\mathbf{V}$, then 
\begin{equation} \label{1stVarEqn}
{\frac{d}{ds}\vline}_{s=0} E[\Psi_s(\Sigma)]=-\int_\Sigma \mathbf{V}\cdot \left(\mathbf{H}_\Sigma-\frac{\mathbf{x}^\perp}{2}\right)e^{\frac{|\mathbf{x}|^2}{4}} \, d\mathcal{H}^n.
\end{equation}
\end{prop}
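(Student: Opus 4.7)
The plan is the standard weighted first-variation calculation: differentiate $E$ under the integral sign, split the variation vector field into tangential and normal parts, and then integrate by parts against the Gaussian weight, exploiting that the variation is compactly supported so no boundary term appears. Concretely, I would start by writing
$$
E[\Psi_s(\Sigma)]=\int_\Sigma e^{|\Psi_s|^2/4}\, J\Psi_s\, d\mathcal{H}^n,
$$
where $J\Psi_s$ is the Jacobian of $\Psi_s\colon\Sigma\to\mathbb{R}^{n+1}$. Differentiating at $s=0$ produces two standard terms: the chain rule in the exponent contributes $\tfrac{1}{2}\mathbf{x}\cdot\mathbf{V}$, while the usual first-variation-of-area formula (e.g.\ Simon \S 9 as cited) gives $\partial_s|_{s=0}J\Psi_s=\mathrm{div}_\Sigma\mathbf{V}$. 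Combining these,
$$
\frac{d}{ds}\Big|_{s=0}E[\Psi_s(\Sigma)]=\int_\Sigma\left(\frac{\mathbf{x}\cdot\mathbf{V}}{2}+\mathrm{div}_\Sigma\mathbf{V}\right)e^{|\mathbf{x}|^2/4}\, d\mathcal{H}^n.
$$

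Next, I would decompose $\mathbf{V}=\mathbf{V}^\top+\mathbf{V}^\perp$. For the normal piece, writing $\mathbf{V}^\perp=f\mathbf{n}_\Sigma$ and using $H_\Sigma=\mathrm{div}_\Sigma\mathbf{n}_\Sigma$ together with $\mathbf{H}_\Sigma=-H_\Sigma\mathbf{n}_\Sigma$, one finds $\mathrm{div}_\Sigma\mathbf{V}^\perp=-\mathbf{V}\cdot\mathbf{H}_\Sigma$. For the tangential piece, I would integrate by parts against the weighted measure $e^{|\mathbf{x}|^2/4}d\mathcal{H}^n$, noting $\nabla_\Sigma e^{|\mathbf{x}|^2/4}=\tfrac{1}{2}\mathbf{x}^\top e^{|\mathbf{x}|^2/4}$, to obtain
$$
\int_\Sigma\mathrm{div}_\Sigma(\mathbf{V}^\top)\, e^{|\mathbf{x}|^2/4}\, d\mathcal{H}^n=-\frac{1}{2}\int_\Sigma\mathbf{V}\cdot\mathbf{x}^\top\, e^{|\mathbf{x}|^2/4}\, d\mathcal{H}^n.
$$
The boundary term is absent precisely because $\mathbf{V}$ is supported in a compact set $K\subset\Sigma$, so no decay hypothesis at infinity is invoked.

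Assembling the three contributions, the $\tfrac{1}{2}\mathbf{x}\cdot\mathbf{V}$ term and the $-\tfrac{1}{2}\mathbf{x}^\top\cdot\mathbf{V}$ from integration by parts collapse to $\tfrac{1}{2}\mathbf{V}\cdot\mathbf{x}^\perp$, which when combined with $-\mathbf{V}\cdot\mathbf{H}_\Sigma$ yields \eqref{1stVarEqn}. There is no serious obstacle: the only points deserving a moment's care are the correct sign bookkeeping in $\mathrm{div}_\Sigma(f\mathbf{n}_\Sigma)=-f\mathbf{n}_\Sigma\cdot\mathbf{H}_\Sigma$ and the observation that the compact support hypothesis on $\Psi_s$ is what licenses the weighted integration by parts — a caveat worth flagging since later sections work with variations that are not compactly supported and there one must instead rely on the asymptotic expansions developed in Section \ref{JacobiSec}.
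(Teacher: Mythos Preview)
Your proposal is correct and is precisely the computation the paper has in mind: the paper's own ``proof'' consists of the single sentence ``By a direct computation (see, for instance, \cite[Section 9]{Simon}) and integration by parts,'' and your sketch is exactly that direct computation spelled out. The sign bookkeeping and the use of compact support to kill the boundary term are both handled correctly.
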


Next we compute the second variation of $E$ at its critical points, i.e., self-expanders.

\begin{prop} \label{2ndVarProp}
Let $\Sigma$ be a self-expander in $\mathbb{R}^{n+1}$, and let $\{\Psi_{s,t}\}_{-1<s,t<1}$ be a compactly supported variation of $\mathbf{x}|_\Sigma$. If ${\frac{\partial\Psi_{s,0}}{\partial s}\vline}_{s=0}=\mathbf{V}$ and ${\frac{\partial\Psi_{0,t}}{\partial t}\vline}_{t=0}=\mathbf{W}$, then
\begin{equation} \label{2ndVarEqn}
{\frac{\partial^2}{\partial t\partial s}\vline}_{s=t=0} E[\Psi_{s,t}(\Sigma)]=-\int_\Sigma \left(\mathbf{V}\cdot\mathbf{n}_\Sigma\right) L_\Sigma\left(\mathbf{W}\cdot\mathbf{n}_\Sigma\right)e^{\frac{|\mathbf{x}|^2}{4}} \, d\mathcal{H}^n,
\end{equation}
where 
$$
L_\Sigma=\Delta_\Sigma+\frac{1}{2}\mathbf{x}\cdot\nabla_\Sigma+|A_\Sigma|^2-\frac{1}{2}.
$$
\end{prop}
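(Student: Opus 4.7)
The plan is to differentiate the first variation formula of Proposition \ref{1stVarProp} once more in $t$, exploiting that at a self-expander the integrand of \eqref{1stVarEqn} vanishes identically. Concretely, for each $t$ near $0$ I would set $\Sigma_t=\Psi_{0,t}(\Sigma)$ and apply Proposition \ref{1stVarProp} to the family $s\mapsto\Psi_{s,t}$ regarded as a variation of $\mathbf{x}|_{\Sigma_t}$. Pulling the resulting integral back to $\Sigma$ via $\Psi_{0,t}$ yields
\[
\partial_s E[\Psi_{s,t}(\Sigma)]\big|_{s=0}=-\int_\Sigma \mathbf{V}_t\cdot \Phi_t\, e^{|\Psi_{0,t}|^2/4}\, J_t\, d\mathcal{H}^n,
\]
where $\mathbf{V}_t=\partial_s|_{s=0}\Psi_{s,t}$, $\Phi_t=(\mathbf{H}_{\Sigma_t}-\mathbf{x}^\perp/2)\circ\Psi_{0,t}$ (as an $\mathbb{R}^{n+1}$-valued function on $\Sigma$), and $J_t$ is the Jacobian of $\Psi_{0,t}$ relative to $\Sigma$. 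Because $\Sigma$ satisfies \eqref{ExpanderEqn}, $\Phi_0\equiv\mathbf{0}$, so upon differentiating at $t=0$ every term in the product rule except the one hitting $\Phi_t$ is killed, leaving
\[
\partial_t\partial_s E[\Psi_{s,t}(\Sigma)]\big|_{s=t=0}=-\int_\Sigma \mathbf{V}\cdot\partial_t|_{t=0}\Phi_t\, e^{|\mathbf{x}|^2/4}\, d\mathcal{H}^n.
\]

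Next I would reduce to a purely normal variation in $\mathbf{W}$. Writing $\mathbf{W}=u\mathbf{n}_\Sigma+\mathbf{W}^\top$ with $u=\mathbf{W}\cdot\mathbf{n}_\Sigma$, one can produce a second family $\tilde\Psi_{0,t}$ parametrizing the same hypersurfaces $\Sigma_t$ but with $\partial_t|_{t=0}\tilde\Psi_{0,t}=u\mathbf{n}_\Sigma$; the two families differ by composition with diffeomorphisms $\phi_t$ of $\Sigma$ with $\phi_0=\mathrm{id}$ and $\partial_t|_{t=0}\phi_t=\mathbf{W}^\top$. Writing $\Phi_t=\tilde\Phi_t\circ\phi_t$ and differentiating at $t=0$ gives
\[
\partial_t|_{t=0}\Phi_t=\partial_t|_{t=0}\tilde\Phi_t+d(\mathbf{H}_\Sigma-\mathbf{x}^\perp/2)(\mathbf{W}^\top),
\]
and the second term vanishes because $\mathbf{H}_\Sigma-\mathbf{x}^\perp/2\equiv\mathbf{0}$ on $\Sigma$. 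Hence I may assume $\mathbf{W}=u\mathbf{n}_\Sigma$.

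Finally, for a normal variation, standard pointwise formulas give $\partial_t|_{t=0}H_{\Sigma_t}=-\Delta_\Sigma u-|A_\Sigma|^2 u$, $\partial_t|_{t=0}\mathbf{n}_{\Sigma_t}=-\nabla_\Sigma u$, and $\partial_t|_{t=0}(\mathbf{x}\cdot\mathbf{n}_{\Sigma_t})=u-\mathbf{x}\cdot\nabla_\Sigma u$. Rewriting $\mathbf{H}_{\Sigma_t}-\mathbf{x}^\perp/2=-(H_{\Sigma_t}+\mathbf{x}\cdot\mathbf{n}_{\Sigma_t}/2)\mathbf{n}_{\Sigma_t}$ and using once more that the scalar $H_\Sigma+\mathbf{x}\cdot\mathbf{n}_\Sigma/2$ vanishes on $\Sigma$ (so the term in which $\partial_t$ acts on $\mathbf{n}_{\Sigma_t}$ drops out), a direct calculation yields $\partial_t|_{t=0}\Phi_t=(L_\Sigma u)\,\mathbf{n}_\Sigma$ with $L_\Sigma$ as in the statement. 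Substituting into the displayed identity for $\partial_t\partial_s E$ gives \eqref{2ndVarEqn}. The only delicate step is the tangential-to-normal reduction, which genuinely uses that $\Sigma$ is a critical point of $E$; the remaining ingredients are routine first-order variation formulas.
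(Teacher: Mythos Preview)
Your argument is correct. It takes a genuinely different route from the paper's proof, so a brief comparison is in order.

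The paper first expands the second variation by direct computation (as in \cite[Section 9]{Simon}) to obtain an explicit symmetric bilinear form $Q_\Sigma(\mathbf{V},\mathbf{W})$, valid for any $\Sigma$; it then decomposes $Q_\Sigma(\mathbf{V},\mathbf{W})$ according to the tangential/normal splittings of $\mathbf{V}$ and $\mathbf{W}$ and shows, by constructing special two-parameter variations, that the three terms involving a tangential component vanish at a self-expander. An integration by parts on the remaining $Q_\Sigma(\mathbf{V}^\perp,\mathbf{W}^\perp)$ then yields \eqref{2ndVarEqn}.

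Your approach instead differentiates the first variation \eqref{1stVarEqn} and exploits $\Phi_0=\mathbf{0}$ at the outset, so that only $\partial_t|_{t=0}\Phi_t$ survives. You then reduce only $\mathbf{W}$ to its normal part via reparametrization (the tangential part of $\mathbf{V}$ drops out automatically because $\partial_t|_{t=0}\Phi_t$ is normal), and finish with the pointwise first-order variation formulas for $H$, $\mathbf{n}$, and $\mathbf{x}\cdot\mathbf{n}$. This is more direct for the stated goal and avoids both the full bilinear form and any integration by parts. The paper's route, on the other hand, makes the symmetry $Q_\Sigma(\mathbf{V},\mathbf{W})=Q_\Sigma(\mathbf{W},\mathbf{V})$ manifest and records the explicit integrand of the second variation, which can be useful elsewhere.
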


\begin{rem}
If $\mathbf{V}=f\mathbf{v}$ and $\mathbf{W}=g\mathbf{v}$, where $\mathbf{v}$ is a transverse section on $\Sigma$, then
$$
{\frac{\partial^2}{\partial t\partial s}\vline}_{s=t=0} E[\Psi_{s,t}(\Sigma)]=-\int_\Sigma f (L_{\mathbf{v}}g) e^{\frac{|\mathbf{x}|^2}{4}} \, d\mathcal{H}^n,
$$
where $L_{\mathbf{v}}g=(\mathbf{v}\cdot\mathbf{n}_\Sigma)L_\Sigma((\mathbf{v}\cdot\mathbf{n}_\Sigma)g)$ and $L_{\mathbf{v}}$ is called the \emph{$\mathbf{v}$-Jacobi operator}.
\end{rem}

\begin{proof}
Denote by $\Sigma_{s,t}=\Psi_{s,t}(\Sigma)$. A straightforward computation similar to \cite[Section 9]{Simon} gives that
\begin{align*}
& {\frac{\partial^2}{\partial t\partial s}\vline}_{s=t=0} E[\Sigma_{s,t}]={\frac{\partial}{\partial t}\vline}_{t=0}\int_{\Sigma_{0,t}} -\left({\frac{\partial\Psi_{s,t}}{\partial s}\vline}_{s=0}\right)\cdot \left(\mathbf{H}_{\Sigma_{0,t}}-\frac{\mathbf{x}^\perp}{2}\right)e^{\frac{|\mathbf{x}|^2}{4}} d\mathcal{H}^n \\
&=  \int_\Sigma \left(\mathrm{div}_\Sigma\mathbf{V}\cdot\mathrm{div}_\Sigma\mathbf{W}+\sum_{i=1}^n(\nabla_{\tau_i}\mathbf{V})^\perp\cdot(\nabla_{\tau_i}\mathbf{W})^\perp -\sum_{i,j=1}^n (\tau_i\cdot \nabla_{\tau_j}\mathbf{V})(\tau_j\cdot\nabla_{\tau_i}\mathbf{W}) \right. \\
 & \left. +\frac{1}{2}\left(\mathbf{x}\cdot\mathbf{V}\right)\mathrm{div}_\Sigma\mathbf{W}+\frac{1}{2}\left(\mathbf{x}\cdot\mathbf{W}\right)\mathrm{div}_\Sigma\mathbf{V}+\frac{1}{4}\left(\mathbf{x}\cdot\mathbf{V}\right)\left(\mathbf{x}\cdot\mathbf{W}\right)+\frac{1}{2}\mathbf{V}\cdot\mathbf{W}\right) e^{\frac{|\mathbf{x}|^2}{4}} \, d\mathcal{H}^n,
\end{align*}
where $\{\tau_1,\dots,\tau_n\}$ is an orthonormal frame on $\Sigma$ and $\nabla$ is the flat connection on $\mathbb{R}^{n+1}$. Thus, ${\frac{\partial^2}{\partial s\partial t}\vline}_{s=t=0}E[\Sigma_{s,t}]$ is a symmetric bilinear form, denoted by $Q_\Sigma(\mathbf{V},\mathbf{W})$. We write
$$
Q_\Sigma\left(\mathbf{V},\mathbf{W}\right)=Q_{\Sigma}\left(\mathbf{V}^\perp,\mathbf{W}^\perp\right)+Q_{\Sigma}\left(\mathbf{V}^\top,\mathbf{W}^\perp\right)+Q_{\Sigma}\left(\mathbf{V}^\perp,\mathbf{W}^\top\right)+Q_{\Sigma}\left(\mathbf{V}^\top,\mathbf{W}^\top\right).
$$
We claim that the last three terms on the right hand side are equal to $0$. Therefore, \eqref{2ndVarEqn} follows easily from integration by parts.

To prove the claim, let $\{\phi_t\}_{t\in\mathbb{R}}$ be the one-parameter group of diffeomorphisms of $\Sigma$ generated by $\mathbf{V}^\top$. Given $|s|$ small and $t\in\mathbb{R}$ we define: for $p\in\Sigma$,
$$
\tilde{\Psi}_{s,t}(p)=\gamma_{\mathbf{W}^\perp}(\phi_t(p),s)=\mathbf{x}(\phi_t(p))+s\mathbf{W}^\perp(\phi_t(p)).
$$
Clearly, $\{\tilde{\Psi}_{s,t}\}$ is a compactly supported variation of $\mathbf{x}|_\Sigma$ with ${\frac{\partial\tilde{\Psi}_{s,0}}{\partial s}\vline}_{s=0}=\mathbf{W}^\perp$ and ${\frac{\partial\tilde{\Psi}_{0,t}}{\partial t}\vline}_{t=0}=\mathbf{V}^\top$. Thus, $\tilde{\Psi}_{0,t}(\Sigma)=\mathbf{x}|_\Sigma\circ\phi_t(\Sigma)=\Sigma$. By the computations in the preceding paragraph, 
$$
Q_{\Sigma}\left(\mathbf{V}^\top,\mathbf{W}^\perp\right)={\frac{\partial^2}{\partial t\partial s}\vline}_{s=t=0} E[\tilde{\Psi}_{s,t}(\Sigma)]=0.
$$
By symmetries, $Q_\Sigma(\mathbf{V}^\perp,\mathbf{W}^\top)=0$. Finally, let $\{\psi_s\}_{s\in\mathbb{R}}$ be the one-parameter group of diffeomorphisms of $\Sigma$ generated by $\mathbf{W}^\top$. Given $s,t\in\mathbb{R}$ define $\hat{\Psi}_{s,t}=\mathbf{x}|_\Sigma\circ\psi_s\circ\phi_t$. Then $\{\hat{\Psi}_{s,t}\}$ is a compactly supported variation of $\mathbf{x}|_\Sigma$ with ${\frac{\partial\hat{\Psi}_{0,t}}{\partial t}\vline}_{t=0}=\mathbf{V}^\top$ and ${\frac{\partial\hat{\Psi}_{s,0}}{\partial s}\vline}_{s=0}=\mathbf{W}^\top$. As $E[\hat{\Psi}_{s,t}(\Sigma)]-E[\Sigma]=0$, $Q_\Sigma(\mathbf{V}^\top,\mathbf{W}^\top)=0$, proving the claim. 
\end{proof}

\section{Fredholm property of $\mathbf{v}$-Jacobi operator} \label{FredholmSec}
Throughout this section we fix a self-expander $\Sigma\in\mathcal{ACH}_n^{k,\alpha}$ and a transverse section on $\Sigma$, $\mathbf{v}\in C^{k,\alpha}_{0}\cap C^{k}_{0,\mathrm{H}}(\Sigma;\mathbb{R}^{n+1})$. In addition, we assume that $\mathcal{C}[\mathbf{v}]=\mathscr{E}^{\mathrm{H}}_1\circ\mathrm{tr}^1_\infty[\mathbf{v}]$ is a transverse section on $\mathcal{C}(\Sigma)$ and
\begin{equation} \label{RegVectorEqn}
\left(\mathscr{L}_\Sigma+\frac{1}{2}\right)\mathbf{v}=\Delta_\Sigma\mathbf{v}+\frac{1}{2}\mathbf{x}\cdot\nabla_\Sigma\mathbf{v}\in C^{k-2,\alpha}_{-2}(\Sigma;\mathbb{R}^{n+1}).
\end{equation} 
Observe that such $\mathbf{v}$ always exists: for instance, one may take $\mathbf{v}$ to be a transverse section on $\Sigma$ that, outside a compact set, equals $\mathscr{E}_{\mathbf{w}}[\mathbf{w}]\circ\mathbf{x}|_\Sigma$ for $\mathbf{w}$ a homogeneous transverse section on $\mathcal{C}(\Sigma)$, as one computes that, outside this compact set,
$$
\mathbf{x}\cdot\nabla_\Sigma\mathbf{v}=\nabla_{\mathbf{x}}\mathscr{E}_\mathbf{w}[\mathbf{w}]\circ\mathbf{x}|_\Sigma-\nabla_{\mathbf{x}^\perp}\mathscr{E}_\mathbf{w}[\mathbf{w}]\circ\mathbf{x}|_\Sigma=-2\nabla_{\mathbf{H}_\Sigma}\mathscr{E}_\mathbf{w}[\mathbf{w}]\circ\mathbf{x}|_\Sigma,
$$
where $\nabla$ is the flat connection on $\mathbb{R}^{n+1}$ and the second equality uses the homogeneity of $\mathscr{E}_\mathbf{w}[\mathbf{w}]$ and the self-expander equation. 

For an integer $l\ge 2$ and $\beta\in [0,1)$ with $l+\beta\leq k+\alpha$ we define the Banach space 
$$
\mathcal{D}^{l,\beta}(\Sigma)=\left\{f\in C^{l,\beta}_{1}\cap C^{l-1,\beta}_0\cap C^{l-2,\beta}_{-1}(\Sigma) \colon \mathbf{x}\cdot\nabla_\Sigma f\in C^{l-2,\beta}_{-1}(\Sigma) \right\}
$$
equipped with the norm
$$
\Vert f\Vert_{l,\beta}^*=\Vert f \Vert_{l-2,\beta}^{(-1)}+\sum_{i=l-1}^l \Vert \nabla_\Sigma^i f\Vert_{\beta}^{(1-l)}+\Vert\mathbf{x}\cdot\nabla_\Sigma f\Vert_{l-2,\beta}^{(-1)}.
$$

The goal of this section is to prove the Fredholm property of $L_{\mathbf{v}}$. First, we show the existence and uniqueness of solutions in $\mathcal{D}^{k,\alpha}(\Sigma)$ of $\mathscr{L}_\Sigma u=f$ for any given $f\in C^{k-2,\alpha}_{-1}(\Sigma)$. A similar problem has also been investigated by Deruelle \cite{Deruelle} in the study of asymptotically conical expanding Ricci solitons. However, the method we employ here is independent of \cite{Deruelle} (as well as \cite{Lunardi}) in part because the weighted function spaces considered in \cite[Section 2]{Deruelle} are different from ours and so the arguments there do not apply to our setting. Next we show that $L_{\mathbf{v}}-(\mathbf{v}\cdot\mathbf{n}_\Sigma)^2\mathscr{L}_\Sigma$ is a compact operator from $\mathcal{D}^{k,\alpha}(\Sigma)$ to $C^{k-2,\alpha}_{-1}(\Sigma)$. Thus it follows that $L_{\mathbf{v}}$ is Fredholm and of index $0$.

\subsection{$\mathcal{D}^{k,\alpha}$-solutions to $\mathscr{L}_\Sigma u=f$} \label{ModelSubsec}

\begin{lem} \label{C0EstLem}
Let $\Omega$ be a bounded domain in $\Sigma$ with smooth boundary. Given $f\in C^0(\Omega)$, if $u\in C^0(\bar{\Omega})\cap C^2(\Omega)$ satisfies 
$\mathscr{L}_\Sigma u=f$, then 
$$
\max_{p\in\bar{\Omega}} |u(p)| \le 2\sup_{p\in\Omega} 
|f(p)|+\max_{p\in\partial\Omega} |u(p)|.
$$
\end{lem}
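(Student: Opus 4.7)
The key observation I will use is that $\mathscr{L}_\Sigma$, as can be read off from \eqref{RegVectorEqn}, equals $\Delta_\Sigma + \frac{1}{2}\mathbf{x}\cdot\nabla_\Sigma - \frac{1}{2}$, so it carries a strictly negative zeroth-order term. In particular $\mathscr{L}_\Sigma c = -c/2$ for any constant $c$, so large positive constants will serve as global supersolutions; the factor $2$ in the statement is precisely the reciprocal of this $-\tfrac12$.

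The plan is to set $M = \sup_\Omega|f|$, $M_\partial = \max_{\partial\Omega}|u|$, and $c = 2M + M_\partial \ge 0$, then apply the weak maximum principle for linear second-order elliptic operators with nonpositive zeroth-order term (e.g.\ Gilbarg--Trudinger, Theorem~3.1) separately to $w_+ = u - c$ and $w_- = -u - c$. Since $\Omega$ is bounded, the coefficients of $\mathscr{L}_\Sigma$ are continuous and bounded on $\bar\Omega$, so this weak maximum principle applies in its standard form. For $w_+$, linearity gives
$$
\mathscr{L}_\Sigma w_+ = f + \tfrac{c}{2} = f + M + \tfrac{M_\partial}{2} \ge 0,
$$
while on $\partial\Omega$ one has $w_+ \le M_\partial - (2M + M_\partial) = -2M \le 0$. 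The maximum principle then forces $w_+ \le 0$ throughout $\bar\Omega$, that is, $u \le c$. The identical argument applied to $w_-$ gives $-u \le c$, and together these yield the claimed bound $\max_{\bar\Omega}|u| \le 2\sup_\Omega |f| + \max_{\partial\Omega}|u|$.

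No real obstacle needs to be surmounted once the $-\tfrac12$ hidden inside $\mathscr{L}_\Sigma$ is noticed; in particular no structure specific to self-expanders, and no ellipticity ``at infinity'', enters the argument, which is consistent with the hypothesis that $\Omega$ is bounded.
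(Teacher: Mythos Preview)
Your proof is correct and is essentially the same maximum-principle argument as the paper's: both exploit the strictly negative zeroth-order coefficient $-\tfrac12$ in $\mathscr{L}_\Sigma$. The only cosmetic difference is that the paper argues directly at an interior maximum point (if $u$ attains its maximum at $p_0\in\Omega$, then $f(p_0)=\mathscr{L}_\Sigma u(p_0)\le -\tfrac12 u(p_0)$, so $u(p_0)\le -2f(p_0)$), whereas you package the same idea as a constant-barrier comparison and cite the weak maximum principle from Gilbarg--Trudinger.
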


\begin{proof}
Suppose that $u$ attains its maximum at an interior point $p_0\in\Omega$. Then $\nabla_\Sigma u(p_0)=0$ and $\Delta_\Sigma u(p_0)\le 0$. Thus,
$$
f(p_0)=\mathscr{L}_\Sigma u(p_0)\le -\frac{1}{2} u(p_0),
$$
implying $u(p_0)\le -2f(p_0)$. Hence we have
$$
\max_{p\in\bar{\Omega}} u(p) \le 2\sup_{p\in\Omega} -f(p)+\max_{p\in\partial\Omega} u(p).
$$
A similar argument gives
$$
\max_{p\in\bar{\Omega}} -u(p) \le 2\sup_{p\in\Omega} f(p)+\max_{p\in\partial \Omega} -u(p).
$$
The lemma follows from combining these two estimates.
\end{proof}

\begin{lem} \label{BarrierLem}
Given $d\in\mathbb{R}$ there is an $R_0=R_0(\Sigma,n,d)>1$ so that on $\Sigma\setminus \bar{B}_{R_0}$,
$$
\mathscr{L}_\Sigma r^d<\frac{d}{2} r^d
$$
where $r(p)=|\mathbf{x}(p)|$ for $p\in\Sigma$.
\end{lem}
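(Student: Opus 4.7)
The plan is to compute $\mathscr{L}_\Sigma r^d$ in closed form and then observe that, after the obvious $\tfrac{d}{2}r^d$ contribution cancels, what is left is a strictly negative leading-order term $-\tfrac12 r^d$ coming from the zeroth-order part of $\mathscr{L}_\Sigma$, which dominates all lower-order corrections once $r$ is large. So the proof is essentially a bookkeeping exercise; there is no substantial analytic obstacle, just the need to keep the sign conventions straight.

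Using the convention $\mathscr{L}_\Sigma=\Delta_\Sigma+\tfrac12\mathbf{x}\cdot\nabla_\Sigma-\tfrac12$ fixed by equation \eqref{RegVectorEqn}, I would first compute $\nabla_\Sigma|\mathbf{x}|^2=2\mathbf{x}^\top$ and $\Delta_\Sigma|\mathbf{x}|^2=2n+2\mathbf{x}\cdot\mathbf{H}_\Sigma$, then invoke the self-expander equation $\mathbf{H}_\Sigma=\mathbf{x}^\perp/2$ to rewrite the latter as $2n+|\mathbf{x}^\perp|^2$. Applying the chain rule to $r^d=(|\mathbf{x}|^2)^{d/2}$ and combining with $|\mathbf{x}^\top|^2+|\mathbf{x}^\perp|^2=r^2$ then gives
$$
\mathscr{L}_\Sigma r^d = d(d-2)r^{d-4}|\mathbf{x}^\top|^2+\frac{d-1}{2}r^d+dnr^{d-2}.
$$
Thus
$$
\mathscr{L}_\Sigma r^d-\frac{d}{2}r^d=d(d-2)r^{d-4}|\mathbf{x}^\top|^2-\frac12 r^d+dn r^{d-2},
$$
and bounding $|\mathbf{x}^\top|^2\le r^2$ yields the uniform estimate
$$
\mathscr{L}_\Sigma r^d-\frac{d}{2}r^d\le\bigl(|d(d-2)|+|d|n\bigr)r^{d-2}-\frac12 r^d.
$$

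The right-hand side is strictly negative as soon as $r^2>2(|d(d-2)|+|d|n)$, so choosing $R_0=\max\{1,\sqrt{2(|d(d-2)|+|d|n)}+1\}$ suffices (in fact this constant depends only on $n$ and $d$). The conceptually essential observation is that the $-\tfrac12$ summand in the definition of $\mathscr{L}_\Sigma$ is precisely what furnishes the $-\tfrac12 r^d$ gap after the expected $\tfrac{d}{2}r^d$ cancellation; without it the inequality would fail. The self-expander hypothesis enters only through the identity $\mathbf{x}\cdot\mathbf{H}_\Sigma=|\mathbf{x}^\perp|^2/2$, but since $|\mathbf{x}^\perp|^2$ is absorbed into $r^2$ via the Pythagorean identity, no asymptotic decay of $\mathbf{x}^\perp$ is actually needed for this barrier estimate.
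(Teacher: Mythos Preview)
Your computation is correct. The closed-form identity
\[
\mathscr{L}_\Sigma r^d = d(d-2)r^{d-4}|\mathbf{x}^\top|^2+\frac{d-1}{2}r^d+dnr^{d-2}
\]
is right, and the subsequent bound and choice of $R_0$ are fine. In fact you obtain a slight sharpening: your $R_0$ depends only on $n$ and $d$, not on $\Sigma$.

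Your route differs from the paper's. The paper does not use the self-expander equation at all here; it writes $\Delta_\Sigma r^d$ via the ambient Hessian and invokes only the asymptotically conical hypothesis (so that $|\mathbf{H}_\Sigma|=O(r^{-1})$ and $|\mathbf{x}^\perp|=o(r)$) to conclude $\Delta_\Sigma r^d=O(r^{d-2})$ and $\mathbf{x}\cdot\nabla_\Sigma r^d=(d+o(1))r^d$, whence $\mathscr{L}_\Sigma r^d=\tfrac12(d-1+o(1))r^d$. So the two proofs lean on complementary halves of the standing hypotheses of Section~\ref{FredholmSec}: the paper's argument would go through for any $\Sigma\in\mathcal{ACH}^{k,\alpha}_n$ that is not a self-expander, while yours would go through for any self-expander that is not asymptotically conical. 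Your approach buys an explicit, $\Sigma$-independent constant; the paper's buys robustness under dropping the self-expander condition. Either is perfectly adequate for the use made of the lemma in Proposition~\ref{C0DecayProp}.
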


\begin{proof} 
On $\Sigma\setminus B_1$ we compute
$$
\Delta_\Sigma r^d=\Delta r^d-\nabla^2 r^d(\mathbf{n}_\Sigma,\mathbf{n}_\Sigma)+\mathbf{H}_\Sigma\cdot\nabla r^d=O(r^{d-2}),
$$
where we used the hypothesis that $\Sigma\in\mathcal{ACH}^{k,\alpha}_n$ in the last equality. Similarly,
$$
\mathbf{x}\cdot\nabla_\Sigma r^d=d\left(1-|\mathbf{x}^\perp|^2r^{-2}\right)r^{d}=\left(d+o(1)\right)r^d.
$$
It follows from these two identities that 
$$
\mathscr{L}_\Sigma r^d=\frac{1}{2}\left(d-1+o(1)\right) r^d,
$$
proving the claim.
\end{proof}

\begin{prop} \label{C0DecayProp}
Given $f\in C^{k-2,\alpha}_{loc}\cap C^{0}_{-1}(\Sigma)$ there is a unique solution in $C^{k,\alpha}_{loc}\cap C^0_{-1}(\Sigma)$ of $\mathscr{L}_\Sigma u=f$. Moreover, there is a constant $C_0=C_0(\Sigma,n)$ so that
$$
\Vert u\Vert_{0}^{(-1)} \leq C_0 \Vert f\Vert_{0}^{(-1)}.
$$
\end{prop}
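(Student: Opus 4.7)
The strategy is to combine the maximum principle (Lemma \ref{C0EstLem}) with the barrier estimate (Lemma \ref{BarrierLem}) in an exhaustion argument. \emph{Uniqueness} is immediate: if $w \in C^{k,\alpha}_{loc}\cap C^0_{-1}(\Sigma)$ solves $\mathscr{L}_\Sigma w = 0$, then $|w(p)| \leq \|w\|_0^{(-1)}(|\mathbf{x}(p)|+1)^{-1}$, and for a.e.\ $R>0$ the set $\Sigma \cap \partial B_R$ is smooth (Sard), so Lemma \ref{C0EstLem} applied with $f=0$ gives
$$\max_{\Sigma \cap \bar{B}_R}|w| \;\leq\; \max_{\Sigma \cap \partial B_R}|w| \;\leq\; \frac{\|w\|_0^{(-1)}}{R+1},$$
which forces $w \equiv 0$ as $R \to \infty$.

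\emph{Existence.} Fix $R_0 = R_0(\Sigma,n)$ as in Lemma \ref{BarrierLem} with $d=-1$, so $\mathscr{L}_\Sigma |\mathbf{x}|^{-1} < -\tfrac12|\mathbf{x}|^{-1}$ on $\Sigma\setminus\bar{B}_{R_0}$. For each admissible $R>R_0+1$ (meaning $\Sigma$ meets $\partial B_R$ transversally, true a.e.\ by Sard), standard linear elliptic theory on the compact manifold-with-boundary $\Sigma\cap\bar{B}_R$ produces a unique solution $u_R \in C^{k,\alpha}(\Sigma\cap\bar{B}_R)$ of $\mathscr{L}_\Sigma u_R = f$ with $u_R|_{\Sigma\cap\partial B_R}=0$, with triviality of the kernel of the Dirichlet problem supplied by Lemma \ref{C0EstLem}. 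That same lemma yields the crude bound $\|u_R\|_{L^\infty} \leq 2\|f\|_{L^\infty} \leq 2\|f\|_0^{(-1)}$. To upgrade this to the weighted estimate, set $A = 2(R_0+1)\|f\|_0^{(-1)}$ and compare $\pm u_R$ with the barrier $b = A|\mathbf{x}|^{-1}$ on $\Sigma \cap (B_R\setminus\bar{B}_{R_0})$. Using Lemma \ref{BarrierLem} and the pointwise bound $|f|\leq \|f\|_0^{(-1)}|\mathbf{x}|^{-1}$,
$$\mathscr{L}_\Sigma(\pm u_R - b) \;\geq\; \left(\tfrac{A}{2} - \|f\|_0^{(-1)}\right)|\mathbf{x}|^{-1} \;\geq\; 0$$
on this annulus; on $\Sigma\cap\partial B_{R_0}$ the crude bound and choice of $A$ give $\pm u_R - b \leq 2\|f\|_0^{(-1)} - A/R_0 \leq 0$, and on $\Sigma\cap\partial B_R$ the inequality is trivial. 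The weak maximum principle therefore gives $|u_R|\leq b$ on the annulus, which combined with the interior crude bound yields the uniform estimate $\|u_R\|_0^{(-1)} \leq C_0(\Sigma,n)\|f\|_0^{(-1)}$.

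\emph{Passage to the limit and main obstacle.} Since $k\geq 2$, the coefficients of $\mathscr{L}_\Sigma$ are of class $C^{k-2,\alpha}_{loc}$, so interior Schauder estimates together with the uniform $L^\infty$ bound furnish uniform $C^{k,\alpha}$ bounds on every compact subset of $\Sigma$. Arzel\`a--Ascoli extracts a subsequence $u_{R_j}\to u$ converging in $C^{k,\alpha'}_{loc}$ for any $\alpha'<\alpha$; by weak closedness $u\in C^{k,\alpha}_{loc}(\Sigma)$, it solves $\mathscr{L}_\Sigma u = f$, and the weighted estimate passes to the limit. The delicate step throughout is the barrier argument, which requires choosing a single constant $A$ that simultaneously makes $b = A|\mathbf{x}|^{-1}$ a strict supersolution at infinity (forcing $A\gtrsim \|f\|_0^{(-1)}$, supplied by Lemma \ref{BarrierLem}) and dominates the crude $L^\infty$ bound on $\Sigma\cap\partial B_{R_0}$ (forcing $A\gtrsim R_0\|f\|_0^{(-1)}$, supplied by Lemma \ref{C0EstLem}); everything else reduces to standard elliptic theory on compact manifolds with boundary.
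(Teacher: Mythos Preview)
Your proof is correct and follows essentially the same approach as the paper: solve the Dirichlet problem $\mathscr{L}_\Sigma u_R = f$, $u_R|_{\partial B_R}=0$ on an exhaustion by balls, obtain a uniform $C^0$ bound from Lemma~\ref{C0EstLem}, upgrade it to the weighted $C^0_{-1}$ bound via the barrier $c\,r^{-1}$ supplied by Lemma~\ref{BarrierLem}, and pass to the limit using interior Schauder estimates and Arzel\`a--Ascoli, with uniqueness coming from the maximum principle. The only differences are cosmetic---your choice of constant $A = 2(R_0+1)\Vert f\Vert_0^{(-1)}$ versus the paper's $c = 4R_0\Vert f\Vert_0^{(-1)}$, and your explicit invocation of Sard's theorem to ensure $\Sigma\cap\partial B_R$ is smooth, which the paper leaves implicit.
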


\begin{proof}
By \cite[Theorem 6.14]{GilbargTrudinger} the Dirichlet problem
$$
\left\{
\begin{array}{cc}
\mathscr{L}_\Sigma u_R=f & \mbox{in $\Sigma\cap B_R$} \\
u_R=0 & \mbox{on $\Sigma\cap\partial B_R$}
\end{array}
\right.
$$
has a unique solution in $C^{k,\alpha}(\Sigma\cap\bar{B}_R)$. Moreover, by Lemma \ref{C0EstLem}, 
\begin{equation} \label{C0ApproxEqn}
\Vert u_R\Vert_{0}\leq 2\Vert f\Vert_{0} \leq 2\Vert f\Vert_{0}^{(-1)}.
\end{equation}
Thus, setting $c=4R_0\Vert f\Vert_{0}^{(-1)}$, for all $R>R_0(\Sigma,n,-1)$, 
$$
2|f|<cr^{-1} \mbox{ in $\Sigma\cap (B_R\setminus\bar{B}_{R_0})$ and } |u_R|<cr^{-1} \mbox{ on $\Sigma\cap\partial B_{R_0}$}.
$$
Thus, by Lemma \ref{BarrierLem} and the elliptic maximum principle, 
\begin{equation} \label{C0DecayApproxEqn}
|u_R|<cr^{-1} \mbox{ in $\Sigma\cap (\bar{B}_R\setminus B_{R_0})$}.
\end{equation}
Hence, combining \eqref{C0ApproxEqn} and \eqref{C0DecayApproxEqn} gives
\begin{equation} \label{WeightC0ApproxEqn}
\sup_{\Sigma\cap \bar{B}_{R}} \left(1+r\right) |u_R| \leq C(R_0)\Vert f\Vert_{0}^{(-1)}.
\end{equation}

Therefore, invoking \eqref{WeightC0ApproxEqn} and the elliptic Schauder estimates \cite[Theorem 6.2]{GilbargTrudinger}, passing $R\to\infty$, it follows from the Arzel\`{a}-Ascoli theorem that there is a solution to $\mathscr{L}_\Sigma u=f$ in $C^{k,\alpha}_{loc}(\Sigma)$ that satisfies
$$
\Vert u\Vert_{0}^{(-1)} \leq C\Vert f\Vert_{0}^{(-1)}.
$$
The uniqueness of such solutions is implied by the elliptic maximum principle.
\end{proof}

\begin{lem} \label{DerivativeDecayLem}
Given $f\in C^{k,\alpha}_{loc}\cap C^{\alpha}_{-1}(\Sigma)$ let $u$ be the solution given in Proposition \ref{C0DecayProp}. There exist constants $C_i$, $i=1,2$, depending only on $\Sigma$, $n$, and $\alpha$, so that 
$$
\Vert\nabla^i_\Sigma u\Vert_{0}^{(-1)} \leq C_i \Vert f\Vert_{\alpha}^{(-1)}.
$$
\end{lem}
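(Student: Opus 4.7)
The plan is to combine the $C^0$ decay estimate from Proposition \ref{C0DecayProp} with interior Schauder estimates for $\mathscr{L}_\Sigma$, leveraging the rescaling characterization of weighted norms from Item \eqref{ListProp1} of Proposition \ref{ListProp}. The overall strategy is: handle a compact core with standard interior Schauder, and handle the asymptotic region by rescaling to unit-scale annuli on $\rho^{-1}\Sigma$, where the equation becomes uniformly tractable and the weighted derivative bound follows from the $C^0$ decay after translating back.

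On a bounded region $\Sigma\cap \bar B_{R_0}$ for some fixed $R_0$, the coefficients of the elliptic operator $\mathscr{L}_\Sigma$ are uniformly $C^\alpha$, so standard interior Schauder estimates applied to $\mathscr{L}_\Sigma u=f$ give $\Vert u\Vert_{C^{2,\alpha}(\Sigma\cap \bar B_{R_0})}\leq C(\Vert u\Vert_{C^0}+\Vert f\Vert_{C^\alpha})\leq C\Vert f\Vert_\alpha^{(-1)}$, where the last step uses Proposition \ref{C0DecayProp}. This produces the desired pointwise bounds on $\nabla u$ and $\nabla^2 u$ in the compact part.

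For the asymptotic region, given $\rho\geq R_0$, I rescale by passing to $\tilde\Sigma_\rho=\rho^{-1}\Sigma$, which converges in $C^k_{loc}(\mathbb{R}^{n+1}\setminus\{\mathbf{0}\})$ to the asymptotic cone $\mathcal{C}(\Sigma)$. Define $v_\rho(q)=\rho\, u(\rho q)$ and $g_\rho(q)=\rho\, f(\rho q)$ on $\tilde\Sigma_\rho$. Proposition \ref{C0DecayProp} and the hypothesis on $f$ give $\Vert v_\rho\Vert_{C^0}, \Vert g_\rho\Vert_{C^\alpha}\leq C\Vert f\Vert_\alpha^{(-1)}$ uniformly in $\rho$ on an annulus $A=\tilde\Sigma_\rho\cap (\bar B_2\setminus B_{1/2})$, and a direct computation using $\mathbf{x}(\rho q)=\rho\,\mathbf{x}(q)$ shows
$$
\rho^{-2}\Delta_{\tilde\Sigma_\rho} v_\rho+\tfrac{1}{2}\mathbf{x}\cdot\nabla_{\tilde\Sigma_\rho} v_\rho=g_\rho.
$$
Multiplying by $\rho^2$ rewrites this as $\Delta_{\tilde\Sigma_\rho} v_\rho=\rho^2 g_\rho-\tfrac{\rho^2}{2}\mathbf{x}\cdot\nabla_{\tilde\Sigma_\rho} v_\rho$. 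Applying interior Schauder on a slightly shrunken annulus $A'\subset A$, together with the interpolation $\Vert \nabla v_\rho\Vert_{C^0}\leq \epsilon \Vert \nabla^2 v_\rho\Vert_{C^0}+C\epsilon^{-1}\Vert v_\rho\Vert_{C^0}$ with $\epsilon\sim \rho^{-2}$, yields $\Vert \nabla v_\rho\Vert_{C^0(A')}\leq C\rho\Vert f\Vert_\alpha^{(-1)}$ and $\Vert \nabla^2 v_\rho\Vert_{C^0(A')}\leq C\rho^2\Vert f\Vert_\alpha^{(-1)}$ uniformly in $\rho$. Translating back through $v_\rho=\rho\, u\circ\mathscr{D}_\rho$ gives $|\nabla^i u(p)|\leq C r_p^{-1}\Vert f\Vert_\alpha^{(-1)}$ for $r_p\sim \rho$ and $i=1,2$, which upgrades to the weighted norm bound via Item \eqref{ListProp1} of Proposition \ref{ListProp}.

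The main obstacle is dealing with the singular-perturbation character of the rescaled equation, whose leading coefficient $\rho^{-2}$ is small for large $\rho$; standard elliptic regularity does not apply uniformly in $\rho$, and one must carefully combine the bounds on $\Delta_{\tilde\Sigma_\rho} v_\rho$ coming from the equation with the $C^0$ control on $v_\rho$ through interpolation in order to obtain $\nabla v_\rho$ and $\nabla^2 v_\rho$ growing at the precise rates $\rho$ and $\rho^2$ that are needed to match the weight $-1$ in the claim.
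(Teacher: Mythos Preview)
Your approach has a genuine gap at the crucial step. The operator $\mathscr{L}_\Sigma=\Delta_\Sigma+\tfrac{1}{2}\mathbf{x}\cdot\nabla_\Sigma-\tfrac{1}{2}$ has an unbounded drift, and after your rescaling the equation becomes $\rho^{-2}\Delta_{\tilde\Sigma_\rho} v_\rho+\tfrac{1}{2}\mathbf{x}\cdot\nabla_{\tilde\Sigma_\rho} v_\rho-\tfrac{1}{2}v_\rho=g_\rho$ (you dropped the zeroth-order term, but that is minor). This is a genuinely singularly perturbed elliptic problem: the ellipticity constant is $\rho^{-2}$, so interior Schauder for the full operator does not apply uniformly in $\rho$, and rewriting it as $\Delta_{\tilde\Sigma_\rho} v_\rho=\rho^2(\cdots)-\tfrac{\rho^2}{2}\mathbf{x}\cdot\nabla_{\tilde\Sigma_\rho} v_\rho$ puts a factor $\rho^2$ in front of $\nabla v_\rho$ on the right. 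Your proposed remedy, interpolation with $\epsilon\sim\rho^{-2}$, does not close: feeding $\Vert\nabla v_\rho\Vert\le \epsilon\Vert\nabla^2 v_\rho\Vert+C\epsilon^{-1}\Vert v_\rho\Vert$ with $\epsilon=\delta\rho^{-2}$ into the estimate leaves a term $C\delta^{-1}\rho^{4}M$ on the right, and moreover the $\nabla^2 v_\rho$ term appears on the \emph{larger} domain, so absorption fails. Even if it could be absorbed you would obtain $\Vert\nabla^2 v_\rho\Vert\lesssim\rho^4$, which unravels to $|\nabla_\Sigma^2 u|\lesssim r$, the wrong rate.

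The paper resolves this by exploiting that $\Sigma$ is a self-expander: setting $v(p,t)=t^{1/2}u(t^{-1/2}p)$ on the mean curvature flow $\Sigma_t=\sqrt{t}\,\Sigma$ converts the unbounded drift $\tfrac{1}{2}\mathbf{x}\cdot\nabla_\Sigma$ into the time derivative $\partial_t$, so $v$ satisfies a genuinely parabolic equation on $\Sigma_t$ near a fixed point in the annulus, with coefficients uniformly controlled as $t\to 0$ since $\Sigma_t\to\mathcal{C}(\Sigma)$ in $C^k_{loc}$. Parabolic Schauder then gives uniform $C^2$ bounds on $v$ at $t=1$, which translate back to the claimed weighted decay for $\nabla_\Sigma u$ and $\nabla_\Sigma^2 u$. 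The passage to a parabolic problem is the key idea you are missing; there is no purely elliptic shortcut here via interpolation.
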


\begin{proof}
Let $\Sigma_t=\sqrt{t}\, \Sigma$ for $t>0$. As $\Sigma$ is a self-expander, $\{\Sigma_t\}_{t>0}$ is a MCF. As $\Sigma\in\mathcal{ACH}^{k,\alpha}_n$, it follows from the interior estimates for graphical MCF (cf. \cite[Theorem 3.4]{EHInterior}) that there is an $R=R(\Sigma)$ sufficiently large so that for any $p\in\Sigma\setminus B_R$ and $t\in \left[\frac{1}{2},1\right]$, each $\Sigma_t\cap B_2^{n+1}(p)$ can be parametrized by the map $\Psi_{p}(\cdot, t)\colon \Omega_{p}(t)\subset T_{p}\Sigma\to\mathbb{R}^{n+1}$ given by 
$$
\Psi_{p}(x,t)=\mathbf{x}(p)+\mathbf{x}(x)+\psi_{p}(x,t)\mathbf{n}_\Sigma(p),
$$ 
satisfying that $\psi_{p}(\mathbf{0},1)=0$ and 
\begin{equation} \label{GraphEstEqn}
\sum_{i=0}^3 |\nabla^i\psi_{p}|+\sum_{i=0}^2 |\partial_t\nabla^i\psi_{p}|\leq 10^{-1},
\end{equation}
where $\nabla$ is the flat connection on $T_p\Sigma$. Clearly, $B_1^n\subset\Omega_{p}(t)$.

Fix any $p\in\Sigma\setminus B_R$. We next define a function $v_{p}$ on $Q=B_1^n\times \left[\frac{1}{2},1\right]$ by
$$
v_{p}(x,t)=t^{\frac{1}{2}} u(t^{-\frac{1}{2}} \Psi_{p}(x,t)).
$$
As $\mathscr{L}_\Sigma u=f$, it follows from the chain rule that 
$$
\partial_t v_{p}-\Delta_t v_{p}-\partial_t \psi_{p}\mathbf{n}_\Sigma^\top(p) \cdot\nabla_t v_{p}=g_{p}
$$
where $\nabla_t$ and $\Delta_t$ are the gradient and Laplacian, respectively,  for the pull-back metric via $\Psi_{p}(\cdot, t)$,  $\mathbf{n}_\Sigma^\top(p)$ is the pull-back of the projection of $\mathbf{n}_\Sigma(p)$ to the appropriate tangent space of $\sqrt{t}\, \Sigma$ and 
$$
g_{p}(x,t)=-t^{-\frac{1}{2}} f(t^{-\frac{1}{2}}\Psi_{p}(x,t)).
$$

Next we will show that $g_{p}$ is H\"{o}lder continuous in space-time. More precisely, 
$$
\Vert g_{p}\Vert_{0;Q}+\sup_{(x,t),(y,s)\in Q}\frac{|g_{p}(x,t)-g_{p}(y,s)|}{|x-y|^\alpha+|t-s|^{\frac{\alpha}{2}}} \leq C\Vert f\Vert_{\alpha}^{(-1)}|\mathbf{x}(p)|^{-1}
$$
for some $C=C(\alpha)$. First, by \eqref{GraphEstEqn}, 
$$
\frac{1}{2}|\mathbf{x}(p)|<|\Psi_{p}(x,t)|<2|\mathbf{x}(p)|,
$$
and thus 
$$
\Vert g_{p}\Vert_{0;Q} \leq 2\Vert f\Vert_{0}^{(-1)}|\mathbf{x}(p)|^{-1}.
$$
Next, for $(x,t), (y,s)\in Q$, 
\begin{align*}
 |t^{-\frac{1}{2}}\Psi_{p}(x,t)-s^{-\frac{1}{2}}\Psi_{p}(y,s)|
& \leq |t^{-\frac{1}{2}}-s^{-\frac{1}{2}}| |\Psi_{p}(x,t)|+s^{-\frac{1}{2}} |\Psi_{p}(x,t)-\Psi_{p}(y,s)| \\
&\leq \left(4|\mathbf{x}(p)|+1\right) |s-t|+|x-y|,
\end{align*}
where we used \eqref{GraphEstEqn} in the last inequality. It follows that
\begin{align*}
|g_{p}(x,t)-g_{p}(y,s)| & \leq s^{-\frac{1}{2}} |f(t^{-\frac{1}{2}}\Psi_{p}(x,t))-f(s^{-\frac{1}{2}}\Psi_{p}(y,s))| \\
& \quad +|t^{-\frac{1}{2}}-s^{-\frac{1}{2}}| |f(t^{-\frac{1}{2}}\Psi_{p}(x,t))| \\
& \leq C\Vert f\Vert_{\alpha}^{(-1)} |\mathbf{x}(p)|^{-1}\left(|x-y|^\alpha+|s-t|^{\frac{\alpha}{2}}\right),
\end{align*}
proving the claim.

Finally, combining the hypothesis on $u$ and the preceding discussions, it follows from the parabolic Schauder estimates (cf. \cite[Chapter 4, Theorem 10.1]{LSU}) that
$$
\sum_{i=1}^2 |\nabla^i v_{p}(\mathbf{0},1)|\leq C^\prime(n,\alpha)\Vert f\Vert_{\alpha}^{(-1)} |\mathbf{x}(p)|^{-1}.
$$
This together with \eqref{GraphEstEqn} implies 
$$
\sum_{i=1}^2 |\nabla_\Sigma^i u(p)| \leq C^\prime \Vert f\Vert_{\alpha}^{(-1)} |\mathbf{x}(p)|^{-1}.
$$
As $p\in\Sigma\setminus B_R$ was arbitrary, it remains only to obtain the appropriate bounds in $\bar{B}_R\cap \Sigma$. Here one appeals to standard elliptic Schauder estimates for compact domains.
\end{proof}

\begin{lem} \label{CauchyProbLem}
Given $\beta\in (0,1)$ there is an $\epsilon_0=\epsilon_0(n,\beta)$ sufficiently small so that if $a^{ij}\in C^0((0,1); C^{\beta}(\mathbb{R}^n))$ for $1\leq i,j\leq n$ satisfy  
$$
\sup_{0<t<1}\sum_{i,j=1}^n\Vert a^{ij}(\cdot,t)-\delta_{ij}\Vert_{\beta} \leq \epsilon_0,
$$
then given $h\in C^0((0,1); C^{\beta}(\mathbb{R}^n))$ the Cauchy problem
\begin{equation} \label{CauchyProbEqn}
\left\{
\begin{array}{cc}
\partial_t w-\sum\limits_{i,j=1}^n a^{ij}\partial^2_{x_ix_j}w=h & \mbox{in $\mathbb{R}^n\times (0,1)$} \\
\displaystyle\lim_{(x^\prime,t)\to (x,0)} w(x^\prime,t)=0 & \mbox{for $x\in\mathbb{R}^n$}
\end{array}
\right.
\end{equation}
has a unique solution $w\in C^0((0,1); C^{2,\beta}(\mathbb{R}^n))$. Moreover, for some $C_3=C_3(n,\beta)$,
\begin{equation} \label{TimeDecayEqn}
\sup_{0<t<1}\sum_{i=0}^2 t^{\frac{i-2}{2}} \Vert\nabla^iw(\cdot, t)\Vert_{\beta} \leq C_3 \sup_{0<t<1} \Vert h(\cdot,t)\Vert_{\beta}.
\end{equation}
\end{lem}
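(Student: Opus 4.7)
The plan is to reduce the variable-coefficient Cauchy problem to the pure heat equation via a contraction mapping on a weighted Banach space, and to handle the heat-equation base case via the heat kernel together with classical parabolic H\"{o}lder--Schauder estimates. Introduce the Banach space $X_\beta$ consisting of those $w\in C^0((0,1); C^{2,\beta}(\mathbb{R}^n))$ with finite norm
\begin{equation*}
\Vert w\Vert_{X_\beta}=\sup_{0<t<1}\sum_{i=0}^2 t^{\frac{i-2}{2}}\Vert\nabla^i w(\cdot,t)\Vert_\beta,
\end{equation*}
so that the asserted conclusion \eqref{TimeDecayEqn} is precisely $\Vert w\Vert_{X_\beta}\le C_3 \sup_{0<t<1}\Vert h(\cdot,t)\Vert_\beta$.

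First I would treat the constant-coefficient case $a^{ij}=\delta_{ij}$, for which the unique solution with vanishing initial trace is given explicitly by the heat potential
\begin{equation*}
w(x,t)=\int_0^t\int_{\mathbb{R}^n}\Phi(x-y,t-s)\,h(y,s)\,dy\,ds, \quad \Phi(x,t)=(4\pi t)^{-n/2}e^{-\frac{|x|^2}{4t}}.
\end{equation*}
For $i\in\{0,1\}$ the weighted bound follows by moving at most one derivative onto $\Phi$, applying $\Vert \Phi(\cdot,t-s)*f\Vert_\beta\le\Vert f\Vert_\beta$, and integrating the factor $(t-s)^{(i-1)/2}$ over $[0,t]$. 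The case $i=2$ is the classical parabolic Schauder estimate for the inhomogeneous heat equation with zero initial data: since $\nabla^2\Phi$ is not integrable in time as $s\uparrow t$, one exploits the cancellation $\int_{\mathbb{R}^n}\nabla^2\Phi(y,\tau)\,dy=0$ to replace $h(y,s)$ by $h(y,s)-h(x,s)$ under the integral, and the $C^\beta$ regularity of $h$ then makes everything converge and yields a uniform-in-$t$ bound on $\Vert\nabla^2 w(\cdot,t)\Vert_\beta$.

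With the constant-coefficient estimate in hand, I would rewrite \eqref{CauchyProbEqn} as the heat equation with forcing $\tilde h(w):=h+\sum_{i,j}(a^{ij}-\delta_{ij})\partial^2_{x_ix_j}w$ and define $T\colon X_\beta\to X_\beta$ to send $w$ to the heat potential of $\tilde h(w)$; that $\tilde h(w)\in C^0((0,1); C^\beta(\mathbb{R}^n))$ uses the multiplicative inequality for $C^\beta$ norms and the smallness hypothesis on $a^{ij}-\delta_{ij}$. The base estimate then yields, for some $K=K(n,\beta)$,
\begin{equation*}
\Vert T(w_1)-T(w_2)\Vert_{X_\beta}\le K\sup_{0<t<1}\Vert \tilde h(w_1)-\tilde h(w_2)\Vert_\beta \le K\epsilon_0\Vert w_1-w_2\Vert_{X_\beta},
\end{equation*}
so choosing $\epsilon_0<1/(2K)$ makes $T$ a contraction on $X_\beta$; its unique fixed point solves \eqref{CauchyProbEqn}, and \eqref{TimeDecayEqn} reads off directly from the $X_\beta$-bound with $C_3=2K$. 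Uniqueness in $X_\beta$ follows by applying the same contraction to the difference of two solutions, which satisfies the homogeneous problem.

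The main obstacle is the base Schauder estimate for $i=2$: the uniform-in-$t$ bound on $\Vert\nabla^2 w(\cdot,t)\Vert_\beta$ is the only place in the argument where the H\"{o}lder regularity of $h$ is genuinely used, and it requires the careful near-diagonal/far-field splitting of the heat potential described above. Once this model estimate is established, the rest of the proof is a soft contraction argument on the weighted Banach space $X_\beta$.
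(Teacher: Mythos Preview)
Your approach is essentially the paper's: both prove the base case $a^{ij}=\delta_{ij}$ via the heat potential and parabolic Schauder estimates (the paper relegates this to an appendix), then treat the variable-coefficient equation as a perturbation of the heat equation with forcing $\tilde h=h+\sum(a^{ij}-\delta_{ij})\partial^2_{x_ix_j}w$ and absorb the $\epsilon_0\Vert\nabla^2 w\Vert_\beta$ term. The only methodological difference is that the paper obtains existence by the method of continuity together with the a priori estimate, whereas you run a Banach fixed point on $X_\beta$; these are interchangeable here.

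There is one genuine gap. Your contraction yields uniqueness only within $X_\beta$, but the lemma asserts uniqueness in the larger class $C^0((0,1);C^{2,\beta}(\mathbb{R}^n))$, where a competing solution is not assumed a priori to satisfy the weighted bound $t^{-1}\Vert w(\cdot,t)\Vert_\beta<\infty$. The paper closes this via a parabolic maximum principle for bounded solutions with zero initial data (their Proposition~A.1): the difference of two solutions in $C^0((0,1);C^{2,\beta}(\mathbb{R}^n))$ is bounded, solves the homogeneous equation, and vanishes at $t=0$, hence is identically zero. You should invoke the same maximum principle (or, equivalently, first show any such solution lies in $X_\beta$ and then apply your contraction uniqueness); the fixed-point argument alone does not rule out solutions outside $X_\beta$.
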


\begin{rem} \label{CauchyProbRem}
By a change of variables, it is easy to see that Lemma \ref{CauchyProbLem} still holds true if we replace the identity matrix $(\delta_{ij})$ by any positive definite constant matrix. 
\end{rem}

\begin{proof}
When $a^{ij}=\delta_{ij}$, the lemma was known as a classical result for the heat equation on $\mathbb{R}^n$. For reader's convenience, we include a proof of that in Appendix \ref{EstHeatEqnApp}. For general $(a^{ij})$ that are small perturbations of the identity matrix, we appeal to the method of continuity to establish the existence of solutions; see, for instance, \cite[Theorem 5.2]{GilbargTrudinger}. The uniqueness follows from a parabolic maximum principle, Proposition \ref{MaxPrincipleProp}. Thus, it remains to prove the a priori estimate \eqref{TimeDecayEqn} for solutions $w\in C^0((0,1); C^{2,\beta}(\mathbb{R}^n))$.

The problem \eqref{CauchyProbEqn} can be rewritten as follows:
$$
\left\{
\begin{array}{cc}
\partial_t w-\Delta w=\tilde{h} & \mbox{in $\mathbb{R}^n\times (0,1)$} \\
\displaystyle \lim_{(x^\prime,t)\to (x,0)}w(x^\prime,t)=0 & \mbox{for $x\in\mathbb{R}^n$}
\end{array}
\right.
$$
where 
$$
\tilde{h}=\sum_{i,j=1}^n(a^{ij}-\delta_{ij}) \partial^2_{x_ix_j} w+h.
$$
Using the estimate $|| f g||_{\beta}\leq 2 ||f||_\beta ||g||_\beta$ we obtain
$$
\Vert\tilde{h}(\cdot,t)\Vert_{\beta} \leq 2\sum_{i,j=1}^n \Vert a^{ij}(\cdot,t)-\delta_{ij}\Vert_{\beta}\Vert \partial^2_{x_ix_j} w(\cdot,t)\Vert_{\beta}+\Vert h(\cdot,t)\Vert_{\beta}.
$$
Thus, it follows from Proposition \ref{SpecialCauchyProbProp} that 
$$
\sup_{0<t<1}\sum_{i=0}^2 t^{\frac{i-2}{2}} \Vert\nabla^iw(\cdot,t)\Vert_{\beta} \leq C\epsilon_0 \sup_{0<t<1}\Vert\nabla^2w(\cdot,t)\Vert_{\beta}+C\sup_{0<t<1}\Vert h(\cdot,t)\Vert_{\beta}
$$
for some $C=C(n,\beta)$. Now, choosing $\epsilon_0=1/(2C)$, the first term on the right hand side can be absorbed into the left, so
$$
\sup_{0<t<1} \sum_{i=0}^2 t^{\frac{i-2}{2}} \Vert\nabla^i w(\cdot,t)\Vert_{\beta} \leq 2C\sup_{0<t<1}\Vert h(\cdot,t)\Vert_{\beta},
$$
completing the proof. 
\end{proof}

\begin{thm} \label{AsymptoticDirichletThm}
Given $f\in C^{k-2,\alpha}_{-1}(\Sigma)$ there is a unique solution in $\mathcal{D}^{k,\alpha}(\Sigma)$ of $\mathscr{L}_\Sigma u=f$ satisfying
$$
\Vert u\Vert_{k,\alpha}^* \leq C_4 \Vert f\Vert_{k-2,\alpha}^{(-1)}
$$
for some $C_4=C_4(\Sigma,n,k,\alpha)$.
\end{thm}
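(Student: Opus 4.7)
Existence and uniqueness in $\mathcal{D}^{k,\alpha}(\Sigma)$ reduce to Proposition \ref{C0DecayProp}. Since $f\in C^{k-2,\alpha}_{-1}(\Sigma)\subset C^0_{-1}(\Sigma)$, that proposition yields a unique $u\in C^{k,\alpha}_{loc}\cap C^0_{-1}(\Sigma)$ solving $\mathscr{L}_\Sigma u=f$ with $\|u\|_0^{(-1)}\le C\|f\|_0^{(-1)}$. Uniqueness in $\mathcal{D}^{k,\alpha}(\Sigma)$ follows immediately: two such solutions differ by an element of $C^0_{-1}(\Sigma)$ annihilated by $\mathscr{L}_\Sigma$, and must vanish by the same proposition. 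What remains is the quantitative estimate $\|u\|_{k,\alpha}^*\le C_4\|f\|_{k-2,\alpha}^{(-1)}$.

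The H\"older pieces of the $\mathcal{D}^{k,\alpha}$-norm will be supplied by extending the parabolic rescaling from Lemma \ref{DerivativeDecayLem} to higher order. At a point $p$ with $|\mathbf{x}(p)|=R\gg 1$, the function $v_p(x,t)=t^{1/2}u(t^{-1/2}\Psi_p(x,t))$ solves a uniformly parabolic equation on the unit cylinder $B_1^n\times[\tfrac12,1]$ whose coefficients lie in $C^{k-2,\alpha}$ uniformly in $p$. Applying higher-order parabolic Schauder estimates — whose frozen-coefficient model is Lemma \ref{CauchyProbLem} together with Remark \ref{CauchyProbRem} — yields $\|v_p\|_{C^{k,\alpha}}\le CR^{-1}\|f\|_{k-2,\alpha}^{(-1)}$, which translates into $C^{k,\alpha}$-bounds for $u$ at $p$ at the $R^{-1}$-rate. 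This handles all the weighted H\"older seminorms appearing in $\|\cdot\|_{k,\alpha}^*$, but only delivers sup-norm decay $|\nabla^j u|\le CR^{-1}$.

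To obtain the refined sup-norm decay $|\nabla^j u|\le C(|\mathbf{x}|+1)^{-1-j}$ of Term~1 for $0\le j\le k-2$ (and the $(|\mathbf{x}|+1)^{1-k}$-decay for $j=k-1,k$ of Terms~2 and~3), I differentiate the equation tangentially. Using the commutator identity $[\tfrac{1}{2}\mathbf{x}\cdot\nabla_\Sigma,\nabla_\Sigma]=-\tfrac{1}{2}\nabla_\Sigma$ modulo curvature terms whose decay is controlled by the second fundamental form of the asymptotically conical $\Sigma$, the tensor $\nabla^j u$ satisfies
$$\Bigl(\Delta_\Sigma+\tfrac{1}{2}\mathbf{x}\cdot\nabla_\Sigma-\tfrac{j+1}{2}\Bigr)\nabla^j u=\nabla^j f+E_j,$$
where $E_j$ is a lower-order error whose $C^0_{-1-j}$-norm is controlled by the already completed stages of the induction. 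Because the zeroth-order coefficient grows more negative with $j$, the barrier $r^{-1-j}$ in Lemma \ref{BarrierLem} remains strict for the shifted operator, so a maximum-principle argument mimicking Proposition \ref{C0DecayProp} gives $\nabla^j u\in C^0_{-1-j}(\Sigma)$ inductively on $0\le j\le k-2$. For $\nabla^{k-1}u$ and $\nabla^k u$, where the direct induction fails because $\nabla^j f$ is no longer pointwise defined, I apply the parabolic Schauder estimate above to the equation for $\nabla^{k-2}u$ — whose sup-norm is now known to decay as $r^{1-k}$ — to conclude $|\nabla^{k-1}u|,|\nabla^k u|\le C(|\mathbf{x}|+1)^{1-k}$. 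Term~4 follows by rearranging the equation: $\mathbf{x}\cdot\nabla_\Sigma u=2f+u-2\Delta_\Sigma u\in C^{k-2,\alpha}_{-1}(\Sigma)$ from the bounds just derived.

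The principal obstacle is the inductive commutator bookkeeping: at each stage one must verify $E_j\in C^0_{-1-j}(\Sigma)$ by tracking the derivatives of the curvature of $\Sigma$, using both the self-expander equation and the asymptotically conical structure to control their decay, and combining these with the previously established decay of $\nabla^i u$ for $i<j$. On the bounded portion of $\Sigma$ the argument is closed out by standard interior elliptic Schauder estimates on compact domains.
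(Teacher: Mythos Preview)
There is a genuine gap in your H\"older estimate. Your assertion that the unit-cylinder parabolic Schauder bound $\|v_p\|_{C^{k,\alpha}}\le CR^{-1}$ ``handles all the weighted H\"older seminorms appearing in $\|\cdot\|_{k,\alpha}^*$'' is incorrect. Every H\"older seminorm in that norm is of the form $[\nabla^j_\Sigma u]_\alpha^{(1-k)}$ for $j\in\{k-2,k-1,k\}$, and by definition this is the H\"older quotient over geodesic balls of radius $\delta_p=\eta(|\mathbf{x}(p)|+1)\sim r$, weighted by $r^{k-1+\alpha}$. Your Schauder estimate on $B_1^n\times[\tfrac12,1]$ is at \emph{unit} scale in $\Sigma$ and delivers only $[\nabla^j_\Sigma u]_{\alpha;\,B_1(p)}\le Cr^{-1}$; after your barrier argument and the second Schauder pass on the equation for $\nabla^{k-2}_\Sigma u$, this improves to $[\nabla^j_\Sigma u]_{\alpha;\,B_1(p)}\le Cr^{1-k}$. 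Combining this with the sup bound $|\nabla^j_\Sigma u|\le Cr^{1-k}$ over the larger ball $B_{\eta r}(p)$ still yields only $[\nabla^j_\Sigma u]_{\alpha;\,B_{\eta r}(p)}\le Cr^{1-k}$, so the weighted seminorm is bounded by $r^{k-1+\alpha}\cdot r^{1-k}=r^{\alpha}$, which diverges. At the top order $j=k$ there is no $\nabla^{k+1}_\Sigma u$ estimate available to interpolate against, so this deficit of $r^{-\alpha}$ cannot be closed within your scheme.

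The paper closes exactly this gap by working in the opposite parabolic regime. Rather than $t\in[\tfrac12,1]$ as in Lemma~\ref{DerivativeDecayLem}, the proof passes to $t\to 0^{+}$, where $\Sigma_t\to\mathcal{C}(\Sigma)$ and, crucially, the localized rescaling $v_q(x,t)=v(\Psi_q(x,t),t)$ tends to $0$ as $t\to 0$ (since $u\in C^0_{-1}$ forces $|v|\le Ct$ on the unit annulus). This is a Cauchy problem with vanishing initial data, to which Lemma~\ref{CauchyProbLem} applies, and its estimate \eqref{TimeDecayEqn} carries the weights $t^{(i-2)/2}$. After unwinding the scaling $\rho=t^{-1/2}$, these weights furnish precisely the missing factor in the H\"older control and simultaneously give the sharp pointwise decay --- so your separate barrier argument becomes unnecessary. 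The induction \eqref{InductiveDecayEqn} then proceeds by differentiating the localized parabolic equation tangentially and reapplying Lemma~\ref{CauchyProbLem} at each step. Your citation of Lemma~\ref{CauchyProbLem} as the ``frozen-coefficient model'' for interior Schauder on $[\tfrac12,1]$ misreads its role: on that interval it gives nothing beyond standard parabolic Schauder; it is the zero-initial-data structure on $(0,\delta^2)$ that makes it the decisive tool here.
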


\begin{proof}
By Proposition \ref{C0DecayProp} and Lemma \ref{DerivativeDecayLem}, and the fact that $f\in C^{0}_{-1}(\Sigma)$, there is a unique solution, $u$, in $C^{k,\alpha}_{loc}\cap\mathcal{D}^{2,0}(\Sigma)$ of $\mathscr{L}_\Sigma u=f$. Moreover, for $i=0,1,2$, this $u$ satisfies
\begin{equation} \label{InitialC2DecayEqn}
\Vert \nabla_\Sigma^i u\Vert_{0}^{(-1)} \leq C_i \Vert f\Vert_{\alpha}^{(-1)}.
\end{equation}
Letting $\Sigma_t=\sqrt{t}\, \Sigma$, we define a function $v(p,t)=t^{\frac{1}{2}} u(t^{-\frac{1}{2}} p)$ for $p\in\Sigma_t$ and $t>0$. In order to show the solution $u\in \mathcal{D}^{k,\alpha}(\Sigma)$, it suffices to establish that for each $0\leq l\leq k-2$, there is $c_l=c_l(\Sigma,n,k,\alpha)$ so that
\begin{equation} \label{InductiveDecayEqn}
\sup_{0<t<1}\sum_{j=0}^2 t^{\frac{j-2}{2}} \Vert\nabla_{\Sigma_t}^{l+j} v(\cdot,t)\Vert_{\alpha; \Sigma_t\cap (B_2\setminus \bar{B}_1)} \leq c_l \Vert f\Vert^{(-1)}_{k-2,\alpha}.
\end{equation}
In what follows we will prove \eqref{InductiveDecayEqn} by induction on $l$.

Denote by $\mathcal{C}$ the asymptotic cone of $\Sigma$. For $\Sigma\in\mathcal{ACH}^{k,\alpha}_n$, there exists $\delta>0$ sufficiently small so that for any $q\in\mathcal{C}\cap (B_2\setminus\bar{B}_1)$ and $t\in (0,\delta^2)$, $\Sigma_t\cap B_{2\delta}^{n+1}(q)$ can be parametrized by the map $\Psi_q(\cdot,t)\colon\Omega_q(t)\subset T_q\mathcal{C}\to\mathbb{R}^{n+1}$ defined by
$$
\Psi_q(x,t)=\mathbf{x}(q)+\mathbf{x}(x)+\psi_q(x,t)\mathbf{n}_{\mathcal{C}}(q),
$$
where $B^n_\delta\subset\Omega_q(t)$ and for some $\mu=\mu(\mathcal{C},\delta)$,
\begin{equation} \label{PerturbConeEqn}
\sup_{0<t<\delta^2} \Vert\psi_q(\cdot,t)\Vert_{k,\alpha; B^n_\delta} \leq \mu.
\end{equation}
As $\lim_{t\to 0^+}\Sigma_t=\mathcal{C}$ in $C^{k}_{loc}(\mathbb{R}^{n+1}\setminus \{\mathbf{0}\})$, there is an element $\psi^{\mathcal{C}}_q\in C^{k,\alpha}(\bar{B}_\delta^n)$ so that
$$
\lim_{t\to 0^+} \psi_q(\cdot, t)= \psi_q^{\mathcal{C}} \mbox{ in $C^k(\bar{B}_\delta^n)$}.
$$

Fix any $q\in\mathcal{C}\cap (B_2\setminus\bar{B}_1)$. We define a function $v_q$ on $B^n_\delta\times (0,\delta^2)$ by
$$
v_q(x,t)=v(\Psi_q(x,t),t).
$$
Thus, \eqref{InitialC2DecayEqn} and \eqref{PerturbConeEqn} imply that 
\begin{equation} \label{InductionHypothesisEqn}
\sup_{0<t<\delta^2} \sum_{i=0}^1 t^{\frac{i-1}{2}} \Vert\nabla^i v_q(\cdot,t)\Vert_{\alpha; B^n_\delta} \leq C \Vert f\Vert^{(-1)}_\alpha
\end{equation}
for some $C=C(\mu,C_0,C_1,C_2)$. As $\mathscr{L}_\Sigma u=f$ and $\Sigma$ is a self-expander, the chain rule together with \eqref{InductionHypothesisEqn} gives that
\begin{equation} \label{LocCauchyProbEqn}
\left\{
\begin{array}{cc}
\partial_t v_q-\sum\limits_{i,j=1}^n a^{ij}_q\partial_{x_ix_j}^2 v_q-\sum\limits_{i=1}^n b^i_q \partial_{x_i} v_q=g_q & \mbox{in $B_\delta^n\times (0,\delta^2)$} \\
\displaystyle \lim_{(x^\prime,t)\to (x,0)} v_q(x^\prime,t)=0 & \mbox{for $x\in B_\delta^n$},
\end{array}
\right.
\end{equation}
where $a^{ij}_q\in C^0((0,\delta^2); C^{k-1,\alpha}(B^n_\delta))$ are given by rational functions of $\nabla\psi_q$, $b^i_q\in C^0((0,\delta^2); C^{k-2,\alpha}(B^n_\delta))$ are given by smooth functions of $\nabla\psi_q$ and $\nabla^2\psi_q$, and 
$$
g_{q}(x,t)=-t^{-\frac{1}{2}} f(t^{-\frac{1}{2}}\Psi_q(x,t)).
$$
By taking $\delta$ small, the matrix $(a^{ij}_q)$ is a small $C^0$ perturbation of the identity matrix. Moreover, as $f\in C^{k-2,\alpha}_{-1}(\Sigma)$,  \eqref{PerturbConeEqn} ensures that $g_q\in C^0((0,\delta^2); C^{k-2,\alpha}(B_\delta^n))$ with
\begin{equation} \label{InhomoEstEqn}
\sup_{0<t<\delta^2} \Vert g_q(\cdot,t)\Vert_{k-2,\alpha; B_\delta^n} \leq  C^\prime \Vert f\Vert_{k-2,\alpha}^{(-1)}
\end{equation}
for some $C^\prime=C^\prime(n,k,\alpha,\mu)$.

Let $\chi\colon\mathbb{R}^n\to [0,1]$ be a smooth function so that $\chi\equiv 1$ in $B^n_{1}$ and $\chi\equiv 0$ outside $B_2^n$. And let $\mathscr{D}_\rho (x,t)=(\rho x,\rho^2 t)$ for $(x,t)\in\mathbb{R}^n\times\mathbb{R}$. For $\rho<\frac{1}{8}\delta$, let $w_{q,\rho}=\chi (v_q\circ\mathscr{D}_\rho)$ which we take to be defined on $\mathbb{R}^n\times (0,1)$. Thus, $w_{q,\rho}$ is supported in $B_2^n\times [0,1]$ and, by \eqref{LocCauchyProbEqn}, satisfies
$$
\left\{
\begin{array}{cc}
\partial_t w_{q,\rho}-\sum\limits_{i,j=1}^n a^{ij}_{q,\rho} \partial^2_{x_ix_j} w_{q,\rho}=h_{q,\rho} & \mbox{in $\mathbb{R}^n\times (0,1)$} \\
\displaystyle \lim_{(x^\prime,t)\to (x,0)} w_{q,\rho}(x^\prime,t)=0 & \mbox{for $x\in\mathbb{R}^n$}
\end{array}
\right.
$$
where 
\begin{align*}
a^{ij}_{q,\rho} & =\chi \circ \mathscr{D}_{1/2} (a^{ij}_q\circ\mathscr{D}_\rho)+(1-\chi \circ \mathscr{D}_{1/2}) \delta_{ij}, \mbox{ and}, \\
h_{q,\rho} & =\rho\chi\sum_{i=1}^n (b^i_q\circ\mathscr{D}_\rho)\partial_{x_i} (v_q\circ\mathscr{D}_\rho)-2\sum_{i,j=1}^n (a^{ij}_q\circ\mathscr{D}_\rho)\partial_{x_i}\chi \partial_{x_j}(v_q\circ\mathscr{D}_\rho) \\
& \quad +\rho^2 \chi (g_q\circ\mathscr{D}_\rho)-(v_q\circ\mathscr{D}_\rho)\sum_{i,j=1}^n (a^{ij}_q\circ\mathscr{D}_\rho)\partial^2_{x_ix_j}\chi.
\end{align*}
Here we used that $w_{q,p}=0$ outside $B^n_{2}$ while $(1-\chi \circ \mathscr{D}_{1/2}) =0$ in $B^n_2$.

By \eqref{PerturbConeEqn}, we may choose $\rho=\rho(\Sigma,n,\alpha,\mu)$ sufficiently small so that 
$$
\sup_{0<t<1}\sum_{i,j=1}^n \Vert a^{ij}_{q,\rho}(\cdot,t)-\delta_{ij}\Vert_{\alpha} \leq \epsilon_0(n,\alpha).
$$
Furthermore, by \eqref{InductionHypothesisEqn}, \eqref{PerturbConeEqn} and \eqref{InhomoEstEqn},
$$
\sup_{0<t<1}\Vert h_{q,\rho}(\cdot,t)\Vert_{\alpha}\leq C^{\prime\prime} \sup_{0<t<\delta^2} \Vert g_q(\cdot,t)\Vert_{\alpha; B_{\delta}^n}+\sum_{i=0}^1 \Vert\nabla^i v_q(\cdot,t)\Vert_{\alpha; B^n_\delta} \leq C^{\prime\prime} \Vert f\Vert_{k-2,\alpha}^{(-1)}
$$
for some $C^{\prime\prime}$ depending only on $n,\alpha,\mu, C, C^\prime$. This together with Lemma \ref{CauchyProbLem} and the parabolic maximum principle (i.e., Proposition \ref{MaxPrincipleProp}) implies 
$$
\sup_{0<t<1} \sum_{i=0}^2 t^{\frac{i-2}{2}} \Vert\nabla^i w_{q,\rho}(\cdot,t)\Vert_{\alpha} \leq C_3 C^{\prime\prime} \Vert f\Vert_{k-2,\alpha}^{(-1)}.
$$
Hence, \eqref{InductiveDecayEqn} for $l=0$ follows from transforming the above estimate to that for $v_q$ by the chain rule and invoking \eqref{PerturbConeEqn} and the arbitrariness of $q\in\mathcal{C}\cap (B_2\setminus\bar{B}_1)$.

Suppose \eqref{InductiveDecayEqn} holds for $l=0,\ldots,m$ and $m<k-2$. Let $I=(i_1,\ldots,i_{n})$ be a multi-index of elements of $\{1,\ldots,n\}$ with $|I|=m+1$, and let $x^I=x_1^{i_1}\cdots x_n^{i_n}$. As before, fix any $q\in\mathcal{C}\cap (B_2\setminus\bar{B}_1)$. And we follow the previous notation. By the inductive hypothesis and \eqref{PerturbConeEqn}, 
$$
\sup_{0<t<\delta^2}\sum_{i=0}^1 t^{\frac{i-1}{2}} \Vert\nabla^i\partial_{x^I}^{m+1} v_q(\cdot,t)\Vert_{\alpha; B_\delta^n} \leq \tilde{C} \Vert f\Vert_{k-2,\alpha}^{(-1)},
$$
where $\tilde{C}$ depends only on $n, m, \alpha, \mu, c_0, \ldots, c_m$. This together with \eqref{LocCauchyProbEqn} further gives
$$
\left\{
\begin{array}{cc}
\left(\partial_t-\sum\limits_{i,j=1}^n a_{q}^{ij} \partial^2_{x_ix_j}\right) \partial^{m+1}_{x^I} v_q=g_{q,I} & \mbox{in $B_{\delta}^n\times (0,\delta^2)$} \\
\displaystyle \lim_{(x^\prime,t)\to (x,0)} \partial^{m+1}_{x^I} v_q (x^\prime,t)=0 & \mbox{for $x\in B_{\delta}^n$}
\end{array}
\right.
$$
where 
$$
g_{q,I}=\partial^{m+1}_{x^I} g_q+P_{q,I}(\nabla v_q,\ldots,\nabla^{m+2} v_q)
$$
and $P_{q,I}$ is a multi-linear function with coefficients given by {smooth} functions of $a_{q}^{ij}$ and $b_q^i$ and their spatial derivatives up to the $m$-th order. It follows from our inductive hypothesis, \eqref{PerturbConeEqn} and \eqref{InhomoEstEqn} that 
$$
\sup_{0<t<\delta^2} \Vert g_{q,I}(\cdot,t)\Vert_{\alpha; B_\delta^n} \leq \tilde{C}^\prime \Vert f\Vert^{(-1)}_{k-2,\alpha},
$$
where $\tilde{C}^\prime$ depends only on $n, m, \alpha, \mu, C^\prime, c_0, \ldots, c_m$. Now we may apply the same reasoning to $\partial_{x^I}^{m+1} v_q$ as in the preceding two paragraphs to establish \eqref{InductiveDecayEqn} for $l=m+1$, completing the induction procedure.
\end{proof}

\begin{cor} \label{ExtensionCor}
Given $\varphi\in C^{k,\alpha}(\mathcal{L}(\Sigma))$ the asymptotic Dirichlet problem 
\begin{equation} \label{AsympDirichletEqn}
\left\{
\begin{array}{cc}
\mathscr{L}_\Sigma u=0 & \mbox{in $\Sigma$} \\
\mathrm{tr}^1_\infty[u]=\varphi & \mbox{in $\mathcal{L}(\Sigma)$}
\end{array}
\right.
\end{equation}
has a unique solution in $C^{k,\alpha}_1\cap C^k_{1,\mathrm{H}}(\Sigma)$. Moreover, there is a $C_5=C_5(\Sigma,n,k,\alpha)$ so that
$$
\Vert u\Vert_{k,\alpha}^{(1)} \leq C_5 \Vert\varphi\Vert_{k,\alpha}.
$$
\end{cor}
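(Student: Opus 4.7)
The plan is to reduce the asymptotic Dirichlet problem \eqref{AsympDirichletEqn} to the interior solvability result of Theorem \ref{AsymptoticDirichletThm} by constructing a model extension of $\varphi$ and then correcting the residual. First I would construct $u_0 \in C^{k,\alpha}_1 \cap C^k_{1,\mathrm{H}}(\Sigma)$ with $\mathrm{tr}_\infty^1[u_0] = \varphi$ and $\Vert u_0 \Vert^{(1)}_{k,\alpha} \leq C \Vert \varphi \Vert_{k,\alpha}$: fix a homogeneous transverse section $\mathbf{w}$ on $\mathcal{C}(\Sigma)$, form the $\mathbf{w}$-homogeneous degree-$1$ extension $\mathscr{E}^{\mathrm{H}}_{\mathbf{w},1}[\varphi]$ from Section \ref{HomoExtSubsec} on an ambient open cone containing $\mathcal{C}(\Sigma)$, restrict it to the ends of $\Sigma$ (which for $|\mathbf{x}|$ large lie in this $\mathbf{w}$-regular neighborhood), and extend to all of $\Sigma$ via a smooth cutoff. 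The norm estimate of Section \ref{HomoExtSubsec} together with Proposition \ref{ListProp} provides the claimed bound on $u_0$.

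The main obstacle is to verify $f := \mathscr{L}_\Sigma u_0 \in C^{k-2,\alpha}_{-1}(\Sigma)$ with $\Vert f \Vert^{(-1)}_{k-2,\alpha} \leq C^\prime \Vert \varphi \Vert_{k,\alpha}$. The key point is a cancellation at infinity: on the cone $\mathcal{C}(\Sigma)$, the homogeneous degree-$1$ function $\tilde{u}_0 = \mathscr{E}^{\mathrm{H}}_1[\varphi]$ satisfies $\mathbf{x}\cdot\nabla_{\mathcal{C}} \tilde{u}_0 = \tilde{u}_0$ identically, so the contributions $\tfrac{1}{2}\mathbf{x}\cdot\nabla-\tfrac{1}{2}$ in $\mathscr{L}_{\mathcal{C}}\tilde{u}_0$ cancel and leave only $\Delta_{\mathcal{C}} \tilde{u}_0$, which is homogeneous of degree $-1$ and hence manifestly in $C^{k-2,\alpha}_{-1}$. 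The remainder $\mathscr{L}_\Sigma u_0 - \mathscr{L}_{\mathcal{C}}\tilde{u}_0$ quantifies the departure of $\Sigma$ from $\mathcal{C}(\Sigma)$, and by the definition of $\mathcal{ACH}^{k,\alpha}_n$ (the graph function over $\mathcal{C}(\Sigma)$ lies in $C^{k,\alpha}_1\cap C^k_{1,0}$) combined with the product and composition estimates of Proposition \ref{ListProp}, these perturbative terms also lie in $C^{k-2,\alpha}_{-1}$ with the required bound. Careful bookkeeping of the decay of each error term is the most delicate step.

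Given $f$, Theorem \ref{AsymptoticDirichletThm} supplies a unique $w \in \mathcal{D}^{k,\alpha}(\Sigma)$ solving $\mathscr{L}_\Sigma w = -f$ with $\Vert w \Vert^*_{k,\alpha} \leq C_4 \Vert f \Vert^{(-1)}_{k-2,\alpha}$, and I set $u = u_0 + w$, so that $\mathscr{L}_\Sigma u = 0$. Membership in $\mathcal{D}^{k,\alpha}$ forces $w\in C^{k-2,\beta}_{-1}(\Sigma)$ with analogous decay for derivatives of order up to $k-1$, so the rescalings of $w$ converge to $0$ in $C^{k-1}_{loc}(\mathcal{C}(\Sigma))$; interpolation against the $C^{k,\alpha}_1$ bound together with Arzel\`a--Ascoli upgrades this to $C^k_{loc}$ convergence, placing $w$ in $C^k_{1,\mathrm{H}}$ with $\mathrm{tr}_\infty^1[w] = 0$. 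Hence $u\in C^{k,\alpha}_1\cap C^k_{1,\mathrm{H}}(\Sigma)$ with $\mathrm{tr}_\infty^1[u] = \varphi$ and $\Vert u\Vert^{(1)}_{k,\alpha} \leq C_5 \Vert \varphi\Vert_{k,\alpha}$. For uniqueness, the difference $\tilde u$ of two such solutions has trivial trace at infinity, so its rescalings converge to $0$ uniformly on the unit sphere, giving $\sup_{|\mathbf{x}|=R}|\tilde u|=o(R)$; bootstrapping via the rescaling argument to $\tilde u = O(r^{1-\delta})$ for some $\delta>0$, Lemma \ref{BarrierLem} produces $r^{1-\delta}$ as a strict supersolution at infinity, and the maximum principle of Lemma \ref{C0EstLem} applied on $\Sigma\cap(\bar B_{R^\prime}\setminus B_R)$ and sent to infinity then forces $\tilde u\equiv 0$.
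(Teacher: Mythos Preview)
Your overall strategy---build a homogeneous extension $u_0$, show $\mathscr{L}_\Sigma u_0\in C^{k-2,\alpha}_{-1}$, correct via Theorem \ref{AsymptoticDirichletThm}---matches the paper exactly. However, the justification of the crucial decay $\mathscr{L}_\Sigma u_0\in C^{k-2,\alpha}_{-1}$ has a real gap. You attribute it to the asymptotically conical structure alone, but that is not enough: for a general $\Sigma\in\mathcal{ACH}^{k,\alpha}_n$ one only has $\mathbf{x}^\perp=o(r)$, so the drift contribution $\tfrac12(\mathbf{x}\cdot\nabla_\Sigma u_0-u_0)=-\tfrac12\nabla_{\mathbf{x}^\perp}\mathscr{E}^{\mathrm H}_{\mathbf w,1}[\varphi]$ need not lie in $C^{k-2,\alpha}_{-1}$. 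The paper's computation uses the self-expander equation $\mathbf{x}^\perp=2\mathbf{H}_\Sigma$ to write this term as $-\nabla_{\mathbf H_\Sigma}\mathscr{E}^{\mathrm H}_{\mathbf w,1}[\varphi]$, and since $\mathbf H_\Sigma\in C^{k-2,\alpha}_{-1}$ the required decay follows. Your comparison with $\mathscr{L}_{\mathcal C}\tilde u_0$ cannot absorb this: the drift involves the position vector $\mathbf{x}$ itself, and the difference between the drifts on $\Sigma$ and on $\mathcal C$ carries an $\mathbf{x}^\perp$ factor whose decay comes only from the expander equation, not from the graph function $\psi\in C^{k,\alpha}_1\cap C^k_{1,0}$.

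Your uniqueness argument is also incomplete. Membership in $C^k_{1,0}$ gives only $\tilde u=o(r)$; the ``bootstrap via rescaling to $O(r^{1-\delta})$'' is not explained, and in fact the barrier $r^{1-\delta}$ combined with Lemma \ref{C0EstLem} would at best show the maximum of $|\tilde u|$ is attained on a fixed sphere, not that $\tilde u\equiv0$. The paper instead observes directly from $\mathscr L_\Sigma\tilde u=0$ that $\mathbf{x}\cdot\nabla_\Sigma\tilde u-\tilde u=-2\Delta_\Sigma\tilde u=O(r^{-1})$, integrates the resulting first-order ODE along the flow of $|\mathbf x|\,|\mathbf x^\top|^{-2}\mathbf x^\top$ (again using $\mathbf{x}^\perp=2\mathbf H_\Sigma$ to control the error), and obtains $\tilde u\in C^0_{-1}$; then the elliptic maximum principle forces $\tilde u=0$.
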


\begin{proof}
Let $\mathbf{w}$ be a homogeneous transverse section on $\mathcal{C}(\Sigma)$ and $\hat{\mathcal{C}}$ a  $\mathbf{w}$-regular open cone in $\mathbb{R}^{n+1}$ containing $\mathcal{C}(\Sigma)$. Pick an $R>0$ sufficiently large, so that $\Sigma\setminus B_R\subset\hat{\mathcal{C}}$. Let $\chi\colon\mathbb{R}^{n+1}\to [0,1]$ satisfy $\chi\equiv 1$ in $\mathbb{R}^{n+1}\setminus B_{2R}$ and $\chi\equiv 0$ in $B_R$. Then we define a function $g$ on $\Sigma$ to be
$$
g=\left(\chi\mathscr{E}^{\mathrm{H}}_{\mathbf{w},1}[\varphi]\right)\circ\mathbf{x}|_\Sigma.
$$
As $\Sigma\in\mathcal{ACH}^{k,\alpha}_n$, it follows that $g\in C^{k,\alpha}_1\cap C^{k}_{1,\mathrm{H}}(\Sigma)$ with the properties that
\begin{equation} \label{HomoExtEstEqn}
\Vert g \Vert_{k,\alpha}^{(1)} \leq C \Vert\varphi\Vert_{k,\alpha} \mbox{ and } \mathrm{tr}_\infty^1[g]=\varphi
\end{equation}
where $C=C(\Sigma,n,k,\alpha)>0$.

Observe, that on $\Sigma\setminus B_{2R}$, $g=\mathscr{E}^{\mathrm{H}}_{\mathbf{w},1}[\varphi]\circ\mathbf{x}|_\Sigma$, and so
$$
\mathbf{x}\cdot\nabla_\Sigma g-g=-\nabla_{\mathbf{x}^\perp} \mathscr{E}^{\mathrm{H}}_{\mathbf{w},1}[\varphi]\circ\mathbf{x}|_\Sigma=-2\nabla_{\mathbf{H}_\Sigma} \mathscr{E}^{\mathrm{H}}_{\mathbf{w},1}[\varphi]\circ\mathbf{x}|_\Sigma
$$
where we used that $\mathscr{E}^{\mathrm{H}}_{\mathbf{w},1}[\varphi]$ is homogeneous of degree $1$ in the first equality and that $2\mathbf{H}_\Sigma=\mathbf{x}^\perp$ in the last equality. This identity together with $\mathbf{H}_\Sigma\in C^{k-2,\alpha}_{-1}(\Sigma; \mathbb{R}^{n+1})$ implies that for some $C^\prime=C^\prime(\Sigma,n,k,\alpha)$,
\begin{equation} \label{AlmostHomoEstEqn}
\Vert\mathbf{x}\cdot\nabla_\Sigma g-g\Vert_{k-2,\alpha}^{(-1)} \leq C^\prime \Vert\varphi\Vert_{k-1,\alpha}.
\end{equation}

Thus, it follows from \eqref{HomoExtEstEqn} and \eqref{AlmostHomoEstEqn} that $\mathscr{L}_\Sigma g\in C^{k-2,\alpha}_{-1}(\Sigma)$ with 
\begin{equation} \label{InhomoEqn}
\Vert\mathscr{L}_\Sigma g\Vert_{k-2,\alpha}^{(-1)} \leq C^{\prime\prime}\Vert\varphi\Vert_{k,\alpha}
\end{equation}
where $C^{\prime\prime}$ depends only on $\Sigma,n,k,\alpha,C,C^\prime$. Hence, by Theorem \ref{AsymptoticDirichletThm} and \eqref{InhomoEqn}, there is a unique solution, $v$, in $\mathcal{D}^{k,\alpha}(\Sigma)$ of $\mathscr{L}_\Sigma v=-\mathscr{L}_\Sigma g$ with 
\begin{equation} \label{PerturbEstEqn}
\Vert v\Vert_{k,\alpha}^* \leq C_4 C^{\prime\prime}\Vert\varphi\Vert_{k,\alpha}.
\end{equation}

Observe that the inclusion $\mathcal{D}^{k,\alpha}(\Sigma)\subset C^{k,\alpha}_{1}\cap C^{k}_{1,0}(\Sigma)$ is continuous. Therefore, combining \eqref{HomoExtEstEqn} and \eqref{PerturbEstEqn}, we conclude that $v+g\in C^{k,\alpha}_1\cap C^k_{1,\mathrm{H}}(\Sigma)$ is a solution to the problem \eqref{AsympDirichletEqn} with the claimed estimate.

Finally, we show the uniqueness of the solutions $u\in C^{k,\alpha}_1\cap C^k_{1,\mathrm{H}}(\Sigma)$ of the problem \eqref{AsympDirichletEqn}. It suffices to prove that $\varphi=0$ implies $u=0$. For $\Sigma\in\mathcal{ACH}_n^{k,\alpha}$, the vector field
$$
\mathbf{V}=\frac{|\mathbf{x}|}{|\mathbf{x}^\top|^{2}}\, \mathbf{x}^\top
$$
is well defined on $\Sigma\setminus B_{R^\prime}$ for some sufficiently large $R^\prime$. We extend $\mathbf{V}$ to a $C^{k-1,\alpha}$ vector field $\tilde{\mathbf{V}}$ on $\Sigma$. Observe, that $\tilde{\mathbf{V}}$ is complete as $\Sigma$ is asymptotically conical. Then $\tilde{\mathbf{V}}$ generates a one-parameter group, $\{\phi(\cdot,s)\}_{s\in\mathbb{R}}$, of diffeomorphisms of $\Sigma$. Observe, that $|\mathbf{x}(\phi(p,s))|=s+R^\prime$ for $p\in\Sigma\cap\partial B_{R^\prime}$ and $s\geq 0$. Thus, we obtain a diffeomorphism $\phi\colon(\Sigma\cap\partial B_{R^\prime}) \times [0,\infty)\to\Sigma\setminus B_{R^\prime}$. As
$$
(\mathbf{x}\cdot\nabla_\Sigma u-u)\circ\phi=(s+R')\partial_s( u\circ\phi)-u\circ\phi-\frac{|\mathbf{x}^\perp\circ\phi|^2}{s+R'} \, \partial_s (u\circ\phi),
$$
it follows from $2\mathbf{H}_\Sigma=\mathbf{x}^\perp$, $u\in C^{k,\alpha}_{1}(\Sigma)$ and $\mathscr{L}_\Sigma u=0$ that 
$$
(R'+s)\partial_s (u\circ\phi)-u\circ\phi=(R'+s)^2\partial_s ((R'+s)^{-1} u\circ\phi)=O(s^{-1}).
$$
Integrating this gives $u\circ\phi=O(s^{-1})$, that is, $u\in C^{0}_{-1}(\Sigma)$. Therefore, the claim follows from the elliptic maximum principle.
\end{proof}

\subsection{Fredholm property of $L_{\mathbf{v}}$} \label{FredholmSubsec}
Recall, that
$$
L_\Sigma=\Delta_\Sigma+\frac{1}{2}\mathbf{x}\cdot\nabla_\Sigma+|A_\Sigma|^2-\frac{1}{2} \mbox{ and }
L_{\mathbf{v}}=(\mathbf{v}\cdot\mathbf{n}_\Sigma)L_{\Sigma}((\mathbf{v}\cdot\mathbf{n}_\Sigma)\times).
$$
Let
$$
K_{\mathbf{v}}=L_{\mathbf{v}}-(\mathbf{v}\cdot\mathbf{n}_\Sigma)^2\mathscr{L}_\Sigma.
$$

\begin{lem} \label{NormalEqnLem}
The unit normal $\mathbf{n}_\Sigma$ satisfies
$$
L_\Sigma\mathbf{n}_\Sigma+\frac{1}{2}\mathbf{n}_\Sigma=\mathbf{0}.
$$
\end{lem}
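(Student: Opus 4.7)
The plan is to view $\mathbf{n}_\Sigma$ as an $\mathbb{R}^{n+1}$-valued function on $\Sigma$ and to compute $\Delta_\Sigma\mathbf{n}_\Sigma$ and $\mathbf{x}\cdot\nabla_\Sigma\mathbf{n}_\Sigma$ componentwise, then combine the result with the self-expander equation $H_\Sigma+\tfrac{1}{2}\mathbf{x}\cdot\mathbf{n}_\Sigma=0$.

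First, I would fix a point $p\in\Sigma$ and a local orthonormal frame $\{\tau_1,\dots,\tau_n\}$ geodesic at $p$ (so $\nabla^\Sigma_{\tau_i}\tau_j(p)=0$). Writing $h_{ij}=\langle A_\Sigma(\tau_i),\tau_j\rangle$ so that $\nabla_{\tau_i}\mathbf{n}_\Sigma=-\sum_j h_{ij}\tau_j$ (in the flat connection of $\mathbb{R}^{n+1}$), the Gauss formula $\nabla_{\tau_k}\tau_j=\nabla^\Sigma_{\tau_k}\tau_j+h_{kj}\mathbf{n}_\Sigma$ and the Codazzi identity together give, at $p$,
\begin{equation*}
\Delta_\Sigma\mathbf{n}_\Sigma=\nabla_\Sigma H_\Sigma-|A_\Sigma|^2\mathbf{n}_\Sigma,
\end{equation*}
where $\nabla_\Sigma H_\Sigma$ is viewed as a tangent vector field. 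This is the usual Simons-type identity.

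Next I would compute the drift term. Since $\mathbf{x}\cdot\nabla_\Sigma$ applied to a vector-valued function is just the directional derivative $\nabla_{\mathbf{x}^\top}$ in the flat connection, one has
\begin{equation*}
\mathbf{x}\cdot\nabla_\Sigma\mathbf{n}_\Sigma=-A_\Sigma(\mathbf{x}^\top).
\end{equation*}

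The key remaining step is to relate $\nabla_\Sigma H_\Sigma$ to $A_\Sigma(\mathbf{x}^\top)$, and this is where the self-expander equation enters. Writing \eqref{ExpanderEqn} in scalar form as $H_\Sigma+\tfrac{1}{2}\mathbf{x}\cdot\mathbf{n}_\Sigma=0$ and applying $\nabla_\Sigma$, one obtains
\begin{equation*}
\nabla_\Sigma H_\Sigma=-\tfrac{1}{2}\nabla_\Sigma(\mathbf{x}\cdot\mathbf{n}_\Sigma)=\tfrac{1}{2}A_\Sigma(\mathbf{x}^\top),
\end{equation*}
since $\tau_k(\mathbf{x}\cdot\mathbf{n}_\Sigma)=\tau_k\cdot\mathbf{n}_\Sigma+\mathbf{x}\cdot\nabla_{\tau_k}\mathbf{n}_\Sigma=-\sum_i h_{ki}(\mathbf{x}\cdot\tau_i)$.

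Assembling these three identities,
\begin{equation*}
L_\Sigma\mathbf{n}_\Sigma=\nabla_\Sigma H_\Sigma-|A_\Sigma|^2\mathbf{n}_\Sigma-\tfrac{1}{2}A_\Sigma(\mathbf{x}^\top)+|A_\Sigma|^2\mathbf{n}_\Sigma-\tfrac{1}{2}\mathbf{n}_\Sigma=-\tfrac{1}{2}\mathbf{n}_\Sigma,
\end{equation*}
which is the claim. The only subtle point is the bookkeeping of sign conventions (in particular the sign of $H_\Sigma$ relative to $\mathrm{tr}\,A_\Sigma$ coming from $\mathbf{H}_\Sigma=-H_\Sigma\mathbf{n}_\Sigma$); everything else is a routine frame computation.
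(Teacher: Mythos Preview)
Your proof is correct and follows essentially the same approach as the paper: both compute $\Delta_\Sigma\mathbf{n}_\Sigma=\nabla_\Sigma H_\Sigma-|A_\Sigma|^2\mathbf{n}_\Sigma$ and then use the self-expander equation to identify $\nabla_\Sigma H_\Sigma$ with $-\tfrac{1}{2}\mathbf{x}\cdot\nabla_\Sigma\mathbf{n}_\Sigma$. The only difference is cosmetic---the paper writes this last identity directly, while you pass through the intermediate expression $A_\Sigma(\mathbf{x}^\top)$.
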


\begin{proof}
By a direct computation
$$
\Delta_\Sigma \mathbf{n}_\Sigma=\nabla_\Sigma H_\Sigma-|A_\Sigma|^2\mathbf{n}_\Sigma.
$$
As $2H_\Sigma=-\mathbf{x}\cdot\mathbf{n}_\Sigma$, we have
$$
\nabla_\Sigma H_\Sigma=-\frac{1}{2}\mathbf{x}\cdot\nabla_\Sigma \mathbf{n}_\Sigma.
$$
The lemma follows from combining these two identities.
\end{proof}

\begin{lem} \label{SimplifyKvLem}
There exist $a_{\mathbf{v}}\in C^{k-2,\alpha}_{-2}(\Sigma)$ and $\mathbf{b}_{\mathbf{v}}\in C^{k-2,\alpha}_{-1}(\Sigma; \mathbb{R}^{n+1})$ such that 
$$
K_{\mathbf{v}}=a_{\mathbf{v}}+\mathbf{b}_{\mathbf{v}}\cdot\nabla_\Sigma.
$$
\end{lem}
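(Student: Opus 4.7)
The approach is a direct Leibniz-rule expansion followed by a careful regularity bookkeeping. Set $\nu = \mathbf{v}\cdot\mathbf{n}_\Sigma$. Since $L_{\mathbf{v}} u = \nu L_\Sigma(\nu u)$, applying the product rule to $\Delta_\Sigma(\nu u)$ and $\mathbf{x}\cdot\nabla_\Sigma(\nu u)$, and keeping track of the zeroth-order $-\tfrac12$ in $\mathscr{L}_\Sigma$, gives
\[
L_{\mathbf{v}} u \;=\; \nu^2\,\mathscr{L}_\Sigma u \;+\; 2\nu\,\nabla_\Sigma\nu\cdot\nabla_\Sigma u \;+\; \bigl(\nu L_\Sigma\nu + \tfrac12\nu^2\bigr)\,u.
\]
Subtracting $\nu^2\mathscr{L}_\Sigma u$ identifies the natural candidates $\mathbf{b}_{\mathbf{v}} = 2\nu\,\nabla_\Sigma\nu$ (viewed as an $\mathbb{R}^{n+1}$-valued vector via $T\Sigma\subset T\mathbb{R}^{n+1}$) and $a_{\mathbf{v}} = \nu\bigl(L_\Sigma\nu + \tfrac12\nu\bigr)$.

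The bound on $\mathbf{b}_{\mathbf{v}}$ is routine: since $\Sigma\in\mathcal{ACH}^{k,\alpha}_n$ one has $\mathbf{n}_\Sigma\in C^{k-1,\alpha}_0\cap C^{k-1}_{0,\mathrm{H}}$, so $\nu\in C^{k-1,\alpha}_0$ and $\nabla_\Sigma\nu\in C^{k-2,\alpha}_{-1}$, and Item \eqref{ListProp2} of Proposition \ref{ListProp} gives $\mathbf{b}_{\mathbf{v}}\in C^{k-2,\alpha}_{-1}(\Sigma;\mathbb{R}^{n+1})$.

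The one delicate point is $a_{\mathbf{v}}$: a naive estimate on $L_\Sigma\nu$ is only $O(1)$, because the $-\tfrac12\nu$ contribution coming from the zeroth-order part of $L_\Sigma$ does not decay, so $\nu L_\Sigma\nu$ by itself is merely in $C^{k-2,\alpha}_0$. The added $\tfrac12\nu^2$ is precisely engineered to cancel this bad piece. To exhibit this cancellation I would apply the componentwise Leibniz rule to $L_\Sigma(\mathbf{v}\cdot\mathbf{n}_\Sigma)$, substitute $L_\Sigma\mathbf{n}_\Sigma = -\tfrac12\mathbf{n}_\Sigma$ from Lemma \ref{NormalEqnLem}, and decompose $L_\Sigma\mathbf{v} = (\mathscr{L}_\Sigma+\tfrac12)\mathbf{v} + (|A_\Sigma|^2 - \tfrac12)\mathbf{v}$; the two $(|A_\Sigma|^2-\tfrac12)\nu$ terms that appear cancel one another, leaving
\[
L_\Sigma\nu + \tfrac12\nu \;=\; \bigl[(\mathscr{L}_\Sigma+\tfrac12)\mathbf{v}\bigr]\cdot\mathbf{n}_\Sigma \;+\; 2\,\nabla_\Sigma\mathbf{v}\cdot\nabla_\Sigma\mathbf{n}_\Sigma.
\]
By the hypothesis \eqref{RegVectorEqn}, the first term on the right is the product of an element of $C^{k-2,\alpha}_{-2}$ with $\mathbf{n}_\Sigma\in C^{k-1,\alpha}_0$, hence in $C^{k-2,\alpha}_{-2}$; the second term is a contraction of $\nabla_\Sigma\mathbf{v}\in C^{k-2,\alpha}_{-1}$ with $\nabla_\Sigma\mathbf{n}_\Sigma\in C^{k-2,\alpha}_{-1}$ and lies in $C^{k-2,\alpha}_{-2}$ by Proposition \ref{ListProp}. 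Multiplying through by $\nu\in C^{k-1,\alpha}_0$ yields $a_{\mathbf{v}}\in C^{k-2,\alpha}_{-2}(\Sigma)$.

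The main obstacle is not in the algebraic manipulation but in recognizing the correct bookkeeping: the $\tfrac12$ shift in the definition of $\mathscr{L}_\Sigma$ appears to obstruct decay of $\nu L_\Sigma\nu$, and both the hypothesis \eqref{RegVectorEqn} on $\mathbf{v}$ and the normal Jacobi identity of Lemma \ref{NormalEqnLem} are exactly calibrated to make the non-decaying contributions cancel after multiplying $L_\Sigma\nu+\tfrac12\nu$ by $\nu$.
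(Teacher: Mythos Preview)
Your proof is correct and is essentially identical to the paper's: the paper derives the same product-rule expansion \eqref{Product1Eqn}, the same identity \eqref{Product2Eqn} for $(L_\Sigma+\tfrac12)(\mathbf{v}\cdot\mathbf{n}_\Sigma)$ via Lemma~\ref{NormalEqnLem}, and arrives at the same $a_{\mathbf{v}}$ and $\mathbf{b}_{\mathbf{v}}=\nabla_\Sigma(\mathbf{v}\cdot\mathbf{n}_\Sigma)^2=2\nu\nabla_\Sigma\nu$ with the same regularity bookkeeping. Your emphasis on why the $\tfrac12\nu^2$ term is needed to secure the $C^{k-2,\alpha}_{-2}$ decay of $a_{\mathbf{v}}$ is a nice expository addition, but the underlying argument matches the paper line for line.
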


\begin{proof}
Using the product rule, we compute
\begin{equation} \label{Product1Eqn}
L_\Sigma \left((\mathbf{v}\cdot\mathbf{n}_\Sigma)g\right) =(\mathbf{v}\cdot\mathbf{n}_\Sigma) \mathscr{L}_\Sigma g+2\nabla_\Sigma g \cdot \nabla_\Sigma (\mathbf{v}\cdot\mathbf{n}_\Sigma) + g \left(L_\Sigma+\frac{1}{2}\right)(\mathbf{v}\cdot\mathbf{n}_\Sigma).
\end{equation}
Invoking Lemma \ref{NormalEqnLem}, we simplify
\begin{equation} \label{Product2Eqn}
\left(L_\Sigma+\frac{1}{2}\right)(\mathbf{v}\cdot\mathbf{n}_\Sigma)=\left(\mathscr{L}_\Sigma+\frac{1}{2}\right) \mathbf{v}\cdot\mathbf{n}_\Sigma+2\nabla_\Sigma\mathbf{v}\cdot\nabla_\Sigma\mathbf{n}_\Sigma.
\end{equation}
Thus, substituting \eqref{Product2Eqn} into \eqref{Product1Eqn} and invoking the definition of $K_{\mathbf{v}}$, we get
\begin{equation} \label{KvEqn}
K_{\mathbf{v}} g=a_{\mathbf{v}} g+\mathbf{b}_{\mathbf{v}}\cdot\nabla_\Sigma g,
\end{equation}
where 
$$
a_{\mathbf{v}}=(\mathbf{v}\cdot\mathbf{n}_\Sigma) \left\{\left(\mathscr{L}_\Sigma+\frac{1}{2}\right)\mathbf{v}\cdot\mathbf{n}_\Sigma+2\nabla_\Sigma\mathbf{v}\cdot\nabla_\Sigma\mathbf{n}_\Sigma\right\} \mbox{ and } 
\mathbf{b}_{\mathbf{v}}=\nabla_{\Sigma} (\mathbf{v}\cdot\mathbf{n}_\Sigma)^2.
$$

We show that $a_{\mathbf{v}}\in C^{k-2,\alpha}_{-2}(\Sigma)$ and $\mathbf{b}_{\mathbf{v}}\in C^{k-2,\alpha}_{-1}(\Sigma; \mathbb{R}^{n+1})$. For $\Sigma\in\mathcal{ACH}^{k,\alpha}_n$ one has $\mathbf{n}_\Sigma\in C^{k-1,\alpha}_0(\Sigma;\mathbb{R}^{n+1})$. Our assumptions on $\mathbf{v}$ ensure that $\mathbf{v}\in C^{k,\alpha}_0(\Sigma;\mathbb{R}^{n+1})$ and 
$$
\left(\mathscr{L}_\Sigma+\frac{1}{2}\right)\mathbf{v}\in C^{k-2,\alpha}_{-2}(\Sigma;\mathbb{R}^{n+1}).
$$
Now the claim is a direct consequence of Item \eqref{ListProp2} of Proposition \ref{ListProp}.
\end{proof}

\begin{prop} \label{CompactOperatorProp}
The operator $K_{\mathbf{v}}\colon\mathcal{D}^{k,\alpha}(\Sigma)\to C^{k-2,\alpha}_{-1}(\Sigma)$ is compact.
\end{prop}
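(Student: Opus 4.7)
The key observation is that, by Lemma~\ref{SimplifyKvLem}, $K_{\mathbf{v}}$ is a first-order operator whose coefficients $a_{\mathbf{v}}\in C^{k-2,\alpha}_{-2}$ and $\mathbf{b}_{\mathbf{v}}\in C^{k-2,\alpha}_{-1}$ force the image to decay strictly faster at infinity than is needed for continuity into $C^{k-2,\alpha}_{-1}(\Sigma)$. The plan is to exploit this \emph{gain of decay at infinity} by pairing a tail estimate with an Arzel\`{a}--Ascoli extraction on a compact exhaustion of $\Sigma$.

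First I would verify that $K_{\mathbf{v}}$ actually maps $\mathcal{D}^{k,\alpha}(\Sigma)$ continuously into $C^{k-2,\alpha}_{-2}(\Sigma)$. For $g\in\mathcal{D}^{k,\alpha}(\Sigma)$, item~\eqref{ListProp2} of Proposition~\ref{ListProp}, combined with the defining inclusion $g\in C^{k-2,\alpha}_{-1}$ and the derived inclusion $\nabla_\Sigma g\in C^{k-2,\alpha}_{-1}$ (which comes from $g\in C^{k-1,\alpha}_0$), yields $a_{\mathbf{v}} g\in C^{k-2,\alpha}_{-3}\subset C^{k-2,\alpha}_{-2}$ and $\mathbf{b}_{\mathbf{v}}\cdot\nabla_\Sigma g\in C^{k-2,\alpha}_{-2}$, with $\|K_{\mathbf{v}} g\|^{(-2)}_{k-2,\alpha}\leq C\|g\|^*_{k,\alpha}$ for some $C=C(\Sigma,\mathbf{v},n,k,\alpha)$. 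Now let $\{g_i\}\subset\mathcal{D}^{k,\alpha}(\Sigma)$ be a bounded sequence, say with $\|g_i\|^*_{k,\alpha}\leq M$. On each piece $\Omega_m=\Sigma\cap\bar{B}_m$ of a compact exhaustion, $\{g_i\}$ is bounded in $C^{k,\alpha}(\Omega_m)$ by item~\eqref{ListProp1} of Proposition~\ref{ListProp}. Using the compact inclusion $C^{k,\alpha}(\Omega_m)\hookrightarrow C^{k-1,\alpha}(\Omega_m)$ (which follows from $C^{k,\alpha}\hookrightarrow C^{k-1,1}$ together with the compactness of $C^{k-1,1}\hookrightarrow C^{k-1,\alpha}$) and a standard diagonal argument, I pass to a subsequence converging in $C^{k-1,\alpha}_{loc}(\Sigma)$. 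Because multiplication by the fixed $C^{k-2,\alpha}$ coefficients $a_{\mathbf{v}}$ and $\mathbf{b}_{\mathbf{v}}$ is continuous on $C^{k-2,\alpha}(\Omega_m)$, the corresponding subsequence $\{K_{\mathbf{v}} g_{i_j}\}$ is Cauchy in $C^{k-2,\alpha}(\Omega_m)$ for every $m$.

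To promote this interior Cauchyness to Cauchyness in $C^{k-2,\alpha}_{-1}(\Sigma)$, I would use the uniform bound $\|K_{\mathbf{v}} g_{i_j}\|^{(-2)}_{k-2,\alpha}\leq CM$ to estimate the tail: directly from the definition of the weighted norms, the $(-1)$-weighted $(k-2,\alpha)$-norm of $K_{\mathbf{v}} g_{i_j}$ restricted to $\Sigma\setminus\bar{B}_R$ is bounded by $2CM/(R+1)$, which is uniformly small in $j$ once $R$ is taken large. Item~\eqref{ListProp1} of Proposition~\ref{ListProp} then permits gluing the $C^{k-2,\alpha}$-Cauchy estimate on $\Sigma\cap\bar{B}_{R+1}$ with the uniformly small tail to conclude that $\{K_{\mathbf{v}} g_{i_j}\}$ is Cauchy, and hence convergent, in $C^{k-2,\alpha}_{-1}(\Sigma)$.

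The one delicate point I expect is ensuring the Arzel\`{a}--Ascoli step delivers convergence in $C^{k-1,\alpha}_{loc}$ with no loss of H\"{o}lder exponent: since the coefficients $a_{\mathbf{v}}$ and $\mathbf{b}_{\mathbf{v}}$ are only $C^{k-2,\alpha}$, any weakening to $C^{k-1,\beta}$ for $\beta<\alpha$ would fail to produce $C^{k-2,\alpha}$-convergence of the products; the chain $C^{k,\alpha}\hookrightarrow C^{k-1,1}\hookrightarrow C^{k-1,\alpha}$ is precisely what makes this legitimate, and matching weights and H\"{o}lder exponents across Proposition~\ref{ListProp} is where the bookkeeping concentrates.
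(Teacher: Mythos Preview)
Your proof is correct and follows essentially the same strategy as the paper: Arzel\`{a}--Ascoli on a compact exhaustion paired with a tail estimate coming from the extra decay of $a_{\mathbf{v}}$ and $\mathbf{b}_{\mathbf{v}}$. Your closing concern is unnecessary---since $C^{k-1}\subset C^{k-2,1}\subset C^{k-2,\alpha}$ on compact sets, even plain $C^{k}_{loc}$ Cauchyness (which is what the paper actually extracts) already suffices to make the products $a_{\mathbf{v}} g$ and $\mathbf{b}_{\mathbf{v}}\cdot\nabla_\Sigma g$ Cauchy in $C^{k-2,\alpha}$ locally.
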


\begin{proof}
By our definitions, the inclusion $\mathcal{D}^{k,\alpha}(\Sigma)\subset C^{k-2,\alpha}_{-1}(\Sigma)$ and the linear map
$$
\nabla_\Sigma\colon\mathcal{D}^{k,\alpha}(\Sigma)\to C^{k-2,\alpha}_{-1}(\Sigma; \mathbb{R}^{n+1})
$$
are both continuous. This together with Lemma \ref{SimplifyKvLem} implies that $K_{\mathbf{v}}$ is a bounded linear operator between $\mathcal{D}^{k,\alpha}(\Sigma)$ and $C^{k-2,\alpha}_{-1}(\Sigma)$. 

Next, let $(g_i)_{i\in\mathbb{N}}$ be any sequence of functions in the unit ball of $\mathcal{D}^{k,\alpha}$. We show there is a subsequence $(i_j)_{j\in\mathbb{N}}$ so that $(K_{\mathbf{v}} g_{i_j})_{j\in\mathbb{N}}$ is a Cauchy sequence in $C^{k-2,\alpha}_{-1}(\Sigma)$, implying $K_{\mathbf{v}}$ is compact. Indeed, by the Arzel\`{a}-Ascoli theorem and a diagonal argument, there is a subsequence $(i_j)_{j\in\mathbb{N}}$ so that $(g_{i_j})_{j\in\mathbb{N}}$ is a Cauchy sequence in $C^{k}(\Sigma\cap\bar{B}_R)$ for all $R>0$. Moreover, by our definitions and Item \eqref{ListProp2} of Proposition \ref{ListProp},
\begin{align*}
\Vert K_{\mathbf{v}} g_{i_j}\Vert_{k-2,\alpha; \Sigma\setminus B_R}^{(-1)} & \leq \Vert a_{\mathbf{v}}\Vert_{k-2,\alpha; \Sigma\setminus B_R}^{(0)} \Vert g_{i_j}\Vert^{(-1)}_{k-2,\alpha}+\Vert \mathbf{b}_{\mathbf{v}} \Vert_{k-2,\alpha; \Sigma\setminus B_R}^{(0)} \Vert\nabla_\Sigma g_{i_j}\Vert_{k-2,\alpha}^{(-1)} \\
& \leq R^{-2}\left(\Vert a_{\mathbf{v}} \Vert_{k-2,\alpha}^{(-2)}+R\Vert \mathbf{b}_{\mathbf{v}} \Vert_{k-2,\alpha}^{(-1)}\right) \Vert g_{i_j} \Vert_{k,\alpha}^*.
\end{align*}

Thus, given $\epsilon>0$ there is an $R^\prime>0$ depending only on $\Vert a_{\mathbf{v}}\Vert_{k-2,\alpha}^{(-2)}, \Vert \mathbf{b}_{\mathbf{v}}\Vert_{k-2,\alpha}^{(-1)}$ and $\epsilon$ so that for all $j$,
\begin{equation} \label{EstEndEqn}
\Vert K_{\mathbf{v}} g_{i_j}\Vert_{k-2,\alpha; \Sigma\setminus B_{R^\prime}}^{(-1)} < \frac{\epsilon}{4}.
\end{equation}
Furthermore, there is an integer $j_0$ depending only on $\Vert a_{\mathbf{v}}\Vert_{k-2,\alpha}^{(-2)}, \Vert \mathbf{b}_{\mathbf{v}}\Vert_{k-2,\alpha}^{(-1)}, g_i, \epsilon$ and $R^\prime$ so that for all $l,m>j_0$,
\begin{equation} \label{EstCompactEqn}
\Vert K_{\mathbf{v}} g_{i_l}-K_{\mathbf{v}} g_{i_m}\Vert_{k-2,\alpha; \Sigma\cap B_{2R^\prime}}^{(-1)}  <\frac{\epsilon}{2}.
\end{equation}
Hence, it follows from the triangle inequality, \eqref{EstEndEqn} and \eqref{EstCompactEqn} that for all $l,m>j_0$,
$$
\Vert K_{\mathbf{v}} g_{i_l}-K_{\mathbf{v}} g_{i_m}\Vert_{k-2,\alpha}^{(-1)}<\epsilon.
$$
Hence, it follows that $(K_{\mathbf{v}} g_{i_j})_{j\in\mathbb{N}}$ is a Cauchy sequence in $C^{k-2,\alpha}_{-1}(\Sigma)$. 
\end{proof}

\begin{thm} \label{FredholmThm}
The operator $L_{\mathbf{v}}\colon\mathcal{D}^{k,\alpha}(\Sigma)\to C^{k-2,\alpha}_{-1}(\Sigma)$ is Fredholm of index $0$.
\end{thm}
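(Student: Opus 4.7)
The plan is to use the built-in decomposition
$$
L_{\mathbf{v}} = M_{\mathbf{v}}\circ\mathscr{L}_\Sigma + K_{\mathbf{v}},
$$
where $M_{\mathbf{v}}$ denotes pointwise multiplication by $(\mathbf{v}\cdot\mathbf{n}_\Sigma)^2$, and reduce the Fredholm assertion to two facts already established in this section. Theorem \ref{AsymptoticDirichletThm} gives that $\mathscr{L}_\Sigma\colon\mathcal{D}^{k,\alpha}(\Sigma)\to C^{k-2,\alpha}_{-1}(\Sigma)$ is a Banach space isomorphism, while Proposition \ref{CompactOperatorProp} gives that $K_{\mathbf{v}}$ is compact between the same pair of spaces. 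Since the Fredholm index is invariant under compact perturbations, it suffices to exhibit $M_{\mathbf{v}}\circ\mathscr{L}_\Sigma$ as an isomorphism; it is then Fredholm of index $0$, and adding the compact operator $K_{\mathbf{v}}$ yields the conclusion.

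To that end, I would verify that $M_{\mathbf{v}}$ is a bounded isomorphism of $C^{k-2,\alpha}_{-1}(\Sigma)$ onto itself. By hypothesis $\mathbf{v}\in C^{k,\alpha}_0(\Sigma;\mathbb{R}^{n+1})$, and since $\Sigma\in\mathcal{ACH}^{k,\alpha}_n$ one has $\mathbf{n}_\Sigma\in C^{k-1,\alpha}_0(\Sigma;\mathbb{R}^{n+1})$, so Item \eqref{ListProp2} of Proposition \ref{ListProp} places $(\mathbf{v}\cdot\mathbf{n}_\Sigma)^2$ in $C^{k-2,\alpha}_0(\Sigma)$ and gives boundedness of $M_{\mathbf{v}}$. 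For invertibility, the transversality of $\mathbf{v}$ together with the assumption that $\mathcal{C}[\mathbf{v}]=\mathscr{E}_1^{\mathrm{H}}\circ\mathrm{tr}_\infty^1[\mathbf{v}]$ is a transverse section on $\mathcal{C}(\Sigma)$ forces a uniform lower bound $|\mathbf{v}\cdot\mathbf{n}_\Sigma|\geq \delta>0$: near infinity the trace at infinity $\mathrm{tr}_\infty^0[\mathbf{v}\cdot\mathbf{n}_\Sigma]$ is a nowhere vanishing continuous function on the compact link $\mathcal{L}(\Sigma)$, and on the remaining bounded portion of $\Sigma$ the conclusion is immediate from continuity and compactness. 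Item \eqref{ListProp3} of Proposition \ref{ListProp} then yields $(\mathbf{v}\cdot\mathbf{n}_\Sigma)^{-2}\in C^{k-2,\alpha}_0(\Sigma)$, and the associated multiplication operator provides the two-sided inverse of $M_{\mathbf{v}}$.

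With $M_{\mathbf{v}}$ an isomorphism of $C^{k-2,\alpha}_{-1}(\Sigma)$ onto itself and $\mathscr{L}_\Sigma$ an isomorphism of $\mathcal{D}^{k,\alpha}(\Sigma)$ onto $C^{k-2,\alpha}_{-1}(\Sigma)$, the composition $M_{\mathbf{v}}\circ\mathscr{L}_\Sigma$ is an isomorphism, and the argument concludes as above. The one point requiring genuine verification, as opposed to direct quotation of preceding results, is the uniform positive lower bound on $|\mathbf{v}\cdot\mathbf{n}_\Sigma|$; I expect this to be the only nontrivial step, and it follows cleanly from the asymptotic structure of $\mathbf{v}$ and compactness of the link of $\mathcal{C}(\Sigma)$.
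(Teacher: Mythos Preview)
Your proposal is correct and follows essentially the same approach as the paper: decompose $L_{\mathbf{v}}=(\mathbf{v}\cdot\mathbf{n}_\Sigma)^2\mathscr{L}_\Sigma+K_{\mathbf{v}}$, invoke Theorem~\ref{AsymptoticDirichletThm} and Proposition~\ref{CompactOperatorProp}, and verify that multiplication by $(\mathbf{v}\cdot\mathbf{n}_\Sigma)^2$ is an isomorphism of $C^{k-2,\alpha}_{-1}(\Sigma)$ via the uniform lower bound coming from transversality of $\mathbf{v}$ and $\mathcal{C}[\mathbf{v}]$ together with Items~\eqref{ListProp3} and~\eqref{ListProp2} of Proposition~\ref{ListProp}. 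The paper records the slightly sharper regularity $(\mathbf{v}\cdot\mathbf{n}_\Sigma)^2\in C^{k-1,\alpha}_0(\Sigma)$, but your $C^{k-2,\alpha}_0$ statement already suffices.
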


\begin{proof}
Since 
$$
L_{\mathbf{v}}=(\mathbf{v}\cdot\mathbf{n}_\Sigma)^2\mathscr{L}_\Sigma+K_{\mathbf{v}}
$$
where $K_{\mathbf{v}}$ is a compact operator by Proposition \ref{CompactOperatorProp}, to prove the theorem, it suffices to show that $(\mathbf{v}\cdot\mathbf{n}_\Sigma)^2\mathscr{L}_\Sigma$ is an isomorphism between $\mathcal{D}^{k,\alpha}(\Sigma)$ and $C^{k-2,\alpha}_{-1}(\Sigma)$.

First, by our definition of $\mathcal{D}^{k,\alpha}(\Sigma)$ and Theorem \ref{AsymptoticDirichletThm}, $\mathscr{L}_\Sigma$ is an isomorphism between $\mathcal{D}^{k,\alpha}(\Sigma)$ and $C^{k-2,\alpha}_{-1}(\Sigma)$. If the map defined by
$$
(\mathbf{v}\cdot\mathbf{n}_\Sigma)^2\times\colon g\mapsto (\mathbf{v}\cdot\mathbf{n}_\Sigma)^2 g
$$
were an isomorphism of $C^{k-2,\alpha}_{-1}(\Sigma)$, then the claim would follow immediately.

For $\Sigma\in\mathcal{ACH}^{k,\alpha}_n$ and our assumptions on $\mathbf{v}$, 
$$
\mathbf{n}_\Sigma\in C^{k-1,\alpha}_0\cap C^{k-1}_{0,\mathrm{H}} (\Sigma; \mathbb{R}^{n+1}) \mbox{ and } \mathbf{v}\in C^{k,\alpha}_0\cap C^{k}_{0,\mathrm{H}} (\Sigma; \mathbb{R}^{n+1}).
$$
It follows from Item \eqref{ListProp2} of Proposition \ref{ListProp} that $(\mathbf{v}\cdot\mathbf{n}_\Sigma)^2\in C^{k-1,\alpha}_0\cap C^{k-1}_{0,\mathrm{H}}(\Sigma)$. Moreover, as $\mathbf{v}$ and $\mathcal{C}[\mathbf{v}]$ are transverse to, respectively, $\Sigma$ and $\mathcal{C}(\Sigma)$ we have
$$
\inf_{p\in\Sigma} |\mathbf{v}\cdot\mathbf{n}_\Sigma|(p)=\delta>0.
$$ 
It follows from Item \eqref{ListProp3} of Proposition \ref{ListProp} that $(\mathbf{v}\cdot\mathbf{n}_\Sigma)^{-2}\in C^{k-1,\alpha}_0 (\Sigma)$. Hence,
$$
g\in C^{k-2,\alpha}_{-1}(\Sigma) \Longleftrightarrow (\mathbf{v}\cdot\mathbf{n}_\Sigma)^2 g \in C^{k-2,\alpha}_{-1}(\Sigma).
$$
Moreover, there is a $C>0$ depending only on $\Sigma,\mathbf{v},n,k,\alpha$ so that
\begin{equation} \label{NormEquivEqn}
C^{-1} \Vert g \Vert_{k-2,\alpha}^{(-1)}\leq \Vert (\mathbf{v}\cdot\mathbf{n}_\Sigma)^{2} g \Vert_{k-2,\alpha}^{(-1)} \leq C \Vert g \Vert_{k-2,\alpha}^{(-1)},
\end{equation}
implying the map $(\mathbf{v}\cdot\mathbf{n}_\Sigma)^2\times$ is an isomorphism of $C^{k-2,\alpha}_{-1}(\Sigma)$.
\end{proof}

\section{Asymptotic behavior of Jacobi functions}\label{JacobiSec}
Continue to consider a self-expander $\Sigma\in\mathcal{ACH}_n^{k,\alpha}$ and a transverse section on $\Sigma$, $\mathbf{v}\in C^{k,\alpha}_0\cap C^k_{0,\mathrm{H}}(\Sigma;\mathbb{R}^{n+1})$ so that $\mathbf{w}=\mathcal{C}[\mathbf{v}]$ is a transverse section on $\mathcal{C}(\Sigma)$. We say a function $u$ is a \emph{Jacobi function of $\Sigma$} if $L_\Sigma u=0$. In this case, $u\mathbf{n}_\Sigma$ is a \emph{normal Jacobi field of $\Sigma$}. Similarly, a function $u$ is a \emph{$\mathbf{v}$-Jacobi function} if $L_\mathbf{v} u=0$ and in this case $u\mathbf{v}$ is a \emph{$\mathbf{v}$-Jacobi field}. Let us denote the set of Jacobi functions of $\Sigma$ so that the corresponding normal Jacobi fields fix the asymptotic cone of $\Sigma$ by
\begin{equation}\label{KspaceEqn}
\mathcal{K}=\left\{u\in C^2_{loc}(\Sigma)\cap C^{0}_{1, 0}(\Sigma)\colon L_\Sigma u=0\right\}.
\end{equation}
Similarly, denote the set of $\mathbf{v}$-Jacobi functions on $\Sigma$ whose corresponding $\mathbf{v}$-Jacobi fields fix the asymptotic cone of $\Sigma$ by
\begin{equation}\label{KvspaceEqn}
\mathcal{K}_{\mathbf{v}}=\left\{u\in C^2_{loc}(\Sigma)\cap C^{0}_{1, 0}(\Sigma)\colon L_{\mathbf{v}} u=0\right\}.
\end{equation}
The elements of these spaces are significantly more regular locally and have better asymptotic decay properties.  To see this, we first use the analysis of Section \ref{FredholmSec} and a result from \cite{Bernstein} to show improved regularity and decay for elements of the two spaces.  

Indeed, for $r(p)=|\mathbf{x}(p)|$, $p\in \Sigma$, and $\partial_r =\nabla_\Sigma r$, one has

\begin{lem} \label{RegJacobiLem}
The spaces $\mathcal{K}$ and  $\mathcal{K}_{\mathbf{v}}$ are both finite dimensional subspaces of  $\mathcal{D}^{k, \alpha}(\Sigma)$, and $\dim \mathcal{K}=\dim \mathcal{K}_{\mathbf{v}}$. Moreover, for all $m\in \mathbb{R}$ any element $u\in \mathcal{K}\cup \mathcal{K}_{\mathbf{v}}$ satisfies
\begin{equation} \label{IntDecayEst}
\int_{\Sigma} \left(|\nabla_\Sigma u|^2 + u^2\right) r^m {e}^{\frac{r^2}{4}} \, d\mathcal{H}^n<\infty.
\end{equation}
\end{lem}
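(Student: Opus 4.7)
The proof is organized around the observation that the factorization $L_{\mathbf{v}}=(\mathbf{v}\cdot\mathbf{n}_\Sigma) L_\Sigma((\mathbf{v}\cdot\mathbf{n}_\Sigma)\cdot)$ makes multiplication by $\nu=\mathbf{v}\cdot\mathbf{n}_\Sigma$ a linear bijection $\mathcal{K}_{\mathbf{v}}\to\mathcal{K}$. Since $\mathbf{v}$ and $\mathcal{C}[\mathbf{v}]$ are transverse to $\Sigma$ and $\mathcal{C}(\Sigma)$ respectively, $\nu$ belongs to $C^{k-1,\alpha}_0\cap C^{k-1}_{0,\mathrm{H}}(\Sigma)$ with a nowhere vanishing trace at infinity, and Item \eqref{ListProp3} of Proposition \ref{ListProp} (exactly as used in the proof of Theorem \ref{FredholmThm}) puts $\nu^{-1}$ in the same class. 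Multiplication by such a function preserves both asymptotic homogeneity of degree $1$ and the trace at infinity, so $\nu\cdot:\,C^0_{1,0}(\Sigma)\to C^0_{1,0}(\Sigma)$ is an isomorphism carrying $\mathcal{K}_{\mathbf{v}}$ onto $\mathcal{K}$. In particular $\dim\mathcal{K}=\dim\mathcal{K}_{\mathbf{v}}$, and it suffices to carry out the rest of the proof for $\mathcal{K}$.

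Given the isomorphism, the key remaining assertion is that $\mathcal{K}\subset \mathcal{D}^{k,\alpha}(\Sigma)$, since then finite-dimensionality is immediate from the Fredholm property of $L_{\mathbf{v}}:\mathcal{D}^{k,\alpha}(\Sigma)\to C^{k-2,\alpha}_{-1}(\Sigma)$ given by Theorem \ref{FredholmThm} (transported back to $\mathcal{K}$ by the same isomorphism). Interior $C^{k,\alpha}_{loc}$-regularity of an element $u\in\mathcal{K}$ follows from standard elliptic Schauder estimates applied to $L_\Sigma u=0$, whose coefficients are $C^{k-2,\alpha}$. The real work is to upgrade the weak asymptotic condition $u\in C^0_{1,0}(\Sigma)$ to the strong weighted control packaged in $\mathcal{D}^{k,\alpha}(\Sigma)$. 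To achieve this I would rewrite the Jacobi equation as $\mathscr{L}_\Sigma u=(\tfrac{1}{2}-|A_\Sigma|^2)u$, which, since $|A_\Sigma|^2\in C^{k-2,\alpha}_{-2}(\Sigma)$ by $\Sigma\in\mathcal{ACH}^{k,\alpha}_n$, identifies $u$ on an end of $\Sigma$ as an almost eigenfunction of the drifted Laplacian $\mathscr{L}_\Sigma$ with eigenvalue $\tfrac{1}{2}$ up to a quadratically decaying error. At this point I invoke the asymptotic expansion of almost eigenfunctions from \cite{Bernstein} advertised in the introduction. Combined with the hypothesis $\mathrm{tr}^1_\infty[u]=0$ this expansion should force $u$ to decay faster than any polynomial, and in fact to satisfy Gaussian-type bounds $|u(p)|\le C_N(1+r(p))^{N} e^{-r(p)^2/4}$ at infinity. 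Analogous bounds on derivatives up to order $k$ are then propagated by the interior parabolic estimates for the rescaled flow $\sqrt{t}\,\Sigma$ already employed in Section \ref{FredholmSec}.

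Once such polynomial-times-Gaussian decay is in hand, the remaining claims are formal. On the one hand, Gaussian decay beats every weight appearing in the norm $\Vert\cdot\Vert^*_{k,\alpha}$ that defines $\mathcal{D}^{k,\alpha}(\Sigma)$, so $u\in\mathcal{D}^{k,\alpha}(\Sigma)$ and the Fredholm property of Theorem \ref{FredholmThm} gives the finite-dimensionality. On the other hand, for any $m\in\mathbb{R}$ the integrand in \eqref{IntDecayEst} is bounded by a polynomial in $r$ times $e^{-r^2/4}$, and since the area element of an asymptotically conical hypersurface grows only polynomially in $r$, the integral converges. This establishes \eqref{IntDecayEst} for every $m$.

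The main obstacle is the adaptation step: the expansion in \cite{Bernstein} is stated in a context which is not literally that of a Jacobi function on an asymptotically conical self-expander, so one must recast the equation $\mathscr{L}_\Sigma u=(\tfrac12-|A_\Sigma|^2)u$ on an end of $\Sigma$ as a perturbation of the model drifted-Laplacian eigenvalue problem (using a homogeneous transverse section to pull the end back to a piece of $\mathcal{C}(\Sigma)$ as in Section \ref{ACHSec}), and track that the $C^{k-2,\alpha}_{-2}$ errors introduced by $|A_\Sigma|^2$ and by the ambient geometry are compatible with the hypotheses of the expansion. One must also verify that the asymptotic vanishing encoded in $C^0_{1,0}(\Sigma)$ selects precisely the decaying branch of the expansion; this is the step that produces the Gaussian rate and on which the whole lemma hinges.
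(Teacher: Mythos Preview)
Your overall strategy---reduce to $\mathcal{K}$ via multiplication by $\nu=\mathbf{v}\cdot\mathbf{n}_\Sigma$, invoke \cite{Bernstein} for decay, and read off $\mathcal{D}^{k,\alpha}$-membership and finite dimensionality via Theorem~\ref{FredholmThm}---matches the paper. But the route you propose to $u\in\mathcal{D}^{k,\alpha}(\Sigma)$ differs from the paper's and hides a potential circularity. You want to invoke the \emph{asymptotic expansion} from \cite{Bernstein} to produce pointwise Gaussian decay and then conclude both \eqref{IntDecayEst} and $\mathcal{D}^{k,\alpha}$-membership. However, in the paper's setup (see Proposition~\ref{AsymptoticKerProp}) that expansion (\cite[Theorem~7.2]{Bernstein}) is applied \emph{only after} \eqref{IntDecayEst} is already known; it is not clear it can be invoked from the bare hypothesis $u\in C^0_{1,0}(\Sigma)$.

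The paper avoids this by using a different result, \cite[Theorem~9.1]{Bernstein}, whose input is only the sublinear growth condition $\lim_{\rho\to\infty}\rho^{-n-1}\int_{\Sigma\cap\partial B_\rho}u^2\,d\mathcal{H}^{n-1}=0$ (immediate from $u\in C^0_{1,0}$), and whose output is exactly \eqref{IntDecayEst}. With that in hand, the paper does \emph{not} pass through pointwise Gaussian bounds. Instead it constructs $\tilde{u}\in C^0_{-1}(\Sigma)$ via Proposition~\ref{C0DecayProp} solving $\mathscr{L}_\Sigma\tilde{u}=-|A_\Sigma|^2 u$, applies the same Theorem~9.1 to $v=u-\tilde{u}$ (which satisfies $\mathscr{L}_\Sigma v=0$), and integrates by parts against $e^{r^2/4}$ to force $v\equiv 0$; hence $u=\tilde{u}\in C^0_{-1}(\Sigma)$. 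A straightforward bootstrap using Theorem~\ref{AsymptoticDirichletThm} and the maximum principle then places $u$ in $\mathcal{D}^{k,\alpha}(\Sigma)$. This is more economical than your plan and sidesteps the need to propagate Gaussian pointwise bounds to derivatives. (Minor slip: since $L_\Sigma=\mathscr{L}_\Sigma+|A_\Sigma|^2$, the correct identity is $\mathscr{L}_\Sigma u=-|A_\Sigma|^2 u$, not $(\tfrac12-|A_\Sigma|^2)u$.)
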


\begin{proof}
Suppose that $u\in\mathcal{K}$. By standard elliptic regularity results $\Sigma$ is a smooth hypersurface and $u\in C^\infty_{loc}(\Sigma)$. As $|A_\Sigma|\in C^{\infty}_{loc}\cap C^{k-2,\alpha}_{-1}(\Sigma)$, the function $f=\mathscr{L}_\Sigma u=-|A_\Sigma|^2 u$ satisfies $f\in C^\infty_{loc}\cap C^0_{-1}(\Sigma)$. By Proposition \ref{C0DecayProp} and the local gradient estimate \cite[Theorem 8.32]{GilbargTrudinger} there is a unique solution, $\tilde{u}$, in $C^0_{-1}\cap C^1_{1}(\Sigma)$ of $\mathscr{L}_\Sigma\tilde{u}=f$. We first show $\tilde{u}=u$.
	 
To that end, observe that as $\Sigma\in\mathcal{ACH}_{n}^{k,\alpha}$ is a self-expander, on $\Sigma\setminus B_1$,
$$
r|A_\Sigma|+r^4||\nabla_\Sigma r|-1|+r^2|\nabla^2_\Sigma r^2-2g_\Sigma|\leq C.
$$
Hence, there is an $R_\Sigma>1$ so that $(\Sigma\backslash \bar{B}_{R_\Sigma}, g_\Sigma, r)$ is a weakly conical end in the sense of \cite{Bernstein}. Observe that $\mathscr{L}_\Sigma=\Delta_{g_\Sigma} +\frac{r}{2}\partial_r -\frac{1}{2}$. Let $v=u-\tilde{u}$ and observe that as $u, v\in C^{0}_{1,0}(\Sigma)$, 
$$
\lim_{\rho\to \infty} \rho^{-n-1} \int_{\Sigma\cap\partial B_{\rho}} u^2 \, d\mathcal{H}^{n-1}=\lim_{\rho\to \infty} \rho^{-n-1} \int_{\Sigma\cap\partial B_{\rho}} v^2 \, d\mathcal{H}^{n-1}=0.
$$
Hence, as $L_\Sigma u=0$ and $\mathscr{L}_\Sigma v=\mathscr{L}_\Sigma u -\mathscr{L}_\Sigma \tilde{u}=0$, \cite[Theorem 9.1]{Bernstein} implies that \eqref{IntDecayEst} holds, with any $m$, for both $u$ and $v$. In particular, for $R>1$ sufficiently large, integrating by parts and using $\mathscr{L}_\Sigma v=0$, gives
\begin{align*}
\int_{\Sigma\cap B_R} & \left( |\nabla_\Sigma v|^2 +\frac{1}{2}v^2 \right) {e}^{\frac{r^2}{4}} \, d\mathcal{H}^n=\int_{\Sigma \cap \partial B_R} v\partial_r v |\nabla_\Sigma r|^{-1} {e}^{\frac{r^2}{4}} \, d\mathcal{H}^n \\
& \leq 2\left( \int_{\Sigma \cap \partial B_R} v^2 {e}^{\frac{r^2}{4}} \, d\mathcal{H}^n\right)^{1/2} \left( \int_{\Sigma\cap  \partial B_R} |\nabla_\Sigma v|^2 {e}^{\frac{r^2}{4}} \, d\mathcal{H}^n\right)^{1/2} .
\end{align*}
As \eqref{IntDecayEst} holds for $v$ with $m=0$, there is a sequence of $R\to \infty$, for which the right hand side tends to zero. Hence, $v$ identically vanishes and so $u=\tilde{u}$ and $u\in C^{0}_{-1}\cap C^1_1(\Sigma)$.
	 
We will now argue by induction to show $u\in \mathcal{D}^{k,\alpha}(\Sigma)$. As $C^1_1(\Sigma)\subset C^\alpha_1(\Sigma)$, that $u\in C^1_1(\Sigma)$ and the decay of $|A_\Sigma|$ ensure that $f=-|A_\Sigma|^2 u\in C^{\alpha}_{-1}(\Sigma)$. Let $\bar{u}$ be the unique element of $\mathcal{D}^{2,\alpha}(\Sigma)$ that satisfies $\mathscr{L}_\Sigma\bar{u}=f$ given by Theorem \ref{AsymptoticDirichletThm}. As $\mathscr{L}_\Sigma (\bar{u}-u)=0$ and $\bar{u}-u\in C^2_{loc}\cap C^0_{-1}(\Sigma)$, the elliptic maximum principle implies $u=\bar{u}$ and so $u\in\mathcal{D}^{2,\alpha}(\Sigma)$, proving the claim for $k=2$. Suppose that we have shown $u\in\mathcal{D}^{l,\alpha}(\Sigma)$ for $2\leq l \leq k-1$. As $\mathcal{D}^{l,\alpha}(\Sigma)\subset C^{l,\alpha}_{1}(\Sigma)$,  the decay of $|A_\Sigma|^2$ ensures that $f=-|A_{\Sigma}|^2 u\in C^{l^\prime,\alpha}_{-1}(\Sigma)$ where $l^\prime=\min\{k-2, l\}$. Theorem \ref{AsymptoticDirichletThm} gives a unique $\hat{u}\in \mathcal{D}^{l^\prime+2, \alpha}(\Sigma)$ with $\mathscr{L}_\Sigma \hat{u}=f$. As $\hat{u}-u\in C^2_{loc}\cap C^0_{-1}(\Sigma)$ and $\mathscr{L}_\Sigma(\hat{u}-u)=0$ the elliptic maximum principle implies $u=\hat{u}$ and so $u\in \mathcal{D}^{k,\alpha}(\Sigma)$ as claimed.
	
Suppose now that $u\in\mathcal{K}_{\mathbf{v}}$. For $\Sigma\in\mathcal{ACH}^{k,\alpha}_n$ and our assumptions on $\mathbf{v}$ we have that $|\mathbf{v}\cdot\mathbf{n}_\Sigma|\in C^{k-1,\alpha}_0(\Sigma)$ and is uniformly bounded from below by a positive constant. This implies $(\mathbf{v}\cdot\mathbf{n}_\Sigma)^{-1}\in C^{k-1,\alpha}_0(\Sigma)$. Thus, $u\in \mathcal{K}_{\mathbf{v}}$ if and only if $(\mathbf{v}\cdot \mathbf{n}_\Sigma) u \in \mathcal{K}$. By what have shown \eqref{IntDecayEst} holds for $u$ and $(\mathbf{v}\cdot\mathbf{n}_\Sigma) u\in\mathcal{D}^{k,\alpha}(\Sigma)$ implying $u\in C^{k-2,\alpha}_{-1}\cap C^{k-1,\alpha}_0(\Sigma)$. By writing $L_{\mathbf{v}}=(\mathbf{v}\cdot \mathbf{n}_\Sigma)^2 \mathscr{L}_\Sigma +K_{\mathbf{v}}$,  the definition of $K_\mathbf{v}$ (see \eqref{KvEqn}) ensures that $\mathscr{L}_\Sigma u \in C^{k-2,\alpha}_{-1}(\Sigma)$.  Hence, arguing as above gives $u\in \mathcal{D}^{k, \alpha}(\Sigma)$. 

Finally, by Theorem \ref{FredholmThm}, $L_{\mathbf{v}}$ is Fredholm and so $\mathcal{K}_{\mathbf{v}}$ is finite dimensional and, as $\mathcal{K}_{\mathbf{v}}$ and $\mathcal{K}$ are isomorphic, the same is true of $\mathcal{K}$.
\end{proof}

We will actually need a sharper decay estimate for elements of $\mathcal{K}$ proved in \cite{Bernstein}. Roughly speaking, this result says that for any non-trivial $u\in \mathcal{K}$, there will be a non-zero element $\mathrm{tr}_\infty^*[u]\in L^2(\mathcal{L}(\Sigma))$ so that one has the asymptotic expansion
$$
u= r^{-n-1} e^{-\frac{r^2}{4}} \mathscr{E}_{0,\mathbf{w}}^{\mathrm{H}}[\mathrm{tr}_\infty^*[u]]\circ\mathbf{x}|_\Sigma+o\left(r^{-n-1} e^{-\frac{r^2}{4}} \right).
$$
This expansion will be crucial in justifying several integration by parts arguments.

More precisely, with  $A_{R^\prime,R}=\Sigma\cap (\bar{B}_{R^\prime}\setminus B_R)$, we have

\begin{prop} \label{AsymptoticKerProp}
If $u\in\mathcal{K}$, then 
\begin{equation} \label{L2AsymptoticEqn}
\lim_{\rho\to\infty} \rho^{n+3} e^{\frac{\rho^2}{2}} \int_{\Sigma\cap\partial B_\rho} u^2 \, d\mathcal{H}^{n-1}=a[u]^2<\infty,
\end{equation}
and there are constants $R_1$ and $C_6$, depending on $\Sigma$ and $u$, so that for all $R\geq R_1$, 
\begin{equation} \label{EnergyDecayEqn}
\int_{A_{2R,R}} \left(|\nabla_\Sigma u|^2+r^4(2\partial_r u+ru)^2\right) e^{\frac{r^2}{2}}\, d\mathcal{H}^n \leq C_6 a[u]^2 R^{-n}.
\end{equation}
Moreover, there is an injective linear map
$$
\mathrm{tr}^*_\infty\colon \mathcal{K} \to L^2(\mathcal{L}(\Sigma))
$$
satisfying 
\begin{equation} \label{L2TraceEqn}
\int_{\mathcal{L}(\Sigma)} \mathrm{tr}^*_\infty[u]^2\, d\mathcal{H}^{n-1}=a[u]^2,
\end{equation}
and for all $R\geq R_1$,
\begin{equation} \label{TraceApproxDecayEqn}
\int_{A_{2R,R}} (u-\mathscr{F}_{\mathbf{w}}[u])^2 e^{\frac{r^2}{2}} \, d\mathcal{H}^n\leq C_6 a[u]^2 R^{-n-4}
\end{equation}
where
$$
\mathscr{F}_{\mathbf{w}}[u]=r^{-n-1} e^{-\frac{r^2}{4}}\mathscr{E}_{0,\mathbf{w}}^{\mathrm{H}}[\mathrm{tr}^*_\infty[u]].
$$
\end{prop}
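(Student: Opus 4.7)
The approach is to invoke the asymptotic expansion machinery for almost eigenfunctions of drifted Laplacians on weakly conical ends developed by the first author in \cite{Bernstein}, after verifying that elements of $\mathcal{K}$ fit into that framework. By Lemma \ref{RegJacobiLem}, any $u \in \mathcal{K}$ already lies in $\mathcal{D}^{k,\alpha}(\Sigma)$ and satisfies the weighted integrability \eqref{IntDecayEst} with any polynomial weight $r^m$. Since $\Sigma \in \mathcal{ACH}^{k,\alpha}_n$ is a self-expander, one has $|A_\Sigma|^2 \in C^{k-2,\alpha}_{-2}(\Sigma)$, and the equation $L_\Sigma u = 0$ may be rewritten as
\begin{equation*}
\mathscr{L}_\Sigma u = -|A_\Sigma|^2 u,
\end{equation*}
so $u$ is an almost eigenfunction of $\mathscr{L}_\Sigma$ with inhomogeneity bounded pointwise by $C r^{-2} |u|$. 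As noted in the proof of Lemma \ref{RegJacobiLem}, $\Sigma \setminus \bar{B}_{R_\Sigma}$ is a weakly conical end in the sense of \cite{Bernstein}, so the setting from that paper applies.

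The key step is to appeal to the asymptotic expansion results in \cite{Bernstein}. Heuristically, separating variables on the model cone leads to a radial ODE whose leading decaying solution behaves like $r^{-n-1} e^{-r^2/4}$ (while the second independent solution grows); the integrability coming from Lemma \ref{RegJacobiLem} rules out the growing mode. Combined with a Fourier-type decomposition on $\mathcal{L}(\Sigma)$, this formally produces the ansatz $\mathscr{F}_\mathbf{w}[u] = r^{-n-1} e^{-r^2/4}\, \mathscr{E}_{0,\mathbf{w}}^{\mathrm{H}}[\varphi]$ for some $\varphi \in L^2(\mathcal{L}(\Sigma))$, and the weighted $L^2$ energy monotonicity and Carleman-type estimates of \cite{Bernstein} (which must be carried out in the presence of the $r^{-2}|u|$ error coming from the curvature term) promote this ansatz to the rigorous annular bound \eqref{TraceApproxDecayEqn}. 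Setting $\mathrm{tr}_\infty^*[u] = \varphi$ then defines the desired bounded linear map, and integrating the expansion over $\Sigma \cap \partial B_\rho$ yields both \eqref{L2AsymptoticEqn} and \eqref{L2TraceEqn}; the prefactor $\rho^{n+3} e^{\rho^2/2}$ is exactly what cancels the square of $r^{-n-1} e^{-r^2/4}$ against the conical area element $\rho^{n-1} d\mathcal{H}^{n-1}$ on the link.

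For the energy decay \eqref{EnergyDecayEqn}, the plan is to pair $L_\Sigma u = 0$ with $u\, e^{r^2/2} \chi(r/R)^2$, where $\chi$ is a smooth cutoff supported in $[1,2]$, and integrate by parts over $\Sigma$. Using $|A_\Sigma|^2 = O(r^{-2})$ and the self-expander equation to handle the drift term $\tfrac{1}{2} \mathbf{x} \cdot \nabla_\Sigma u$, the bulk term produces $|\nabla_\Sigma u|^2 e^{r^2/2}$ on the annulus, while the boundary-type errors reorganize, after completing a square, into $r^4(2\partial_r u + ru)^2 e^{r^2/2}$. The quantity $2\partial_r u + ru$ measures precisely the deviation of $u$ from obeying the leading radial profile equation $2\partial_r v + rv = 0$ satisfied by $e^{-r^2/4}$, so the estimate on its weighted $L^2$ norm follows from \eqref{TraceApproxDecayEqn} applied on adjacent annuli together with the integrability supplied by Lemma \ref{RegJacobiLem}.

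Injectivity of $\mathrm{tr}_\infty^*$ is then immediate once \eqref{TraceApproxDecayEqn} and \eqref{L2TraceEqn} are in hand: if $\mathrm{tr}_\infty^*[u] = 0$, then $\mathscr{F}_\mathbf{w}[u] \equiv 0$ and $a[u] = 0$, so \eqref{TraceApproxDecayEqn} forces $u \equiv 0$ on every annulus $A_{2R, R}$ with $R \geq R_1$, and standard elliptic unique continuation for the linear equation $L_\Sigma u = 0$ on the interior extends this to $u \equiv 0$ on all of $\Sigma$. I expect the main technical obstacle to lie not in injectivity but in carrying out the weighted $L^2$/Carleman machinery of \cite{Bernstein} in the present setting, where the perturbation $|A_\Sigma|^2 u$ only decays quadratically and must be absorbed carefully on each dyadic annulus.
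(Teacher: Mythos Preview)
Your high-level strategy---reduce to the weakly conical end framework of \cite{Bernstein} via Lemma~\ref{RegJacobiLem}---matches the paper, but the execution differs in two substantive ways.

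First, the paper does not attempt to read off \eqref{TraceApproxDecayEqn} directly from a Carleman estimate in \cite{Bernstein}. Instead it makes the substitution $\hat{u}=r^{n+1}e^{r^2/4}u$ and applies \cite[Theorem~7.2]{Bernstein} to $\hat{u}$, obtaining immediately
\[
\lim_{\rho\to\infty}\rho^{1-n}\int_{\Sigma\cap\partial B_\rho}\hat{u}^2=a[u]^2
\quad\text{and}\quad
\int_{A_{2R,R}}\bigl(|\nabla_\Sigma\hat{u}|^2+r^2(\partial_r\hat{u})^2\bigr)r^{-n}\,d\mathcal{H}^n\leq \hat{C}a[u]^2R^{-2}.
\]
Both \eqref{L2AsymptoticEqn} and \eqref{EnergyDecayEqn} then drop out by rewriting $\nabla_\Sigma u$ and $2\partial_r u+ru$ in terms of $\hat{u}$ and $\nabla_\Sigma\hat{u}$. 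In particular, the energy bound \eqref{EnergyDecayEqn} is \emph{not} obtained by your proposed cutoff pairing with $u\,e^{r^2/2}\chi^2$; note that the weight naturally adapted to $\mathscr{L}_\Sigma$ is $e^{r^2/4}$, so an integration by parts against $e^{r^2/2}$ produces an extra first-order drift that does not obviously close, and your appeal to \eqref{TraceApproxDecayEqn} to control $r^4(2\partial_r u+ru)^2$ is circular since that estimate gives no derivative information.

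Second, the order is reversed: the paper establishes \eqref{L2AsymptoticEqn} and \eqref{EnergyDecayEqn} first, and only then constructs $\mathrm{tr}_\infty^*$ explicitly by pulling $\hat{u}$ back to the cone via the graph parametrization $\Psi$, setting $\hat{u}_\tau=\hat{u}(\Psi(\tau\,\cdot))$, and showing $\hat{u}_\tau$ is Cauchy in $L^2_{loc}(\mathcal{C})$ as $\tau\to\infty$ using the $\hat{u}$-energy decay above. The limit $\hat{u}_\infty$ is homogeneous of degree $0$, its restriction to $\mathcal{L}(\Sigma)$ is $\mathrm{tr}_\infty^*[u]$, and the Cauchy estimate itself yields \eqref{L2TraceEqn} and \eqref{TraceApproxDecayEqn}. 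Your injectivity argument via \eqref{TraceApproxDecayEqn} plus unique continuation is fine and essentially equivalent to what the paper does.
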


\begin{proof}
As in the proof of Lemma \ref{RegJacobiLem},  $(\Sigma\backslash \bar{B}_{R_\Sigma}, g_\Sigma, r)$ is a weakly conical end in the sense of \cite{Bernstein}.
Define $\hat{u}=r^{n+1} e^{r^2/4} u$. As $u\in \mathcal{K}$ satisfies $L_\Sigma u=0$ and \eqref{IntDecayEst}, it follows from \cite[Theorem 7.2]{Bernstein} that 
\begin{equation} \label{HatL2TraceEqn}
\lim_{\rho\to\infty} \rho^{1-n} \int_{\Sigma\cap\partial B_{\rho}} \hat{u}^2 \, d\mathcal{H}^{n-1}=a[u]^2<\infty,
\end{equation}
and there are constants $\hat{R}$ and $\hat{C}$, depending on $\Sigma$ and $u$, so that for $R\geq \hat{R}$,
\begin{equation} \label{HatEnergyDecayEqn}
\int_{A_{2R,R}} \left(|\nabla_\Sigma \hat{u}|^2+r^2(\partial_r \hat{u})^2\right)r^{-n}\, d\mathcal{H}^n\leq \hat{C} a[u]^2 R^{-2}.
\end{equation}
Thus, \eqref{L2AsymptoticEqn} and \eqref{EnergyDecayEqn} follow from \eqref{HatL2TraceEqn} and \eqref{HatEnergyDecayEqn} together with the observations that 
\begin{align*}
e^{\frac{r^2}{4}}\nabla_\Sigma u & =-\left(\frac{n+1}{r}+\frac{r}{2}\right)r^{-n-1}\hat{u}\partial_r+r^{-n-1} \nabla_\Sigma \hat{u},  \\
e^{\frac{r^2}{4}}(2\partial_r u+ru) & = -2(n+1)(1+O(r^{-2}))r^{-n-2}\hat{u}+2r^{-n-1} \partial_r \hat{u}.
\end{align*}

It remains to construct the map $\mathrm{tr}_\infty^*$ with claimed properties. As $\Sigma\in\mathcal{ACH}^{k,\alpha}_n$, by our definition, there is a $C^{k,\alpha}$-regular cone $\mathcal{C}=\mathcal{C}(\Sigma)$ and a function $\psi\in C^{k,\alpha}_1\cap C^k_{1,0}(\mathcal{C}_{R^\prime})$ where $\mathcal{C}_{R^\prime}=\mathcal{C}\setminus\bar{B}_{R^\prime}$ so that for some compact set $K\subset\Sigma$, $\Sigma^\prime=\Sigma\setminus K$ can be parametrized by the map $\Psi\colon\mathcal{C}_{R^\prime}\to\Sigma^\prime\subset\mathbb{R}^{n+1}$ given by
$$
\Psi(p)=\mathbf{x}(p)+\psi(p)\mathbf{w}(p).
$$
As $\Sigma$ is a self-expander, we have
$$
(\mathbf{x}\cdot\nabla_{\mathcal{C}}\Psi-\Psi)\cdot(\mathbf{n}_\Sigma\circ\Psi)=2H_\Sigma\circ\Psi \in C^{0}_{-1}(\mathcal{C}_{R^\prime}).
$$
Invoking the homogeneity of cone and $\mathbf{w}$, this gives
\begin{equation} \label{ApproxHomoEqn}
\mathbf{x}\cdot\nabla_{\mathcal{C}}\psi-\psi \in C^0_{-1}(\mathcal{C}_{R^\prime}).
\end{equation}

Given $\tau>0$ we define $\hat{u}_\tau=\hat{u}(\Psi(\tau\cdot))$ on $\mathcal{C}_{R^\prime/\tau}$. Then
$$
\partial_\tau\hat{u}_\tau(p)=\tau^{-1}\left\{|\Psi(\tau p)|\partial_r\hat{u}(\Psi(\tau p))+(\tau p\cdot\nabla_{\mathcal{C}}\psi(\tau p)-\psi(\tau p))\nabla_\Sigma\hat{u}(\Psi(\tau p))\cdot\mathbf{w}(p)\right\}.
$$
Thus, invoking \eqref{ApproxHomoEqn}, for some constant $C^\prime=C^\prime(\mathcal{C},\mathbf{w},\psi)$,
\begin{equation} \label{TimeDerEqn}
|\partial_\tau\hat{u}_\tau(p)|\leq \tau^{-1}|r\partial_r\hat{u}|(\Psi(\tau p))+C^\prime\tau^{-2}|\mathbf{x}(p)|^{-1}|\nabla_\Sigma\hat{u}|(\Psi(\tau p)).
\end{equation}
Fix any $\rho_2\geq \rho_1>0$. Denote by $A^{c}_{\rho_2,\rho_1}=\mathcal{C}\cap (\bar{B}_{\rho_2}\setminus B_{\rho_1})$. For $\tau_2\geq\tau_1\gg1$, we use the H\"{o}lder inequality and \eqref{TimeDerEqn} to estimate:
\begin{equation} \label{L2DiffEqn}
\begin{split}
& \int_{A^{c}_{\rho_2,\rho_1}} \left(\hat{u}_{\tau_2}-\hat{u}_{\tau_1}\right)^2 d\mathcal{H}^n \leq \int_{A^{c}_{\rho_2,\rho_1}} \left( \int_{\tau_1}^{\tau_2} |\partial_\tau\hat{u}_\tau| \, d\tau\right)^2 d\mathcal{H}^n \\
 &\leq 2\int_{\tau_1}^{\tau_2} \frac{1}{\tau^{2}}\, d\tau \int_{\tau_1}^{\tau_2}\int_{A_{\rho_2,\rho_1}^c} |r\partial_r\hat{u}|^2(\Psi(\tau p))\, d\mathcal{H}^n d\tau \\
 & \quad +\frac{2(C^\prime)^2}{\rho_1^2}\int_{\tau_1}^{\tau_2} \frac{1}{\tau^4}\, d\tau \int_{\tau_1}^{\tau_2}\int_{A_{\rho_2,\rho_1}^c} |\nabla_\Sigma\hat{u}|^2(\Psi(\tau p))\, d\mathcal{H}^n d\tau.
\end{split}
\end{equation}

Observe that for sufficiently large $\tau$, $\Psi(A^c_{\tau\rho_2,\tau\rho_1})\subset A_{2\tau\rho_2,\frac{\tau\rho_1}{2}}$ and the Jacobian of $\Psi$ on $A^c_{\tau\rho_2,\tau\rho_1}$ is bounded from below by $\frac{1}{2}$. Thus, an application of the change of variables formula and \eqref{HatEnergyDecayEqn} gives that
\begin{align*}
& \int_{A^c_{\rho_2,\rho_1}} \left(|\nabla_\Sigma\hat{u}|^2+(r\partial_r\hat{u})^2\right)(\Psi(\tau p)) \, d\mathcal{H}^n \\
& \leq \frac{2}{\tau^n}\int_{A_{2\tau\rho_2,\frac{\tau\rho_1}{2}}} \left(|\nabla_\Sigma\hat{u}|^2+(r\partial_r\hat{u})^2\right) \, d\mathcal{H}^n \leq C^{\prime\prime} a[u]^2\tau^{-2},
\end{align*}
where $C^{\prime\prime}$ depends on $\hat{C},n,\rho_1$ and $\rho_2$. Hence, substituting this estimate into \eqref{L2DiffEqn} gives
$$
\int_{A^c_{\rho_2,\rho_1}} \left(\hat{u}_{\tau_2}-\hat{u}_{\tau_1}\right)^2 d\mathcal{H}^n\leq \tilde{C}a[u]^2\tau_1^{-2}
$$
where $\tilde{C}$ depends on $\rho_1,C^\prime$ and $C^{\prime\prime}$. 

Therefore, it follows that 
$$
\lim_{\tau\to\infty} \hat{u}_\tau=\hat{u}_\infty \mbox{ in $L^2_{loc}(\mathcal{C})$}
$$
for some $\hat{u}_\infty\in L^2_{loc}(\mathcal{C})$. Moreover, $\hat{u}_\infty(\rho p)=\hat{u}_\infty(p)$ for all $p\in\mathcal{C}$ and $\rho>0$, and for $\rho_2\geq \rho_1>0$ and for sufficiently large $\tau$,
\begin{equation} \label{L2ApproxEqn}
\int_{A^c_{\rho_2,\rho_1}} \left(\hat{u}_\tau-\hat{u}_\infty\right)^2 d\mathcal{H}^n \leq \tilde{C}a[u]^2\tau^{-2}.
\end{equation}
As $\hat{u}_\infty$ is homogeneous of degree $0$, we can define
$$
\mathrm{tr}_\infty^*[u]=\hat{u}_\infty|_{\mathcal{L}(\Sigma)}\in L^2(\mathcal{L}(\Sigma)).
$$
The linear dependence of $\mathrm{tr}_\infty^*$ on $u$ is justified by our construction of $\hat{u}_\infty$. And \eqref{L2TraceEqn} is given by combining \eqref{HatL2TraceEqn} and \eqref{L2ApproxEqn}. Furthermore, \eqref{TraceApproxDecayEqn} follows from substituting $u=r^{-n-1}e^{-r^2/4}\hat{u}$ into \eqref{L2ApproxEqn} and a change of variables. Finally, in view of \eqref{L2AsymptoticEqn}, \eqref{EnergyDecayEqn} and \eqref{L2TraceEqn}, the map $\mathrm{tr}_\infty^*$ is injective.
\end{proof}

We obtain several useful corollaries of Proposition \ref{AsymptoticKerProp}.  
 
\begin{cor} \label{CokerCor}
For all $\kappa\in\mathcal{K}_{\mathbf{v}}\setminus\{0\}$ there are no solutions in $\mathcal{D}^{k,\alpha}(\Sigma)$ of $L_{\mathbf{v}} u=\kappa$. 
\end{cor}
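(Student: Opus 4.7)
The strategy is to test the equation $L_{\mathbf{v}} u = \kappa$ against $\kappa$ in the weighted $L^{2}$-inner product with measure $e^{|\mathbf{x}|^{2}/4}\,d\mathcal{H}^{n}$, and to exploit the formal self-adjointness of the underlying operator $L_{\Sigma}$ in this pairing to force $\kappa\equiv 0$. To set this up, let $\tilde u=(\mathbf{v}\cdot\mathbf{n}_{\Sigma})u$ and $\tilde\kappa=(\mathbf{v}\cdot\mathbf{n}_{\Sigma})\kappa$. Since $L_{\mathbf{v}}=(\mathbf{v}\cdot\mathbf{n}_{\Sigma})L_{\Sigma}((\mathbf{v}\cdot\mathbf{n}_{\Sigma})\cdot)$, the hypothesis translates to $L_{\Sigma}\tilde u=(\mathbf{v}\cdot\mathbf{n}_{\Sigma})^{-1}\kappa$ and $L_{\Sigma}\tilde\kappa=0$, so $\tilde\kappa\in \mathcal{K}$ and Proposition \ref{AsymptoticKerProp} applies. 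Using the divergence form
$$
L_{\Sigma}=e^{-|\mathbf{x}|^{2}/4}\mathrm{div}_{\Sigma}\bigl(e^{|\mathbf{x}|^{2}/4}\nabla_{\Sigma}\cdot\bigr)+\bigl(|A_{\Sigma}|^{2}-\tfrac{1}{2}\bigr),
$$
the Green identity for $\tilde u,\tilde\kappa$ on $\Sigma\cap B_{R}$ together with $L_{\Sigma}\tilde\kappa=0$ and $(\mathbf{v}\cdot\mathbf{n}_{\Sigma})^{-1}\kappa\cdot \tilde\kappa = \kappa^{2}$ yields
$$
\int_{\Sigma\cap B_{R}}\kappa^{2}\,e^{|\mathbf{x}|^{2}/4}\,d\mathcal{H}^{n}=B_{R},
$$
where $B_{R}=\int_{\Sigma\cap\partial B_{R}}e^{|\mathbf{x}|^{2}/4}(\tilde\kappa\,\partial_{\nu}\tilde u-\tilde u\,\partial_{\nu}\tilde\kappa)\,d\mathcal{H}^{n-1}$ is the boundary flux.

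\textbf{Main obstacle: killing the boundary terms.} The hard part is to verify that $\liminf_{R\to\infty}|B_{R}|=0$ despite the rapid growth of the Gaussian weight $e^{|\mathbf{x}|^{2}/4}$. This is precisely where the sharp decay in Proposition \ref{AsymptoticKerProp} is needed, substituting for what would otherwise be a Carleman or unique-continuation argument. By the coarea formula and the fact that $|\nabla_{\Sigma} r|\to 1$ on the asymptotic end,
$$
\int_{R}^{2R}|B_{\rho}|\,d\rho\le C\int_{A_{2R,R}}\bigl(|\tilde\kappa||\nabla_{\Sigma}\tilde u|+|\tilde u||\nabla_{\Sigma}\tilde\kappa|\bigr)e^{r^{2}/4}\,d\mathcal{H}^{n}.
$$
The factors $|\tilde u|,|\nabla_{\Sigma}\tilde u|$ decay polynomially (inherited from $u\in\mathcal{D}^{k,\alpha}(\Sigma)$ and $\mathbf{v}\cdot\mathbf{n}_{\Sigma}\in C^{k-1,\alpha}_{0}(\Sigma)$ via Item \eqref{ListProp2} of Proposition \ref{ListProp}). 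For the factors involving $\tilde\kappa$, a Cauchy--Schwarz argument together with the annular estimates \eqref{EnergyDecayEqn} and \eqref{TraceApproxDecayEqn} of Proposition \ref{AsymptoticKerProp} gives
$$
\int_{A_{2R,R}}\tilde\kappa^{2}e^{r^{2}/2}\,d\mathcal{H}^{n}+\int_{A_{2R,R}}|\nabla_{\Sigma}\tilde\kappa|^{2}e^{r^{2}/2}\,d\mathcal{H}^{n}=O(R^{-n}),
$$
which after Cauchy--Schwarz with the polynomial bounds on $\tilde u$ produces $\int_{R}^{2R}|B_{\rho}|\,d\rho=O(R^{-1})$. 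A pigeonhole argument then selects a sequence $R_{j}\to\infty$ with $|B_{R_{j}}|\to 0$. Crucially, the Gaussian factors $e^{-r^{2}/4}$ hidden in $\tilde\kappa$ and $\nabla_{\Sigma}\tilde\kappa$ cancel the Gaussian weight, leaving only decaying polynomial factors; this would fail with any slower-than-Gaussian decay for $\tilde\kappa$, which is why the full strength of Proposition \ref{AsymptoticKerProp} is essential.

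\textbf{Conclusion.} The integrand on the left-hand side converges: by Lemma \ref{RegJacobiLem}, $\int_{\Sigma}\kappa^{2}e^{|\mathbf{x}|^{2}/4}\,d\mathcal{H}^{n}<\infty$. Sending $R=R_{j}\to\infty$ along the good subsequence yields
$$
\int_{\Sigma}\kappa^{2}e^{|\mathbf{x}|^{2}/4}\,d\mathcal{H}^{n}=0,
$$
so $\kappa\equiv 0$, contradicting $\kappa\in \mathcal{K}_{\mathbf{v}}\setminus\{0\}$. This establishes the corollary and, combined with Theorem \ref{FredholmThm}, implies that $L_{\mathbf{v}}$ restricts to an isomorphism on a natural complement of $\mathcal{K}_{\mathbf{v}}$.
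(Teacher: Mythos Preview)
Your proof is correct and follows essentially the same route as the paper: reduce to $L_\Sigma\tilde u$ and $L_\Sigma\tilde\kappa$, apply Green's identity on $\Sigma\cap B_R$ to obtain $\int_{\Sigma\cap B_R}\kappa^2 e^{r^2/4}=B_R$, and kill the boundary flux using the polynomial decay $\tilde u,|\nabla_\Sigma\tilde u|\in C^0_{-1}(\Sigma)$ against the Gaussian decay of $\tilde\kappa,\nabla_\Sigma\tilde\kappa$ from Proposition~\ref{AsymptoticKerProp}. The only cosmetic difference is that the paper applies Cauchy--Schwarz directly on the sphere $S_\rho$ and reads off a good sequence from \eqref{L2AsymptoticEqn}--\eqref{EnergyDecayEqn}, whereas you pass through the coarea formula on the annulus $A_{2R,R}$ and pigeonhole; also, your citation of \eqref{TraceApproxDecayEqn} is unnecessary, since the bound $\int_{A_{2R,R}}\tilde\kappa^2 e^{r^2/2}=O(R^{-n})$ already follows from integrating \eqref{L2AsymptoticEqn}.
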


\begin{proof}
We argue by contradiction. Suppose there were a $\kappa\in\mathcal{K}_{\mathbf{v}}\setminus\{0\}$ and a $u\in\mathcal{D}^{k,\alpha}(\Sigma)$ satisfying $L_{\mathbf{v}}u=\kappa$. Let $\tilde{\kappa}=(\mathbf{v}\cdot\mathbf{n}_\Sigma)\kappa$ and $\tilde{u}=(\mathbf{v}\cdot\mathbf{n}_\Sigma) u$. By standard elliptic regularity theory, $\Sigma$ is a smooth hypersurface. Then $\tilde{\kappa}\in\mathcal{K}$, $\tilde{u}\in C^{2}_{loc}(\Sigma)$ and 
\begin{equation} \label{FuncDecayEqn}
\tilde{u}, |\nabla_\Sigma\tilde{u}| \in C^{0}_{-1}(\Sigma).
\end{equation}
Denote by $S_\rho=\Sigma\cap\partial B_\rho$. Since $L_\Sigma\tilde{\kappa}=0$ and $(\mathbf{v}\cdot\mathbf{n}_\Sigma)L_\Sigma\tilde{u}=\kappa$, it follows from the divergence theorem and the Cauchy-Schwarz inequality that for sufficiently large $\rho$,
\begin{align*}
& \int_{\Sigma\cap B_\rho} \kappa^2 e^{\frac{r^2}{4}}\, d\mathcal{H}^n =e^{\frac{\rho^2}{4}}\int_{S_\rho} (\tilde{\kappa}\partial_r\tilde{u}-\tilde{u}\partial_r\tilde{\kappa}) |\nabla_\Sigma r|^{-1}\, d\mathcal{H}^{n-1} \\
& \leq 2\left(\rho^{3-n}\int_{S_\rho} (|\tilde{u}|^2+|\partial_r\tilde{u}|^2)\, d\mathcal{H}^{n-1}\right)^{\frac{1}{2}} \left(\rho^{n-3}e^{\frac{\rho^2}{2}}\int_{S_\rho} (|\tilde{\kappa}|^2+|\partial_r \tilde{\kappa}|^2) \, d\mathcal{H}^{n-1} \right)^{\frac{1}{2}}.
\end{align*}
Invoking \eqref{FuncDecayEqn}, \eqref{L2AsymptoticEqn} and \eqref{EnergyDecayEqn}, it follows that for some sequence of $\rho\to\infty$ the last term of the above identity converges to $0$. Thus, by the monotone convergence theorem,
$$
\int_\Sigma \kappa^2 e^{\frac{|\mathbf{x}|^2}{4}} \, d\mathcal{H}^n=0,
$$
giving a contradiction.
\end{proof}

\begin{cor} \label{MonomorphismCor}
Given $\epsilon\in (0,1)$ there is an injective linear map $T^\epsilon_{\mathbf{v}}\colon\mathcal{K}_{\mathbf{v}}\to C^{k,\alpha}(\mathcal{L}(\Sigma))$ satisfying that for any $\kappa\in\mathcal{K}_{\mathbf{v}}$,
$$
\Vert T^\epsilon_{\mathbf{v}}[\kappa]-\mathrm{tr}_\infty^*[\kappa\mathbf{v}\cdot\mathbf{n}_\Sigma]\Vert_{L^2} < \epsilon \Vert \mathrm{tr}_\infty^*[\kappa\mathbf{v}\cdot\mathbf{n}_\Sigma]\Vert_{L^2}.
$$
\end{cor}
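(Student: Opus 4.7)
The plan is to exploit finite-dimensionality of $\mathcal{K}_{\mathbf{v}}$ together with density of $C^{k,\alpha}(\mathcal{L}(\Sigma))$ in $L^{2}(\mathcal{L}(\Sigma))$. First I would observe that multiplication by $\mathbf{v}\cdot\mathbf{n}_{\Sigma}$ is a linear isomorphism $\mathcal{K}_{\mathbf{v}}\to\mathcal{K}$; indeed, from the proof of Lemma \ref{RegJacobiLem}, $(\mathbf{v}\cdot\mathbf{n}_\Sigma)^{-1}\in C^{k-1,\alpha}_{0}(\Sigma)$ and $u\in\mathcal{K}_{\mathbf{v}}\iff (\mathbf{v}\cdot\mathbf{n}_\Sigma)u\in\mathcal{K}$. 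Composing this isomorphism with the injective linear map $\mathrm{tr}_{\infty}^{*}\colon\mathcal{K}\to L^{2}(\mathcal{L}(\Sigma))$ from Proposition \ref{AsymptoticKerProp} yields an injective linear map
\begin{equation*}
\Phi\colon\mathcal{K}_{\mathbf{v}}\to L^{2}(\mathcal{L}(\Sigma)),\qquad \Phi[\kappa]=\mathrm{tr}_{\infty}^{*}[\kappa\,\mathbf{v}\cdot\mathbf{n}_{\Sigma}].
\end{equation*}
By Lemma \ref{RegJacobiLem}, $\mathcal{K}_{\mathbf{v}}$ is finite-dimensional, so its image $V=\Phi(\mathcal{K}_{\mathbf{v}})\subset L^{2}(\mathcal{L}(\Sigma))$ is as well.

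Next I would invoke that, since $\mathcal{L}(\Sigma)$ is a compact $C^{k,\alpha}$-hypersurface in $\mathbb{S}^{n}$, a standard partition-of-unity plus mollification argument on a finite $C^{k,\alpha}$ atlas shows that $C^{k,\alpha}(\mathcal{L}(\Sigma))$ is dense in $L^{2}(\mathcal{L}(\Sigma))$. The map $T^{\epsilon}_{\mathbf{v}}$ is then constructed as follows. Fix a basis $\kappa_{1},\dots,\kappa_{N}$ of $\mathcal{K}_{\mathbf{v}}$ and set $v_{i}=\Phi[\kappa_{i}]$, which, by injectivity of $\Phi$, form a basis of $V$. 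For a parameter $\eta>0$ to be chosen, density yields $\tilde v_{i}\in C^{k,\alpha}(\mathcal{L}(\Sigma))$ with $\|v_{i}-\tilde v_{i}\|_{L^{2}}<\eta$, and I would define $T^{\epsilon}_{\mathbf{v}}$ by linear extension of $T^{\epsilon}_{\mathbf{v}}[\kappa_{i}]=\tilde v_{i}$.

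To verify the estimate, I would use that on the finite-dimensional space $\mathcal{K}_{\mathbf{v}}$ all norms are equivalent. Writing $\kappa=\sum c_{i}\kappa_{i}$, the expressions $\kappa\mapsto\sum_{i}|c_{i}|$ and $\kappa\mapsto\|\Phi[\kappa]\|_{L^{2}}$ (a norm by injectivity of $\Phi$) are equivalent, so there exists $C=C(\kappa_{1},\dots,\kappa_{N})>0$ with $\sum|c_{i}|\le C\|\Phi[\kappa]\|_{L^{2}}$. The triangle inequality then gives
\begin{equation*}
\bigl\|T^{\epsilon}_{\mathbf{v}}[\kappa]-\Phi[\kappa]\bigr\|_{L^{2}}\le \sum_{i=1}^{N}|c_{i}|\,\|\tilde v_{i}-v_{i}\|_{L^{2}}<\eta\,C\,\|\Phi[\kappa]\|_{L^{2}}.
\end{equation*}
Choosing $\eta=\epsilon/(2C)$ (or any $\eta<\epsilon/C$) delivers the required strict inequality for every nonzero $\kappa$. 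Injectivity of $T^{\epsilon}_{\mathbf{v}}$ is then automatic, since $T^{\epsilon}_{\mathbf{v}}[\kappa]=0$ forces $(1-\epsilon)\|\Phi[\kappa]\|_{L^{2}}<0$ by the displayed inequality, hence $\Phi[\kappa]=0$, whence $\kappa=0$.

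There is no real obstacle here; the corollary is a soft finite-dimensional approximation statement whose role is to replace the merely-$L^{2}$ trace $\mathrm{tr}_{\infty}^{*}$ by a $C^{k,\alpha}$-valued surrogate suitable for the Banach-manifold constructions in Section \ref{StructureSec}. The only mildly technical input is the density of $C^{k,\alpha}(\mathcal{L}(\Sigma))$ in $L^{2}(\mathcal{L}(\Sigma))$, which is routine on the $C^{k,\alpha}$-regular link.
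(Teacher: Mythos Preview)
Your proof is correct and follows essentially the same approach as the paper: define the injective linear map $\Phi=T_{\mathbf{v}}$ into $L^2$, approximate a basis of its finite-dimensional image by $C^{k,\alpha}$ functions via mollification, and extend linearly. The only cosmetic difference is that the paper chooses an $L^2$-\emph{orthonormal} basis $\varphi_1,\dots,\varphi_m$ of the image, so that $\|\varphi\|_{L^2}^2=\sum a_i^2$ and the estimate follows directly from Cauchy--Schwarz with the explicit constant $\epsilon$, whereas you use an arbitrary basis and absorb the resulting constant via equivalence of norms on a finite-dimensional space.
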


\begin{proof}
We first define a map $T_{\mathbf{v}}\colon\mathcal{K}_{\mathbf{v}}\to L^2(\mathcal{L}(\Sigma))$ by 
$$
T_{\mathbf{v}}[\kappa]=\mathrm{tr}_\infty^*[\kappa\mathbf{v}\cdot\mathbf{n}_\Sigma].
$$
The map $T_{\mathbf{v}}$ is linear by the linearity of $\mathrm{tr}_\infty^*$. It is injective because $\mathbf{v}$ is a transverse section on $\Sigma$ and Proposition \ref{AsymptoticKerProp}.

Next, as $\dim \mathcal{K}_{\mathbf{v}}<\infty$, the subspace $T_{\mathbf{v}}(\mathcal{K}_{\mathbf{v}})$ is also finite dimensional. We choose an orthonormal basis, $\varphi_1,\ldots,\varphi_m$, of $T_{\mathbf{v}}(\mathcal{K}_{\mathbf{v}})$. Using the partition of unity and the mollification, given $\epsilon\in (0,1)$ we may find $\varphi_i^\epsilon\in C^{k,\alpha}(\mathcal{L}(\Sigma))$ for $1\leq i \leq m$ so that
$$
\sum_{i=1}^m\Vert\varphi_i^\epsilon-\varphi_i\Vert^2_{L^2} <\epsilon^2.
$$
We then define a linear map $M^\epsilon\colon T_{\mathbf{v}}(\mathcal{K}_{\mathbf{v}})\to C^{k,\alpha}(\mathcal{L}(\Sigma))$ as follows. If $\varphi\in T_{\mathbf{v}}(\mathcal{K}_{\mathbf{v}})$ is given by the unique linear combination of $\varphi_i$, namely,
$$
\varphi=\sum_{i=1}^m a_i\varphi_i \mbox{ where $a_i\in\mathbb{R}$},
$$
then we let 
$$
M^{\epsilon}[\varphi]=\sum_{i=1}^m a_i\varphi_i^\epsilon=\sum_{i=1}^m a_i M^\epsilon[\varphi_i].
$$
Observe that $\Vert\varphi\Vert^2_{L^2}=\sum_{i=1}^m a_i^2$. Thus, by the triangle inequality,
$$
\Vert M^\epsilon[\varphi]-\varphi\Vert_{L^2}\leq \sum_{i=1}^m |a_i|\Vert\varphi_i^\epsilon-\varphi_i\Vert_{L^2}<\epsilon\Vert\varphi\Vert_{L^2}.
$$
Therefore, the corollary follows by setting $T^\epsilon_{\mathbf{v}}=M^\epsilon\circ T_{\mathbf{v}}$.
\end{proof}

\begin{cor} \label{NonExistAsympDirichletCor}
There is an $\epsilon_1=\epsilon_1(\Sigma,\mathbf{w})$ so that given $\zeta\in T^{\epsilon_1}_{\mathbf{v}}(\mathcal{K}_{\mathbf{v}}\setminus\{0\})$ the asymptotic Dirichlet problem
\begin{equation} \label{AsympBoundaryEqn}
\left\{
\begin{array}{cc}
L_{\Sigma} u=0 & \mbox{in $\Sigma$} \\
\mathrm{tr}_\infty^1[u]=\zeta\mathbf{w}\cdot\mathbf{n}_{\mathcal{L}(\Sigma)} & \mbox{in $\mathcal{L}(\Sigma)$}
\end{array}
\right.
\end{equation}
has no solutions in $C^2_{loc}\cap C^{1}_{1,\mathrm{H}}(\Sigma)$.
\end{cor}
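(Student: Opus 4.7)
The plan is to argue by contradiction. Suppose $u\in C^2_{loc}\cap C^1_{1,\mathrm{H}}(\Sigma)$ solves \eqref{AsympBoundaryEqn} with $\zeta=T^{\epsilon_1}_{\mathbf{v}}[\kappa]$ for some $\kappa\in\mathcal{K}_{\mathbf{v}}\setminus\{0\}$, and let $\tilde{\kappa}=\kappa\,(\mathbf{v}\cdot\mathbf{n}_\Sigma)\in\mathcal{K}$. Elliptic regularity makes $u$ smooth, and $L_\Sigma$ is formally self-adjoint with respect to the weight $e^{r^2/4}$. Applying the weighted Green's identity to the $L_\Sigma$-harmonic pair $u,\tilde{\kappa}$ on $\Sigma\cap B_\rho$ gives
\[
\int_{S_\rho}\bigl(u\,\partial_r\tilde{\kappa}-\tilde{\kappa}\,\partial_r u\bigr)|\nabla_\Sigma r|^{-1}e^{r^2/4}\,d\mathcal{H}^{n-1}=0
\]
for a.e.\ large $\rho$ by Sard. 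Integrating in $\rho\in[R,2R]$ and applying the coarea formula then yields
\[
\int_{A_{2R,R}}\bigl(u\,\partial_r\tilde{\kappa}-\tilde{\kappa}\,\partial_r u\bigr)e^{r^2/4}\,d\mathcal{H}^n=0
\]
for every large $R$.

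Next I would extract the dominant behavior as $R\to\infty$. Writing $u\,\partial_r\tilde{\kappa}-\tilde{\kappa}\,\partial_r u=\tfrac{1}{2}u(2\partial_r\tilde{\kappa}+r\tilde{\kappa})-\tilde{\kappa}(\partial_r u+\tfrac{r}{2}u)$, Cauchy-Schwarz combined with $|u|\leq Cr$ and the bound \eqref{EnergyDecayEqn} controls the first piece by $O(R^{-1})$. For the second piece I would approximate $\tilde{\kappa}$ by $\mathscr{F}_{\mathbf{w}}[\tilde{\kappa}]=r^{-n-1}e^{-r^2/4}\mathscr{E}^{\mathrm{H}}_{0,\mathbf{w}}[\mathrm{tr}^*_\infty[\tilde{\kappa}]]$ with error bounded by \eqref{TraceApproxDecayEqn}, and use $u\in C^1_{1,\mathrm{H}}$ with $\mathrm{tr}^1_\infty[u]=\varphi^0:=\zeta\,(\mathbf{w}\cdot\mathbf{n}_{\mathcal{L}(\Sigma)})$ to write $\partial_r u+\tfrac{r}{2}u=\tfrac{r^2}{2}\varphi^0+o(r^2)$ at infinity. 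Passing to polar coordinates on $\mathcal{C}(\Sigma)$ via the parametrization $\Psi$ of Proposition \ref{AsymptoticKerProp} identifies the leading contribution as $-\tfrac{R}{2}\int_{\mathcal{L}(\Sigma)}\mathrm{tr}^*_\infty[\tilde{\kappa}]\,\varphi^0\,d\mathcal{H}^{n-1}+o(R)$, so dividing by $R$ and letting $R\to\infty$ forces
\[
\int_{\mathcal{L}(\Sigma)}\mathrm{tr}^*_\infty[\tilde{\kappa}]\,\zeta\,(\mathbf{w}\cdot\mathbf{n}_{\mathcal{L}(\Sigma)})\,d\mathcal{H}^{n-1}=0.
\]

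For the final contradiction, the $L^2$-closeness $\Vert\zeta-\mathrm{tr}^*_\infty[\tilde{\kappa}]\Vert_{L^2}<\epsilon_1\Vert\mathrm{tr}^*_\infty[\tilde{\kappa}]\Vert_{L^2}$ from Corollary \ref{MonomorphismCor} combined with Cauchy-Schwarz gives
\[
\Bigl|\int_{\mathcal{L}(\Sigma)}(\mathrm{tr}^*_\infty[\tilde{\kappa}])^2\,(\mathbf{w}\cdot\mathbf{n}_{\mathcal{L}(\Sigma)})\,d\mathcal{H}^{n-1}\Bigr|<\epsilon_1\Vert\mathbf{w}\cdot\mathbf{n}\Vert_\infty\Vert\mathrm{tr}^*_\infty[\tilde{\kappa}]\Vert_{L^2}^2.
\]
Using the transversality $|\mathbf{w}\cdot\mathbf{n}_{\mathcal{L}(\Sigma)}|\geq\delta>0$ together with the factorization $\mathrm{tr}^*_\infty[\tilde{\kappa}]=(\mathbf{w}\cdot\mathbf{n}_{\mathcal{L}(\Sigma)})\,\sigma[\kappa]$ inherited from $\tilde{\kappa}=\kappa\,(\mathbf{v}\cdot\mathbf{n}_\Sigma)$ and the injectivity of $\mathrm{tr}^*_\infty$, the left-hand side is bounded below (in absolute value) by a fixed positive multiple $c=c(\Sigma,\mathbf{w})$ of $\Vert\mathrm{tr}^*_\infty[\tilde{\kappa}]\Vert_{L^2}^2$ on the finite-dimensional space $\mathcal{K}_{\mathbf{v}}$; choosing $\epsilon_1$ strictly smaller than $c/\Vert\mathbf{w}\cdot\mathbf{n}\Vert_\infty$ produces the contradiction.

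The main obstacle will be the asymptotic computation in the second paragraph: pairing the weighted $L^2$ decay bounds \eqref{EnergyDecayEqn} and \eqref{TraceApproxDecayEqn} against the $C^1$-asymptotic homogeneity of $u$ via Cauchy-Schwarz is delicate, and the graphical approximation of $\Sigma$ by $\mathcal{C}(\Sigma)$ must contribute only at the $o(R)$ level for the $O(R)$ dominant term to be isolated unambiguously.
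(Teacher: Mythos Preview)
Your argument is essentially the paper's: contradict by pairing the $L_\Sigma$-harmonic functions $u$ and $\tilde{\kappa}=(\mathbf{v}\cdot\mathbf{n}_\Sigma)\kappa$ via Green's identity, use the asymptotic expansion of Proposition~\ref{AsymptoticKerProp} to extract the limiting link integral $\int_{\mathcal{L}(\Sigma)}\zeta\,\mathrm{tr}^*_\infty[\tilde{\kappa}]\,(\mathbf{w}\cdot\mathbf{n}_{\mathcal{L}(\Sigma)})=0$, and contradict via Corollary~\ref{MonomorphismCor} plus transversality. The only cosmetic differences are that the paper works directly on spheres $S_\rho$ (extracting a good sequence $\rho\to\infty$ from the annular bounds \eqref{EnergyDecayEqn}, \eqref{TraceApproxDecayEqn} via coarea) rather than integrating over $A_{2R,R}$, and decomposes $2\partial_r\tilde{\kappa}=(2\partial_r\tilde{\kappa}+r\tilde{\kappa})-r(\tilde{\kappa}-\mathscr{F}_{\mathbf{w}}[\tilde{\kappa}])-r\mathscr{F}_{\mathbf{w}}[\tilde{\kappa}]$ while treating $\tilde{\kappa}\,\partial_r u$ separately; your symmetric splitting is algebraically equivalent.

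One point to fix in your final paragraph: the factorization $\mathrm{tr}^*_\infty[\tilde{\kappa}]=(\mathbf{w}\cdot\mathbf{n}_{\mathcal{L}(\Sigma)})\,\sigma[\kappa]$ is neither proved in the paper nor needed. The map $\mathrm{tr}^*_\infty$ is constructed only on $\mathcal{K}$ as an $L^2$-limit, and no multiplicative rule of that form is established. The paper instead simply fixes the orientation so that $\mathbf{w}\cdot\mathbf{n}_{\mathcal{L}(\Sigma)}\geq\delta>0$ (possible since $\mathbf{v}\cdot\mathbf{n}_\Sigma$ is continuous and nonvanishing), whence directly
\[
\int_{\mathcal{L}(\Sigma)}(\mathrm{tr}^*_\infty[\tilde{\kappa}])^2(\mathbf{w}\cdot\mathbf{n}_{\mathcal{L}(\Sigma)})\,d\mathcal{H}^{n-1}\geq\delta\,\Vert\mathrm{tr}^*_\infty[\tilde{\kappa}]\Vert_{L^2}^2,
\]
and since $|\mathbf{w}\cdot\mathbf{n}_{\mathcal{L}(\Sigma)}|\leq 1$ the error from Corollary~\ref{MonomorphismCor} is at most $\epsilon_1\Vert\mathrm{tr}^*_\infty[\tilde{\kappa}]\Vert_{L^2}^2$; taking $\epsilon_1<\delta$ finishes. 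Drop the factorization and use this instead.
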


\begin{proof}
We argue by contradiction. Take any sufficiently small $\epsilon$. Suppose that there were a $\zeta\in T^{\epsilon}_{\mathbf{v}}(\mathcal{K}_{\mathbf{v}}\setminus\{0\})$ so that the problem \eqref{AsympBoundaryEqn} has a solution $u\in C^2_{loc}\cap C^1_{1,\mathrm{H}}(\Sigma)$. There is a $\kappa\in\mathcal{K}_{\mathbf{v}}\setminus\{0\}$ so that $T^\epsilon_{\mathbf{v}}[\kappa]=\zeta$. Let $\tilde{\kappa}=\kappa\mathbf{v}\cdot\mathbf{n}_\Sigma$, so $\tilde{\kappa}\in\mathcal{K}\setminus\{0\}$. Thus, by the divergence theorem, letting $S_\rho=\Sigma\cap\partial B_\rho$,
\begin{equation} \label{BoundaryIntegralEqn}
e^{\frac{\rho^2}{4}}\int_{S_\rho} \tilde{\kappa}\partial_r u |\nabla_\Sigma r|^{-1} \, d\mathcal{H}^{n-1}=e^{\frac{\rho^2}{4}} \int_{S_\rho} u \partial_r\tilde{\kappa} |\nabla_\Sigma r|^{-1} \, d\mathcal{H}^{n-1}.
\end{equation}

On the one hand, using the Cauchy-Schwarz inequality gives
\begin{align*}
& e^{\frac{\rho^2}{4}}\int_{S_\rho} |\tilde{\kappa}\partial_r u| |\nabla_\Sigma r|^{-1} \, d\mathcal{H}^{n-1} \\
& \leq 2\left(\rho^{1-n}\int_{S_\rho} (\partial_r u)^2 \, d\mathcal{H}^{n-1}\right)^{\frac{1}{2}} \left(\rho^{n-1}e^{\frac{\rho^2}{2}}\int_{S_\rho} \tilde{\kappa}^2 \, d\mathcal{H}^{n-1}\right)^{\frac{1}{2}}.
\end{align*}
Hence,  Proposition \ref{AsymptoticKerProp} implies that there is a sequence of $\rho\to\infty$, for which the left hand side of \eqref{BoundaryIntegralEqn} converges to $0$.
On the other hand, we decompose
\begin{align*}
2\partial_r \tilde{\kappa}=(2\partial_r\tilde{\kappa}+r\tilde{\kappa})-r(\tilde{\kappa}-\mathscr{F}_{\mathbf{w}}[\tilde{\kappa}])-r\mathscr{F}_{\mathbf{w}}[\tilde{\kappa}].
\end{align*} 
Using the Cauchy-Schwarz inequality,
\begin{align*}
& e^{\frac{\rho^2}{4}}\int_{S_\rho} |u(2\partial_r\tilde{\kappa}+r\tilde{\kappa})| |\nabla_\Sigma r|^{-1} \, d\mathcal{H}^{n-1} \\
& \leq 2\left(\rho^{-n-1}\int_{S_\rho} u^2 \, d\mathcal{H}^{n-1}\right)^{\frac{1}{2}} \left(\rho^{n+1}e^{\frac{\rho^2}{2}}\int_{S_\rho} (2\partial_r\tilde{\kappa}+r\tilde{\kappa})^2 \, d\mathcal{H}^{n-1}\right)^{\frac{1}{2}},
\end{align*}
and likewise
\begin{align*}
& \rho e^{\frac{\rho^2}{4}}\int_{S_\rho} |u(\tilde{\kappa}-\mathscr{F}_{\mathbf{w}}[\tilde{\kappa}])| |\nabla_\Sigma r|^{-1} \, d\mathcal{H}^{n-1} \\
& \leq 2\left(\rho^{-n-1}\int_{S_\rho} u^2 \, d\mathcal{H}^{n-1}\right)^{\frac{1}{2}} \left(\rho^{n+3}e^{\frac{\rho^2}{2}}\int_{S_\rho} (\tilde{\kappa}-\mathscr{F}_{\mathbf{w}}[\tilde{\kappa}])^2 \, d\mathcal{H}^{n-1}\right)^{\frac{1}{2}}.
\end{align*}
Hence, Proposition \ref{AsymptoticKerProp} gives that the right hand side of \eqref{BoundaryIntegralEqn} converges to
$$
-\frac{1}{2} \int_{\mathcal{L}(\Sigma)} T_{\mathbf{v}}^\epsilon[\kappa]\mathrm{tr}_\infty^*[\tilde{\kappa}] \mathbf{w}\cdot\mathbf{n}_{\mathcal{L}(\Sigma)} \, d\mathcal{H}^{n-1}.
$$
As $\mathbf{w}$ is transverse to $\mathcal{L}(\Sigma)$, we may assume 
$$
\inf_{\mathcal{L}(\Sigma)} \mathbf{w}\cdot\mathbf{n}_{\mathcal{L}(\Sigma)}=\delta>0.
$$
Therefore, by Corollary \ref{MonomorphismCor} and injectivity of $\mathrm{tr}_\infty^*$, 
$$
\left\vert \int_{\mathcal{L}(\Sigma)} T_{\mathbf{v}}^\epsilon[\kappa]\mathrm{tr}_\infty^*[\tilde{\kappa}] \mathbf{w}\cdot\mathbf{n}_{\mathcal{L}(\Sigma)} \, d\mathcal{H}^{n-1} \right\vert
\geq  (\delta-\epsilon)\Vert\mathrm{tr}_\infty^*[\tilde{\kappa}]\Vert^2_{L^2}>0,
$$
which gives a contradiction.
\end{proof}

\section{Structure of the space of asymptotically conical self-expanders} \label{StructureSec}
For this section we fix an element $\Gamma\in\mathcal{ACH}^{k,\alpha}_n$. Let $\mathbf{f}\in\mathcal{ACH}^{k,\alpha}_n(\Gamma)$ be a $E$-stationary map, that is, $\Sigma=\mathbf{f}(\Gamma)$ is a self-expander. By Item \eqref{ACHItem} of Proposition \ref{AsympConicalProp}, $\Sigma\in\mathcal{ACH}^{k,\alpha}_n$ and $\mathcal{C}(\Sigma)=\mathcal{C}[\mathbf{f}](\mathcal{C}(\Gamma))$. Let $\mathbf{v}\in C^{k,\alpha}_0\cap C^k_{0,\mathrm{H}}(\Sigma;\mathbb{R}^{n+1})$ be a transverse section on $\Sigma$ so that $\mathbf{w}=\mathcal{C}[\mathbf{v}]$ is a transverse section on $\mathcal{C}(\Sigma)$ and satisfying \eqref{RegVectorEqn}. Next we define 
$$
\mathbf{H}\colon \mathcal{ACH}^{k,\alpha}_n(\Sigma)\to C^{k-2,\alpha}_{-1}(\Sigma;\mathbb{R}^{n+1})
$$ 
by $\mathbf{H}[\mathbf{g}](p)=\mathbf{H}_{\mathbf{g}(\Sigma)}(\mathbf{g}(p))$. Likewise, define $\mathbf{n}$ and $\mathbf{x}^\perp$. Finally, given a Banach space $X$, denote by $\mathcal{B}_R(p;X)$ the (open) ball in the space with center $p$ and radius $R$. 

\subsection{{Smooth} dependence theorem} \label{DependenceSubsec}
The goal here is to establish an analog of \cite[Theorem 3.2]{WhiteEI} in the asymptotically conical $E$-stationary setting.

\begin{thm} \label{SmoothDependThm}
There exist {smooth} maps
\begin{align*}
F_{\mathbf{v}}\colon \mathcal{U}_1\times\mathcal{U}_2 & \to \mathcal{ACH}_{n}^{k,\alpha}(\Sigma), \mbox{ and}, \\
G_{\mathbf{v}}\colon \mathcal{U}_1\times\mathcal{U}_2 & \to \mathcal{K}_{\mathbf{v}},
\end{align*}
where $\mathcal{U}_1$ is some neighborhood of $\mathbf{x}|_{\mathcal{L}(\Sigma)}$ in $C^{k,\alpha}(\mathcal{L}(\Sigma); \mathbb{R}^{n+1})$ and $\mathcal{U}_2$ is some neighborhood of $0$ in $\mathcal{K}_{\mathbf{v}}$, such that the following hold:
\begin{enumerate}
\item \label{TraceFItem} For $(\varphi,\kappa)\in\mathcal{U}_1\times\mathcal{U}_2$, $\mathrm{tr}_\infty^1[F_{\mathbf{v}}[\varphi,\kappa]]=\varphi$. 
\item \label{IdentityItem} $F_{\mathbf{v}}[\mathbf{x}|_{\mathcal{L}(\Sigma)},0]=\mathbf{x}|_\Sigma$. 
\item \label{EStationaryItem} For $(\varphi,\kappa)\in\mathcal{U}_1\times\mathcal{U}_2$, $F_{\mathbf{v}}[\varphi,\kappa]$ is $E$-stationary if and only if $G_{\mathbf{v}}[\varphi,\kappa]=0$. 
\item \label{FDiffItem} For $\kappa\in\mathcal{K}_{\mathbf{v}}$, $D_2F_{\mathbf{v}}(\mathbf{x}|_{\mathcal{L}(\Sigma)},0)\kappa=\kappa\mathbf{v}$.
\item \label{SubmanifoldItem} $G_{\mathbf{v}}^{-1}(0)$ is a {smooth} submanifold of codimension equal to $\dim \mathcal{K}_{\mathbf{v}}$. It contains $\{0\}\times\mathcal{K}_{\mathbf{v}}$ in its tangent space at $(\varphi,0)$. Equivalently, $D_1 G_{\mathbf{v}}(\mathbf{x}|_{\mathcal{L}(\Sigma)},0)$ is of rank equal to $\dim \mathcal{K}_{\mathbf{v}}$ and $D_2 G_{\mathbf{v}}(\mathbf{x}|_{\mathcal{L}(\Sigma)},0)=0$. 
\item \label{UniqueItem} Given $\epsilon>0$ there is a neighborhood $\mathcal{W}$ of $\mathbf{x}|_\Sigma\in \mathcal{ACH}^{k,\alpha}_n(\Sigma)$ such that for any $E$-stationary element $\mathbf{g}\in\mathcal{W}$ there is a $\kappa\in\mathcal{K}_{\mathbf{v}}$ with $\Vert \kappa \Vert_{k,\alpha}^* \leq \epsilon$ and a $C^{k,\alpha}$ diffeomorphism $\phi$ of $\Sigma$ with $\mathbf{x}|_\Sigma\circ\phi\in\mathcal{ACH}_{n}^{k,\alpha}(\Sigma)$ and $\mathrm{tr}_\infty^1[\mathbf{x}|_\Sigma\circ\phi]=\mathbf{x}|_{\mathcal{L}(\Sigma)}$ such that 
$$
\mathbf{g}=F_{\mathbf{v}}[\mathrm{tr}_\infty^1[\mathbf{g}],\kappa]\circ\phi. 
$$
That is, $\mathbf{g}\sim F_{\mathbf{v}}[\mathrm{tr}_\infty^1[\mathbf{g}],\kappa]$ in the sense of Section \ref{ACESubsec}.
\end{enumerate}

Furthermore, we can choose
$$
\mathcal{U}_1=\tilde{\mathcal{U}}_1\cap C^{k,\alpha}(\mathcal{L}(\Sigma); \mathbb{R}^{n+1}) \mbox{ and } \mathcal{W}=\tilde{\mathcal{W}}\cap C^{k,\alpha}_1(\Sigma;\mathbb{R}^{n+1}),
$$
where $\tilde{\mathcal{U}}_1$ and $\tilde{\mathcal{W}}$ are open sets of $C^{k,\tilde{\alpha}}(\mathcal{L}(\Sigma);\mathbb{R}^{n+1})$ and $C^{k,\tilde{\alpha}}_1(\Sigma;\mathbb{R}^{n+1})$, respectively, for $0<\tilde{\alpha}<\alpha$.
\end{thm}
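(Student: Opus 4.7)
\emph{Setup.} The plan is a Lyapunov--Schmidt reduction, analogous to White's \cite{WhiteEI}, with $L_{\mathbf{v}}$ and the Fredholm/asymptotic theory of Sections \ref{FredholmSec}--\ref{JacobiSec} playing the role of compact elliptic boundary value theory. First, I will construct a smooth linear extension operator $\mathbf{E}\colon\mathcal{U}_1\to\mathcal{ACH}^{k,\alpha}_n(\Sigma)$ with $\mathrm{tr}^1_\infty[\mathbf{E}[\varphi]]=\varphi$ and $\mathbf{E}[\mathbf{x}|_{\mathcal{L}(\Sigma)}]=\mathbf{x}|_\Sigma$---for instance by applying the $\mathbf{w}$-homogeneous extension $\mathscr{E}^{\mathrm{H}}_{\mathbf{w},1}$ from Section \ref{HomoExtSubsec} to $\varphi-\mathbf{x}|_{\mathcal{L}(\Sigma)}$, cutting off near the origin, and adding $\mathbf{x}|_\Sigma$. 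Choose a closed complement $\mathcal{Z}$ of $\mathcal{K}_{\mathbf{v}}$ in $\mathcal{D}^{k,\alpha}(\Sigma)$ (possible because $\dim\mathcal{K}_{\mathbf{v}}<\infty$ by Lemma \ref{RegJacobiLem}). For $(u,\varphi,\kappa)$ in a neighborhood of the base point inside $\mathcal{Z}\times\mathcal{U}_1\times\mathcal{U}_2$, set
$$
\mathbf{g}(u,\varphi,\kappa)(p)=\mathbf{E}[\varphi](p)+(u(p)+\kappa(p))\,\mathbf{v}(p),
$$
which lies in $\mathcal{ACH}^{k,\alpha}_n(\Sigma)$ with $\mathrm{tr}^1_\infty[\mathbf{g}]=\varphi$ since $u+\kappa\in C^{k-2,\alpha}_{-1}(\Sigma)$. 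Encode the expander equation scalarly as
$$
\Xi(u,\varphi,\kappa)(p)=\mathbf{v}(p)\cdot\Bigl(\mathbf{H}[\mathbf{g}](p)-\tfrac12\,\mathbf{g}(p)^\perp\Bigr),
$$
a smooth map into $C^{k-2,\alpha}_{-1}(\Sigma)$ that vanishes exactly when $\mathbf{g}$ is $E$-stationary (as $\mathbf{H}-\tfrac12\mathbf{g}^\perp$ is normal to $\mathbf{g}(\Sigma)$ and $\mathbf{v}$ remains transverse to $\mathbf{g}(\Sigma)$ near the base point).

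\emph{Lyapunov--Schmidt and items \eqref{TraceFItem}--\eqref{FDiffItem}.} By Proposition \ref{2ndVarProp} and the remark following it, the differential $D_u\Xi$ at the base point equals $L_{\mathbf{v}}$. Theorem \ref{FredholmThm} says $L_{\mathbf{v}}$ is Fredholm of index zero with kernel $\mathcal{K}_{\mathbf{v}}$; combined with Corollary \ref{CokerCor} this yields the splitting $C^{k-2,\alpha}_{-1}(\Sigma)=\mathrm{Range}(L_{\mathbf{v}})\oplus\mathcal{K}_{\mathbf{v}}$ with continuous projection $P$ onto the second summand. Since $L_{\mathbf{v}}|_\mathcal{Z}\colon\mathcal{Z}\to\mathrm{Range}(L_{\mathbf{v}})$ is a Banach isomorphism, the inverse function theorem applied to $(I-P)\Xi$ produces a unique smooth $u=u(\varphi,\kappa)\in\mathcal{Z}$ vanishing at the base point. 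Define $F_{\mathbf{v}}[\varphi,\kappa]=\mathbf{g}(u(\varphi,\kappa),\varphi,\kappa)$ and $G_{\mathbf{v}}[\varphi,\kappa]=P\Xi(u(\varphi,\kappa),\varphi,\kappa)$. Items \eqref{TraceFItem}, \eqref{IdentityItem}, \eqref{EStationaryItem} are immediate. For \eqref{FDiffItem} and the vanishing of $D_2 G_{\mathbf{v}}(\mathbf{x}|_{\mathcal{L}(\Sigma)},0)$, note that $\Xi$ depends on $u$ and $\kappa$ only through $u+\kappa$, so $D_\kappa\Xi=L_{\mathbf{v}}$ at the base point; differentiating $(I-P)\Xi\equiv 0$ in $\kappa$ and using $L_{\mathbf{v}}\kappa=0$ gives $\partial_\kappa u=0$ there, whence $D_2 F_{\mathbf{v}}\kappa=\kappa\mathbf{v}$ and $D_2 G_{\mathbf{v}}=0$.

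\emph{Surjectivity of $D_1 G_{\mathbf{v}}$.} This is the crux of \eqref{SubmanifoldItem} and the main obstacle. A direct computation gives $D_1 G_{\mathbf{v}}\varphi'=P\bigl[(\mathbf{v}\cdot\mathbf{n}_\Sigma)\,L_\Sigma(\mathbf{V}\cdot\mathbf{n}_\Sigma)\bigr]$ where $\mathbf{V}=D\mathbf{E}(\mathbf{x}|_{\mathcal{L}(\Sigma)})\varphi'$. Form the linear endomorphism $\Phi\colon\mathcal{K}_{\mathbf{v}}\to\mathcal{K}_{\mathbf{v}}$ defined by $\Phi(\kappa)=D_1 G_{\mathbf{v}}(\mathbf{x}|_{\mathcal{L}(\Sigma)},0)\bigl[T^{\epsilon_1}_{\mathbf{v}}[\kappa]\,\mathbf{w}|_{\mathcal{L}(\Sigma)}\bigr]$, with $T^{\epsilon_1}_{\mathbf{v}}$ the injective monomorphism from Corollary \ref{MonomorphismCor}. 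If $\Phi(\kappa)=0$ for some nonzero $\kappa\in\mathcal{K}_{\mathbf{v}}$, then by definition of $P$ there is $u_0\in\mathcal{D}^{k,\alpha}(\Sigma)$ with $L_{\mathbf{v}} u_0=(\mathbf{v}\cdot\mathbf{n}_\Sigma)L_\Sigma(\mathbf{V}\cdot\mathbf{n}_\Sigma)$; the function
$$
h=\mathbf{V}\cdot\mathbf{n}_\Sigma-(\mathbf{v}\cdot\mathbf{n}_\Sigma)\,u_0\in C^{k-1,\alpha}_1\cap C^{k-1}_{1,\mathrm{H}}(\Sigma)
$$
then solves $L_\Sigma h=0$ with $\mathrm{tr}^1_\infty[h]=T^{\epsilon_1}_{\mathbf{v}}[\kappa]\,\mathbf{w}\cdot\mathbf{n}_{\mathcal{L}(\Sigma)}$, contradicting Corollary \ref{NonExistAsympDirichletCor}. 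Hence $\Phi$ is injective on a finite-dimensional space, therefore surjective, so $D_1 G_{\mathbf{v}}$ is surjective; combined with $D_2 G_{\mathbf{v}}=0$ this yields item \eqref{SubmanifoldItem}.

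\emph{Uniqueness \eqref{UniqueItem} and lower regularity.} Given an $E$-stationary $\mathbf{g}\in\mathcal{W}$ close to $\mathbf{x}|_\Sigma$, a further application of the implicit function theorem---using transversality of $\mathbf{v}$ and the smallness of $\mathbf{g}-\mathbf{E}[\mathrm{tr}^1_\infty[\mathbf{g}]]$---produces a diffeomorphism $\phi\colon\Sigma\to\Sigma$ fixing infinity and a small $v\in\mathcal{D}^{k,\alpha}(\Sigma)$ with $\mathbf{g}\circ\phi=\mathbf{E}[\mathrm{tr}^1_\infty[\mathbf{g}]]+v\mathbf{v}$. Decomposing $v=v_\mathcal{Z}+\kappa\in\mathcal{Z}\oplus\mathcal{K}_{\mathbf{v}}$ and noting that $\mathbf{g}\circ\phi$ is $E$-stationary (so $(I-P)\Xi(v_\mathcal{Z},\mathrm{tr}^1_\infty[\mathbf{g}],\kappa)=0$), the uniqueness clause of the IFT from the second paragraph identifies $v_\mathcal{Z}=u(\mathrm{tr}^1_\infty[\mathbf{g}],\kappa)$, so $\mathbf{g}=F_{\mathbf{v}}[\mathrm{tr}^1_\infty[\mathbf{g}],\kappa]\circ\phi$ with $\|\kappa\|_{k,\alpha}^*\leq\epsilon$ after shrinking $\mathcal{W}$. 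The lower-regularity claim at the end follows because $\mathbf{E}$, $\Xi$, and the embedding/transversality constraints are all smooth in the weaker $C^{k,\tilde\alpha}$ topology, so the IFT neighborhoods $\mathcal{U}_1$ and $\mathcal{W}$ can be taken as intersections of $C^{k,\tilde\alpha}$-open sets with the respective $C^{k,\alpha}$ spaces.
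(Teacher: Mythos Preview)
Your overall strategy---Lyapunov--Schmidt reduction using the Fredholm theory of $L_{\mathbf{v}}$ and the asymptotic non-existence result of Corollary \ref{NonExistAsympDirichletCor}---matches the paper's, and your treatment of items \eqref{TraceFItem}--\eqref{SubmanifoldItem} is essentially correct. Two cosmetic differences: the paper uses the PDE-based extension $\mathscr{E}_\Sigma$ from Corollary \ref{ExtensionCor} (solving $\mathscr{L}_\Sigma\mathbf{u}=\mathbf{0}$) rather than a cut-off homogeneous extension, and it uses the weighted-$L^2$ orthogonal complement $\mathcal{K}_{\mathbf{v},*}^\perp$ rather than an abstract closed complement $\mathcal{Z}$. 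Both choices streamline the verification that $\Xi$ lands in $C^{k-2,\alpha}_{-1}(\Sigma)$ and that the projection is bounded, but your variants can be made to work with a little extra checking.

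There are, however, two genuine gaps in your last paragraph. For item \eqref{UniqueItem}, writing $\mathbf{g}\circ\phi=\mathbf{E}[\mathrm{tr}^1_\infty[\mathbf{g}]]+v\mathbf{v}$ as a $\mathbf{v}$-graph yields only $v\in C^{k,\alpha}_{1,0}(\Sigma)$ a priori; membership in $\mathcal{D}^{k,\alpha}(\Sigma)$ requires the stronger decay $v\in C^{k-2,\alpha}_{-1}(\Sigma)$ and $\mathbf{x}\cdot\nabla_\Sigma v\in C^{k-2,\alpha}_{-1}(\Sigma)$, and this does \emph{not} follow from an implicit function theorem argument alone. The paper obtains it in Lemma \ref{ReparametrizeLem} by using the self-expander equation satisfied by $\mathbf{g}\circ\phi$ to show $\mathscr{L}_\Sigma v\in C^{k-2,\beta}_{-1}$ for some $\beta<\alpha$, then invoking Theorem \ref{AsymptoticDirichletThm} and a regularity bootstrap (Lemma \ref{ImproveRegLem}) to reach $\mathcal{D}^{k,\alpha}$. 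Similarly, your argument for the ``Furthermore'' clause is incomplete: running the IFT in the $C^{k,\tilde\alpha}$ topology produces $\tilde F_{\mathbf{v},*}[\varphi,\kappa]\in\mathcal{D}^{k,\tilde\alpha}(\Sigma)$, and you must still show that $C^{k,\alpha}$ boundary data $\varphi$ forces $\tilde F_{\mathbf{v},*}[\varphi,\kappa]\in\mathcal{D}^{k,\alpha}(\Sigma)$ (and that the resulting restriction is smooth between the $C^{k,\alpha}$ spaces). This is an elliptic regularity statement, not an open-set statement; the paper handles it via the self-improvement Lemma \ref{ImproveRegLem}, applied both to the nonlinear map $\Xi_{\mathbf{g},\mathbf{v}}$ and to its linearization. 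Without such a bootstrap you cannot conclude that $F_{\mathbf{v}}$ takes values in $\mathcal{ACH}^{k,\alpha}_n(\Sigma)$ on the claimed domain.
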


To prove Theorem \ref{SmoothDependThm} we need several auxiliary lemmata. Let $g_{e}$ be the Euclidean metric on $\mathbb{R}^{n+1}$. If $\mathbf{h}\colon\Sigma\to\mathbb{R}^{n+1}$ is a $C^1$ embedding, then we denote by $g_{\mathbf{h}}=\mathbf{h}^*g_{e}$ the pull-back metric of the Euclidean one by $\mathbf{h}$.

\begin{lem} \label{SimpleExpanderMCLem}
If $\mathbf{h}\colon\Sigma\to\mathbb{R}^{n+1}$ is a $C^2$ embedding, then 
$$
\mathbf{v}\cdot\left(\mathbf{H}-\frac{\mathbf{x}^\perp}{2}\right)[\mathbf{h}]=\left(\mathbf{v}\cdot\mathbf{n}[\mathbf{h}]\right)\left(\mathscr{L}_\Sigma\mathbf{h}+\sum_{i,j=1}^n(g_{\mathbf{h}}^{-1}-g_\Sigma^{-1})^{ij}(\nabla_\Sigma^2\mathbf{h})_{ij}\right)\cdot\mathbf{n}[\mathbf{h}].
$$
\end{lem}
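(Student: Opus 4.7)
The plan is to reduce the vector-valued identity to its component along $\mathbf{n}[\mathbf{h}]$ and then verify the resulting scalar identity in local coordinates on $\Sigma$.

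First I would note that $\mathbf{H}[\mathbf{h}]$ is by definition normal to $\mathbf{h}(\Sigma)$ and that $\mathbf{x}^\perp[\mathbf{h}]=(\mathbf{h}\cdot\mathbf{n}[\mathbf{h}])\mathbf{n}[\mathbf{h}]$ is likewise normal, so
\[
\mathbf{v}\cdot\bigl(\mathbf{H}-\tfrac{1}{2}\mathbf{x}^\perp\bigr)[\mathbf{h}]=(\mathbf{v}\cdot\mathbf{n}[\mathbf{h}])\bigl(\mathbf{H}[\mathbf{h}]\cdot\mathbf{n}[\mathbf{h}]-\tfrac{1}{2}\mathbf{h}\cdot\mathbf{n}[\mathbf{h}]\bigr).
\]
After factoring out $\mathbf{v}\cdot\mathbf{n}[\mathbf{h}]$ on both sides, it therefore suffices to establish the scalar identity
\[
\mathbf{H}[\mathbf{h}]\cdot\mathbf{n}[\mathbf{h}]-\tfrac{1}{2}\mathbf{h}\cdot\mathbf{n}[\mathbf{h}]=\Bigl(\mathscr{L}_\Sigma\mathbf{h}+\sum_{i,j=1}^n(g_{\mathbf{h}}^{-1}-g_\Sigma^{-1})^{ij}(\nabla_\Sigma^2\mathbf{h})_{ij}\Bigr)\cdot\mathbf{n}[\mathbf{h}].
\]

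Next I would pass to local coordinates $(x^1,\dots,x^n)$ on $\Sigma$ and use the guiding observation that every vector tangent to $\mathbf{h}(\Sigma)$ vanishes upon dotting with $\mathbf{n}[\mathbf{h}]$; in particular each $\partial_k\mathbf{h}=d\mathbf{h}(\partial_k)$ is tangent, so any Christoffel-type correction $\Gamma^k_{ij}\partial_k\mathbf{h}$ drops out. Applied to the coordinate expressions
\[
\mathbf{H}[\mathbf{h}]=\Delta_{g_{\mathbf{h}}}\mathbf{h}=g_{\mathbf{h}}^{ij}\bigl(\partial_i\partial_j\mathbf{h}-\Gamma^k_{ij}(g_{\mathbf{h}})\partial_k\mathbf{h}\bigr),\qquad (\nabla_\Sigma^2\mathbf{h})_{ij}=\partial_i\partial_j\mathbf{h}-\Gamma^k_{ij}(g_\Sigma)\partial_k\mathbf{h},
\]
this yields $\mathbf{H}[\mathbf{h}]\cdot\mathbf{n}[\mathbf{h}]=g_{\mathbf{h}}^{ij}\,\partial_i\partial_j\mathbf{h}\cdot\mathbf{n}[\mathbf{h}]$, $\Delta_\Sigma\mathbf{h}\cdot\mathbf{n}[\mathbf{h}]=g_\Sigma^{ij}\,\partial_i\partial_j\mathbf{h}\cdot\mathbf{n}[\mathbf{h}]$, and $(\nabla_\Sigma^2\mathbf{h})_{ij}\cdot\mathbf{n}[\mathbf{h}]=\partial_i\partial_j\mathbf{h}\cdot\mathbf{n}[\mathbf{h}]$. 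Writing $g_{\mathbf{h}}^{ij}=g_\Sigma^{ij}+(g_{\mathbf{h}}^{-1}-g_\Sigma^{-1})^{ij}$ and combining gives
\[
\mathbf{H}[\mathbf{h}]\cdot\mathbf{n}[\mathbf{h}]=\Delta_\Sigma\mathbf{h}\cdot\mathbf{n}[\mathbf{h}]+\sum_{i,j=1}^n(g_{\mathbf{h}}^{-1}-g_\Sigma^{-1})^{ij}(\nabla_\Sigma^2\mathbf{h})_{ij}\cdot\mathbf{n}[\mathbf{h}].
\]

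It remains to absorb the lower-order terms in $\mathscr{L}_\Sigma$. Reading off \eqref{RegVectorEqn} one has $\mathscr{L}_\Sigma=\Delta_\Sigma+\tfrac{1}{2}\mathbf{x}\cdot\nabla_\Sigma-\tfrac{1}{2}$. Interpreted componentwise, $(\mathbf{x}\cdot\nabla_\Sigma\mathbf{h})^k=(\mathbf{x}^\top)^i\partial_i h^k$, so the vector $\mathbf{x}\cdot\nabla_\Sigma\mathbf{h}=d\mathbf{h}(\mathbf{x}^\top)$ is tangent to $\mathbf{h}(\Sigma)$ and is annihilated by projection onto $\mathbf{n}[\mathbf{h}]$. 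Consequently
\[
\mathscr{L}_\Sigma\mathbf{h}\cdot\mathbf{n}[\mathbf{h}]=\Delta_\Sigma\mathbf{h}\cdot\mathbf{n}[\mathbf{h}]-\tfrac{1}{2}\mathbf{h}\cdot\mathbf{n}[\mathbf{h}],
\]
and combining with the previous paragraph closes the scalar identity; multiplying by $\mathbf{v}\cdot\mathbf{n}[\mathbf{h}]$ recovers the lemma. The only real pitfall I anticipate is bookkeeping — one must remember the $-\tfrac{1}{2}$ hidden inside $\mathscr{L}_\Sigma$ and consistently exploit the fact that every tangent vector of $\mathbf{h}(\Sigma)$ dies in the normal projection. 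There is no analytic content beyond the standard identification $\mathbf{H}[\mathbf{h}]=\Delta_{g_{\mathbf{h}}}\mathbf{h}$.
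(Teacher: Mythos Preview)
Your proof is correct and follows essentially the same route as the paper: both arguments use the identity $\mathbf{H}[\mathbf{h}]\cdot\mathbf{n}[\mathbf{h}]=g_{\mathbf{h}}^{ij}(\nabla_\Sigma^2\mathbf{h})_{ij}\cdot\mathbf{n}[\mathbf{h}]$ (the paper cites this as a standard fact from \cite{Ecker}, you derive it by noting Christoffel corrections are tangent), split $g_{\mathbf{h}}^{-1}=g_\Sigma^{-1}+(g_{\mathbf{h}}^{-1}-g_\Sigma^{-1})$, and then use that $\mathbf{x}\cdot\nabla_\Sigma\mathbf{h}=d\mathbf{h}(\mathbf{x}^\top)$ is tangent to $\mathbf{h}(\Sigma)$ to handle the drift term. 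The only cosmetic difference is that the paper packages the drift observation as $\mathbf{n}[\mathbf{h}]\cdot\mathbf{h}=-\mathbf{n}[\mathbf{h}]\cdot(\mathbf{x}\cdot\nabla_\Sigma\mathbf{h}-\mathbf{h})$ rather than reading off $\mathscr{L}_\Sigma$ directly, but the content is identical.
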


\begin{proof}
By a standard differential geometric fact (see, for instance, \cite[Appendix A]{Ecker}) 
\begin{equation} \label{PullbackMCEqn}
\begin{split}
\mathbf{H}[\mathbf{h}] & =\mathbf{n}[\mathbf{h}]\cdot\left(\sum_{i,j=1}^n(g_{\mathbf{h}}^{-1})^{ij}(\nabla_\Sigma^2\mathbf{h})_{ij}\right)\mathbf{n}[\mathbf{h}] \\
& =\mathbf{n}[\mathbf{h}]\cdot\left(\Delta_\Sigma\mathbf{h}+\sum_{i,j=1}^n (g_{\mathbf{h}}^{-1}-g_\Sigma^{-1})^{ij} (\nabla^2_\Sigma\mathbf{h})_{ij}\right)\mathbf{n}[\mathbf{h}].
\end{split}
\end{equation}
Observe that
$$
(\mathbf{x}\cdot\nabla_\Sigma\mathbf{h})(p)=(D\mathbf{h})_p(\mathbf{x}^\top)\in T_{\mathbf{h}(p)}\mathbf{h}(\Sigma).
$$
Thus, it follows that
\begin{equation} \label{PullbackDriftEqn}
\mathbf{n}[\mathbf{h}]\cdot\mathbf{h}=-\mathbf{n}[\mathbf{h}]\cdot\left(\mathbf{x}\cdot\nabla_\Sigma\mathbf{h}-\mathbf{h}\right).
\end{equation}
Hence, the lemma follows by combining \eqref{PullbackMCEqn} and \eqref{PullbackDriftEqn}.
\end{proof}

By Corollary \ref{ExtensionCor} we can define 
$$
\mathscr{E}_\Sigma\colon C^{k,\alpha}(\mathcal{L}(\Sigma); \mathbb{R}^{n+1}) \to C^{k,\alpha}_{1}\cap C^{k}_{1,\mathrm{H}}(\Sigma; \mathbb{R}^{n+1})
$$
such that $\mathscr{E}_\Sigma[\varphi]$ is the (unique) solution to $\mathscr{L}_\Sigma \mathbf{u}=\mathbf{0}$ with $\mathrm{tr}_\infty^1[\mathbf{u}]=\varphi$ given by Corollary \ref{ExtensionCor}. Moreover, $\mathscr{E}_\Sigma$ is an isomorphism.

\begin{lem} \label{ExpanderMCLem}
There is an $r_0>0$ sufficiently small, depending only on $\Sigma,\mathbf{v},n,k$ and $\alpha$, such that the map 
$$
\Xi_{\mathbf{v}} \colon \mathcal{B}_{r_0}(\mathbf{x}|_{\mathcal{L}(\Sigma)}; C^{k,\alpha}(\mathcal{L}(\Sigma); \mathbb{R}^{n+1}))\times \mathcal{B}_{r_0}(0; \mathcal{D}^{k,\alpha}(\Sigma)) \to C^{k-2,\alpha}_{-1}(\Sigma)
$$
defined by 
$$
\Xi_{\mathbf{v}}[\varphi,u]=\mathbf{v}\cdot\left(\mathbf{H}-\frac{\mathbf{x}^\perp}{2}\right)\left[\mathbf{x}|_\Sigma+\mathscr{E}_\Sigma[\varphi-\mathbf{x}|_{\mathcal{L}(\Sigma)}]+u\mathbf{v}\right]
$$
is a smooth map.
\end{lem}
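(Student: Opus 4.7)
The plan is to apply Lemma~\ref{SimpleExpanderMCLem} to rewrite
\begin{equation*}
\Xi_{\mathbf{v}}[\varphi,u] = (\mathbf{v}\cdot\mathbf{n}[\mathbf{h}])\left(\mathscr{L}_\Sigma \mathbf{h} + \sum_{i,j=1}^n (g_\mathbf{h}^{-1}-g_\Sigma^{-1})^{ij}(\nabla_\Sigma^2 \mathbf{h})_{ij}\right)\cdot \mathbf{n}[\mathbf{h}]
\end{equation*}
as a pointwise algebraic expression in the one- and two-jets of $\mathbf{h}[\varphi,u] = \mathbf{x}|_\Sigma + \mathscr{E}_\Sigma[\varphi-\mathbf{x}|_{\mathcal{L}(\Sigma)}]+u\mathbf{v}$, and then to verify smoothness factor by factor in the weighted H\"older framework. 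By Corollary~\ref{ExtensionCor} together with Item~\eqref{ListProp2} of Proposition~\ref{ListProp} and the hypothesis $\mathbf{v}\in C^{k,\alpha}_0(\Sigma;\mathbb{R}^{n+1})$, the map $(\varphi,u)\mapsto \mathbf{h}[\varphi,u]-\mathbf{x}|_\Sigma$ is affine continuous from $C^{k,\alpha}(\mathcal{L}(\Sigma);\mathbb{R}^{n+1})\times \mathcal{D}^{k,\alpha}(\Sigma)$ into $C^{k,\alpha}_1(\Sigma;\mathbb{R}^{n+1})$. Choosing $r_0$ sufficiently small ensures that $\mathbf{h}$ is a $C^{k,\alpha}$ embedding with $g_\mathbf{h}$ uniformly close to $g_\Sigma$ (hence invertible) and $\mathbf{n}[\mathbf{h}]$ well-defined and close to $\mathbf{n}_\Sigma$.

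Each ingredient of the displayed formula is a smooth map between Banach spaces: $\mathbf{h}\mapsto (\nabla_\Sigma\mathbf{h},\nabla_\Sigma^2\mathbf{h},\mathscr{L}_\Sigma\mathbf{h})$ is linear; $\nabla_\Sigma\mathbf{h}\mapsto g_\mathbf{h}$ is a quadratic polynomial; $g_\mathbf{h}\mapsto g_\mathbf{h}^{-1}$ is smooth on the open set of invertible symmetric matrices (via the Neumann series); and $\mathbf{h}\mapsto \mathbf{n}[\mathbf{h}]$ is smooth (a cross product of rows of $\nabla_\Sigma\mathbf{h}$ followed by normalization). Pointwise products of weighted H\"older functions are bilinear and continuous by Item~\eqref{ListProp2} of Proposition~\ref{ListProp}. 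Hence the composition defining $\Xi_\mathbf{v}$ is smooth, provided the codomain is correctly identified as $C^{k-2,\alpha}_{-1}(\Sigma)$.

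The main technical point is verifying this decay at infinity, and here I exploit the self-expander structure. A direct computation gives $\mathscr{L}_\Sigma \mathbf{x}|_\Sigma = \mathbf{H}_\Sigma + \tfrac{1}{2}\mathbf{x}^\top - \tfrac{1}{2}\mathbf{x}|_\Sigma = \mathbf{H}_\Sigma - \tfrac{1}{2}\mathbf{x}^\perp = \mathbf{0}$ because $\Sigma$ is a self-expander, while $\mathscr{L}_\Sigma \mathscr{E}_\Sigma[\varphi-\mathbf{x}|_{\mathcal{L}(\Sigma)}] = \mathbf{0}$ by construction of $\mathscr{E}_\Sigma$. Applying the product rule yields
\begin{equation*}
\mathscr{L}_\Sigma\mathbf{h} = \mathscr{L}_\Sigma(u\mathbf{v}) = (\mathscr{L}_\Sigma u)\mathbf{v} + 2\nabla_\Sigma u\cdot \nabla_\Sigma\mathbf{v} + u\,\bigl(\mathscr{L}_\Sigma+\tfrac{1}{2}\bigr)\mathbf{v},
\end{equation*}
whose three terms all belong to $C^{k-2,\alpha}_{-1}(\Sigma;\mathbb{R}^{n+1})$: the first by the definition of $\mathcal{D}^{k,\alpha}(\Sigma)$, the third by hypothesis~\eqref{RegVectorEqn} and Item~\eqref{ListProp2} of Proposition~\ref{ListProp}, and the second from $\nabla_\Sigma u \in C^{k-2,\alpha}_{-1}$ (as $u\in\mathcal{D}^{k,\alpha}$) together with $\nabla_\Sigma\mathbf{v}\in C^{k-1,\alpha}_{-1}$. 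For the remaining piece, $\nabla_\Sigma^2\mathbf{h}\in C^{k-2,\alpha}_{-1}(\Sigma;\mathbb{R}^{n+1})$ (combining $\nabla_\Sigma^2\mathbf{x}|_\Sigma = A_\Sigma\otimes\mathbf{n}_\Sigma$ with two-fold differentiation of $\mathbf{h}-\mathbf{x}|_\Sigma\in C^{k,\alpha}_1$), while $g_\mathbf{h}^{-1}-g_\Sigma^{-1}\in C^{k-1,\alpha}_0$ is uniformly bounded at infinity, so a further application of Item~\eqref{ListProp2} of Proposition~\ref{ListProp} closes the estimate.

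The main obstacle will be the careful bookkeeping of weights through the weighted H\"older product rule, and in particular establishing that the non-decaying contributions to $\mathscr{L}_\Sigma\mathbf{h}$ coming from the two leading-order pieces of $\mathbf{h}$ in fact vanish. These cancellations rely essentially on $\Sigma$ being a self-expander and on the construction of $\mathscr{E}_\Sigma$ as a right inverse to $\mathrm{tr}_\infty^1$ lying in $\ker\mathscr{L}_\Sigma$; without them, the target space $C^{k-2,\alpha}_{-1}(\Sigma)$ could not be reached.
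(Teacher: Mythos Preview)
Your proposal is correct and follows essentially the same route as the paper: you invoke Lemma~\ref{SimpleExpanderMCLem}, verify that $\mathscr{L}_\Sigma\mathbf{h}\in C^{k-2,\alpha}_{-1}$ via the cancellations $\mathscr{L}_\Sigma\mathbf{x}|_\Sigma=\mathbf{0}$ (self-expander) and $\mathscr{L}_\Sigma\mathscr{E}_\Sigma[\cdot]=\mathbf{0}$ together with the product-rule expansion of $\mathscr{L}_\Sigma(u\mathbf{v})$, and then observe that $\mathbf{n}[\mathbf{h}]$ and $g_\mathbf{h}^{-1}-g_\Sigma^{-1}$ are rational in $\nabla_\Sigma\mathbf{h}$, hence smooth into $C^{k-1,\alpha}_0$. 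The paper's proof is organized identically, with the same key ingredients (Proposition~\ref{ListProp}\eqref{ListProp2}, hypothesis~\eqref{RegVectorEqn}, and the definition of $\mathcal{D}^{k,\alpha}$).
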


\begin{proof}
First observe that there is an $\epsilon>0$ sufficiently small, depending on $\Sigma,\mathbf{v},n,k$ and $\alpha$, so that if 
\begin{equation} \label{SmallnessEqn}
\Vert\varphi-\mathbf{x}|_{\mathcal{L}(\Sigma)}\Vert_{k,\alpha}+\Vert u\Vert^*_{k,\alpha}<\epsilon,
\end{equation}
then 
$$
\mathbf{h}_{\varphi,u}=\mathbf{x}|_\Sigma+\mathscr{E}_\Sigma[\varphi-\mathbf{x}|_{\mathcal{L}(\Sigma)}]+u\mathbf{v} \in\mathcal{ACH}^{k,\alpha}_n(\Sigma).
$$

Fix any pair $(\varphi,u)$ satisfying \eqref{SmallnessEqn}. For simplicity, we write $\mathbf{h}=\mathbf{h}_{\varphi,u}$. By Item \eqref{ACHItem} of Proposition \ref{AsympConicalProp}, $\Lambda=\mathbf{h}(\Sigma)\in\mathcal{ACH}^{k,\alpha}_n$. As $\mathbf{n}[\mathbf{h}](p)=\mathbf{n}_\Lambda(\mathbf{h}(p))$, it follows from Item \eqref{ListProp4} of Proposition \ref{ListProp} that $\mathbf{n}[\mathbf{h}]\in C^{k-1,\alpha}_0(\Sigma;\mathbb{R}^{n+1})$. Similarly, one proves that $g_{\mathbf{h}}^{-1}-g_{\Sigma}^{-1}\in C^{k-1,\alpha}_0(\Sigma; T^{(0,2)}\Sigma)$. We show $\mathscr{L}_\Sigma\mathbf{h}\in C^{k-2,\alpha}_{-1}(\Sigma;\mathbb{R}^{n+1})$, which, together with Lemma \ref{SimpleExpanderMCLem} and Item \eqref{ListProp2} of Proposition \ref{ListProp}, implies that $\Xi_{\mathbf{v}}[\varphi,u]\in C^{k-2,\alpha}_{-1}(\Sigma)$. 

By our hypothesis that $\Sigma$ is a self-expander and the definition of $\mathscr{E}_\Sigma$,
$$
\mathscr{L}_\Sigma\left(\mathbf{x}|_\Sigma+\mathscr{E}_\Sigma[\varphi-\mathbf{x}|_{\mathcal{L}(\Sigma)}]\right)=\mathbf{0}.
$$
It remains only to show that $\mathscr{L}_\Sigma(u\mathbf{v})\in C^{k-2,\alpha}_{-1}(\Sigma;\mathbb{R}^{n+1})$. The product rule gives
$$
\mathscr{L}_\Sigma(u\mathbf{v})=\mathbf{v}\mathscr{L}_\Sigma u+2\nabla_\Sigma u\cdot\nabla_\Sigma\mathbf{v}+u\left(\mathscr{L}_\Sigma+\frac{1}{2}\right)\mathbf{v}.
$$
Thus the claim follows from that $u\in\mathcal{D}^{k,\alpha}(\Sigma)$, the hypotheses on $\mathbf{v}$ and Item \eqref{ListProp2} of Proposition \ref{ListProp}.

Finally, as $\mathbf{n}[\mathbf{h}]$ and $g^{-1}_{\mathbf{h}}-g^{-1}_\Sigma$ are rational functions of $\nabla_\Sigma\mathbf{h}$, the maps $\mathbf{h}\mapsto\mathbf{n}[\mathbf{h}]$ and $\mathbf{h}\mapsto (g_{\mathbf{h}}^{-1}-g_\Sigma^{-1})$ are smooth maps from $\mathcal{ACH}_n^{k,\alpha}(\Sigma)$ to, respectively, $C^{k-1,\alpha}_0(\Sigma;\mathbb{R}^{n+1})$ and $C^{k-1,\alpha}_0(\Sigma;T^{(0,2)}\Sigma)$, and so the smoothness of $\Xi_{\mathbf{v}}$ follows easily.
\end{proof}

We will need the following self-improving regularity theorem at several points in the argument. First, we prove a slight generalization of Theorem \ref{AsymptoticDirichletThm} where we allow the coefficient matrix of the Laplacian on $\Sigma$ to be a small perturbation of the metric. 

\begin{prop} \label{PerturbAsympDirichletProp}
There is an $\epsilon_1>0$ sufficiently small, depending on $\Sigma,n,k$ and $\alpha$, such that if $\mathbf{M}\in C^{k-2,\alpha}_0(\Sigma;T^{(0,2)}\Sigma)$ satisfies $\Vert\mathbf{M}\Vert_{k-2,\alpha}^{(0)}<\epsilon_1$, then for every $f\in C^{k-2,\alpha}_{-1}(\Sigma)$ there is a unique solution $u\in\mathcal{D}^{k,\alpha}(\Sigma)$ of 
$$
\mathscr{L}_{\Sigma,\mathbf{M}} u=\mathscr{L}_\Sigma u+\sum_{i,j=1}^n\mathbf{M}^{ij}(\nabla_\Sigma^2 u)_{ij}=f.
$$
Moreover, there is a constant $C_7>0$ depending on $\Sigma,n,k$ and $\alpha$ so that 
\begin{equation} \label{PerturbSchauderEstEqn}
\Vert u\Vert^*_{k,\alpha} \leq C_7 \Vert f\Vert^{(-1)}_{k-2,\alpha}.
\end{equation}
\end{prop}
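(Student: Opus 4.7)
The plan is a standard Neumann-series perturbation argument built on Theorem \ref{AsymptoticDirichletThm}. Write
\[
\mathscr{L}_{\Sigma,\mathbf{M}} = \mathscr{L}_\Sigma + T_{\mathbf{M}}, \qquad T_{\mathbf{M}} u := \sum_{i,j=1}^n \mathbf{M}^{ij} (\nabla_\Sigma^2 u)_{ij}.
\]
The whole task then reduces to showing that $T_{\mathbf{M}}$ is a bounded linear map $\mathcal{D}^{k,\alpha}(\Sigma) \to C^{k-2,\alpha}_{-1}(\Sigma)$ whose operator norm is controlled by $\|\mathbf{M}\|_{k-2,\alpha}^{(0)}$; once that is in hand, standard operator-theoretic facts produce the proposition.

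The crucial observation is that the Hessian defines a bounded linear map
\[
\nabla_\Sigma^2 \colon \mathcal{D}^{k,\alpha}(\Sigma) \longrightarrow C^{k-2,\alpha}_{-1}(\Sigma; T^{(0,2)}\Sigma).
\]
To verify this, I would unpack the definition of $\|\nabla_\Sigma^2 u\|_{k-2,\alpha}^{(-1)}$: it consists of the suprema $\sup_\Sigma (|\mathbf{x}|+1)^{j-1}|\nabla_\Sigma^j u|$ for $j = 2, \dots, k$, together with the weighted H\"older semi-norm $[\nabla_\Sigma^k u]_\alpha^{(1-k)}$. The definition of $\|u\|_{k,\alpha}^*$ provides each of these: the summand $\|u\|_{k-2,\alpha}^{(-1)}$ handles $j \leq k-2$ (with room to spare, since it controls the strictly faster decay $r^{-(1+j)}$), while $\|\nabla_\Sigma^{k-1} u\|_\alpha^{(1-k)}$ and $\|\nabla_\Sigma^k u\|_\alpha^{(1-k)}$ provide the two remaining suprema and the H\"older piece. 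With this Hessian bound secured, componentwise application of the product estimate (Item \eqref{ListProp2} of Proposition \ref{ListProp}), extended to tensor fields as in the remark following that proposition, yields
\[
\|T_{\mathbf{M}} u\|_{k-2,\alpha}^{(-1)} \leq C \|\mathbf{M}\|_{k-2,\alpha}^{(0)} \|u\|_{k,\alpha}^*
\]
for some $C = C(\Sigma, n, k, \alpha)$.

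To conclude, factor $\mathscr{L}_{\Sigma,\mathbf{M}} = \mathscr{L}_\Sigma \circ (I + \mathscr{L}_\Sigma^{-1} T_{\mathbf{M}})$ on $\mathcal{D}^{k,\alpha}(\Sigma)$. By Theorem \ref{AsymptoticDirichletThm}, $\mathscr{L}_\Sigma^{-1}$ exists with operator norm at most $C_4$, so $\mathscr{L}_\Sigma^{-1} T_{\mathbf{M}}$ acts on $\mathcal{D}^{k,\alpha}(\Sigma)$ with operator norm at most $C_4 C \|\mathbf{M}\|_{k-2,\alpha}^{(0)}$. Setting $\epsilon_1 := (2 C_4 C)^{-1}$, the hypothesis $\|\mathbf{M}\|_{k-2,\alpha}^{(0)} < \epsilon_1$ forces this to be strictly less than $1/2$, so $I + \mathscr{L}_\Sigma^{-1} T_{\mathbf{M}}$ is invertible on $\mathcal{D}^{k,\alpha}(\Sigma)$ by the Neumann series, with inverse bounded by $2$. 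Hence $\mathscr{L}_{\Sigma,\mathbf{M}}$ is itself an isomorphism $\mathcal{D}^{k,\alpha}(\Sigma) \to C^{k-2,\alpha}_{-1}(\Sigma)$ with inverse bounded by $2C_4$, which gives both the existence/uniqueness statement and the a priori estimate \eqref{PerturbSchauderEstEqn} with $C_7 = 2 C_4$.

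The only mildly delicate step is the Hessian mapping property in the second paragraph, which rests on the somewhat asymmetric weight structure built into $\mathcal{D}^{k,\alpha}(\Sigma)$---the top two derivatives decay like $r^{1-k}$, strictly faster than the $r^{-1}$ decay demanded on the image side---so once the definitions are unwound the desired bound falls out essentially by inspection. Beyond that, the argument is entirely standard abstract functional analysis and requires no new analytic input beyond Theorem \ref{AsymptoticDirichletThm}.
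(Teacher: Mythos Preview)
Your proposal is correct and essentially the same as the paper's proof. The paper phrases the argument as ``method of continuity plus a priori estimate'' rather than a Neumann series, but the core computation is identical: one bounds $\sum_{i,j}\mathbf{M}^{ij}(\nabla_\Sigma^2 u)_{ij}$ in $C^{k-2,\alpha}_{-1}$ by $2\|\mathbf{M}\|_{k-2,\alpha}^{(0)}\|\nabla_\Sigma^2 u\|_{k-2,\alpha}^{(-1)}$, feeds $\mathscr{L}_\Sigma u = f - T_{\mathbf{M}}u$ back through Theorem~\ref{AsymptoticDirichletThm}, and absorbs to obtain $C_7 = 2C_4$. Your Neumann-series packaging is arguably slightly cleaner, since it delivers existence, uniqueness, and the estimate in one stroke without separately invoking ellipticity and the maximum principle (which the paper does to justify the continuity method), but the analytic content is the same.
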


\begin{proof}
When $\mathbf{M}$ vanishes identically this was proved in Theorem \ref{AsymptoticDirichletThm}. We can prove it for the general case by the method of continuity (see, for instance, \cite[Theorem 5.2]{GilbargTrudinger}), provided we can establish that the a priori estimate \eqref{PerturbSchauderEstEqn} holds.

First observe that by taking $\epsilon_1$ sufficiently small, depending only on $n$, we can ensure that the symbol of the operator $\mathscr{L}_{\Sigma,\mathbf{M}}$ is uniformly elliptic. Here we use the elementary fact that we may decompose $\mathbf{M}=\mathbf{M}_s+\mathbf{M}_a$ where $\mathbf{M}_s$ is symmetric and $\mathbf{M}_a$ is skew-symmetric, and $\Vert\mathbf{M}_s\Vert_{k-2,\alpha}\leq\Vert\mathbf{M}\Vert_{k-2,\alpha}$ and one has, due to the symmetry of $\nabla_\Sigma^2$, that $\mathscr{L}_{\Sigma,\mathbf{M}}=\mathscr{L}_{\Sigma,\mathbf{M}_s}$. Hence, by the elliptic maximum principle, the only $u\in\mathcal{D}^{k,\alpha}(\Sigma)$ in the kernel of $\mathscr{L}_{\Sigma,\mathbf{M}}$ is the zero function.

Next observe that, if $u\in\mathcal{D}^{k,\alpha}(\Sigma)$, then
$$
\sum_{i,j=1}^n\mathbf{M}^{ij}(\nabla_\Sigma^2 u)_{ij}\in C^{k-2,\alpha}_{-1}(\Sigma),
$$
and as $u$ solves
$$
\mathscr{L}_\Sigma u=f-\sum_{i,j=1}^n \mathbf{M}^{ij} (\nabla_\Sigma^2 u)_{ij},
$$
Theorem \ref{AsymptoticDirichletThm} implies 
\begin{align*}
 \Vert u\Vert_{k,\alpha}^* & \leq C_4 \Vert f-\sum_{i,j=1}^n \mathbf{M}^{ij} (\nabla^2_\Sigma u)_{ij}\Vert_{k-2,\alpha}^{(-1)} \\
& \leq C_4 \Vert f\Vert_{k-2,\alpha}^{(-1)}+2C_4\Vert\mathbf{M}\Vert_{k-2,\alpha}^{(0)}\Vert\nabla^2_\Sigma u\Vert^{(-1)}_{k-2,\alpha} \\
 & \leq C_4\Vert f\Vert_{k-2,\alpha}^{(-1)}+2C_4\epsilon_1\Vert u\Vert_{k,\alpha}^*.
\end{align*}
When $\epsilon_1=\frac{1}{4C_4}$, 
$$
\Vert u \Vert_{k,\alpha}^* \leq 2C_4 \Vert f\Vert_{k-2,\alpha}^{(-1)},
$$
and so the desired estimate holds with $C_7=2C_4$.
\end{proof}

\begin{lem} \label{ImproveRegLem}
Let $0<\tilde{\alpha}<\alpha$. There is an $r_1>0$ sufficiently small and a $C_8>0$, depending only on $\Sigma,\mathbf{v},n,k,\alpha$ and $\tilde{\alpha}$, with the following significance. Let $\mathbf{g}\in\mathcal{ACH}^{k,\alpha}_n(\Sigma)$ satisfy
$$
\Vert\mathbf{g}-\mathbf{x}|_\Sigma\Vert_{k-1,\alpha}^{(1)}<r_1 \mbox{ and } \mathscr{L}_\Sigma\mathbf{g}\in C^{k-2,\alpha}_{-1}(\Sigma;\mathbb{R}^{n+1}),
$$
and let 
$$
\Xi_{\mathbf{g},\mathbf{v}}\colon \mathcal{D}^{k,\tilde{\alpha}}(\Sigma)\cap\mathcal{B}_{r_1}(0; C^{k-1,\alpha}_{1}(\Sigma)) \to C_{-1}^{k-2,\tilde{\alpha}}(\Sigma)
$$
be the map defined by 
$$
\Xi_{\mathbf{g},\mathbf{v}}[u]=\mathbf{v}\cdot\left(\mathbf{H}-\frac{\mathbf{x}^\perp}{2}\right)[\mathbf{g}+u\mathbf{v}].
$$
The following statements hold:
\begin{enumerate}
\item \label{NonlinearItem} If $\Xi_{\mathbf{g},\mathbf{v}}[u]\in C^{k-2,\alpha}_{-1}(\Sigma)$, then $u\in\mathcal{D}^{k,\alpha}(\Sigma)$, with the estimate
$$
\Vert u \Vert_{k,\alpha}^{*}\leq C_8\left(\Vert\Xi_{\mathbf{g},\mathbf{v}}[u]\Vert_{k-2,{\alpha}}^{(-1)}+\Vert\mathbf{g}-\mathbf{x}|_\Sigma\Vert_{k,\alpha}^{(1)}+\Vert\mathscr{L}_\Sigma \mathbf{g}\Vert_{k-2,\alpha}^{(-1)}+\Vert u\Vert_{k-1,\alpha}^{(1)}\right).
$$
\item \label{LinearDiffItem} If $D\Xi_{\mathbf{g},\mathbf{v}}(0)u\in C^{k-2,\alpha}_{-1}(\Sigma)$ for $u\in\mathcal{D}^{k,\tilde{\alpha}}(\Sigma)$, then $u\in\mathcal{D}^{k,\alpha}(\Sigma)$.
\end{enumerate}
\end{lem}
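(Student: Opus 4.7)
My plan is to apply Lemma \ref{SimpleExpanderMCLem} to rewrite $\Xi_{\mathbf{g},\mathbf{v}}$ as a quasilinear elliptic operator in $u$.  Setting $\mathbf{h}=\mathbf{g}+u\mathbf{v}$ and $\mathbf{M}^{ij}=(g_{\mathbf{h}}^{-1}-g_\Sigma^{-1})^{ij}$, the product identity $\mathscr{L}_\Sigma(u\mathbf{v})=\mathbf{v}\mathscr{L}_\Sigma u+2\nabla_\Sigma u\cdot\nabla_\Sigma\mathbf{v}+u\mathscr{L}_\Sigma\mathbf{v}$ together with the analogous expansion of $\nabla_\Sigma^2(u\mathbf{v})$ yields the decomposition
$$\Xi_{\mathbf{g},\mathbf{v}}[u]=(\mathbf{v}\cdot\mathbf{n}[\mathbf{h}])^2\mathscr{L}_{\Sigma,\mathbf{M}}u+\mathbf{b}\cdot\nabla_\Sigma u+cu+e,$$
where $\mathbf{b}$ and $c$ are smooth functions of $\mathbf{v}$, $\nabla_\Sigma\mathbf{v}$, $\nabla_\Sigma\mathbf{h}$, and $\mathbf{n}[\mathbf{h}]$, while
$$e=(\mathbf{v}\cdot\mathbf{n}[\mathbf{h}])\mathbf{n}[\mathbf{h}]\cdot\Bigl(\mathscr{L}_\Sigma\mathbf{g}+\sum_{i,j=1}^n\mathbf{M}^{ij}(\nabla_\Sigma^2\mathbf{g})_{ij}\Bigr)$$
depends only on $\mathbf{g}$.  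By taking $r_1$ small, $\mathbf{h}$ is an embedding in $\mathcal{ACH}^{k,\alpha}_n(\Sigma)$, $|\mathbf{v}\cdot\mathbf{n}[\mathbf{h}]|$ is uniformly bounded below, and $\Vert\mathbf{M}\Vert^{(0)}_{k-2,\alpha}$ falls below the threshold $\epsilon_1$ of Proposition \ref{PerturbAsympDirichletProp}.

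The essential regularity input will be the weighted inclusion $C^{k-1,\tilde{\alpha}}_0(\Sigma)\subset C^{k-2,\alpha}_0(\Sigma)$, an interpolation gain in which the boundedness of the $(k-1)$-st derivative upgrades the H\"older exponent of the $(k-2)$-nd derivative all the way to~$1$ and so in particular to~$\alpha$ in the appropriate weighted sense.  Since $u\in\mathcal{D}^{k,\tilde{\alpha}}\subset C^{k-1,\tilde{\alpha}}_0$, this places $\nabla_\Sigma\mathbf{h}=\nabla_\Sigma\mathbf{g}+\nabla_\Sigma(u\mathbf{v})$ in $C^{k-2,\alpha}_0$, so composition with smooth maps and the product estimates of Proposition \ref{ListProp} put the coefficients $\mathbf{n}[\mathbf{h}]$, $\mathbf{M}$, $\mathbf{b}$ and $c$ in $C^{k-2,\alpha}$ with the expected weights, and with norms controlled by $\Vert\mathbf{g}-\mathbf{x}|_\Sigma\Vert^{(1)}_{k,\alpha}$ and $\Vert u\Vert^{(1)}_{k-1,\alpha}$.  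The hypothesis $\mathscr{L}_\Sigma\mathbf{g}\in C^{k-2,\alpha}_{-1}$ then puts $e\in C^{k-2,\alpha}_{-1}$, and the equation $\Xi_{\mathbf{g},\mathbf{v}}[u]=f$ rearranges to a linear elliptic equation for $u$ with $C^{k-2,\alpha}$ coefficients,
$$(\mathscr{L}_{\Sigma,\mathbf{M}}+\tilde{\mathbf{b}}\cdot\nabla_\Sigma+\tilde{c})u=\tilde{f}\in C^{k-2,\alpha}_{-1},$$
where $\tilde{\mathbf{b}}$, $\tilde{c}$ and $\tilde{f}$ are the $(\mathbf{v}\cdot\mathbf{n}[\mathbf{h}])^{-2}$-rescaled versions of $\mathbf{b}$, $c$ and $f-e$.

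To conclude, I will apply an extension of Proposition \ref{PerturbAsympDirichletProp} permitting first- and zeroth-order lower-order terms with $C^{k-2,\alpha}$ coefficients; the extension is provable by the same method of continuity, combined with the compactness of the lower-order perturbation (in the spirit of Proposition \ref{CompactOperatorProp}) and the elliptic maximum principle for uniqueness.  This produces a unique $\tilde{u}\in\mathcal{D}^{k,\alpha}$ solving the rearranged equation, and the maximum principle applied in $\mathcal{D}^{k,\tilde{\alpha}}\subset C^0_{-1}$ forces $u=\tilde{u}$, whence $u\in\mathcal{D}^{k,\alpha}$; the quantitative bound in Item \eqref{NonlinearItem} follows by tracking constants through the associated weighted Schauder estimate.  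Item \eqref{LinearDiffItem} will be the same argument applied to the linear operator $D\Xi_{\mathbf{g},\mathbf{v}}(0)$, whose coefficients depend only on $\mathbf{g}\in C^{k,\alpha}_1$ and so already lie in $C^{k-2,\alpha}$ without invoking the inclusion trick.  I expect the main technical obstacle to be the careful verification of the weighted-space inclusion together with the associated product estimates, ensuring in particular that the right-hand side of the estimate in Item \eqref{NonlinearItem} depends only on $\Vert u\Vert_{k-1,\alpha}^{(1)}$ rather than the a priori unknown $\Vert u\Vert_{k,\alpha}^*$.
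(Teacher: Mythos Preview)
Your approach is correct and follows essentially the same strategy as the paper: rewrite $\Xi_{\mathbf{g},\mathbf{v}}$ via Lemma \ref{SimpleExpanderMCLem} as $\mathscr{L}_{\Sigma,\mathbf{M}}u$ plus lower-order terms, control the coefficients in $C^{k-2,\alpha}$, and invoke Proposition \ref{PerturbAsympDirichletProp}. The one substantive difference is that the paper does \emph{not} extend Proposition \ref{PerturbAsympDirichletProp} to accommodate first- and zeroth-order perturbations; it simply moves those terms to the right-hand side. This works because the domain hypothesis $u\in\mathcal{B}_{r_1}(0;C^{k-1,\alpha}_1)$ already places $u$ and $\nabla_\Sigma u$ in $C^{k-2,\alpha}_{-1}$ with norm controlled by $\Vert u\Vert_{k-1,\alpha}^{(1)}$, so $\tilde{\mathbf{b}}\cdot\nabla_\Sigma u+\tilde{c}u\in C^{k-2,\alpha}_{-1}$ is treated as data and Proposition \ref{PerturbAsympDirichletProp} applies directly (uniqueness there only needs $u\in C^0_{-1}$, which $\mathcal{D}^{k,\tilde\alpha}$ provides). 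Your proposed extension is more delicate than necessary: the maximum principle for $\mathscr{L}_{\Sigma,\mathbf{M}}+\tilde{\mathbf{b}}\cdot\nabla_\Sigma+\tilde{c}$ is not automatic since $\tilde{c}$ carries no sign, and compactness of the lower-order perturbation gives Fredholm index zero but not injectivity, so the method of continuity does not close without further argument.

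One further remark on where the interpolation inclusion is actually needed. For Item \eqref{NonlinearItem} it is not: the hypothesis $u\in C^{k-1,\alpha}_1$ already gives $\nabla_\Sigma\mathbf{h}\in C^{k-2,\alpha}_0$, so the coefficients $\mathbf{n}[\mathbf{h}],\mathbf{M},P,Q$ land in $C^{k-2,\alpha}$ directly. It is rather for Item \eqref{LinearDiffItem}, where $u$ is only assumed in $\mathcal{D}^{k,\tilde{\alpha}}$, that an inclusion of the type $\mathcal{D}^{k,\tilde{\alpha}}\subset C^{k-2,\alpha}_{-1}\cap C^{k-1,\alpha}_0$ is used---contrary to your final sentence, the linearized equation (obtained in the paper by differentiating the identity $\mathscr{L}_{\Sigma,\mathbf{M}[su]}(su)=P^2\Xi-PQ$ at $s=0$) still has $DP_{\mathbf{g},\mathbf{v}}(0)u$ and $DQ_{\mathbf{g},\mathbf{v}}(0)u$ on the right, and these involve $u$ and $\nabla_\Sigma u$.
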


\begin{proof}
First, we remark that all the constants below depend only on $\Sigma,\mathbf{v},n,k,\alpha$ and $\tilde{\alpha}$. By our hypotheses on $\mathbf{v}$, 
$$
\inf_{p\in\Sigma} |\mathbf{v}\cdot\mathbf{n}_\Sigma|(p)=2\delta>0.
$$ 
One may take $r_1>0$ sufficiently small so that if $\mathbf{g}\in\mathcal{ACH}^{k,\alpha}_n(\Sigma)$, $u\in\mathcal{D}^{k,\tilde{\alpha}}(\Sigma)$ and
$$
\Vert\mathbf{g}-\mathbf{x}|_\Sigma\Vert_{k-1,\alpha}^{(1)}+\Vert u\Vert_{k-1,\alpha}^{(1)}<r_1,
$$
then $\mathbf{h}=\mathbf{g}+u\mathbf{v}\in\mathcal{ACH}^{k,\tilde{\alpha}}_n(\Sigma)$, 
$$
\inf_{p\in\Sigma} |\mathbf{v}\cdot\mathbf{n}[\mathbf{h}]|(p)\geq\delta \mbox{ and } \Vert g_{\mathbf{h}}^{-1}-g_\Sigma^{-1}\Vert_{k-2,\alpha}^{(0)}<\epsilon_1(\Sigma,n,k,\alpha),
$$
where we used the fact that
\begin{equation} \label{PerturbMetricEqn}
\Vert g_{\mathbf{h}}^{-1}-g_\Sigma^{-1}\Vert^{(0)}_{k-2,\alpha} \leq C \Vert\mathbf{h}-\mathbf{x}|_\Sigma\Vert_{k-1,\alpha}^{(1)} \leq C \left(\Vert\mathbf{g}-\mathbf{x}|_\Sigma\Vert^{(1)}_{k-1,\alpha}+\Vert u\Vert^{(1)}_{k-1,\alpha}\right).
\end{equation}
Moreover, as shown in Lemma \ref{ExpanderMCLem}, $\mathscr{L}_\Sigma (u\mathbf{v})\in C^{k-2,\tilde{\alpha}}_{-1}(\Sigma;\mathbb{R}^{n+1})$ and, by our hypotheses on $\mathbf{g}$, so does $\mathscr{L}_\Sigma\mathbf{h}$. Hence, invoking Lemma \ref{SimpleExpanderMCLem} gives the map $\Xi_{\mathbf{g},\mathbf{v}}$ is well defined.

Let us set 
$$
\mathbf{M}_{\mathbf{g},\mathbf{v}}[u]=g^{-1}_{\mathbf{h}}-g_\Sigma^{-1} \mbox{ and } P_{\mathbf{g},\mathbf{v}}[u]=(\mathbf{v}\cdot\mathbf{n}[\mathbf{h}])^{-1}.
$$
By Lemma \ref{SimpleExpanderMCLem} and the product rule,
\begin{equation} \label{LinearExpanderEqn}
\mathscr{L}_\Sigma u+\sum_{i,j=1}^n \mathbf{M}_{\mathbf{g},\mathbf{v}}[u]^{ij} (\nabla_\Sigma^2 u)_{ij}=P_{\mathbf{g},\mathbf{v}}[u]^{2}\Xi_{\mathbf{g},\mathbf{v}}[u]-P_{\mathbf{g},\mathbf{v}}[u]Q_{\mathbf{g},\mathbf{v}}[u],
\end{equation}
where 
\begin{align*}
Q_{\mathbf{g},\mathbf{v}}[u] & =\left(\mathscr{L}_\Sigma\mathbf{g}+u\left(\mathscr{L}_\Sigma+\frac{1}{2}\right)\mathbf{v}+2\nabla_\Sigma u\cdot\nabla_\Sigma\mathbf{v}\right)\cdot\mathbf{n}[\mathbf{h}] \\
& \quad +\left(\sum_{i,j=1}^n \mathbf{M}_{\mathbf{g},\mathbf{v}}[u]^{ij}(\nabla_\Sigma^2(\mathbf{g}-\mathbf{x}|_\Sigma))_{ij}+\sum_{i,j=1}^n \mathbf{M}_{\mathbf{g},\mathbf{v}}[u]^{ij}(\nabla_\Sigma^2\mathbf{x}|_\Sigma)_{ij}\right)\cdot\mathbf{n}[\mathbf{h}] \\
& \quad +\left(\sum_{i,j=1}^n \mathbf{M}_{\mathbf{g},\mathbf{v}}[u]^{ij} \left(u(\nabla_\Sigma^2\mathbf{v})_{ij}+2(\nabla_\Sigma u)_i(\nabla_\Sigma\mathbf{v})_j\right)\right)\cdot\mathbf{n}[\mathbf{h}].
\end{align*}

By Items \eqref{ListProp3}-\eqref{ListProp4} of Proposition \ref{ListProp} and our hypotheses, 
$$
\Vert\mathbf{n}[\mathbf{h}]\Vert^{(0)}_{k-2,\alpha}+\Vert P_{\mathbf{g},\mathbf{v}}[u]\Vert_{k-2,\alpha}^{(0)} \leq C^\prime.
$$
Our hypotheses on $\mathbf{v}$ ensure that
$$
\mathbf{v}\in C^{k,\alpha}_0(\Sigma;\mathbb{R}^{n+1}) \mbox{ and } \left(\mathscr{L}_\Sigma+\frac{1}{2}\right)\mathbf{v}\in C^{k-2,\alpha}_{-2}(\Sigma;\mathbb{R}^{n+1}).
$$
As $\mathbf{g}\in C^{k,\alpha}_1(\Sigma;\mathbb{R}^{n+1})$, we use Item \eqref{ListProp2} of Proposition \ref{ListProp} and \eqref{PerturbMetricEqn} to estimate
$$
\Vert Q_{\mathbf{g},\mathbf{v}}[u]\Vert_{k-2,\alpha}^{(-1)} \leq C^{\prime\prime} \left(\Vert\mathscr{L}_\Sigma\mathbf{g}\Vert_{k-2,\alpha}^{(-1)}+\Vert\mathbf{g}-\mathbf{x}|_\Sigma\Vert^{(1)}_{k,\alpha}+\Vert u\Vert_{k-1,\alpha}^{(1)}\right).
$$
Hence, as we assume $\Xi_{\mathbf{g},\mathbf{v}}[u]\in C^{k-2,\alpha}_{-1}(\Sigma)$, the right hand side of \eqref{LinearExpanderEqn} is an element of $C^{k-2,\alpha}_{-1}(\Sigma)$, and so, by Proposition \ref{PerturbAsympDirichletProp}, $u\in\mathcal{D}^{k,\alpha}(\Sigma)$ and it satisfies the estimate claimed in Item \eqref{NonlinearItem}.

To obtain Item \eqref{LinearDiffItem}, we replace $u$ in \eqref{LinearExpanderEqn} by $su$ for $|s|<\epsilon$ to get that 
$$
s\left(\mathscr{L}_\Sigma u+\sum_{i,j=1}^n \mathbf{M}_{\mathbf{g},\mathbf{v}}[su]^{ij} (\nabla_\Sigma^2 u)_{ij}\right)=P_{\mathbf{g},\mathbf{v}}[su]^{2}\Xi_{\mathbf{g},\mathbf{v}}[su]-P_{\mathbf{g},\mathbf{v}}[su]Q_{\mathbf{g},\mathbf{v}}[su].
$$
By shown in Lemma \ref{ExpanderMCLem} and shrinking $r_1$ if needed, $\Xi_{\mathbf{g},\mathbf{v}}$ is smooth near $0\in\mathcal{D}^{k,\tilde{\alpha}}(\Sigma)$. And careful, but straightforward, analysis shows that our hypotheses ensure that $P_{\mathbf{g},\mathbf{v}}$ and $Q_{\mathbf{g},\mathbf{v}}$ are both smooth maps from a small neighborhood of $0\in \mathcal{D}^{k,\tilde{\alpha}}(\Sigma)$ into, respectively, $C^{k-2,\alpha}_0(\Sigma)$ and $C^{k-2,\alpha}_{-1}(\Sigma)$. Hence, differentiating the above equation at $s=0$ gives that
\begin{align*}
\mathscr{L}_\Sigma u+\sum_{i,j=1}^n\mathbf{M}_{\mathbf{g},\mathbf{v}}[0]^{ij} (\nabla^2_\Sigma u)_{ij} & =2\Xi_{\mathbf{g},\mathbf{v}}[0]P_{\mathbf{g},\mathbf{v}}[0]DP_{\mathbf{g},\mathbf{v}}(0)u+P_{\mathbf{g},\mathbf{v}}[0]^2D\Xi_{\mathbf{g},\mathbf{v}}(0)u \\
& \quad -Q_{\mathbf{g},\mathbf{v}}[0]DP_{\mathbf{g},\mathbf{v}}(0)u-P_{\mathbf{g},\mathbf{v}}[0]DQ_{\mathbf{g},\mathbf{v}}(0)u.
\end{align*}
Now, using that $\mathbf{g}\in C^{k,\alpha}_1(\Sigma;\mathbb{R}^{n+1})$ and $\mathscr{L}_\Sigma\mathbf{g}\in C^{k-2,\alpha}_{-1}(\Sigma;\mathbb{R}^{n+1})$, we compute,
$$
\Xi_{\mathbf{g},\mathbf{v}}[0]=P_{\mathbf{g},\mathbf{v}}[0]^{-1}\left(\mathscr{L}_{\Sigma}\mathbf{g}+\sum_{i,j=1}^n\mathbf{M}_{\mathbf{g},\mathbf{v}}[0]^{ij}(\nabla_\Sigma^2\mathbf{g})_{ij}\right)\cdot\mathbf{n}[\mathbf{g}]\in C^{k-2,\alpha}_{-1}(\Sigma).
$$
Hence, as we assume $D\Xi_{\mathbf{g},\mathbf{v}}(0)u\in C^{k-2,\alpha}_{-1}(\Sigma)$ we have
$$
\mathscr{L}_\Sigma u+\sum_{i,j=1}^n\mathbf{M}_{\mathbf{g},\mathbf{v}}[0]^{ij}(\nabla_\Sigma^2 u)_{ij} \in C^{k-2,\alpha}_{-1}(\Sigma),
$$
and the argument concludes by appealing to Proposition \ref{PerturbAsympDirichletProp}.
\end{proof}

Using the above lemmata we show that near any given asymptotically conical self-expander all other asymptotically conical self-expanders admit a natural parametrization.

\begin{lem} \label{ReparametrizeLem}
Given $\epsilon\in (0,1)$ there is an $r_2\in (0,1)$ depending only on $\Sigma,\mathbf{v},n,k,\alpha$ and $\epsilon$ so that for any $E$-stationary map
$$
\mathbf{g}\in\mathcal{ACH}^{k,\alpha}_n(\Sigma)\cap \mathcal{B}_{r_2}(\mathbf{x}|_\Sigma; C^{k,\alpha}_1(\Sigma;\mathbb{R}^{n+1}))
$$
there is a $u\in\mathcal{B}_\epsilon(0;\mathcal{D}^{k,\alpha}(\Sigma))$ so that $\mathbf{g}(\Sigma)$ can be reparametrized by the map 
$$
\tilde{\mathbf{g}}=\mathbf{x}|_\Sigma+\mathscr{E}_{\Sigma}[\mathrm{tr}_\infty^1[\mathbf{g}-\mathbf{x}|_\Sigma]]+u\mathbf{v}\in\mathcal{ACH}^{k,\alpha}_n(\Sigma).
$$
As such, $\tilde{\mathbf{g}}\sim \mathbf{g}$, where this is the equivalence relation of Section \ref{ACESubsec}.
\end{lem}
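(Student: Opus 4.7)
The plan is to realize $\mathbf{g}(\Sigma)$ as the image of a map of the form $\tilde{\mathbf{g}} = \mathbf{g}_0 + u\mathbf{v}$ with $u \in \mathcal{D}^{k,\alpha}(\Sigma)$, by first constructing a canonical reference $\mathbf{g}_0$ encoding the asymptotic data of $\mathbf{g}$. Set $\varphi = \mathrm{tr}_\infty^1[\mathbf{g} - \mathbf{x}|_\Sigma] \in C^{k,\alpha}(\mathcal{L}(\Sigma); \mathbb{R}^{n+1})$, whose norm is controlled by $\Vert\mathbf{g} - \mathbf{x}|_\Sigma\Vert_{k,\alpha}^{(1)} < r_2$, and let $\mathbf{g}_0 = \mathbf{x}|_\Sigma + \mathscr{E}_\Sigma[\varphi]$. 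By Corollary \ref{ExtensionCor}, $\mathscr{E}_\Sigma[\varphi]$ belongs to $C^{k,\alpha}_1 \cap C^k_{1,\mathrm{H}}(\Sigma; \mathbb{R}^{n+1})$, solves $\mathscr{L}_\Sigma \mathbf{u} = \mathbf{0}$ with trace $\varphi$, and has norm at most $C_5\Vert\varphi\Vert_{k,\alpha}$. A direct computation using the self-expander equation yields $\mathscr{L}_\Sigma \mathbf{x}|_\Sigma = \mathbf{H}_\Sigma + \tfrac{1}{2}\mathbf{x}^\top - \tfrac{1}{2}\mathbf{x} = \mathbf{H}_\Sigma - \tfrac{1}{2}\mathbf{x}^\perp = \mathbf{0}$, so $\mathscr{L}_\Sigma\mathbf{g}_0 = \mathbf{0} \in C^{k-2,\alpha}_{-1}(\Sigma; \mathbb{R}^{n+1})$. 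By construction $\mathbf{g}$ and $\mathbf{g}_0$ share the same trace at infinity, hence $\mathbf{g} - \mathbf{g}_0 \in C^{k,\alpha}_{1,0}(\Sigma; \mathbb{R}^{n+1})$ is small.

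Next, shrink $r_2$ further so that $\mathbf{v}$ remains transverse to $\mathbf{g}_0(\Sigma)$. Then the map $\gamma(p,s) = \mathbf{g}_0(p) + s\mathbf{v}(p)$ has invertible differential along $\Sigma \times \{0\}$ and, by an inverse function theorem argument, restricts to a $C^{k,\alpha}$ diffeomorphism onto a $\mathbf{v}$-regular tubular neighborhood of $\mathbf{g}_0(\Sigma)$. Closeness of $\mathbf{g}$ to $\mathbf{g}_0$ places $\mathbf{g}(\Sigma)$ in this tube, so $\gamma^{-1} \circ \mathbf{g}$ produces a $C^{k,\alpha}$ map $(\phi, u_0)\colon \Sigma \to \Sigma \times (-\delta, \delta)$ with $\phi$ a diffeomorphism of $\Sigma$ close to the identity. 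Setting $u = u_0 \circ \phi^{-1}$, the identity $\tilde{\mathbf{g}} = \mathbf{g}_0 + u\mathbf{v} = \mathbf{g} \circ \phi^{-1}$ exhibits $\tilde{\mathbf{g}}$ as a reparametrization of $\mathbf{g}(\Sigma)$; because $\mathbf{g}$ and $\mathbf{g}_0$ share the trace $\mathbf{x}|_{\mathcal{L}(\Sigma)} + \varphi$ at infinity and $\mathbf{v}$ is homogeneous of degree zero, a bookkeeping of weighted norms shows $\phi$ fixes infinity in the sense of Section \ref{ACESubsec}, so $\tilde{\mathbf{g}} \sim \mathbf{g}$.

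Finally, the $E$-stationarity of $\mathbf{g}$ is what forces $u$ into $\mathcal{D}^{k,\alpha}(\Sigma)$. Since $\tilde{\mathbf{g}}(\Sigma) = \mathbf{g}(\Sigma)$ is a self-expander,
$$
\Xi_{\mathbf{g}_0, \mathbf{v}}[u] = \mathbf{v} \cdot \Bigl(\mathbf{H} - \tfrac{1}{2}\mathbf{x}^\perp\Bigr)[\tilde{\mathbf{g}}] = 0 \in C^{k-2,\alpha}_{-1}(\Sigma).
$$
Combined with $\Vert\mathbf{g}_0 - \mathbf{x}|_\Sigma\Vert_{k-1,\alpha}^{(1)}$ small and $\mathscr{L}_\Sigma\mathbf{g}_0 = \mathbf{0}$, this lets us invoke the nonlinear improvement statement of Lemma \ref{ImproveRegLem} at the reference $\mathbf{g}_0$ to conclude $u \in \mathcal{D}^{k,\alpha}(\Sigma)$ with
$$
\Vert u\Vert_{k,\alpha}^* \leq C_8 \bigl(\Vert\mathbf{g}_0 - \mathbf{x}|_\Sigma\Vert_{k,\alpha}^{(1)} + \Vert u\Vert_{k-1,\alpha}^{(1)}\bigr),
$$
both terms being made less than $\epsilon/(2C_8)$ once $r_2$ is chosen small enough. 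The main technical obstacle is to ensure that the $u$ produced by the local inversion of $\gamma$ initially lies in the domain $\mathcal{D}^{k,\tilde{\alpha}}(\Sigma) \cap \mathcal{B}_{r_1}(0; C^{k-1,\alpha}_1(\Sigma))$ of $\Xi_{\mathbf{g}_0,\mathbf{v}}$; this is a delicate bookkeeping of weighted H\"older norms through the diffeomorphism $\phi$, using crucially that $\mathbf{g} - \mathbf{g}_0$ has vanishing trace at infinity.
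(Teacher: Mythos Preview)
Your overall architecture matches the paper's: build the reference map $\mathbf{g}_0=\mathbf{x}|_\Sigma+\mathscr{E}_\Sigma[\varphi]$, write $\mathbf{g}(\Sigma)$ as a $\mathbf{v}$-graph over $\mathbf{g}_0(\Sigma)$ to produce $u$, then upgrade $u$ to $\mathcal{D}^{k,\alpha}(\Sigma)$ via Lemma~\ref{ImproveRegLem}. The gap is in the last step, which you call ``delicate bookkeeping of weighted H\"older norms.'' It is not bookkeeping, and the vanishing trace at infinity is not enough.

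The domain hypothesis of Lemma~\ref{ImproveRegLem} requires $u\in\mathcal{D}^{k,\tilde\alpha}(\Sigma)$ \emph{a priori}. Membership in $\mathcal{D}^{k,\tilde\alpha}$ demands $u\in C^{k-2,\tilde\alpha}_{-1}$ and $\mathbf{x}\cdot\nabla_\Sigma u\in C^{k-2,\tilde\alpha}_{-1}$, i.e.\ genuine $O(r^{-1})$ decay. The graph construction over $\mathbf{g}_0$ only yields $u\in C^{k,\alpha}_1\cap C^k_{1,0}(\Sigma)$ (indeed, after composing with $\phi^{-1}$ and using Proposition~\ref{ListProp}\,\eqref{ListProp5}, you get $u$ small in $C^{k,\beta}_1$ for $\beta<\alpha$). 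But $\mathrm{tr}^1_\infty[u]=0$ says only $u=o(r)$; there is no algebraic or norm-tracking argument that converts this into $u=O(r^{-1})$.

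The paper closes this gap with a PDE step you have not included. Using that $\tilde{\mathbf{g}}$ is $E$-stationary and Lemma~\ref{SimpleExpanderMCLem}, one rewrites the equation as
\[
\mathscr{L}_\Sigma u = -\frac{\mathbf{n}[\tilde{\mathbf{g}}]}{\mathbf{v}\cdot\mathbf{n}[\tilde{\mathbf{g}}]}\cdot\Bigl(2\nabla_\Sigma u\cdot\nabla_\Sigma\mathbf{v}+u\bigl(\mathscr{L}_\Sigma+\tfrac12\bigr)\mathbf{v}+\textstyle\sum_{i,j}(g_{\tilde{\mathbf{g}}}^{-1}-g_\Sigma^{-1})^{ij}(\nabla_\Sigma^2\tilde{\mathbf{g}})_{ij}\Bigr),
\]
and checks the right side lies in $C^{k-2,\beta}_{-1}(\Sigma)$ with small norm. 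Theorem~\ref{AsymptoticDirichletThm} then produces $\tilde u\in\mathcal{D}^{k,\beta}(\Sigma)$ with $\mathscr{L}_\Sigma\tilde u=\mathscr{L}_\Sigma u$, and the uniqueness statement in Corollary~\ref{ExtensionCor} (applied to $u-\tilde u\in C^{k,\beta}_{1,0}$) forces $u=\tilde u$. Only after this does Lemma~\ref{ImproveRegLem} apply. You should insert this argument in place of the ``bookkeeping'' remark.
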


\begin{proof}
Let 
$$
\mathbf{h}=\mathbf{x}|_\Sigma+\mathscr{E}_\Sigma[\mathrm{tr}_\infty^1[\mathbf{g}-\mathbf{x}|_\Sigma]].
$$
And let $\Lambda=\mathbf{g}(\Sigma)$ and $\Upsilon=\mathbf{h}(\Sigma)$. If $\mathbf{g}$ lies in a sufficiently small neighborhood of $\mathbf{x}|_\Sigma\in C^{k,\alpha}_{1}(\Sigma; \mathbb{R}^{n+1})$, then, by Corollary \ref{ExtensionCor}, so does $\mathbf{h}$. Thus, $\mathbf{h}\in\mathcal{ACH}^{k,\alpha}_n(\Sigma)$, and so $\Upsilon\in\mathcal{ACH}^{k,\alpha}_n$ by Item \eqref{ACHItem} of Proposition \ref{AsympConicalProp}. Hence, if we set
$$
\hat{\mathbf{g}}=\mathbf{g}\circ\mathbf{h}^{-1} \mbox{ and } \hat{\mathbf{v}}=\mathbf{v}\circ\mathbf{h}^{-1},
$$ 
then, by Items \eqref{ACInverseItem} and \eqref{ACHCCOMPItem} of Proposition \ref{AsympConicalProp}, $\hat{\mathbf{g}}\in\mathcal{ACH}^{k,\alpha}_n(\Upsilon)$, and, by Item \eqref{ListProp4} of Proposition \ref{ListProp}, $\hat{\mathbf{v}}\in C^{k,\alpha}_{0}\cap C^{k}_{0,\mathrm{H}}(\Upsilon;\mathbb{R}^{n+1})$ and it is a transverse section on $\Upsilon$. Observe, that as $\mathrm{tr}_\infty^1[\mathbf{h}]=\mathrm{tr}_\infty^1[\mathbf{g}]$, $\mathcal{C}(\Upsilon)=\mathcal{C}(\Lambda)$ and $\mathrm{tr}_{\infty}^1[\hat{\mathbf{g}}]=\mathbf{x}|_{\mathcal{L}(\Upsilon)}$. Furthermore, by our hypotheses, Item \eqref{ListProp4} of Proposition \ref{ListProp}, Corollary \ref{ExtensionCor} and the triangle inequality, 
\begin{equation} \label{GHatEqn}
\begin{split}
\Vert\hat{\mathbf{g}}-\mathbf{x}|_\Upsilon\Vert_{k,\alpha}^{(1)} & =\Vert{\mathbf{g}}\circ\mathbf{h}^{-1}-\mathbf{h}\circ\mathbf{h}^{-1}\Vert_{k,\alpha}^{(1)}\leq \nu_1 \Vert\mathbf{g}-\mathbf{h}\Vert_{k,\alpha}^{(1)} \\
& \leq \nu_1 (1+CC_5)\Vert\mathbf{g}-\mathbf{x}|_{\Sigma}\Vert_{k,\alpha}^{(1)}\leq \nu_1 (1+CC_5) r_2,
\end{split}
\end{equation}
where $C$ depends on $\Sigma,n,k$ and $\alpha$. Thus, by taking $r_2$ sufficiently small, we can write
$$
\hat{\mathbf{g}}(p)=\pi_{\hat{\mathbf{v}}}(\hat{\mathbf{g}}(p))+\hat{\psi}(p) \hat{\mathbf{v}}(\pi_{\hat{\mathbf{v}}}(\hat{\mathbf{g}}(p))),
$$
so, by Item \eqref{ListProp4} of Proposition \ref{ListProp} and the fact that $\mathrm{tr}_\infty^1[\pi_{\hat{\mathbf{v}}}\circ\hat{\mathbf{g}}-\hat{\mathbf{g}}]=0$,
$$
\hat{\psi}(p)=\left(\hat{\mathbf{g}}(p)-\pi_{\hat{\mathbf{v}}}(\hat{\mathbf{g}}(p))\right)\cdot\hat{\mathbf{v}}(\pi_{\hat{\mathbf{v}}}(\hat{\mathbf{g}}(p)))\in C_{1}^{k,\alpha}\cap C^{k}_{1,0}(\Upsilon).
$$

Since 
$$
\pi_{\hat{\mathbf{v}}}\circ \hat{\mathbf{g}}-\mathbf{x}|_{\Upsilon}=\pi_{\hat{\mathbf{v}}}\circ \hat{\mathbf{g}}-\pi_{\hat{\mathbf{v}}}\circ \mathbf{x}|_{\Upsilon},
$$
it follows from Item \eqref{ListProp5} of Proposition \ref{ListProp} and \eqref{GHatEqn} that for any $\beta\in [0,\alpha)$, 
\begin{equation} \label{ProjectGHatEqn}
\Vert\pi_{\hat{\mathbf{v}}}\circ \hat{\mathbf{g}}-\mathbf{x}|_{\Upsilon}\Vert_{k,\beta}^{(1)} \leq \nu_2 \left(\Vert\hat{\mathbf{g}}-\mathbf{x}|_{\Upsilon}\Vert^{(1)}_{k,\beta}\right)^{\alpha-\beta} \leq \nu_1\nu_2(1+CC_5) r_2^{\alpha-\beta}.
\end{equation}
Hence, by Item \eqref{ListProp2} of Proposition \ref{ListProp}, the triangle inequality and by using \eqref{GHatEqn} and \eqref{ProjectGHatEqn},
\begin{equation} \label{PsiHatEqn}
\Vert\hat{\psi}\Vert_{k,\beta}^{(1)} \leq C^\prime \Vert\hat{\mathbf{g}}-\pi_{\hat{\mathbf{v}}}\circ\hat{\mathbf{g}}\Vert_{k,\beta}^{(1)} \leq C^\prime\nu_1(\nu_2+1)(1+CC_5)r_2^{\alpha-\beta},
\end{equation}
where $C^\prime$ depends on $\Sigma,\mathbf{v}, n,k$ and $\beta$. Moreover, in view of \eqref{ProjectGHatEqn}, for $r_2$ sufficiently small, $\pi_{\hat{\mathbf{v}}}\circ \hat{\mathbf{g}}\in \mathcal{ACH}^{k, \alpha}_n(\Upsilon)$ and so is its inverse by Item \eqref{ACInverseItem} of Proposition \ref{AsympConicalProp}. Thus, setting $u=\hat{\psi}\circ (\pi_{\hat{\mathbf{v}}}\circ\hat{\mathbf{g}})^{-1} \circ \mathbf{h}$ one has
$$
\tilde{\mathbf{g}}=\hat{\mathbf{g}}\circ (\pi_{\hat{\mathbf{v}}}\circ\hat{\mathbf{g}})^{-1} \circ \mathbf{h}=\mathbf{h}+ u \mathbf{v},
$$
and, by Item \eqref{ListProp4} of Proposition \ref{ListProp} and \eqref{PsiHatEqn}, $u\in C^{k,\alpha}_{1}\cap C^{k}_{1,0}(\Sigma)$ satisfies
\begin{equation}
\label{GraphWeightEstEqn}
\Vert u \Vert_{k, \beta}^{(1)}\leq \tilde{C} r_2^{\alpha-\beta}
\end{equation}
for some constant $\tilde{C}$ depending on $\Sigma,\mathbf{v},n,k,\alpha$ and $\beta$.

Next we claim that $u\in\mathcal{D}^{k,\beta}(\Sigma)$, and $\Vert u\Vert_{k,\beta}^* \to 0$ as $r_2 \to 0$. First observe that, by Corollary \ref{ExtensionCor} and \eqref{GraphWeightEstEqn}, for $r_2$ sufficiently small, $\tilde{\mathbf{g}}$ is sufficiently close to $\mathbf{x}|_\Sigma$ in the $C^{k,\beta}_1$ topology, and so $\tilde{\mathbf{g}}\in\mathcal{ACH}^{k,\beta}_{n}(\Sigma)$. Moreover, by Item \eqref{ListProp2} of Proposition \ref{ListProp},
\begin{equation} \label{PerturbNormalMetricEqn}
\Vert\mathbf{n}[\tilde{\mathbf{g}}]-\mathbf{n}_\Sigma\Vert_{k-1,\beta}^{(0)}+\Vert g_{\tilde{\mathbf{g}}}^{-1}-g_\Sigma^{-1}\Vert_{k-1,\beta}^{(0)} \leq C^{\prime\prime} \left(\Vert\mathbf{g}-\mathbf{x}|_\Sigma\Vert_{k,\beta}^{(1)}+\Vert u \Vert_{k,\beta}^{(1)}\right)
\end{equation}
where $C^{\prime\prime}$ depends on $\Sigma,\mathbf{v},n,k$ and $\beta$. It follows from the hypotheses on $\mathbf{v}$ that 
$$
\inf_{p\in\Sigma} |\mathbf{v}\cdot\mathbf{n}[\tilde{\mathbf{g}}]|(p)=\delta>0.
$$ 
If $\mathbf{g}$ is a $E$-stationary map, then so is $\tilde{\mathbf{g}}$. Hence, by Lemma \ref{SimpleExpanderMCLem} and the product rule,
$$
\mathscr{L}_\Sigma u  = -\frac{\mathbf{n}[\tilde{\mathbf{g}}]}{\mathbf{v}\cdot\mathbf{n}[\tilde{\mathbf{g}}]} \cdot \left(2\nabla_\Sigma u\cdot\nabla_\Sigma\mathbf{v}+u\left(\mathscr{L}_\Sigma+\frac{1}{2}\right)\mathbf{v}+\sum_{i,j=1}^n (g_{\tilde{\mathbf{g}}}^{-1}-g_\Sigma^{-1})^{ij} (\nabla_\Sigma^2\tilde{\mathbf{g}})_{ij}\right)
$$
where the right hand side is an element of $C^{k-2,\beta}_{-1}(\Sigma)$ and, by Items \eqref{ListProp3} and \eqref{ListProp2} of Proposition \ref{ListProp}, \eqref{GraphWeightEstEqn} and \eqref{PerturbNormalMetricEqn}, its $C^{k-2,\beta}_{-1}$ norm is bounded by $O(r_2^{\alpha-\beta})$. Hence, invoking Theorem \ref{AsymptoticDirichletThm}, there is a $\tilde{u}\in \mathcal{D}^{k,\beta}(\Sigma)$ satisfying that $\mathscr{L}_\Sigma\tilde{u}=\mathscr{L}_\Sigma u$ and $\Vert \tilde{u}\Vert_{k,\beta}^* \leq O(r_2^{\alpha-\beta})$. As $u,\tilde{u}\in C^{k,\beta}_{1,0}(\Sigma)$, by Corollary \ref{ExtensionCor}, $u=\tilde{u}$ and so $u\in\mathcal{D}^{k,\beta}(\Sigma)$ and $\Vert u\Vert_{k,\beta}^* \to 0$ as $r_2\to 0$.

Finally, as $\mathscr{L}_\Sigma\mathbf{h}=\Xi_{\mathbf{h},\mathbf{v}}[u]=0$, we can appeal to Item \eqref{NonlinearItem} of Lemma \ref{ImproveRegLem} to see that when $r_2$ is sufficiently small, $u\in\mathcal{D}^{k,\alpha}(\Sigma)$ and
$$
\Vert u\Vert_{k,\alpha}^* \leq C_8\left(\Vert\mathbf{h}-\mathbf{x}|_\Sigma\Vert_{k,\alpha}^{(1)}+\Vert u\Vert_{k,\beta}^*\right) \leq \hat{C} r_2^{\alpha-\beta},
$$
where $\hat{C}$ depends on $\Sigma,\mathbf{v},n,k,\alpha$ and $\beta$. Therefore, one completes the proof by taking $r_2$ sufficiently small so that $\hat{C} r_2^{\alpha-\beta}<\epsilon$.
\end{proof}

Now we are ready to prove Theorem \ref{SmoothDependThm} stated in the beginning of this section.

\begin{proof}[Proof of Theorem \ref{SmoothDependThm}]
First, we define
\begin{align*}
\mathcal{K}_{\mathbf{v}}^\perp & =\left\{f\in C^{k-2,\alpha}_{-1}(\Sigma)\colon \int_{\Sigma} f\kappa e^{\frac{r^2}{4}} \, d\mathcal{H}^n=0 \mbox{ for all $\kappa\in\mathcal{K}_{\mathbf{v}}$}\right\}, \mbox{ and}, \\
\mathcal{K}_{\mathbf{v},*}^\perp & =\left\{f\in\mathcal{D}^{k,\alpha}(\Sigma)\colon \int_{\Sigma} f\kappa e^{\frac{r^2}{4}}\, d\mathcal{H}^n=0 \mbox{ for all $\kappa\in\mathcal{K}_{\mathbf{v}}$}\right\}.
\end{align*}
By Lemma \ref{RegJacobiLem} the space $\mathcal{K}_{\mathbf{v}}$ is a finite dimensional subspace of $\mathcal{D}^{k,\alpha}(\Sigma)$, and for any $f\in C^{0}_{-1}(\Sigma)$ and $\kappa\in\mathcal{K}_{\mathbf{v}}$,
$$
\left|\int_{\Sigma} f\kappa e^{\frac{r^2}{4}} \, d\mathcal{H}^n\right| \leq C(\kappa)\Vert f\Vert_{0}^{(-1)}
$$
where, by Proposition \ref{AsymptoticKerProp}, 
$$
C(\kappa)=\int_{\Sigma} |\kappa|(1+r)^{-1} e^{\frac{r^2}{4}} \, d\mathcal{H}^n<\infty.
$$
Thus, we can define the orthogonal projection $\Pi_{\mathcal{K}_{\mathbf{v}}}\colon C^{k-2,\alpha}_{-1}(\Sigma)\to\mathcal{K}_{\mathbf{v}}$ with respect to the measure $e^{r^2/4}\mathcal{H}^n\lfloor\Sigma$, and $\Pi_{\mathcal{K}_{\mathbf{v}}}$ is a bounded linear map. We also consider the projection $\Pi_{\mathcal{K}^\perp_{\mathbf{v}}}\colon C^{k-2,\alpha}_{-1}(\Sigma)\to\mathcal{K}^\perp_{\mathbf{v}}$ defined by $\Pi_{\mathcal{K}^\perp_{\mathbf{v}}}=\mathrm{Id}-\Pi_{\mathcal{K}_{\mathbf{v}}}$,  is also a bounded linear map.

Consider the map
$$
\Theta_{\mathbf{v}}\colon \mathcal{B}_{r_0}(\mathbf{x}|_{\mathcal{L}(\Sigma)}; C^{k,\alpha}(\mathcal{L}(\Sigma); \mathbb{R}^{n+1}))\times\mathcal{B}_{\frac{r_0}{2}}(0; \mathcal{K}_{\mathbf{v}})\times\mathcal{B}_{\frac{r_0}{2}}(0; \mathcal{K}^\perp_{\mathbf{v},*}) \to \mathcal{K}^\perp_{\mathbf{v}}
$$
defined by 
$$
\Theta_{\mathbf{v}}[\varphi,\kappa,u]=\Pi_{\mathcal{K}^\perp_{\mathbf{v}}}\circ\Xi_{\mathbf{v}}[\varphi,\kappa+u].
$$
By Lemma \ref{ExpanderMCLem}, $\Theta_{\mathbf{v}}$ is a well defined, smooth map. Moreover, in view of Propositions \ref{1stVarProp} and \ref{2ndVarProp}, $D_3\Theta_{\mathbf{v}}(\mathbf{x}|_{\mathcal{L}(\Sigma)},0,0)$ is given by $L_{\mathbf{v}}$ restricted to $\mathcal{K}^\perp_{\mathbf{v},*}$. Thus, it follows from Theorem \ref{FredholmThm} and Corollary \ref{CokerCor} that $D_3\Theta_{\mathbf{v}}(\mathbf{x}|_{\mathcal{L}(\Sigma)},0,0)$ is an isomorphism between $\mathcal{K}^\perp_{\mathbf{v},*}$ and $\mathcal{K}^\perp_{\mathbf{v}}$. Hence, by the implicit function theorem, there is a neighborhood $\mathcal{U}=\mathcal{U}_1\times\mathcal{U}_2$ of $(\mathbf{x}|_{\mathcal{L}(\Sigma)},0)$ in $C^{k,\alpha}(\mathcal{L}(\Sigma); \mathbb{R}^{n+1})\times\mathcal{K}_{\mathbf{v}}$ and a {smooth} map $F_{\mathbf{v},*}\colon \mathcal{U}\to\mathcal{K}^\perp_{\mathbf{v},*}$ that gives the unique solution in a neighborhood of $0\in\mathcal{K}_{\mathbf{v},*}^\perp$ of 
$$
\Theta_{\mathbf{v}}[\varphi,\kappa,F_{\mathbf{v},*}[\varphi,\kappa]]=0.
$$

Now let 
\begin{align*}
F_{\mathbf{v}}[\varphi,\kappa] & =\mathbf{x}|_\Sigma+\mathscr{E}_\Sigma[\varphi-\mathbf{x}|_{\mathcal{L}(\Sigma)}]+\left(\kappa+F_{\mathbf{v},*}[\varphi,\kappa]\right)\mathbf{v}, \mbox{ and}, \\
G_{\mathbf{v}}[\varphi,\kappa] & =\Pi_{\mathcal{K}_{\mathbf{v}}}\circ\Xi_{\mathbf{v}}[\varphi,\kappa+F_{\mathbf{v},*}[\varphi,\kappa]].
\end{align*}
Then Items \eqref{TraceFItem} and \eqref{IdentityItem} follows immediately. By the definitions of $F_{\mathbf{v}}$ and $\Xi_{\mathbf{v}}$,
\begin{equation} \label{FDefnEqn}
\Pi_{\mathcal{K}^\perp_{\mathbf{v}}}\left[\mathbf{v}\cdot\left(\mathbf{H}-\frac{\mathbf{x}^\perp}{2}\right)[F_{\mathbf{v}}[\varphi,\kappa]]\right]=0.
\end{equation}
Thus, $F_{\mathbf{v}}[\varphi,\kappa]$ is $E$-stationary if and only if 
$$
\Pi_{\mathcal{K}_{\mathbf{v}}}\left[\mathbf{v}\cdot\left(\mathbf{H}-\frac{\mathbf{x}^\perp}{2}\right)[F_{\mathbf{v}}[\varphi,\kappa]]\right]=0,
$$
i.e., $G_{\mathbf{v}}[\varphi,\kappa]=0$. This proves Item \eqref{EStationaryItem}.

To establish Item \eqref{FDiffItem}, it suffices to show $D_2F_{\mathbf{v},*}(\mathbf{x}|_{\mathcal{L}(\Sigma)},0)=0$. Take any $\kappa\in\mathcal{K}_{\mathbf{v}}$. Observe that for $|s|<\epsilon$,
\begin{equation} \label{F*DiffEqn}
\Pi_{\mathcal{K}^\perp_{\mathbf{v}}}\left[\mathbf{v}\cdot\left(\mathbf{H}-\frac{\mathbf{x}^\perp}{2}\right)\left[\mathbf{x}|_\Sigma+\left(s\kappa+F_{\mathbf{v},*}[\mathbf{x}|_{\mathcal{L}(\Sigma)},s\kappa]\right)\mathbf{v}\right]\right]=0.
\end{equation}
Now, differentiating \eqref{F*DiffEqn} at $s=0$, we apply Proposition \ref{2ndVarProp} and the chain rule to get
$$
\Pi_{\mathcal{K}^\perp_{\mathbf{v}}}\circ L_{\mathbf{v}}\circ D_2F_{\mathbf{v},*}(\mathbf{x}|_{\mathcal{L}(\Sigma)},0)\kappa=0.
$$
Since $\mathcal{K}^\perp_{\mathbf{v}}=\mathrm{Im}(L_{\mathbf{v}})$ by Theorem \ref{FredholmThm} and Corollary \ref{CokerCor}, it follows that 
$$
D_2F_{\mathbf{v},*}(\mathbf{x}|_{\mathcal{L}(\Sigma)},0)\kappa\in\mathcal{K}_{\mathbf{v}}\cap\mathcal{K}^\perp_{\mathbf{v},*}=\{0\}.
$$ 
Thus, the claim follows immediately from the arbitrariness of $\kappa\in\mathcal{K}_{\mathbf{v}}$.

Next, by the definition of $G_{\mathbf{v}}$ and \eqref{FDefnEqn}, we have
$$
G_{\mathbf{v}}[\varphi,\kappa]=\mathbf{v}\cdot\left(\mathbf{H}-\frac{\mathbf{x}^\perp}{2}\right)[F_{\mathbf{v}}[\varphi,\kappa]].
$$
Thus, it follows from Proposition \ref{2ndVarProp} and Item \eqref{FDiffItem} that for all $\kappa\in\mathcal{K}_{\mathbf{v}}$,
$$
D_2G_{\mathbf{v}}(\mathbf{x}|_{\mathcal{L}(\Sigma)},0)\kappa=(\mathbf{v}\cdot\mathbf{n}_\Sigma)L_\Sigma[D_2F_{\mathbf{v}}(\mathbf{x}|_{\mathcal{L}(\Sigma)},0)\kappa\cdot\mathbf{n}_\Sigma]=L_{\mathbf{v}}\kappa=0.
$$
This shows $D_2G_{\mathbf{v}}(\mathbf{x}|_{\mathcal{L}(\Sigma)},0)=0$. Next, take any $\zeta\in T_{\mathbf{v}}^{\epsilon_1}(\mathcal{K}_{\mathbf{v}})\setminus\{0\}$. Then
$$
D_1 G_{\mathbf{v}}(\mathbf{x}|_{\mathcal{L}(\Sigma)},0)[\zeta\mathbf{w}]=(\mathbf{v}\cdot\mathbf{n}_\Sigma)L_\Sigma[D_1 F_{\mathbf{v}}(\mathbf{x}|_{\mathcal{L}(\Sigma)},0)[\zeta\mathbf{w}]\cdot\mathbf{n}_\Sigma].
$$
By the linearity of $\mathrm{tr}_\infty^1$ and Item \eqref{TraceFItem},
$$
\mathrm{tr}_\infty^1[D_1 F_{\mathbf{v}}(\mathbf{x}|_{\mathcal{L}(\Sigma)},0)[\zeta\mathbf{w}]\cdot\mathbf{n}_\Sigma]=\zeta\mathbf{w}\cdot\mathbf{n}_{\mathcal{L}(\Sigma)}.
$$
As $\Sigma\in\mathcal{ACH}^{k,\alpha}_n$, it follows from Corollary \ref{NonExistAsympDirichletCor} that 
$$
D_1 G_{\mathbf{v}}(\mathbf{x}|_{\mathcal{L}(\Sigma)},0)[\zeta\mathbf{w}]\neq 0.
$$
Thus, we appeal to Corollary \ref{MonomorphismCor} to see that $D_1 G_{\mathbf{v}}(\mathbf{x}|_{\mathcal{L}(\Sigma)},0)$ is of rank equal to $\dim \mathcal{K}_{\mathbf{v}}$, which completes the proof of Item \eqref{SubmanifoldItem}. Finally, Item \eqref{UniqueItem} follows from Lemma \ref{ReparametrizeLem} and the implicit function theorem.

It remains only to prove the last statement (``Furthermore, we can choose..."). Fix an $\tilde{\alpha}\in(0,\alpha)$. First observe that, by Lemma \ref{RegJacobiLem}, $\mathcal{K}_{\mathbf{v}}$ is a finite dimensional subspace of $\mathcal{D}^{k,\alpha}(\Sigma)$, so any norms on $\mathcal{K}_{\mathbf{v}}$ are equivalent. Let $\tilde{\mathcal{K}}_{\mathbf{v}}^\perp$ and $\tilde{\mathcal{K}}_{\mathbf{v},*}^\perp$ be the orthogonal complements of $\mathcal{K}_{\mathbf{v}}$ in $C^{k-2,\tilde{\alpha}}_{-1}(\Sigma)$ and $\mathcal{D}^{k,\tilde{\alpha}}(\Sigma)$, respectively. We further let $\Pi_{\tilde{\mathcal{K}}_{\mathbf{v}}^\perp}$ be the orthogonal projection of $C^{k-2,\tilde{\alpha}}_{-1}(\Sigma)$ onto $\tilde{\mathcal{K}}_{\mathbf{v}}^\perp$. As $\Sigma\in\mathcal{ACH}^{k,\alpha}_n\subset\mathcal{ACH}^{k,\tilde{\alpha}}_n$, applying the theorem with $\tilde{\alpha}$ replacing $\alpha$, there is a neighborhood $\tilde{\mathcal{U}}=\tilde{\mathcal{U}}_1\times\tilde{\mathcal{U}}_2$ of $(\mathbf{x}|_{\mathcal{L}(\Sigma)},0)$ in $C^{k,\tilde{\alpha}}(\mathcal{L}(\Sigma); \mathbb{R}^{n+1})\times\mathcal{K}_{\mathbf{v}}$ together with a smooth map $\tilde{F}_{\mathbf{v},*}\colon\tilde{\mathcal{U}}\to \tilde{\mathcal{K}}_{\mathbf{v},*}^\perp$ giving the unique solution in some neighborhood of $0\in\tilde{\mathcal{K}}_{\mathbf{v},*}^\perp$ of 
$$
\tilde{\Theta}_{\mathbf{v}}[\varphi,\kappa,\tilde{F}_{\mathbf{v},*}[\varphi,\kappa]]=\Pi_{\tilde{\mathcal{K}}_{\mathbf{v}}^\perp}\left(\mathbf{v}\cdot\left(\mathbf{H}-\frac{\mathbf{x}^\perp}{2}\right)[\tilde{F}_{\mathbf{v}}[\varphi,\kappa]]\right)=0,
$$
where
$$
\tilde{F}_{\mathbf{v}}[\varphi,\kappa]=\mathbf{x}|_\Sigma+\mathscr{E}_\Sigma[\varphi-\mathbf{x}|_{\mathcal{L}(\Sigma)}]+\left(\kappa+\tilde{F}_{\mathbf{v},*}[\varphi,\kappa]\right)\mathbf{v}.
$$

If $\varphi\in\tilde{\mathcal{U}}_1\cap C^{k,\alpha}(\mathcal{L}(\Sigma); \mathbb{R}^{n+1})$, then $\mathscr{E}_\Sigma[\varphi-\mathbf{x}|_{\mathcal{L}(\Sigma)}]\in C^{k,\alpha}_1(\Sigma; \mathbb{R}^{n+1})$. Let
$$
\mathbf{g}=\mathbf{x}|_\Sigma+\mathscr{E}_{\Sigma}[\varphi-\mathbf{x}|_{\mathcal{L}(\Sigma)}].
$$
Thus, shrinking $\tilde{\mathcal{U}}_1$ if necessary, $\mathbf{g}\in\mathcal{ACH}^{k,\alpha}_n(\Sigma)$, and $\mathscr{L}_\Sigma \mathbf{g}=
\mathbf{0}$ as $\Sigma$ is a self-expander. By the definition of $\tilde{F}_{\mathbf{v},*}$, 
$$
\Xi_{\mathbf{g},\mathbf{v}}[\kappa+\tilde{F}_{\mathbf{v},*}[\varphi,\kappa]]\in\mathcal{K}_{\mathbf{v}}\subset C^{k-2,\alpha}_{-1}(\Sigma).
$$
Hence, shrinking $\tilde{\mathcal{U}}$ so we may apply Item \eqref{NonlinearItem} of Lemma \ref{ImproveRegLem} to get $\tilde{F}_{\mathbf{v},*}[\varphi,\kappa]\in\mathcal{D}^{k,\alpha}(\Sigma)$. Denote by
$$
\mathcal{U}^\prime=\tilde{\mathcal{U}}\cap (C^{k,\alpha}(\mathcal{L}(\Sigma);\mathbb{R}^{n+1})\times\mathcal{K}_{\mathbf{v}}).
$$
Therefore, the map $F_{\mathbf{v},*}^\prime\colon\mathcal{U}^\prime\to\mathcal{K}^\perp_{\mathbf{v},*}$ given by the restriction of $\tilde{F}_{\mathbf{v},*}$ to $\mathcal{U}^\prime$ is well defined. Likewise, define $F_{\mathbf{v}}^\prime\colon\mathcal{U}^\prime\to\mathcal{ACH}^{k,\alpha}_n(\Sigma)$ to be the restriction of $\tilde{F}_{\mathbf{v}}$.

Given $(\varphi,\kappa)\in\mathcal{U}^\prime$, let $u=F^\prime_{\mathbf{v},*}[\varphi,\kappa]$ and $\mathbf{h}=F^\prime_{\mathbf{v}}[\varphi,\kappa]$. Then $D_3\tilde{\Theta}_{\mathbf{v}}(\varphi,\kappa,u)$ is an isomorphism of $\tilde{\mathcal{K}}_{\mathbf{v},*}^\perp$ and $\tilde{\mathcal{K}}_{\mathbf{v}}^\perp$. Observe that $\mathbf{h}\in\mathcal{ACH}^{k,\alpha}_n(\Sigma)$ with $\mathscr{L}_\Sigma\mathbf{h}\in C^{k-2,\alpha}_{-1}(\Sigma;\mathbb{R}^{n+1})$. Since $\Xi_{\mathbf{h},\mathbf{v}}$ restricts to a {smooth} map from a neighborhood of $0\in\mathcal{D}^{k,\alpha}(\Sigma)$ to $C^{k-2,\alpha}_{-1}(\Sigma)$, we have 
$$
D_3\tilde{\Theta}_{\mathbf{v}}(\varphi,\kappa,u)|_{\mathcal{K}^\perp_{\mathbf{v},*}}=\Pi_{\mathcal{K}_{\mathbf{v}}^\perp}\circ D\Xi_{\mathbf{h},\mathbf{v}}(0)|_{\mathcal{K}^\perp_{\mathbf{v},*}}.
$$
Thus, Item \eqref{LinearDiffItem} of Lemma \ref{ImproveRegLem} implies $D_3\tilde{\Theta}_{\mathbf{v}}(\varphi,\kappa,u)$ is an isomorphism of $\mathcal{K}_{\mathbf{v},*}^\perp$ and $\mathcal{K}^\perp_{\mathbf{v}}$. Moreover, by the implicit function theorem, there is a smooth map $F$ from a neighborhood of $(\varphi,\kappa)\in C^{k,\alpha}(\mathcal{L}(\Sigma);\mathbb{R}^{n+1})\times\mathcal{K}_{\mathbf{v}}$ to $\mathcal{K}_{\mathbf{v}}^\perp$ that gives the unique solution in a neighborhood of $u$ of 
$$
\tilde{\Theta}_{\mathbf{v}}[\varphi^\prime,\kappa^\prime,F[\varphi^\prime,\kappa^\prime]]=0.
$$
Hence, $F^\prime_{\mathbf{v},*}$ coincides with $F$ near $(\varphi,\kappa)$. Therefore, $F^\prime_{\mathbf{v},*}$ and $F^\prime_{\mathbf{v}}$ are both smooth maps.

Let $\tilde{G}_{\mathbf{v}}\colon\tilde{\mathcal{U}}\to\mathcal{K}_{\mathbf{v}}$ be given by
$$
\tilde{G}_{\mathbf{v}}[\varphi,\kappa]=\mathbf{v}\cdot\left(\mathbf{H}-\frac{\mathbf{x}^\perp}{2}\right)[\tilde{F}_{\mathbf{v}}[\varphi,\kappa]],
$$
and let $G_{\mathbf{v}}^\prime=\tilde{G}_{\mathbf{v}}|_{\mathcal{U}^\prime}$. Then $\tilde{G}_{\mathbf{v}}$ and $G^\prime_{\mathbf{v}}$ are {smooth} maps. By the theorem (replacing $\alpha$ by $\tilde{\alpha}$), $\tilde{G}_{\mathbf{v}}^{-1}(0)$ is a smooth submanifold of $\tilde{\mathcal{U}}$ at each point of which we may assume $D\tilde{G}_{\mathbf{v}}$ is surjective. Observe, that
$$
(G^\prime_{\mathbf{v}})^{-1}(0)=\tilde{G}^{-1}_{\mathbf{v}}(0)\cap (C^{k,\alpha}(\mathcal{L}(\Sigma);\mathbb{R}^{n+1})\times\mathcal{K}_{\mathbf{v}}).
$$
As $C^{k,\alpha}(\mathcal{L}(\Sigma))$ is dense in $C^{k,\tilde{\alpha}}(\mathcal{L}(\Sigma))$, $DG^\prime_{\mathbf{v}}(\varphi,\kappa)$ is surjective for $(\varphi,\kappa)\in (G^\prime_{\mathbf{v}})^{-1}(0)$. Thus, by the implicit function theorem, $(G^\prime_{\mathbf{v}})^{-1}(0)$ is a smooth submanifold of $\mathcal{U}^\prime$. 

Finally, by the preceding discussions and the fact that the norms $\Vert\cdot\Vert_{k,\alpha}^*$ and $\Vert\cdot\Vert_{k,\tilde{\alpha}}^*$ on $\mathcal{K}_{\mathbf{v}}$ are equivalent, the $\mathcal{W}$ in Item \eqref{UniqueItem} can be taken to be $\tilde{\mathcal{W}}\cap C^{k,\alpha}_1(\Sigma;\mathbb{R}^{n+1})$ for $\tilde{\mathcal{W}}$ open in $C^{k,\tilde{\alpha}}_1(\Sigma;\mathbb{R}^{n+1})$, completing the proof of the last statement.
\end{proof}

\subsection{Global structure theorem} \label{StructureSubsec}
Following the strategy of the proof of \cite[Theorem 3.3]{WhiteEI}, we apply Theorem \ref{SmoothDependThm} to prove Theorem \ref{StructureThm}.

\begin{proof}[Proof of Theorem \ref{StructureThm}]
First, by Item \eqref{FDiffItem} of Theorem \ref{SmoothDependThm}, one has that for any $\kappa\in\mathcal{K}_{\mathbf{v}}\setminus\{0\}$, there is a bounded smooth $n$-form $\omega$ on a neighborhood of $\Sigma\subset\mathbb{R}^{n+1}$ so that 
$$
{\frac{d}{ds}\vline}_{s=0} \int_\Sigma e^{-\frac{r^2}{4}} F_{\mathbf{v}}[\mathbf{x}|_{\mathcal{L}(\Sigma)},s\kappa]^*\omega \neq 0.
$$
Furthermore, one can choose such $n$-forms, $\omega_1,\ldots,\omega_{M}$, where $M=\dim \mathcal{K}_{\mathbf{v}}$, so that the linear map $\mathcal{K}_{\mathbf{v}}\to\mathbb{R}^M$ defined by
$$
\kappa\mapsto{\frac{d}{ds}\vline}_{s=0} \left(\int_\Sigma e^{-\frac{r^2}{4}}F_{\mathbf{v}}[\mathbf{x}|_{\mathcal{L}(\Sigma)},s\kappa]^*\omega_1,\ldots, \int_\Sigma e^{-\frac{r^2}{4}}F_{\mathbf{v}}[\mathbf{x}|_{\mathcal{L}(\Sigma)},s\kappa]^*\omega_M\right)
$$
is an isomorphism. 

Recall, that $\mathbf{f}\in\mathcal{ACH}^{k,\alpha}_n(\Gamma)$ and $\Sigma=\mathbf{f}(\Gamma)$ is a self-expander. Define 
$$
\Phi_{\mathbf{f}}\colon\mathcal{ACH}^{k,\alpha}_n(\Gamma) \to C^{k,\alpha}(\mathcal{L}(\Gamma);\mathbb{R}^{n+1})\times\mathbb{R}^M
$$
given by
$$
\Phi_{\mathbf{f}}(\mathbf{g})=\left(\mathrm{tr}_\infty^1[\mathbf{g}], \int_\Sigma e^{-\frac{r^2}{4}} (\mathbf{g}\circ\mathbf{f}^{-1})^*\omega_1,\ldots,\int_\Sigma e^{-\frac{r^2}{4}} (\mathbf{g}\circ\mathbf{f}^{-1})^*\omega_M\right).
$$
Clearly, $\Phi_{\mathbf{f}}$ is a smooth map. We also define 
$$
C_{\mathbf{f}}\colon\mathcal{ACH}_n^{k,\alpha}(\Sigma)\to\mathcal{ACH}^{k,\alpha}_n(\Gamma), \, C_{\mathbf{f}}[\mathbf{h}]=\mathbf{h}\circ\mathbf{f}.
$$
By Items \eqref{ACInverseItem} and \eqref{ACHCCOMPItem} of Proposition \ref{AsympConicalProp}, $C_{\mathbf{f}}$ is well defined and it is an isomorphism with inverse $C_{\mathbf{f}^{-1}}$.

Now, consider 
$$
\Phi_{\mathbf{f}}\circ C_{\mathbf{f}}\circ F_{\mathbf{v}}\colon \mathcal{U}_1\times\mathcal{U}_2\to C^{k,\alpha}(\mathcal{L}(\Gamma);\mathbb{R}^{n+1})\times\mathbb{R}^{M}
$$
where $\mathcal{U}_1=\tilde{\mathcal{U}}_1\cap C^{k,\alpha}(\mathcal{L}(\Sigma);\mathbb{R}^{n+1})$,  $\tilde{\mathcal{U}}_1$ a neighborhood of $\mathbf{x}|_{\mathcal{L}(\Sigma)} \in C^{k,\tilde{\alpha}}(\mathcal{L}(\Sigma);\mathbb{R}^{n+1})$ and $\mathcal{U}_2$ is a neighborhood of $0\in\mathcal{K}_{\mathbf{v}}$. By our definitions,
\begin{align*}
\Phi_{\mathbf{f}}\circ C_{\mathbf{f}}\circ F_{\mathbf{v}}[\varphi,\kappa]=\left(|\mathrm{tr}_\infty^1[\mathbf{f}]|\varphi\left(\frac{\mathrm{tr}_\infty^1[\mathbf{f}]}{|\mathrm{tr}_\infty^1[\mathbf{f}]|}\right), \int_\Sigma e^{-\frac{r^2}{4}}F_{\mathbf{v}}[\mathbf{x}|_{\mathcal{L}(\Sigma)},\kappa]^*\omega_1,\right. & \\
\left. \ldots, \int_\Sigma e^{-\frac{r^2}{4}}F_{\mathbf{v}}[\mathbf{x}|_{\mathcal{L}(\Sigma)},\kappa]^*\omega_M\right). &
\end{align*}
Thus, by the inverse function theorem, shrinking $\tilde{\mathcal{U}}_1,\tilde{\mathcal{U}}_2$ if necessary, $\Phi_{\mathbf{f}}\circ C_{\mathbf{f}}\circ F_{\mathbf{v}}$ is a smooth diffeomorphism onto its image. By Item \eqref{SubmanifoldItem} of Theorem \ref{SmoothDependThm}, $G_{\mathbf{v}}^{-1}(0)$ is a smooth submanifold of $C^{k,\alpha}(\mathcal{L}(\Sigma);\mathbb{R}^{n+1})\times\mathcal{K}_{\mathbf{v}}$, and so $\mathcal{S}_{\mathbf{f}}=\Phi_{\mathbf{f}}\circ C_{\mathbf{f}}\circ F_{\mathbf{v}}(G_{\mathbf{v}}^{-1}(0))$ is a smooth submanifold of $C^{k,\alpha}(\mathcal{L}(\Gamma);\mathbb{R}^{n+1})\times\mathbb{R}^{M}$ near $\Phi_{\mathbf{f}}[\mathbf{f}]$. Hence, in view of Items \eqref{EStationaryItem} and \eqref{UniqueItem} of Theorem \ref{SmoothDependThm}, $\Phi_{\mathbf{f}}$ sets up a one-to-one correspondence between 
$$
\mathcal{O}_{\mathbf{f}}=\left\{[\mathbf{g}]\in\mathcal{ACE}^{k,\alpha}_n(\Gamma)\colon \mathbf{g}\in\mathcal{W}\right\},
$$
where $\mathcal{W}=\tilde{\mathcal{W}}\cap \mathcal{ACH}^{k,\alpha}_n(\Gamma)$ for $\tilde{\mathcal{W}}$ a neighborhood of $\mathbf{f}\in C^{k,\tilde{\alpha}}_1(\Gamma;\mathbb{R}^{n+1})$, and a neighborhood of $\Phi_{\mathbf{f}}[\mathbf{f}] \in \mathcal{S}_{\mathbf{f}}$. Moreover, if $\mathbf{f}^\prime\in\mathcal{ACH}^{k,\alpha}_n(\Gamma)$ and $\mathbf{f}^\prime(\Gamma)$ is a self-expander, then 
$$
\Phi_{\mathbf{f}^\prime}\circ (\Phi_{\mathbf{f}}|_{\mathcal{O}_{\mathbf{f}}})^{-1}=\Phi_{\mathbf{f}^\prime}\circ C_{\mathbf{f}}\circ F_{\mathbf{v}}\circ (\Phi_{\mathbf{f}}\circ C_{\mathbf{f}} \circ F_{\mathbf{v}})^{-1}|_{\mathcal{S}_{\mathbf{f}}}
$$
is a smooth map. Therefore, the collection of such $(\mathcal{O}_{\mathbf{f}},\Phi_{\mathbf{f}})$ form a smooth atlas for $\mathcal{ACE}^{k,\alpha}_n(\Gamma)$ from which one obtains a countable subcover as $C^{k,\alpha}_1\cap C^{k}_{1,\mathrm{H}}(\Gamma;\mathbb{R}^{n+1})$ and $C^{k,\alpha}(\mathcal{L}(\Gamma);\mathbb{R}^{n+1})$ are separable with respect to the $C^{k,\tilde{\alpha}}_1$ and $C^{k,\tilde{\alpha}}$ topology, respectively. This proves Item \eqref{ManifoldItem}. 

To obtain Items \eqref{ProjectionItem} -- \eqref{KernelItem}, we observe that $\Pi\circ(\Phi_{\mathbf{f}}|_{\mathcal{O}_{\mathbf{f}}})^{-1}$ is the usual projection of $C^{k,\alpha}(\mathcal{L}(\Gamma); \mathbb{R}^{n+1})\times\mathbb{R}^M$ onto its first component. We invoke Item \eqref{SubmanifoldItem} of Theorem \ref{SmoothDependThm} to see that $\mathrm{codim}\, \mathcal{S}_{\mathbf{f}}=\dim \mathcal{K}_{\mathbf{v}}=M$ and $\{0\}\times\mathbb{R}^M$ is contained in the tangent space of $\mathcal{S}_{\mathbf{f}}$ at $\Phi_{\mathbf{f}}[\mathbf{f}]$. Hence, the claims in Items \eqref{ProjectionItem} -- \eqref{KernelItem} follow easily from these observations.
\end{proof}

\subsection*{Acknowledgements} The second author would like to thank David Hoffman, Bing Wang and Brian White for helpful discussions.

\appendix

\section{Estimates for the heat equation on $\mathbb{R}^n$} \label{EstHeatEqnApp}
We first state a well-known maximum principle for parabolic equations on $\mathbb{R}^n$. 

\begin{prop} \label{MaxPrincipleProp}
If $w\in C^0(\mathbb{R}^n\times [0,T])$ has continuous spatial derivatives up to the second order and continuous time derivative and satisfies
$$
\left\{
\begin{array}{cc}
\partial_t w-\sum\limits_{i,j=1}^n a^{ij}\partial_{x_ix_j}^2 w-\sum\limits_{i=1}^n b^i\partial_{x_i}w-cw\leq 0 & \mbox{in $\mathbb{R}^n\times (0,T)$} \\
w(x,0)\leq 0 & \mbox{for $x\in\mathbb{R}^n$}
\end{array}
\right.
$$
where for some $\lambda>0$,
$$
\sum_{i,j=1}^n a^{ij}(x,t)\xi_i\xi_j \geq 0 \mbox{ and } \sum_{i,j=1}^n |a^{ij}(x,t)|+\sum_{i=1}^n |b^i(x,t)|+|c(x,t)| \leq \lambda,
$$
then $w\leq 0$ on $\mathbb{R}^n\times [0,T]$. 
\end{prop}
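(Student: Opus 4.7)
First I would reduce to the case where the zeroth-order coefficient satisfies $c\leq 0$. The substitution $\tilde w(x,t)=e^{-\lambda t}w(x,t)$ transforms the hypothesized differential inequality into one of the same form with $c$ replaced by $c-\lambda\leq 0$, while preserving both the sign of $w$ and its initial values. So without loss of generality assume $c\leq 0$.

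Next I would construct a strict supersolution growing quadratically in $|x|$. A natural candidate is $\varphi_\epsilon(x,t)=\epsilon(1+|x|^2)e^{Kt}$ for an arbitrary $\epsilon>0$ and a constant $K=K(n,\lambda)$ to be chosen. Computing the derivatives of $\varphi_\epsilon$ and applying the coefficient bounds $|a^{ij}|,|b^i|,|c|\leq\lambda$, Young's inequality $2|b^ix_i|\leq \lambda^{2}+x_i^2$, and the reduced hypothesis $c\leq 0$, one checks directly that the bracket $[K(1+|x|^2)-2\sum a^{ii}-2\sum b^ix_i-c(1+|x|^2)]$ is bounded below by $(K-1)|x|^2+K-2n\lambda-n\lambda^{2}$. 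Choosing $K$ large enough (e.g.\ $K=2+2n\lambda+n\lambda^2$) makes $\partial_t\varphi_\epsilon-\sum a^{ij}\partial^2_{x_ix_j}\varphi_\epsilon-\sum b^i\partial_{x_i}\varphi_\epsilon-c\varphi_\epsilon>0$ strictly on $\mathbb{R}^n\times[0,T]$.

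The final step is a pointwise maximum argument applied to $v_\epsilon=w-\varphi_\epsilon$. By construction $v_\epsilon(\cdot,0)\leq -\epsilon<0$, and $v_\epsilon$ satisfies the analogous differential inequality with a strictly negative right-hand side. Provided $w$ is spatially bounded on $\mathbb{R}^n\times[0,T]$---which is the case in every invocation of the proposition in the paper, since the solutions produced by Lemma \ref{CauchyProbLem} lie in $C^0((0,1); C^{2,\beta}(\mathbb{R}^n))$---the quadratic growth of $\varphi_\epsilon$ forces $v_\epsilon\to-\infty$ as $|x|\to\infty$ uniformly in $t$, so $v_\epsilon$ attains its supremum at some $(x_0,t_0)\in \mathbb{R}^n\times[0,T]$. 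If $t_0=0$, the initial bound gives $\sup v_\epsilon\leq 0$. Otherwise, supposing for contradiction $v_\epsilon(x_0,t_0)>0$, the calculus conditions $\partial_tv_\epsilon(x_0,t_0)\geq 0$, $\nabla v_\epsilon(x_0,t_0)=0$, and $D^2v_\epsilon(x_0,t_0)\leq 0$, together with positive semi-definiteness of $(a^{ij})$ and the fact that $cv_\epsilon(x_0,t_0)\leq 0$ (from $c\leq 0$ and $v_\epsilon(x_0,t_0)>0$), force the left-hand side of the differential inequality to be non-negative at $(x_0,t_0)$, contradicting its strict negativity from the previous step. Hence $v_\epsilon\leq 0$ everywhere, i.e.\ $w\leq \varphi_\epsilon$; letting $\epsilon\to 0$ yields $w\leq 0$.

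The main obstacle is really only conceptual rather than technical: the proposition as stated imposes no growth restriction on $w$ at infinity, but Tychonoff's classical counterexample shows that some such restriction is indispensable for uniqueness of the Cauchy problem on $\mathbb{R}^n$. The barrier argument above requires that $w$ grow slower than $\varphi_\epsilon$, so either a boundedness hypothesis or a Tychonoff-type sub-Gaussian bound should be read implicitly into the statement; this is automatic in all applications in the paper.
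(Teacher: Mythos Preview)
Your proof is correct and follows essentially the same barrier-plus-exponential-weight strategy as the paper, with only cosmetic differences: the paper works with $w_l=e^{-2\lambda t}w - l(1+|x|^2)^{1/2}$ in a single step rather than first reducing to $c\leq 0$ and then subtracting a quadratic barrier. Your remark about the missing growth hypothesis is well-taken and applies equally to the paper's own argument, which asserts $w_l\to-\infty$ at spatial infinity ``as $w\in C^0(\mathbb{R}^n\times[0,T])$''---an implication that of course requires $w$ to be bounded, as it indeed is in every application.
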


\begin{proof}
For each $l>0$ we define 
$$
w_l(x,t)=e^{-2\lambda t} w(x,t)-l(1+|x|^2)^{\frac{1}{2}}.
$$
Then, by direct computations, 
$$
\partial_t w_l-\sum_{i,j=1}^n a^{ij}\partial_{x_ix_j}^2 w_l-\sum_{i=1}^n b^i \partial_{x_i} w_l+(2\lambda-c) w_l \leq C(n,\lambda) l.
$$
Clearly, $w_l(x,0)\leq 0$, and as $w\in C^0(\mathbb{R}^n\times [0,T])$, $w_l(x,t)\to -\infty$ when $(x,t)$ approaches infinity. 

Thus, there is a point $(x_l,t_l)\in \mathbb{R}^n\times [0,T]$ such that 
$$
\sup_{(x,t)\in\mathbb{R}^n\times [0,T]} w_l(x,t)=w_l(x_l,t_l). 
$$
If $t_l>0$, then 
$$
\partial_t w_l(x_l,t_l)\geq 0, \, \partial_{x_i} w_l(x_l,t_l)=0, \mbox{ and } \sum_{i,j=1}^n a^{ij}(x_l,t_l)\partial_{x_ix_j}^2 w_l(x_l,t_l)\leq 0.
$$
As $|c|\leq\lambda$, it follows that $w_l(x_l,t_l)\leq C l \lambda^{-1}$. If $t_l=0$, then $w_l(x_l,t_l)\leq 0$. Hence, 
$$
\sup_{(x,t)\in\mathbb{R}^n\times [0,T]} w_l(x,t)\leq C l \lambda^{-1}.
$$
Now passing $l\to 0$, we get 
$$
\sup_{(x,t)\in\mathbb{R}^n\times [0,T]} w(x,t)\leq 0,
$$
proving the lemma.
\end{proof}

We now prove Lemma \ref{CauchyProbLem} for the heat equation on $\mathbb{R}^n$.

\begin{prop} \label{SpecialCauchyProbProp}
Let $\beta\in (0,1)$. Given $h\in C^0((0,1); C^\beta(\mathbb{R}^n))$ the Cauchy problem
\begin{equation} \label{SpecialCauchyProbEqn}
\left\{
\begin{array}{cc}
\partial_t w-\Delta w=h & \mbox{in $\mathbb{R}^n\times (0,1)$} \\
\displaystyle \lim_{(x^\prime,t)\to (x,0)} w(x^\prime,t)=0 & \mbox{for $x\in\mathbb{R}^n$}
\end{array}
\right.
\end{equation}
has a unique solution in $C^0((0,1); C^{2,\beta}(\mathbb{R}^n))$. Moreover, $w$ satisifes
$$
\sup_{0<t<1} \sum_{i=0}^2 t^{\frac{i-2}{2}} \Vert \nabla^i w(\cdot,t)\Vert_{\beta} \leq \nu(n,\beta) \sup_{0<t<1} \Vert h(\cdot,t)\Vert_\beta.
$$
\end{prop}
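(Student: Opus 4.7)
The plan is to construct the solution via Duhamel's formula and derive the sharp estimates using a cancellation identity together with a space-time splitting argument. Let
$$
K(x,t) = (4\pi t)^{-n/2} e^{-|x|^2/(4t)}
$$
denote the heat kernel on $\mathbb{R}^n$, and define
$$
w(x,t) = \int_0^t \int_{\mathbb{R}^n} K(x-y, t-s)\, h(y,s) \, dy \, ds.
$$
The first step is to verify basic regularity: the Gaussian decay of the spatial derivatives of $K$, together with $h \in C^0((0,1); C^\beta(\mathbb{R}^n))$, justifies differentiation under the integral up to second order in $x$ and first order in $t$, so that $w \in C^0((0,1); C^{2,\beta}(\mathbb{R}^n))$ and $w$ solves $\partial_t w - \Delta w = h$ by the fundamental solution property of $K$. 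The initial condition $w(\cdot,t) \to 0$ uniformly as $t \to 0^+$ is immediate from the trivial bound $|w(x,t)| \leq t \sup_s \|h(\cdot,s)\|_0$.

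Next, I would establish the sup-norm bounds. Using $\int_{\mathbb{R}^n} |\nabla_x^i K(z,\tau)| \, dz \leq C_i \tau^{-i/2}$ and integrating in $s$, one obtains the correctly $t$-weighted bounds on $\|w\|_0$ and $\|\nabla w\|_0$. The analogous estimate for $\|\nabla^2 w\|_0$ would diverge logarithmically if applied naively, so I would use the crucial cancellation identity $\int_{\mathbb{R}^n} \nabla_x^2 K(x-y,\tau) \, dy = 0$ to rewrite
$$
\nabla^2 w(x,t) = \int_0^t \int_{\mathbb{R}^n} \nabla_x^2 K(x-y,t-s)\bigl[h(y,s) - h(x,s)\bigr] \, dy \, ds.
$$
Combining $|h(y,s) - h(x,s)| \leq [h(\cdot,s)]_\beta |x-y|^\beta$ with the moment bound $\int_{\mathbb{R}^n} |\nabla^2 K(z,\tau)| |z|^\beta \, dz \leq C \tau^{-1+\beta/2}$ then yields $\|\nabla^2 w(\cdot,t)\|_0 \leq C t^{\beta/2} \sup_s \|h(\cdot,s)\|_\beta$, which in particular is uniformly bounded for $t \in (0,1)$.

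For the H\"older seminorms, given $x, x' \in \mathbb{R}^n$ with $\delta = |x-x'|$, I would split the time integration at $s_* = \max\{0, t-\delta^2\}$. On the short interval $(s_*, t)$ I would use the cancellation identity together with the triangle inequality to bound each of $\nabla^2 w(x,t)$ and $\nabla^2 w(x',t)$ separately, picking up a factor of $\delta^\beta$ from $\int_{s_*}^t (t-s)^{-1+\beta/2} \, ds$. On $(0, s_*)$ I would apply the mean value theorem in the parameter $x$ to bring out one extra derivative $\nabla^3 K$, use $\int_{\mathbb{R}^n}|\nabla^3 K(z,\tau)||z|^\beta \, dz \leq C\tau^{-3/2+\beta/2}$, and integrate $(t-s)^{-3/2+\beta/2}$ up to $s_*$ to obtain a factor of $\delta^{\beta-1}$ which combines with $|x-x'|$ to produce the required $\delta^\beta$. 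Analogous but easier estimates, using a single level of splitting without the cancellation identity, control $[w(\cdot,t)]_\beta$ and $[\nabla w(\cdot,t)]_\beta$ with the desired $t^{(i-2)/2}$ weights; the case of large $\delta$ is handled by reverting to the sup-norm bounds.

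Uniqueness is a direct application of Proposition \ref{MaxPrincipleProp}: the difference $W$ of two solutions solves the homogeneous heat equation with vanishing initial data; applying the maximum principle to $\pm W$ on each strip $\mathbb{R}^n \times [\epsilon, T]$ with $0<\epsilon<T<1$ and then letting $\epsilon \to 0^+$ forces $W \equiv 0$. The main technical obstacle will be the H\"older seminorm bound on $\nabla^2 w$, where both the cancellation identity and the careful two-region space-time splitting must be orchestrated simultaneously; all other bounds reduce to routine Gaussian kernel integrations.
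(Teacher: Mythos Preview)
Your proposal is correct and follows the same overall architecture as the paper: Duhamel's formula for the candidate solution, the cancellation identity $\int \nabla^2 K\,dy=0$ to handle the otherwise-divergent second-derivative sup bound, and Proposition~\ref{MaxPrincipleProp} for uniqueness.

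The one place where the two arguments diverge is in the H\"older seminorm estimates. For $[w(\cdot,t)]_\beta$ and $[\nabla w(\cdot,t)]_\beta$ the paper avoids any time-splitting by a simple change of variable: writing $w(x,t)-w(x',t)=\int_0^t\int_{\mathbb{R}^n}\bigl(h(x-y,s)-h(x'-y,s)\bigr)\Phi(y,t-s)\,dy\,ds$ transfers the difference from the kernel to $h$, giving the bound in one line. For $[\nabla^2 w(\cdot,t)]_\beta$ the paper simply cites \cite[pp.~276--277]{LSU}. Your time-splitting at $s_*=\max\{0,t-\delta^2\}$, combined with the mean value theorem and the moment bound on $\nabla^3 K$, is the standard self-contained route to the same conclusion and is exactly what underlies the cited pages of \cite{LSU}. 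So your argument is slightly longer for the lower-order seminorms but more explicit for the top-order one; either way the estimate follows.
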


\begin{proof}
On $\mathbb{R}^n\times (0,1)$ we define
$$
w(x,t)=\int_0^t \int_{\mathbb{R}^n} h(y,s) \Phi(x-y,t-s) \, dyds,
$$
where $\Phi(x-y,t-s)=(4\pi (t-s))^{-\frac{n}{2}} e^{-\frac{|x-y|^2}{4(t-s)}}$. It follows from a direct calculation -- cf. \cite[pp. 263--264]{LSU} -- that $\partial_t w-\Delta w=h$. 

Observe that $|w(x,t)|\leq t\Vert h\Vert_0$, and that
\begin{align*}
|w(x,t)-w(x^\prime,t)| & \leq \int_0^t \int_{\mathbb{R}^n} |h(x-y,s)-h(x^\prime-y,s)| \Phi(y,t-s) \, dyds \\
& \leq t \sup_{0<s<1} [h(\cdot,s)]_\beta |x-x^\prime|^\beta.
\end{align*}
Thus,
$$
\sup_{0<t<1} t^{-1} \Vert w(\cdot,t)\Vert_{\beta} \leq \sup_{0<t<1}\Vert h(\cdot,t)\Vert_\beta.
$$

Next we use \cite[Chapter 4, (2.5)]{LSU} to estimate
$$
|\partial_{x_i} w(x,t)| \leq \Vert h\Vert_0\int_0^t \int_{\mathbb{R}^n} |\partial_{x_i}\Phi(x-y,t-s)| \, dyds \leq C(n) \sqrt{t} \Vert h\Vert_0.
$$
Moreover, 
\begin{align*}
|\partial_{x_i} w(x,t)-\partial_{x_i} w(x^\prime,t)| & \leq \int_0^t \int_{\mathbb{R}^n} |h(x-y,s)-h(x^\prime-y,s)| |\partial_{y_i}\Phi(y,t-s)| \, dyds \\
& \leq C(n)\sqrt{t}\sup_{0<s<1} [h(\cdot,s)]_\beta |x-x^\prime|^\beta.
\end{align*}
Thus,
$$
\sup_{0<t<1} t^{-\frac{1}{2}} \Vert \partial_{x_i} w(\cdot,t)\Vert_{\beta} \leq C(n) \sup_{0<t<1} \Vert h(\cdot,t)\Vert_{\beta}.
$$

Next we use \cite[Chapter 4, (1.9) and (2.5)]{LSU} to estimate
\begin{align*}
|\partial_{x_ix_j}^2 w(x,t)| & \leq \int_0^t \int_{\mathbb{R}^n} |h(y,s)-h(x,s)| |\partial_{x_ix_j}^2\Phi(x-y,t-s)| \, dyds \\
& \leq \sup_{0<s<1} [h(\cdot,s)]_\beta \int_0^t \int_{\mathbb{R}^n} |x-y|^\beta |\partial_{x_ix_j}^2\Phi(x-y,t-s)| \, dyds \\
& \leq C^{\prime}(n,\beta)\sup_{0<s<1} [h(\cdot,s)]_\beta.
\end{align*}
This together with the H\"{o}lder estimate \cite[pp. 276--277]{LSU} of $\partial_{x_ix_j}^2 w$ gives
$$
\sup_{0<t<1} \Vert\partial_{x_ix_j}^2 w(\cdot,t)\Vert_\beta \leq C^{\prime}(n,\beta) \sup_{0<t<1} \Vert h(\cdot,t)\Vert_\beta.
$$

Hence, we have proved that $w$ is a classical solution to the problem \eqref{SpecialCauchyProbEqn} satisfying 
$$
\sup_{0<t<1} \sum_{i=0}^2 t^{\frac{i-2}{2}} \Vert \nabla^i w(\cdot,t)\Vert_\beta \leq C^{\prime\prime}(n,\beta) \sup_{0<t<1} \Vert h(\cdot,t)\Vert_\beta.
$$
Moreover, observe that for $(x,t)\in\mathbb{R}^n\times (0,1)$ and $0<\delta<1-t$, 
\begin{align*}
w(x,t+\delta)-w(x,t) & =\int_0^t \int_{\mathbb{R}^n} (h(y,s+\delta)-h(y,s))\Phi(x-y,t-s) \, dyds \\
& \quad +\int_{-\delta}^0 \int_{\mathbb{R}^n} h(y,s+\delta) \Phi(x-y,t-s) \, dyds.
\end{align*}
By the preceding discussions, if $h\in C^0((0,1); C^\beta(\mathbb{R}^n))$, then $w\in C^0((0,1); C^{2,\beta}(\mathbb{R}^n))$. The uniqueness follows from Proposition \ref{MaxPrincipleProp}.
\end{proof}

\end{document}